\title[Stability of Sobolev inequality with bubbling]
{On the sharp stability of critical points\\ of the Sobolev inequality}
\author{A. Figalli}
\address{ETH, Rämistrasse 101, 8092 Zürich, Switzerland}
\email{alessio.figalli@math.ethz.ch}
\author{F. Glaudo}
\address{ETH, Rämistrasse 101, 8092 Zürich, Switzerland}
\email{federico.glaudo@math.ethz.ch}
\begin{document}

\begin{abstract}
Given $n\geq 3$, consider the critical elliptic equation $\lapl u + u^{2^*-1}=0$ in $\R^n$ with $u > 0$.
This equation corresponds to the Euler-Lagrange 
  equation induced by the Sobolev embedding $H^1(\R^n)\hookrightarrow L^{2^*}(\R^n)$,
  and it is well-known that the solutions are uniquely characterized and are given by the so-called ``Talenti bubbles''. 
  In addition, thanks to a fundamental result by Struwe \cite{struwe1984}, this statement is ``stable up to bubbling'': if $u:\R^n\to\oo0\infty$ \emph{almost} solves
  $\lapl u + u^{2^*-1}=0$ then $u$ is (nonquantitatively) close in the $H^1(\R^n)$-norm to a sum of 
  weakly-interacting Talenti bubbles.
  More precisely, if $\dist(u)$ denotes the $H^1(\R^n)$-distance 
  of $u$ from the manifold of sums of Talenti bubbles, Struwe proved that $\delta(u)\to 0$
  as $\norm{\lapl u + u^{2^*-1}}_{H^{-1}}\to 0$.
  
  In this paper we investigate the validity of a sharp quantitative version of the stability for critical points: more precisely, we ask whether 
  under a bound on the energy $\norm{\nabla u}_{L^2}$ (that controls the number of bubbles) it holds
  \begin{equation*}
    \dist(u) \lesssim \norm{\lapl u + u^{2^*-1}}_{H^{-1}} \fullstop
  \end{equation*}
  A recent paper by the first author together with Ciraolo and Maggi \cite{cirfigmag2018} shows that the above result is true if $u$ is close to only one bubble. Here we prove, to our surprise, that whenever there are at least two bubbles then the estimate above is true for $3\le n\le 5$ while it is 
  false for $n\ge 6$.
  To our knowledge, this is the first situation where quantitative stability estimates depend so strikingly on the dimension of the space, changing completely behavior for some particular value of the dimension $n$.
\end{abstract}

\maketitle
\vspace{-0.8cm}

\tableofcontents

\section{Introduction}
The Sobolev inequality with exponent $2$ states that, for any $n\ge 3$ and any
$u\in H^1(\R^n)$, it holds
\begin{equation}\label{eq:sobolev_ineq_intro}
  S\norm{u}_{L^{2^*}} \le \norm{\nabla u}_{L^2} \comma
\end{equation}
where $2^*=\frac{2n}{n-2}$ and $S=S(n)$ is a dimensional constant. In this paper we denote by $H^1(\R^n)$ the closure
of $C^{\infty}_c(\R^n)$ with respect to the norm $\norm{\nabla u}_{L^2}$.
Also, whenever a norm is computed on the whole $\R^n$, we do not specify the domain (so, for instance, $\norm{\cdot}_{L^2}=\norm{\cdot}_{L^2(\R^n)}$).

The optimal value of the constant $S$ is known and so are the optimizers of the Sobolev inequality (see 
\cite{aubin1976,talenti1976}): the functions that satisfy the equality in \cref{eq:sobolev_ineq_intro} 
have the form
\begin{equation*}
  \frac{c}{(1+\lambda^2\abs{x-z}^2)^{\frac {n-2}2}} \comma
\end{equation*}
where $c\in \R$, $\lambda\in\oo{0}{\infty}$, and $z\in\R^n$ can be chosen arbitrarily.
Let us define a subclass of all the optimizers, that is the parametrized family of functions $U[z,\lambda]$, 
with $z\in\R^n$ and $\lambda>0$, defined as
\begin{equation}\label{eq:talenti_intro}
  U[z,\lambda](x) \defeq 
  (n(n-2))^{\frac{n-2}4}\lambda^{\frac{n-2}2}\frac{1}{(1+\lambda^2\abs{x-z}^2)^{\frac{n-2}2}} \fullstop
\end{equation}
We will call such functions \emph{Talenti bubbles}. Later it will be clear why we want to put a specific dimensional
constant in the definition of Talenti bubbles.

Once \cref{eq:sobolev_ineq_intro} is established, it is natural to look for a quantitative version.
Informally, we wonder if \emph{almost} satisfying the equality in 
\cref{eq:sobolev_ineq_intro} implies being \emph{almost} a Talenti bubble up to scaling.

One of the most natural ways to state this question is to ask if the discrepancy 
$\norm{\nabla u}_{L^2}^2-S^2\norm{u}_{L^{2^*}}^2$ of a function $u\in H^1(\R^n)$ can bound
the distance of $u$ from a rescaled Talenti bubble.
The answer is positive as shown in \cite{bianchi1991}, where the authors prove that for any 
$u\in H^1(\R^n)$ it holds
\begin{equation*}
  \inf_{\substack{z\in\R^n, \lambda>0 \\ c\in\R}}\norm*{\nabla(u-c U[z,\lambda])}_{L^2}^2
  \le C(n) \left(\norm{\nabla u}_{L^2}^2-S^2\norm{u}_{L^{2^*}}^2\right) \comma
\end{equation*}
where $C(n)$ is a dimensional constant.

A different (and more challenging) way to approach the question is to consider the Euler-Lagrange equation associated
to the inequality \cref{eq:sobolev_ineq_intro}. It is well-known that the Euler-Lagrange equation
is, up to a suitable scaling, given by
\begin{equation}\label{eq:euler_lagrange}
  \lapl u + u\abs{u}^{2^*-2} = 0 \fullstop
\end{equation}
Notice that our definition of Talenti bubbles \cref{eq:talenti_intro} is such that every Talenti bubble
solves exactly \cref{eq:euler_lagrange}.
Passing from the inequality to the Euler-Lagrange equation is analogous to passing from minimizers to general
critical points.

In na\"ive terms, the topic of the current paper is to investigate whether a function $u$ that \emph{almost} 
solves \cref{eq:euler_lagrange} must be quantitatively close to a Talenti bubble (scaling is 
not necessary since the equation is nonlinear).
There is a number of fundamental obstructions to consider.

First and foremost, Talenti bubbles do not constitute all the solutions of \cref{eq:euler_lagrange}. Indeed,
as was shown in \cite{ding1986}, there are many other \emph{sign-changing} solutions on $\R^n$.
However, if we restrict to nonnegative functions, then, according to \cite{gidas1979}, the family of Talenti bubbles
are the only solutions.

There is another major obstruction to take care of. If we set 
$u\defeq U_1 + U_2$, where $U_1$ and $U_1$ are two \emph{weakly-interacting} Talenti bubbles (for instance
$U_1=U[-Re_1,1]$ and $U_2=U[Re_1,1]$ with $R\ggg 1$), then $u$ will approximately solve 
\cref{eq:euler_lagrange} in any reasonable sense. 
At the same time $u$ is not close to a \emph{single} Talenti bubble.
Hence we have to accept that even if $u$ almost solves \cref{eq:euler_lagrange} it might be close to a sum
of weakly-interacting bubbles.

In fact this is always the case, as proven in the seminal work \cite{struwe1984}. 
Let us recall the mentioned theorem in the form we will need:
\begin{theorem}[Struwe, 1984]\label{thm:struwe_intro}
  Let $n\ge 3$ and $\nu\ge 1$ be positive integers.
  Let $(u_k)_{k\in\N}\subseteq H^1(\R^n)$ be a sequence of nonnegative functions such that 
  $(\nu-\frac12)S^n\le \int_{\R^n}\abs{\nabla u_k}^2 \le (\nu+\frac12)S^n$ with $S=S(n)$ as in \cref{eq:sobolev_ineq_intro}, and assume that
  \begin{equation*}
    \norm{\lapl u_k + u_k^{2^*-1}}_{H^{-1}} \to 0 \quad\text{as $k\to\infty$}\fullstop
  \end{equation*}
  Then there exist a sequence $(z^{(k)}_1,\dots,z^{(k)}_\nu)_{k\in\N}$ of $\nu$-tuples of points in $\R^n$ 
  and a sequence $(\lambda^{(k)}_1,\dots,\lambda^{(k)}_\nu)_{k\in\N}$ of $\nu$-tuples of positive real numbers 
  such that
  \begin{equation*}
    \norm*{\nabla\left(u_k-\sum_{i=1}^\nu U[z_i^{(k)},\lambda_i^{(k)}]\right)}_{L^2}\to 0 
    \quad\text{as $k\to\infty$}\fullstop
  \end{equation*}
\end{theorem}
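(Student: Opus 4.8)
The plan is to run the classical concentration--compactness / profile decomposition scheme, the whole engine being that $\norm{\lapl u+u^{2^*-1}}_{H^{-1}}$ is invariant under the rescaling $u\mapsto\lambda^{-(n-2)/2}u(x_0+\,\cdot\,/\lambda)$ (just like the energy $\norm{\nabla u}_{L^2}$), so that the smallness of the error survives all the rescalings needed to resolve bubbling; by a routine subsequence argument it suffices to prove the conclusion along a subsequence. First I would observe that the energy bound and \cref{eq:sobolev_ineq_intro} make $(u_k)$ bounded in $H^1(\R^n)$, so up to a subsequence $u_k\rightharpoonup u^{(0)}\ge 0$ weakly in $H^1$, with $u_k\to u^{(0)}$ a.e.\ and in $L^2_{\mathrm{loc}}$. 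Since $(u_k^{2^*-1})$ is bounded in $L^{2n/(n+2)}$ and converges a.e.\ to $(u^{(0)})^{2^*-1}$, it converges weakly there; passing to the limit in the weak formulation of $\lapl u_k+u_k^{2^*-1}=f_k$ with $\norm{f_k}_{H^{-1}}\to 0$ gives that $u^{(0)}$ solves \cref{eq:euler_lagrange}, so by \cite{gidas1979} it is either $\equiv 0$ or a Talenti bubble.

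The heart of the matter is a \emph{bubble-extraction step}: given a sequence $v_k$ with $v_k\rightharpoonup 0$ in $H^1$, $\norm{\lapl v_k+v_k^{2^*-1}}_{H^{-1}}\to 0$, and $\liminf_k\norm{\nabla v_k}_{L^2}>0$, I would produce a nonzero finite-energy solution of \cref{eq:euler_lagrange}. Testing the equation against $v_k$ yields $\norm{\nabla v_k}_{L^2}^2=\norm{v_k}_{L^{2^*}}^{2^*}+o(1)$, which with \cref{eq:sobolev_ineq_intro} forces $\liminf_k\norm{\nabla v_k}_{L^2}^2\ge S^n$. Fixing $\varepsilon_0\in(0,S^n)$ small and using Lions' concentration function $r\mapsto\sup_{x\in\R^n}\int_{B_r(x)}\abs{\nabla v_k}^2$, I would choose $x_k,\lambda_k$ so that $\hat v_k(y)\defeq\lambda_k^{-(n-2)/2}v_k(x_k+y/\lambda_k)$ satisfies $\sup_x\int_{B_1(x)}\abs{\nabla\hat v_k}^2=\int_{B_1(0)}\abs{\nabla\hat v_k}^2=\varepsilon_0$. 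By the scaling invariance above, $(\hat v_k)$ is again bounded and almost solving, and up to a subsequence $\hat v_k\rightharpoonup\hat v^{(1)}$. A localized Lions concentration--compactness argument then shows, using the equation, that the Dirichlet energy cannot concentrate at a point without depositing mass $\ge S^n$ there; the normalization $\le\varepsilon_0<S^n$ excludes this, so $\hat v_k\to\hat v^{(1)}$ in $L^{2^*}_{\mathrm{loc}}$, and the equation upgrades this to $\nabla\hat v_k\to\nabla\hat v^{(1)}$ in $L^2_{\mathrm{loc}}$; hence $\int_{B_1(0)}\abs{\nabla\hat v^{(1)}}^2=\varepsilon_0$ and $\hat v^{(1)}\ne 0$, and arguing as before $\hat v^{(1)}$ solves \cref{eq:euler_lagrange}.

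Then I would iterate. Starting from $u_k$, extract $u^{(0)}$ and set $r_k^1\defeq u_k-\sigma_k^1$, with $\sigma_k^1$ the bubble from the first step ($\sigma_k^1=0$ if $u^{(0)}=0$); if $\norm{\nabla r_k^1}_{L^2}\not\to 0$, apply the extraction step to $r_k^1$ to get a new bubble $\sigma_k^2=U[z_2^{(k)},\lambda_2^{(k)}]$, subtract it, and repeat. Two observations close the argument. First, the successive scales must be \emph{divergent}, i.e.\ $\lambda_i^{(k)}/\lambda_j^{(k)}+\lambda_j^{(k)}/\lambda_i^{(k)}+\lambda_i^{(k)}\lambda_j^{(k)}\abs{z_i^{(k)}-z_j^{(k)}}^2\to\infty$ for $i\ne j$ --- otherwise the new rescaling would still ``see'' an earlier bubble and its weak limit could not be a clean bubble --- which is exactly the weak interaction in the statement; and, crucially, after rescaling $r_k^j$ at the new scale the previously subtracted bubbles disappear in the weak limit, so each extracted profile is a weak limit of asymptotically nonnegative functions, hence nonnegative, hence (by \cite{gidas1979}) a genuine Talenti bubble --- even though the remainders themselves are sign-changing. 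Second, the Brezis--Lieb lemma applied to $\norm{\nabla\cdot}_{L^2}^2$ and to $\norm{\cdot}_{L^{2^*}}^{2^*}$, together with weak interaction, shows that the remainder loses exactly $\norm{\nabla U}_{L^2}^2=S^n$ of energy at each step and keeps almost solving \cref{eq:euler_lagrange}; since $\norm{\nabla u_k}_{L^2}^2\le(\nu+\frac12)S^n$, the iteration stops after finitely many steps with a remainder $r_k$ satisfying $\liminf_k\norm{\nabla r_k}_{L^2}^2<S^n$, which by the lower bound from the extraction step forces $\norm{\nabla r_k}_{L^2}\to 0$. Counting energies (the cross terms being negligible by weak interaction) shows that the number of bubbles times $S^n$ equals $\lim_k\norm{\nabla u_k}_{L^2}^2\in[(\nu-\frac12)S^n,(\nu+\frac12)S^n]$, so exactly $\nu$ bubbles are produced and $\norm{\nabla(u_k-\sum_{i=1}^\nu U[z_i^{(k)},\lambda_i^{(k)}])}_{L^2}\to 0$.

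The hard part is the extraction step, and inside it the exclusion of the ``vanishing'' alternative: showing that the scale-fixed weak limit $\hat v^{(1)}$ is truly nonzero requires one to combine the scale invariance of the problem, the PDE itself (to turn local energy concentration into an atom of mass at least $S^n$, and to pass from $L^{2^*}_{\mathrm{loc}}$ to $\dot H^1_{\mathrm{loc}}$ compactness), and interior $\varepsilon$-regularity. A secondary, more bureaucratic issue is tracking the $o(1)$ errors coming from $\norm{f_k}_{H^{-1}}\to 0$ through the rescalings and the Brezis--Lieb decompositions --- harmless in principle since every relevant norm is well behaved under the scaling --- and controlling the sign of the (a priori sign-changing) remainders in the iteration, which is precisely what the divergence of the scales settles.
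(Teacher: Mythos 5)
The paper does not prove this theorem: it is quoted as a known result and attributed to \cite{struwe1984} (see also the references to \cite{struwe2008,bahri1988} in the proof of \cref{cor:main_pos}), so there is no in-paper argument to compare against. Your outline is precisely the classical Struwe/profile-decomposition proof — scaling invariance of the error, bubble extraction via Lions' concentration function with exclusion of vanishing, nonnegativity of each profile via a.e.\ convergence after rescaling so that \cite{gidas1979} applies, and energy quantization by $S^n$ to stop after exactly $\nu$ steps — and it correctly identifies all the key mechanisms, including the two genuinely delicate points (ruling out vanishing, and the asymptotic orthogonality of scales that keeps the profiles nonnegative).
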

Let us remark that the assumptions on the sequence $(u_k)_{k\in\N}$ required in \cref{thm:struwe_intro} are 
equivalent to saying that $(u_k)_{k\in\N}$ is a Palais-Smale sequence for the functional
\begin{equation*}
  J(u) \defeq \frac12\int_{\R^n}\abs{\nabla u}^2 - \frac1{2^*}\int_{\R^n}u^{2^*} \fullstop
\end{equation*}
Hence, a different way to see the mentioned result is: all \emph{critical points at infinity} 
of the functional $J$ are induced by limits of sums of Talenti bubbles (at least if we consider only 
nonnegative functions).

\subsection{Main results}
It is now natural (and useful for applications) to look for a quantitative version of \cref{thm:struwe_intro}.
Considering $J$ as an energy, in analogy with the finite-dimensional setting, we expect that
$\norm{\de J(u)}_{H^{-1}}=\norm{\lapl u + u\abs{u}^{2^*-2}}_{H^{-1}}$ bounds the distance between $u$ 
and the manifold of approximate critical points (namely the sums of weakly-interacting Talenti bubbles).  In addition, a series of results both on this problem and to analogous stability questions for critical points (see for instance \cite{cirfigmag2018,ciraolo2018}) suggests that the control should be linear.
Let us state clearly the problem we want to investigate.
\begin{problem}\label{prob:intro}
  Let $n\ge 3$ and $\nu\ge 1$ be positive integers. 
  Let $(z_i,\lambda_i)_{1\le i\le \nu}\subseteq \R^n\times\oo0\infty$ be a $\nu$-tuple such that for any
  $i\not=j$ it holds
  \begin{equation}\label{eq:condition:intro}
    \min\left(\frac{\lambda_i}{\lambda_j}, 
	      \frac{\lambda_j}{\lambda_i}, 
	      \frac{1}{\lambda_i\lambda_j\abs{z_i-z_j}^2}\right) 
    \le \delta \fullstop
  \end{equation}
  Setting $\sigma\defeq\sum_{i=1}^{\nu}U[z_i,\lambda_i]$, let $u\in H^1(\R^n)$ satisfy 
  $\norm{\nabla u-\nabla\sigma}_{L^2} \le \delta$ for some $\delta=\delta(n,\nu)>0$ small enough.
  Does it exist a constant $C=C(n,\nu)>0$ such that the bound
  \begin{equation}\label{eq:prob:intro}
    \inf_{\substack{ (z'_i)_{1\le i\le\nu}\subseteq\R^n\\
		     (\lambda_i')_{1\le i\le\nu}\subseteq \oo0\infty}}
      \norm*{\nabla \left(u - \sum_{i=1}^{\nu}U[z_i',\lambda_i']\right)}_{L^2} 
      \le C \norm{\lapl u + u\abs{u}^{2^*-2}}_{H^{-1}}
  \end{equation}
  holds true?
\end{problem}
Let us remark that the condition \cref{eq:condition:intro} has to be understood as a requirement of 
weak-interaction between the Talenti bubbles $(U[z_i,\lambda_i])_{1\le i\le\nu}$.

We have shifted our attention from the set of nonnegative functions (recall that nonnegativity is necessary to 
classify exact solutions) to the set of functions in the 
neighborhood of a sum of weakly-interacting Talenti bubbles. 
The latter is more in line with the spirit of the problem. 
In fact our investigation is mainly local, as we want to understand if the quantity 
$\norm{\lapl u + u\abs{u}^{2^*-2}}_{H^{-1}}$ grows linearly in the distance from the manifold of sums 
of weakly-interacting Talenti bubbles.
Moreover it is easy to recover the result for nonnegative functions from the local result (as
we will do in the proof of \cref{cor:main_pos}) and to construct a nonnegative counterexample from a local one
(as we will do in the proof of \cref{thm:counterexample_pos}). 

As shown in the recent paper \cite{cirfigmag2018}, \cref{prob:intro} has a positive answer in any dimension when $\nu=1$ (i.e. only one bubble is present). 
Hence, it is natural to conjecture that a positive answer should hold also in the general case $\nu \geq 2$.

The main results of the paper show that \cref{prob:intro} has a positive answer if the dimension satisfies 
$3\le n\le 5$ (this is \cref{thm:main_close} and \cref{cor:main_pos}), whereas it is false if $\nu \geq 2$ and $n\ge 6$ 
(as shown in \cref{thm:counterexample} and \cref{thm:counterexample_pos}).

To show an application of our stability result, in \cref{sec:fdi} we obtain
a quantitative rate of convergence to equilibrium for a critical fast diffusion equation related to the Yamabe flow.
This result already appeared in \cite{cirfigmag2018} but the proof there contains a gap that we fix here.

\subsection{Comments and remarks}
We postpone a thorough description of the strategy of the proofs to the introductions of 
\cref{sec:pos,sec:counterexample}.
Here we gather a handful of general comments and remarks.
\begin{itemize}
 \item In order to show that \cref{prob:intro} is false in high dimension, we build a 
 family of functions $u_R$ that satisfy the assumption for an arbitrary small $\delta$ but do
 not satisfy the inequality \cref{eq:prob:intro} for any fixed $C$.
 The functions in the family are constructed starting from the solutions of a partial differential equation
 that is a linearization of $\lapl u + u\abs{u}^{2^*-2}=0$ near a sum of weakly-interacting
 Talenti bubbles.
 It is remarkable how counterintuitive and implicit this family of counterexamples is. 
 It is counterintuitive, since one would expect that if the function $u$ is \emph{incredibly} close to a 
 sum of \emph{incredibly} weakly-interacting bubbles, then inequality \cref{eq:prob:intro} might be recovered
 from the same inequality when only one bubble is involved (as a consequence of the ``independence'' 
 among the bubbles).
 It is implicit, as the mentioned partial differential equation (that is \cref{eq:def_rho}) cannot be solved
 explicitly and moreover the datum of the equation (that is $\tilde{f}$ in \cref{eq:def_rho}) is itself 
 defined implicitly.
 \item We are not in a position to claim \emph{why} \cref{prob:intro} fails in high dimension. Our proofs
 seems to indicate that the numerological reason of the failure is the fact that in dimension $n \le 5$ it 
 holds $2^*-2>1$, whereas in dimension $n\ge6$ it holds $2^*-2\le 1$.
 \item
As a consequence of the strategy we employed for building the counterexample, we needed to establish a number 
of properties on eigenfunctions of operators of the form $\frac{-\lapl}{\w}$, where $\w\in L^{\frac n2}(\R^n)$ is a 
positive weight. These results are stated and proven in \cref{app:spectrum}. 
The theory developed in the appendix contains several new results that might be of independent interest.
 \item Although the techniques developed in this paper do not provide any positive result in dimension $n \geq 6$, we believe that in dimension $n=6$ a weaker 
 version of \cref{prob:intro} might hold, where the right-hand side of 
 \cref{eq:prob:intro} is replaced by
 \begin{equation*}
    \norm{\lapl u + u\abs{u}^{2^*-2}}_{H^{-1}}
    \abs*{\log\left(\norm{\lapl u + u\abs{u}^{2^*-2}}_{H^{-1}}\right)} \comma
     \end{equation*}
 while for $n \geq 7$ one may replace it with
 \begin{equation*}
  \norm{\lapl u + u\abs{u}^{2^*-2}}_{H^{-1}}^\gamma\quad \text{for some }\gamma=\gamma(n)<1\fullstop
  \end{equation*}
However, we do not address this question here.
 \item There is a more geometrical perspective on \cref{prob:intro} described in the introduction of
 \cite{cirfigmag2018}. 
 We give only a sketch of this point of view. Let $(\S^n,g_0)$ be the $n$-dimensional sphere endowed
 with its standard Riemannian structure. Let $v:\S^n\to\oo0\infty$ be a conformal factor and let 
 $g=v^{2^*-2}g_0$ be the induced metric.
 The equation satisfied by the scalar curvature $R:\S^n\to\R$ of the metric $g$ is
 \begin{equation}\label{eq:intro:yamabe}
    -\lapl_{g_0}v + \frac{n(n-2)}4v=\frac{n-2}{n-1}Rv^{2^*-1} \fullstop
 \end{equation}
 Let us consider $u:\R^n\to\oo0\infty$ defined as 
 \begin{equation*}
    u(x) = \left(\frac{2}{1+\abs{x}^2}\right)^{\frac{n-2}2}v(F(x)) \comma
 \end{equation*}
 where $F(x)\defeq \left(\frac{2x}{1+\abs{x}^2}, \frac{\abs{x}^2-1}{1+\abs{x}^2}\right)$ is the 
 stereographic projection. In this new coordinates, \cref{eq:intro:yamabe} becomes
 \begin{equation*}
    \lapl u + R(F(x))u^{2^*-1} = 0 \fullstop
 \end{equation*}
 Hence \cref{prob:intro} can be interpreted also as a statement on the metrics on the sphere, conformal to 
 the standard one, that have \emph{almost constant} scalar curvature.
\end{itemize}

\subsection{Analogies with the isoperimetric inequality and Alexandrov's Theorem}\label{subsec:isoperimetric}
The Euclidean isoperimetric inequality states that for any $E\subseteq\R^n$ in a suitable family of sets 
(i.e. open sets with smooth boundary or finite perimeter sets) it holds
\begin{equation*}
  \abs{E}^\frac{n-1}{n} \le C_n\Per(E) \comma
\end{equation*}
where $C_n$ is a dimensional constant.
There is a strong parallel between the Sobolev inequality and the isoperimetric inequality, as the latter
is an instance of the former with exponent $1$.
It makes perfect sense, and in fact there is a rich literature on the topic, to study quantitative
versions of the isoperimetric inequality analogous to the ones we described for the Sobolev inequality.
Let us briefly recall some of the known results.

It was first proven by De Giorgi in \cite{degiorgi1958} that all minimizers of the isoperimetric inequality
are balls (analogous to the fact that Talenti bubbles are the only minimizers for the Sobolev inequality).
The next step is of course to understand whether a set $E$ with rescaled isoperimetric ratio 
\begin{equation*}
  \frac{\abs{E}^{\frac{n-1}{n}}}{\Per(E)} 
  \cdot \left(
  \frac{\abs{B(0,1)}^{\frac{n-1}{n}}}{\Per(B(0,1))}
  \right)^{-1}
\end{equation*}
very close to $1$ must be close to a ball (analogous to the result by \cite{bianchi1991} for the Sobolev inequality).
This quantitative stability of the isoperimetric inequality in a sharp form and in arbitrary dimension
has been first established in 
\cite{fusco2008}, and then obtained again in \cite{figalli2010} with optimal transportation methods
and by \cite{cicalese2012} with a penalization approach. 
See the survey \cite{maggi2008} for a more detailed history of the problem.

Then we move to the Euler-Lagrange equation induced by the isoperimetric inequality: the mean-curvature 
 of the boundary of $E$ must be constant.
As in the functional setting we asked whether the Talenti bubbles are the only solutions of 
$\lapl u + u\abs{u}^{2^*-2}=0$, in the geometrical setting we ask whether the spheres are the only 
(closed, compact, connected) hypersurfaces with constant mean-curvature. 
Remarkably the answer is negative in both cases without further assumptions. 
In the functional setting we require the nonnegativity of $u$, whereas in the
geometrical setting we need to ask that the hypersurface is embedded (otherwise Wente's torus is a
counterexample \cite{wente1986}). With this additional assumption the desired statement is the celebrated 
Alexandrov's Theorem (see \cite{alexandrov1962} for the original proof, and  \cite{delgadino2019} for 
the statement in the class of finite perimeter sets).

With all these results in our toolbox, we can now approach the stability problem:
if the boundary of $E$ has \emph{almost} constant mean curvature, is $E$ close to a ball? 
Exactly as in the functional setting, this is not the case (on the contrary, the answer is positive for
the analogue of this problem for the nonlocal perimeter \cite{ciraolo2018}). 
In fact, it is possible to build a \emph{chain} of balls (see for example \cite{butscher2011}) 
such that the mean curvature is uniformly close to a constant. This fact is absolutely analogous to the fact 
that if $\lapl u + u\abs{u}^{2^*-2}$ is very small, it might be that $u$ is close to a sum of multiple 
Talenti bubbles. 
As shown recently in \cite[Theorem 1.1]{ciraolo2017}, this is the only case: 
if $E\subseteq\R^n$ has isoperimetric ratio bounded by $L\in\N$, then there exists 
a union $G$ of at most $L$ balls such that $\frac{|E\bigtriangleup G|}{|E|}$ is bounded by a power 
of the $L^\infty$-oscillation of the mean curvature of $\partial E$.
Let us emphasize that the spirit of this statement is exactly the same of \cref{prob:intro}.
The only shortcoming of this result is its lack of sharpness:
\begin{itemize}
 \item The norms considered are not the most natural ones, as the natural norm would be the $L^2$-oscillation of the mean curvature.
 Let us remark that in 
 \cite[Theorem 1.1]{delgadino2018} the authors obtain a stability estimate with the  $L^2$-oscillation,
but the result is nonquantitative (in analogy with Struwe's result \cite{struwe1984}).
 \item The power of the oscillation of the mean curvature that controls 
 $\frac{|E\bigtriangleup G|}{|E|}$ is arguably not the sharp one.
\end{itemize}
Our results (i.e. the positive answer to \cref{prob:intro} for $n\le 5$, and the negative answer for $n\ge 6$ and $\nu \geq 2$)
makes one wonder whether a sharp version of \cite[Theorem 1.1]{ciraolo2017} with the \emph{natural} exponent (i.e. 1) and the natural norm (i.e. the $L^2$-oscillation) might fail in high dimension.

\subsection{Structure of the paper}
After a section of notation and preliminaries, in \cref{sec:pos} we give a positive answer to \cref{prob:intro} in low dimensions $3\le n\le 5$, and we obtain a couple of easy corollaries in 
\cref{subsec:corollaries}. 
Then, in \cref{sec:counterexample} we show that the conjecture
cannot hold if $n\ge 6$ and $\nu \geq 2$.
Finally, in \cref{sec:fdi} we prove the result concerning the fast diffusion equation.

This work contains also two appendices. 
The first one, \cref{app:spectrum}, is devoted to the investigation of the spectral properties of 
the operator $\left(\frac{-\lapl}{\w}\right)^{-1}$ where $\w\in L^{\frac n2}(\R^n)$ 
is a positive weight. The properties shown are of fundamental importance in the construction of the 
counterexample, and we believe that several of the results have  their own interest.
Finally, in \cref{app:computations} we collect a couple of statements useful to estimate and approximate
various type of integrals involving the Talenti bubbles.

\section{Notation and preliminaries}
We begin by setting the notation and the definitions that we will use throughout the paper.

We denote by $n\in\N$ the dimension of the ambient space. 
Since we are interested in  the Sobolev embedding with exponent $2$, we will always assume $n\ge 3$.

We recall that the Sobolev exponent is given by $2^*=\frac{2n}{n-2}$, and we define $p\defeq 2^*-1=\frac{n+2}{n-2}$. Given $q \in [1,\infty]$, we denote by $q'=\frac{q}{q-1}$ the H\"older conjugate of $q$.
The following identities will be useful: 
\begin{align*}
  &(2^*)' = \frac{2n}{n+2}=\frac{2^*}p \comma  &p' = \frac{n+2}4 \fullstop
\end{align*}
For any $z\in\R^n$ and $\lambda>0$, the Talenti bubble $U[z,\lambda]$ is defined as in \cref{eq:talenti_intro}.
Let us recall that, according to \cite{aubin1976,talenti1976}, this family of functions constitutes 
(up to scaling) the set of all minimizers of the Sobolev inequality.

Let $S>0$ be the sharp Sobolev constant in $\R^n$, that is
\begin{equation*}
  S \defeq \inf \left\{\frac{\norm{\nabla u}_{L^2}}{\norm{u}_{L^{2^*}}}\ :\ u\in H^1(\R^n)\setminus \{0\}\right\} \fullstop
\end{equation*}
Setting $U=U[z,\lambda]$, as a consequence of the dimensional constant we have chosen in the definition of
the Talenti bubbles, it holds
\begin{equation*}
  \int_{\R^n} U^{2^*} = \int_{\R^n} \abs{\nabla U}^2 = S^n \fullstop
\end{equation*}
Moreover, the Talenti bubble and its derivatives satisfy
\begin{equation} \label{eq:u_solves_yamabe}
  -\lapl U = U^p \comma 
  \qquad -\lapl (\partial_\lambda U) = pU^{p-1}\partial_\lambda U
  \comma
  \qquad -\lapl (\nabla_z U) = pU^{p-1}\nabla_z U
  \fullstop
\end{equation}
Finally, we have the following expression for the $\lambda$-derivative of the Talenti bubble:
\begin{equation}\label{eq:U_derlambda}
    \partial_\lambda U(x) 
    = \frac{n-2}{2\lambda} U(x) \left(\frac{1-\lambda^2\abs{x-z}^2}{1+\lambda^2\abs{x-z}^2}\right) \fullstop
\end{equation}

Let us recall the definitions of homogeneous Sobolev space and of weighted Lebesgue space.
\begin{definition}[Homogeneous Sobolev space]
  For any $1\le p < \infty$, the homogeneous Sobolev space $\dot W^{1,p}(\R^n)$ is the 
  closure of $C^{\infty}_c(\R^n)$ with respect to the norm 
  \begin{equation*}
    \norm{u}_{\dot W^{1,p}} \defeq \norm{\nabla u}_{L^p} \fullstop
  \end{equation*}
  The space $\dot W^{1,2}(\R^n)$ will be called $H^1(\R^n)$.
\end{definition}
Usually the notation $\dot H^1(\R^n)$ is adopted to denote $\dot W^{1,2}$, we decided to drop the dot as 
we will never use the \emph{standard} $W^{1,2}$.
\begin{definition}[Weighted Lebesgue space]
  Let $E\subseteq \R^n$ be a Borel set and let $\w\in L^1_{loc}(E)$ be a positive function.
  For any $1\le p < \infty$, the weighted Lebesgue space $L^p_\w(E)$ is the space of measurable 
  functions $f:E\to\R$ such that
  \begin{equation*}
    \int_E \abs{f}^p \w < \infty \fullstop
  \end{equation*}
  The norm on $L^p_\w(E)$ is
  \begin{equation*}
    f \mapsto \norm{f}_{L^p_{\w}(E)}=\left(\int_E f^p\w\right)^{\frac1p} \fullstop
  \end{equation*}
\end{definition}
The reason why weighted spaces happen to play a role in our treatment will be evident in 
\cref{subsec:spectral_intro}.

\subsection{Symmetries of the problem}\label{subsec:symmetries}
Given $\lambda>0$ and $z\in\R^n$, let $T_{z,\lambda}:C^\infty_c(\R^n)\to C^\infty_c(\R^n)$ be the operator 
defined as
\begin{equation*}
  T_{z,\lambda}(\varphi)(x) \defeq \lambda^{\frac{n-2}2}\varphi(\lambda(x-z)) \fullstop
\end{equation*}

The operator $T_{z,\lambda}$ satisfies a multitude of properties.
\begin{itemize}
 \item For any couple of functions $\varphi,\psi\in C^{\infty}_c(\R^n)$, it holds
 \begin{equation*}
    T_{z,\lambda}(\varphi\cdot\psi)(x) = T_{z,\lambda}(\varphi)(x)\cdot\psi(\lambda(x-z)) \fullstop
 \end{equation*}
 \item Given $k\in\N$, for any choice of positive exponents $(e_i)_{1\leq i \leq k}$ with $e_1+\cdots+e_k=2^*$, and for any choice of nonnegative
 functions $\varphi_1,\dots,\varphi_k\in C^\infty_c(\R^n)$, it holds
 \begin{equation*}
  \int_{\R^n} T_{z,\lambda}(\varphi_1)^{e_1}\cdots T_{z,\lambda}(\varphi_k)^{e_k} = 
  \int_{\R^n}\varphi_1^{e_1}\cdots\varphi_k^{e_k}
 \end{equation*}
 and in particular
 \begin{equation*}
  \int_{\R^n} T_{z,\lambda}(\varphi)^{2^*} = \int_{\R^n} \varphi^{2^*} 
 \end{equation*}
 for any $\varphi\in C^\infty_c(\R^n)$.
 \item For any pair of functions $\varphi,\psi\in C^\infty_c(\R^n)$ it holds
 \begin{equation*}
  \int_{\R^n} \nabla T_{z,\lambda}(\varphi)\cdot \nabla T_{z,\lambda}(\psi) 
  = \int_{\R^n} \nabla\varphi\cdot\nabla\psi
 \end{equation*}
 and in particular
 \begin{equation*}
  \int_{\R^n} \abs{\nabla T_{z,\lambda}(\varphi)}^2 = \int_{\R^n} \abs{\nabla\varphi}^2 \fullstop
 \end{equation*}
 \item As a consequence of their definition, the Talenti bubbles satisfy
 \begin{equation*}
    U[z,\lambda] = T_{z,\lambda}(U[0, 1]) \quad\text{and}\quad
    \partial_\lambda U[z,\lambda] = \frac1\lambda T_{z,\lambda}(\partial_\lambda U[0, 1]) \fullstop
 \end{equation*}
\end{itemize}
Obviously all the mentioned properties hold also if the functions are not smooth with compact support,
provided that the involved integrals are finite.

The transformations $T_{z,\lambda}$ play a central role in the study of the Sobolev inequality as they
do not change the two quantities $\norm{\varphi}_{L^{2^*}}$ and $\norm{\nabla\varphi}_{L^2}$.
In particular we will often use this symmetries to reduce ourselves to the situation where, instead of considering a generic 
Talenti bubble, we can take the bubble $U[0,1]$.

\subsection{Properties and spectrum of \texorpdfstring{$\left(\frac{-\lapl}{w}\right)^{-1}$}{the weighted Laplacian}}
\label{subsec:spectral_intro}
Both in \cref{sec:pos} and in the construction of the counterexample
(\cref{sec:counterexample}), a fundamental role will be played by the spectrum of 
$\left(\frac{-\lapl}{U^{p-1}}\right)^{-1}$ where $U$ is a Talenti bubble, and more in general by the spectrum of
the operator $\left(\frac{-\lapl}{\w}\right)^{-1}$ where $\w\in L^{\frac n2}(\R^n)$ is a suitable positive 
weight.

We note that operator $\left(\frac{-\lapl}{\w}\right)^{-1}$ is well-defined, compact, and self-adjoint from 
$L^2_\w(\R^n)$ into $L^2_\w(\R^n)$, therefore it has a discrete spectrum. 
This fundamental fact, together with many more properties of the spectrum, is contained in \cref{app:spectrum}. 
We will always consider the the eigenvalues of $\frac{-\lapl}{\w}$ instead of those of the inverse operator. 
We adopt this convention as it is more natural to write $-\lapl\psi = \lambda w\psi$ compared to 
$-\lambda\lapl\psi = w\psi$.

The properties of the spectrum of $\left(\frac{-\lapl}{U^{p-1}}\right)^{-1}$, where $U$ is a Talenti bubble, have already been investigated in \cite[Appendix]{bianchi1991}.

\section{Sharp stability in dimension \texorpdfstring{$3\le n\le 5$}{below 6}}\label{sec:pos}
In this whole section we consider the dimension $n$ and the number of bubbles $\nu$ as fixed.
Therefore constants that depend only on $n$ and $\nu$ can be hidden in the notation $\lesssim$ and 
$\approx$. More precisely, we write that $a \lesssim b$ (resp. $a\gtrsim b$) if $a \leq Cb$ (resp. $Ca \geq b$) where $C$ is a constant depending only on the dimension $n$ and on the number of bubbles $\nu$. Also, we say that $a\approx b$ if $a\lesssim b$ and $a \gtrsim b$.

We will deal with \emph{weakly-interacting} family of Talenti bubbles. A family $\{U[z_i, \lambda_i]\}_{1\leq i \leq \nu}$ is weakly-interacting
if either the \emph{centers} $z_i$ of the Talenti bubbles are very far one from the other, or
their scaling factors $\lambda_i$ have different magnitude. It is useful to give a quantitative definition of
the \emph{amount of interaction} that a certain family of Talenti bubbles has.
\begin{definition}[Interaction of Talenti bubbles]
  Let $U_1=U[z_1, \lambda_1],\dots, U_\nu=U[z_\nu, \lambda_\nu]$ be a family of Talenti bubbles.
  We say that the family is $\delta$-interacting for some $\delta>0$ if
  \begin{equation}\label{eq:def_delta_interaction}
    \min\left(\frac{\lambda_i}{\lambda_j}, \frac{\lambda_j}{\lambda_i}, 
    \frac{1}{\lambda_i\lambda_j\abs{z_i-z_j}^2}\right)
    \le \delta \fullstop
  \end{equation}
  If together with the family we have also some positive coefficients $\alpha_1,\dots,\alpha_\nu\in\R$, we say 
  that the family together with the coefficients is $\delta$-interacting if \cref{eq:def_delta_interaction} 
  holds and moreover
  \begin{equation*}
    \max_{1\le i\le \nu} \abs{\alpha_i-1} \le \delta \fullstop
  \end{equation*}
\end{definition}
\begin{remark}
  Our definition of $\delta$-interaction between bubbles is tightly linked to the $H^1$-interaction.
  Indeed, if $U_1=U[z_1,\lambda_1],U_2=U[z_2,\lambda_2]$ are two bubbles, thanks to 
  \cref{prop:interaction_approx} it holds (recall that $-\Delta U_1=U_1^p$)
  \begin{equation*}
    \int_{\R^n}\nabla U_1\cdot\nabla U_2 = \int_{\R^n}U_1^pU_2 
    \approx \min\left(\frac{\lambda_1}{\lambda_2}, \frac{\lambda_2}{\lambda_1}, 
    \frac{1}{\lambda_1\lambda_2\abs{z_1-z_2}^2}\right)^{\frac{n-2}2} \fullstop
  \end{equation*}
  In particular, if $U_1$ and $U_2$ belong to a $\delta$-interacting family then their $H^1$-scalar product is
  bounded by $\delta^{\frac{n-2}2}$.
\end{remark}

\subsection{Main Theorem}
We are ready to state and prove our main theorem in low dimension ($3\le n\le 5$).
We want to show that, in a neighborhood of a weakly-interacting family of Talenti bubbles, the quantity
$\norm{\lapl u + u\abs{u}^{p-1}}_{H^1}$ controls the $H^1$-distance of $u$ from the manifold of sums of Talenti
bubbles.

Let us briefly describe the structure of the proof.
First we consider the sum of Talenti bubbles $\sigma$ that minimizes the distance from $u$.
Then, setting $u=\sigma+\rho$, we test $\lapl u + u\abs{u}^{p-1}$ against $\rho$. Doing so we obtain
an estimate on $\norm{\nabla\rho}_{L^2}$ (namely \cref{eq:pos_init}).
From there, we estimate the right-hand side of \cref{eq:pos_init} with \cref{eq:fundamental_pos_ineq3} to reduce the statement to the validity of the two nontrivial inequalities 
\cref{eq:spectral_estimate,eq:interaction_estimate}.
The proofs of the two mentioned inequalities are postponed to the subsequent sections.

We note that first part of the strategy follows the approach used in \cite{cirfigmag2018} to deal with the
simpler case of a single bubble ($\nu=1$).

\begin{theorem}\label{thm:main_close}
  For any dimension $3\le n\le 5$ and $\nu\in\N$, there exist a small constant $\delta=\delta(n,\nu)>0$ and
  a large constant $C=C(n,\nu)>0$ such that the following statement holds.
  Let $u\in H^1(\R^n)$ be a function such that
  \begin{equation*}
    \norm*{\nabla u - \sum_{i=1}^{\nu}\nabla \tilde U_i}_{L^2} \le \delta \comma
  \end{equation*}
  where $(\tilde U_i)_{1\le i\le \nu}$ is a $\delta$-interacting family of Talenti bubbles.
  Then there exist $\nu$ Talenti bubbles $U_1,U_2,\dots, U_\nu$ such that
  \begin{equation*}
    \norm*{\nabla u - \sum_{i=1}^{\nu}\nabla U_i}_{L^2} \le C\norm{\lapl u + u\abs{u}^{p-1}}_{H^{-1}} 
    \fullstop
  \end{equation*}
  Furthermore, for any $i\not= j$, the interaction between the bubbles can be estimated as
  \begin{equation}\label{eq:interaction_estimate_statement}
    \int_{\R^n}U_i^pU_j \le C\norm{\lapl u + u\abs{u}^{p-1}}_{H^{-1}} \fullstop
  \end{equation}
\end{theorem}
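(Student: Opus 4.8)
The plan is to follow the strategy sketched right before the theorem statement, adapting the single-bubble argument of \cite{cirfigmag2018} to the multi-bubble setting. First I would use the smallness hypothesis together with a compactness/implicit-function argument to select, among all sums of Talenti bubbles, one sum $\sigma=\sum_{i=1}^\nu \alpha_i U[z_i,\lambda_i]$ that realizes (or nearly realizes) the $H^1$-distance from $u$; by continuity of the bubble parameters and the fact that $u$ is $\delta$-close to a $\delta$-interacting family, the selected family is itself $O(\delta)$-interacting (with coefficients $\alpha_i$ close to $1$). Writing $u=\sigma+\rho$, the minimality of $\sigma$ forces $\rho$ to be $H^1$-orthogonal to the tangent space of the bubble manifold at $\sigma$, i.e. orthogonal to every $U_i$, $\partial_\lambda U_i$, and $\nabla_z U_i$. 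This orthogonality is exactly what makes the linearized operator coercive on the space where $\rho$ lives.

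Next I would test the equation against $\rho$ itself. Expanding $\langle -\Delta u - u|u|^{p-1}, \rho\rangle$ and using $-\Delta\sigma = \sum_i U_i^p$ (up to the coefficients, using $-\Delta U_i = U_i^p$), one gets a main term of the form $\int |\nabla\rho|^2 - p\int \big(\sum_i U_i\big)^{p-1}\rho^2$ plus error terms coming from (a) the nonlinear remainder $|u|^{p-1}u - \sigma^{p-1}\sigma - p\sigma^{p-1}\rho$, and (b) the cross terms $\big(\sum_i U_i\big)^{p-1} - \sum_i U_i^{p-1}$ arising because the sum of $p$-th powers is not the $p$-th power of the sum. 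The crucial spectral input — which the paper defers to a later subsection and which I would cite as \cref{eq:spectral_estimate} — is that, thanks to the orthogonality of $\rho$ to the kernel directions and the known spectrum of $\left(\frac{-\Delta}{U^{p-1}}\right)^{-1}$ (see \cite{bianchi1991} and \cref{app:spectrum}), one has the coercivity estimate $\int |\nabla\rho|^2 - p\int \big(\sum_i U_i\big)^{p-1}\rho^2 \gtrsim \int |\nabla\rho|^2$, at least up to a negligible correction controlled by the interaction parameter. Rearranging yields the preliminary bound
\begin{equation*}
  \|\nabla\rho\|_{L^2}^2 \lesssim \|\Delta u + u|u|^{p-1}\|_{H^{-1}}\|\nabla\rho\|_{L^2} + \big(\text{interaction and nonlinear errors}\big),
\end{equation*}
which should be \cref{eq:pos_init}.

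The main obstacle is controlling the two families of error terms in the regime $3\le n\le 5$, and this is precisely where the dimensional restriction enters: when $n\le 5$ one has $p-1=2^*-2>1$, so the pointwise inequality $\big||a+b|^{p-1}(a+b) - |a|^{p-1}a - p|a|^{p-1}b\big| \lesssim |a|^{p-2}b^2 + |b|^p$ is available (the $|a|^{p-2}$ factor makes sense), giving nonlinear errors that are genuinely quadratic in $\rho$ and hence absorbable. The cross interaction term $\int \big[(\sum_i U_i)^{p-1} - \sum_i U_i^{p-1}\big]\sigma\rho$ must be estimated by the bubble-interaction quantity $\mathcal Q \defeq \max_{i\ne j}\int U_i^p U_j$; here I would invoke the integral estimates of \cref{app:computations} (and the interaction asymptotics in \cref{prop:interaction_approx}) to show this term is $\lesssim \mathcal Q\,\|\nabla\rho\|_{L^2}^{\theta}$ for a suitable power, reducing the whole matter to \cref{eq:fundamental_pos_ineq3} and the interaction estimate \cref{eq:interaction_estimate}. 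The latter — that $\mathcal Q \lesssim \|\Delta u + u|u|^{p-1}\|_{H^{-1}}$ — is obtained by testing the equation against well-chosen combinations of the $\partial_\lambda U_i$ (or against a bump adapted to the ``neck'' between two bubbles), exploiting again that $\rho$ is orthogonal to these directions so that the leading contribution is exactly the bubble interaction. Combining the preliminary bound with \cref{eq:spectral_estimate} and \cref{eq:interaction_estimate} gives $\|\nabla\rho\|_{L^2} \lesssim \|\Delta u + u|u|^{p-1}\|_{H^{-1}}$; relabeling $U_i \defeq \alpha_i U[z_i,\lambda_i]$ (and noting $|\alpha_i-1|\lesssim \|\nabla\rho\|_{L^2}$ can be folded in, or passing to genuine bubbles at the cost of a controlled error) yields both the distance estimate and, via \cref{eq:interaction_estimate}, the interaction bound \cref{eq:interaction_estimate_statement}.
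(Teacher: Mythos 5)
Your overall architecture matches the paper's: minimize the $H^1$-distance over \emph{linear combinations} of bubbles, use the resulting orthogonality conditions, test the equation against $\rho$, and reduce everything to the spectral inequality \cref{eq:spectral_estimate} and the interaction estimate \cref{eq:interaction_estimate} (the latter indeed proved by testing against localized versions of $U_i$ and $\partial_\lambda U_i$, as in \cref{prop:interaction_and_coef}).

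However, you misidentify where the restriction $3\le n\le 5$ enters, and since this is the crux of the whole paper (the statement is \emph{false} for $n\ge 6$), this is a genuine gap rather than a stylistic difference. You claim the dimension is needed so that the Taylor-type pointwise inequality containing the term $\abs{a}^{p-2}\abs{b}^2$ makes sense. That is not the obstruction: as observed in \cref{rmk:every n}, when $p<2$ one simply drops the $\abs{a}^{p-2}\abs{b}^2$ term from \cref{eq:tmp_main_pos1}, and that part of the argument goes through in every dimension. The place where $n\le 5$ is indispensable is \cref{eq:le_dimensioni_contano}: the cross term is bounded by $\int\abs{\rho}\,U_i^{p-1}U_j\le\norm{\rho}_{L^{2^*}}\norm{U_i^{p-1}U_j}_{L^{(2^*)'}}$, and one needs $\norm{U_i^{p-1}U_j}_{L^{(2^*)'}}\approx\int U_i^pU_j$ \emph{linearly} in order to absorb this term using \cref{eq:interaction_estimate}. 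By \cref{prop:interaction_approx} this comparability holds only for $n\le 5$; for $n\ge 7$ one only gets $\bigl(\int U_i^pU_j\bigr)^{p-1}\gg\int U_i^pU_j$, and for $n=6$ there is a logarithmic loss. Your formulation ``$\lesssim\mathcal Q\,\norm{\nabla\rho}_{L^2}^{\theta}$ for a suitable power'' glosses over precisely this point: the exponent on $\mathcal Q$ must be $1$ (and $\theta=1$), otherwise the final absorption argument fails. A proof that locates the dimensional restriction in the nonlinear remainder would wrongly suggest the theorem holds in all dimensions, contradicting \cref{thm:counterexample}.
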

\begin{proof}
In our approach, first we approximate $u$ not only with \emph{sums} of Talenti bubbles, but even
with \emph{linear combinations} of them. A posteriori we show that we can recover the result for 
sums. Adding the degree of freedom of choosing the coefficients of the linear combination gives us the 
fundamental information \cref{eq:orthogonality_U1}, but at the same time it compels us to prove that in the optimal choice
the coefficients are (approximately) $1$ (see \cref{prop:interaction_and_coef}).

Let $\sigma=\sum_{i=1}^\nu \alpha_i U[z_i,\lambda_i]$ be the linear combination of Talenti bubbles that is 
closest to $u$ in the $H^1$-norm, that is
\begin{equation*}
  \norm{\nabla u-\nabla\sigma}_{L^2} 
  = \min_{\substack{
  \tilde\alpha_1,\dots,\tilde\alpha_\nu\in\R
  \\
  \tilde z_1,\dots,\dots,\tilde z_\nu\in\R^n
  \\
  \tilde \lambda_1,\dots,\tilde \lambda_\nu
  }} \norm*{\nabla u-\nabla\left(
  \sum_{i=1}^\nu \tilde\alpha_i U[\tilde z_i, \tilde \lambda_i]
  \right)}_{L^2} \fullstop
\end{equation*}
Let $\rho\defeq u-\sigma$ be the difference between the original function and the best 
approximation. Moreover, let us denote $U_i\defeq U[z_i,\lambda_i]$.

From the fact that the $H^1$-distance of $u$ from $\sum_{i=1}^{\nu}\tilde U_i$ is less than $\delta$, it 
follows directly that $\norm{\nabla\rho}_{L^2}\le \delta$. Furthermore, since the bubbles $\tilde U_i$ are
$\delta$-interacting, the family $(\alpha_i,U_i)_{1\le i\le \nu}$ is $\delta'$-interacting for some $\delta'$ that
goes to zero as $\delta$ goes to $0$.

Summing up, we can say \emph{qualitatively} that $\sigma$ is a sum of weakly-interacting 
Talenti bubbles and that $\norm{\nabla\rho}_{L^2}$ is small.

Since $\sigma$ minimizes the $H^1$-distance from $u$, $\rho$ is $H^1$-orthogonal to the manifold
composed of linear combinations of $\nu$ Talenti bubbles. Hence, for any $1\le i\le \nu$, the
following $n+2$ orthogonality conditions hold:
\begin{align}
  &\int_{\R^n}\nabla\rho\cdot\nabla U_i = 0 \comma \label{eq:orthogonality_H1}\\
  &\int_{\R^n}\nabla\rho\cdot\nabla \partial_\lambda U_i = 0 \comma \label{eq:orthogonality_H2}\\
  &\int_{\R^n}\nabla\rho\cdot\nabla \partial_{z_j}U_i = 0 
  \quad\text{for any $1\le j\le n$.} \label{eq:orthogonality_H3}
\end{align}
Since the functions $U_i, \partial_\lambda U_i, \partial_{z_j}U_i$ are eigenfunctions for 
$\frac{-\lapl}{U_i^{p-1}}$, the mentioned orthogonality conditions are equivalent to
\begin{align}
  &\int_{\R^n}\rho \,U_i^p = 0 \comma \label{eq:orthogonality_U1}\\
  &\int_{\R^n}\rho \,\partial_\lambda U_i\, U_i^{p-1}= 0 \comma \label{eq:orthogonality_U2}\\
  &\int_{\R^n}\rho\,\partial_{z_j}U_i\,U_i^{p-1}= 0 \quad\text {for any $1\le j\le n$.} \label{eq:orthogonality_U3}
\end{align}
Our goal is to show that $\norm{\nabla\rho}_{L^2}$ is controlled by $\norm{\lapl u + u\abs{u}^{p-1}}_{H^{-1}}$.
To achieve this, let us start by testing $\lapl u + u\abs{u}^{p-1}$ against $\rho$: 
exploiting the orthogonality condition \cref{eq:orthogonality_H1} yields
\begin{equation}\label{eq:pos_init}
\begin{aligned}
  \int_{\R^n}\abs{\nabla\rho}^2 = \int_{\R^n} \nabla u\cdot\nabla\rho
  &=\int_{\R^n} u\abs{u}^{p-1}\rho  
  -\int \rho(\lapl u + u\abs{u}^{p-1})\\
&  \le \int_{\R^n} u\abs{u}^{p-1}\rho  
  + \norm{\nabla\rho}_{L^2}\norm{\lapl u + u\abs{u}^{p-1}}_{H^{-1}} \fullstop
  \end{aligned}
\end{equation}
To control the first term, we use the elementary estimates
\begin{align}
  \abs*{(a+b)\abs{a+b}^{p-1} - a\abs{a}^{p-1}}
  \le p\abs{a}^{p-1}\abs{b} + C_n\left(\abs{a}^{p-2}\abs{b}^2 + \abs{b}^p\right)\comma 
  \label{eq:tmp_main_pos1}
  \\
  \abs*{\left(\sum_{i=1}^{\nu}a_i\right)\abs*{\sum_{i=1}^{\nu}a_i}^{p-1}
  -\sum_{i=1}^\nu a_i\abs{a_i}^{p-1}} 
  \lesssim 
  \sum_{1\le i\not=j \le \nu} \abs{a_i}^{p-1}\abs{a_j} \label{eq:tmp_main_pos2}\comma
\end{align}
that hold for any $a,b\in\R$ and for any $a_1,\dots,a_\nu\in\R$.
Applying \cref{eq:tmp_main_pos1} with $a=\sigma$ and $b=\rho,$ and \cref{eq:tmp_main_pos2}
with $a_i=\alpha_i U_i$, we deduce
\begin{equation*}
  \abs*{u\abs{u}^{p-1}-\sum_{i=1}^\nu\alpha_i\abs{\alpha_i}^{p-1}U_i^p}
  \le 
  p\sigma^{p-1}\abs{\rho} + C_{n,\nu}\Biggl(
  \sigma^{p-2}\abs{\rho}^2 + \abs{\rho}^p + \sum_{1\le i\not= j\le \nu} U_i^{p-1}U_j\Biggr)
\end{equation*}
and therefore, recalling \cref{eq:orthogonality_U1}, we find
\begin{equation}\label{eq:fundamental_pos_ineq}\begin{aligned}
  \int_{\R^n}u\abs{u}^{p-1}\rho \le p\int_{\R^n}\sigma^{p-1}\rho^2 + 
  C_{n,\nu}\Biggl(\int_{\R^n}\sigma^{p-2}\abs{\rho}^3 + \int_{\R^n}\abs{\rho}^{2^*} 
  + \sum_{1\le i\not=j\le \nu} \int_{\R^n}\abs{\rho}U_i^{p-1}U_j\Biggr) \fullstop
\end{aligned}\end{equation}
Some of the terms in the right-hand side can be controlled easily. Applying  H\"older
and Sobolev inequalities we get
\begin{align*}
  &\int_{\R^n}\sigma^{p-2}\abs{\rho}^3 \le \norm{\sigma}_{L^{2^*}}^{p-2}\norm{\rho}_{L^{2^*}}^3
  \lesssim \norm{\nabla\rho}_{L^2}^3 \comma \\
  &\int_{\R^n}\abs{\rho}^{2^*} \lesssim \norm{\nabla\rho}_{L^2}^{2^*} \comma \\
  &\int_{\R^n}\abs{\rho}U_i^{p-1}U_j 
  \le \norm{\rho}_{L^{2^*}}\norm{U_i^{p-1}U_j}_{L^{(2^*)'}}
  \lesssim \norm{\nabla\rho}_{L^2}\norm{U_i^{p-1}U_j}_{L^{(2^*)'}} \fullstop
\end{align*}
Substituting these estimates into \cref{eq:fundamental_pos_ineq} gives us
\begin{equation}\label{eq:fundamental_pos_ineq2}\begin{aligned}
  \int_{\R^n}u\abs{u}^{p-1}\rho \le p\int_{\R^n}\sigma^{p-1}\rho^2 
  + 
  C_{n,\nu}\Biggl(\norm{\nabla\rho}_{L^2}^3 + \norm{\nabla\rho}_{L^2}^{2^*} + 
  \sum_{1\le i\not=j \le\nu}\norm{\nabla\rho}_{L^2}\norm{U_i^{p-1}U_j}_{L^{(2^*)'}}\Biggr) \fullstop
\end{aligned}\end{equation}
In order to proceed further let us notice that, thanks to \cref{prop:interaction_approx}, for any $i\not=j$ it 
holds\footnote{This is the only point of the whole proof where the condition on the dimension plays a crucial role. Indeed,
  when $n\ge 7$ only the weaker estimate
  \begin{equation*}
    \norm{U_i^{p-1}U_j}_{L^{(2^*)'}} \approx \left(\int_{\R^n}U_i^pU_j\right)^{p-1} 
  \gg \int_{\R^n}U_i^pU_j
  \end{equation*}
  holds, while for $n=6$ we have
   \begin{equation*}
    \norm{U_i^{p-1}U_j}_{L^{(2^*)'}} \approx \left(\int_{\R^n}U_i^pU_j\right)\left|\log\left(\int_{\R^n}U_i^pU_j\right)\right|^{2/3}
    \gg \int_{\R^n}U_i^pU_j\comma
  \end{equation*}
  and none of these estimates suffices to conclude the proof.}
\begin{equation}\label{eq:le_dimensioni_contano}
  \norm{U_i^{p-1}U_j}_{L^{(2^*)'}} = \left(\int_{\R^n}U_i^{(p-1)(2^*)'}U_j^{(2^*)'}\right)^{\frac{1}{(2^*)'}}
  \approx \left(\int_{\R^n}U_i^p U_j\right)^{\frac{(2^*)'}{(2^*)'}}
  = \int_{\R^n}U_i^p U_j \fullstop
\end{equation}
Hence \cref{eq:fundamental_pos_ineq2} becomes
\begin{equation}\label{eq:fundamental_pos_ineq3}
  \int_{\R^n}u\abs{u}^{p-1}\rho \le p\int_{\R^n}\sigma^{p-1}\rho^2
  + 
  C_{n,\nu}\Biggl(\norm{\nabla\rho}_{L^2}^3 + \norm{\nabla\rho}_{L^2}^{2^*} + 
  \sum_{1\le i\not=j \le\nu}\norm{\nabla\rho}_{L^2}\int_{\R^n}U_i^pU_j\Biggr) \fullstop
\end{equation}
It remains to estimate $\int_{\R^n} \sigma^{p-1}\rho^2$ and $\int_{\R^n} U_i^pU_j$. 
While the control on the first one is by now rather standard, some new ideas are needed to control the second term.
We state here the two inequalities that we need to conclude, and we postpone their proofs to \cref{subsec:missing,subsec:localization,subsec:spectral,subsec:interaction}:
\begin{itemize}
 \item Provided $\delta'$ is sufficiently small, it holds
  \begin{equation}\label{eq:spectral_estimate}
    \int_{\R^n} \sigma^{p-1}\rho^2 \le \frac{\tilde c(n,\nu)}{p}\int_{\R^n}\abs{\nabla\rho}^2
  \end{equation}
  for some constant $\tilde c(n,\nu)<1$.
 \item Given $\hat \eps>0$, if $\delta'$ is sufficiently small then 
  \begin{equation}\label{eq:interaction_estimate}
    \int_{\R^n}U_i^pU_j 
    \lesssim \hat \eps\norm{\nabla\rho}_{L^2} 
    + \norm{\lapl u + u\abs{u}^{p-1}}_{H^{-1}} 
    + \norm{\nabla\rho}_{L^2}^2
  \end{equation}
  for any $i \neq j$.
\end{itemize}
With these inequalities at our disposal, we can easily conclude the proof.
Indeed, choose $\hat \eps>0$ such that $\nu^2\hat \eps C_{n,\nu}\tilde C + \tilde c(n,\nu)<1$, where $C_{n,\nu}$ is the 
constant that appears in \cref{eq:fundamental_pos_ineq3} and $\tilde C$ is the constant hidden in the 
$\lesssim$-notation in the inequality \cref{eq:interaction_estimate}.  
Combining \cref{eq:spectral_estimate,eq:interaction_estimate} into \cref{eq:fundamental_pos_ineq3} yields
\begin{align*}
  \int_{\R^n}u\abs{u}^{p-1}\rho &\le \left(\tilde c(n,\nu)+\nu^2\hat \eps C_{n,\nu}\tilde C\right)\norm{\nabla\rho}_{L^2}^2 \\
  &+C'_{n,\nu}\left(\norm{\nabla\rho}_{L^2}^3 + \norm{\nabla\rho}_{L^2}^{2^*} + 
  \norm{\nabla\rho}_{L^2}\norm{\lapl u + u\abs{u}^{p-1}}_{H^{-1}}\right) \fullstop
\end{align*}
Hence, recalling \cref{eq:pos_init}, we deduce
\begin{equation*}
  \left(1-\tilde c(n,\nu)-\nu^2\hat \eps C_{n,\nu}\tilde C\right)\norm{\nabla\rho}_{L^2}^2 \lesssim 
  \norm{\nabla\rho}_{L^2}\norm{\lapl u + u\abs{u}^{p-1}}_{H^{-1}}
  +\norm{\nabla\rho}_{L^2}^3 + \norm{\nabla\rho}_{L^2}^{2^*} \fullstop
\end{equation*}
Since we can assume that $\norm{\nabla\rho}_{L^2} \ll 1$, it is easy to see that
this last inequality implies the desired estimate
\begin{equation}\label{eq:pos_final_deduction}
  \norm{\nabla\rho}_{L^2} \lesssim \norm{\lapl u + u\abs{u}^{p-1}}_{H^{-1}} \fullstop
\end{equation}
Hence, apart from proving the two inequalities \cref{eq:spectral_estimate,eq:interaction_estimate} (that for now we have taken
for granted), in order to finish the proof we have to check:
\begin{itemize}
  \item that the value of all the $\alpha_i$ can be replaced with $1$;
  \item that \cref{eq:interaction_estimate_statement} holds.
\end{itemize}
Note that, thanks to \cref{eq:pos_final_deduction}, both facts are direct byproducts either of
\cref{eq:interaction_estimate} or of the full statement of the proposition that proves 
\cref{eq:interaction_estimate} (that is \cref{prop:interaction_and_coef}). 
Indeed,  by the latter proposition and \cref{eq:pos_final_deduction} we know $\abs{\alpha_i-1} \lesssim \norm{\lapl u + u\abs{u}^{p-1}}_{H^{-1}}$,
so it suffices to  consider $\sigma'=\sum_{i=1}^{\nu}U_i$ to get that $\sigma'$ satisfies all the desired conditions.
\end{proof}

\subsection{Consequences of the main theorem}\label{subsec:corollaries}
As a direct consequence of \cref{thm:main_close} (and of well-known results in literature) we can show the
following corollary:

\begin{corollary}\label{cor:main_pos}
  For any dimension $3\le n\le 5$ and $\nu\in\N$, there exists a constant $C=C(n,\nu)$ such that the following
  statement holds.
  For any nonnegative function $u\in H^1(\R^n)$ such that
  \begin{equation*}
    \left(\nu-\frac12\right) S^n \le \int_{\R^n}\abs{\nabla u}^2 \le \left(\nu+\frac12\right) S^n \comma
  \end{equation*}
  there exist $\nu$ Talenti bubbles $U_1,U_2,\dots, U_\nu$ such that
  \begin{equation*}
    \norm*{\nabla u - \sum_{i=1}^{\nu}\nabla U_i}_{L^2} \le C\norm{\lapl u + u^p}_{H^{-1}} 
    \fullstop
  \end{equation*}
  Furthermore, for any $i\not= j$, the interaction between the bubbles can be estimated as
  \begin{equation*}
    \int_{\R^n}U_i^pU_j \le C\norm{\lapl u + u^p}_{H^{-1}} \fullstop
  \end{equation*}
\end{corollary}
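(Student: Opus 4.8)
The plan is to deduce the corollary from the main theorem \cref{thm:main_close} by using Struwe's decomposition (\cref{thm:struwe_intro}) as a compactness device. First I would argue by contradiction: suppose the statement fails. Then there is a sequence of nonnegative functions $(u_k)_{k\in\N}\subseteq H^1(\R^n)$ with $(\nu-\frac12)S^n\le\int_{\R^n}\abs{\nabla u_k}^2\le(\nu+\frac12)S^n$ such that
\begin{equation*}
  \inf_{U_1,\dots,U_\nu}\norm*{\nabla u_k-\sum_{i=1}^\nu\nabla U_i}_{L^2} > k\,\norm{\lapl u_k+u_k^p}_{H^{-1}}\fullstop
\end{equation*}
Since the left-hand side is bounded (by $\norm{\nabla u_k}_{L^2}\lesssim 1$, taking all $U_i$ concentrated far away, or simply by $\norm{\nabla u_k}_{L^2}$ itself), this forces $\norm{\lapl u_k+u_k^p}_{H^{-1}}\to 0$. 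At this point I would invoke \cref{thm:struwe_intro}: there exist $\nu$-tuples of points $(z_i^{(k)})$ and scales $(\lambda_i^{(k)})$ with $\norm*{\nabla(u_k-\sum_{i=1}^\nu U[z_i^{(k)},\lambda_i^{(k)}])}_{L^2}\to 0$.

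Next I would check that for $k$ large the family $(U[z_i^{(k)},\lambda_i^{(k)}])_{1\le i\le\nu}$ is $\delta$-interacting, with $\delta$ the threshold from \cref{thm:main_close}. This is where a small amount of care is needed: one must know that distinct bubbles in Struwe's decomposition are \emph{asymptotically} weakly-interacting, i.e. that for $i\neq j$ one has $\min(\lambda_i^{(k)}/\lambda_j^{(k)},\lambda_j^{(k)}/\lambda_i^{(k)},(\lambda_i^{(k)}\lambda_j^{(k)}\abs{z_i^{(k)}-z_j^{(k)}}^2)^{-1})\to 0$ as $k\to\infty$. This is a standard part of the concentration-compactness analysis (otherwise two of the bubbles would merge into one, contradicting that the decomposition uses exactly $\nu$ bubbles together with the energy constraint pinning down their number); I would either cite it from \cite{struwe1984} or note it follows from the fact that $\int\abs{\nabla u_k}^2\to \nu S^n + o(1)$ forces the cross terms $\int U_i^{(k)}(U_j^{(k)})^{p-1}$, which are $\approx$ the interaction by \cref{prop:interaction_approx} and the remark after the definition of $\delta$-interaction, to vanish.

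Once weak interaction is established, for $k$ large enough $u_k$ satisfies all the hypotheses of \cref{thm:main_close} with $\tilde U_i=U[z_i^{(k)},\lambda_i^{(k)}]$, so that theorem yields Talenti bubbles $U_1^{(k)},\dots,U_\nu^{(k)}$ with
\begin{equation*}
  \norm*{\nabla u_k-\sum_{i=1}^\nu\nabla U_i^{(k)}}_{L^2}\le C(n,\nu)\,\norm{\lapl u_k+u_k^p}_{H^{-1}}
\end{equation*}
and $\int_{\R^n}(U_i^{(k)})^p U_j^{(k)}\le C(n,\nu)\,\norm{\lapl u_k+u_k^p}_{H^{-1}}$ for $i\neq j$. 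Taking $k>C(n,\nu)$ contradicts the choice of the sequence, which proves the first displayed inequality of the corollary; the interaction bound \cref{eq:interaction_estimate_statement} passes over verbatim. One minor point I would address: \cref{thm:main_close} is stated for general $u\in H^1$, but the absolute value $u\abs{u}^{p-1}$ reduces to $u^p$ for the nonnegative $u_k$, so the $H^{-1}$ norms on the two sides agree. I expect the only genuine obstacle is the weak-interaction claim in the middle step; everything else is a soft compactness/contradiction argument.
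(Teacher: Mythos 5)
Your proposal is correct and follows essentially the same route as the paper: reduce to the regime where $\norm{\lapl u + u^p}_{H^{-1}}$ is small, use Struwe's decomposition to place $u$ within $\delta$ of a $\delta$-interacting family, and then invoke \cref{thm:main_close}; the paper simply cites a uniform quantitative form of this compactness statement (from Struwe's book) instead of running your sequence-and-contradiction argument. One caveat on your fallback justification of weak interaction: the bound $\int\abs{\nabla u_k}^2\to\nu S^n$ is not among the stated hypotheses (they only give the two-sided bound $(\nu\pm\frac12)S^n$), and deducing it from the decomposition is circular unless you first know the cross terms vanish; it does follow from the energy quantization $J(u_k)\to \nu S^n/n$ in the Palais--Smale analysis, but the cleaner move — and the one the paper takes — is to cite the asymptotic orthogonality of the profiles as part of Struwe's theorem itself.
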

\begin{proof}
Our strategy is to apply \cref{thm:main_close}.

Up to enlarging the constant $C$ in the statement, we can assume that
$\norm{\lapl u + u\abs{u}^{p-1}}_{H^{-1}}$ is smaller than a fixed $\eps>0$.
Applying \cite[Chapter III, Theorem 3.1 and Remarks 3.2]{struwe2008} (or directly
the original papers \cite{struwe1984,gidas1979,bahri1988,obata1962}), we know that for any $\delta>0$ 
we can find an $\eps>0$ such that if $\norm{\lapl u + u\abs{u}^{p-1}}_{H^{-1}} \le \eps$ then
\begin{equation*}
  \norm*{\nabla u-\sum_{i=1}^{\nu}\nabla U_i}_{L^2} \le \delta \comma
\end{equation*}
where $(U_i)_{1\le i\le \nu}$ is a $\delta$-interacting family.
This is exactly the hypothesis necessary to apply \cref{thm:main_close} and conclude.
\end{proof}
\begin{remark}
  Let us emphasize that \cref{cor:main_pos} would become false if we drop the assumption of nonnegativity of 
  the function $u$. Indeed, as shown in \cite{ding1986}, there exist 
  sign-changing solutions of $-\lapl u + u\abs{u}^{p-1} = 0$ with finite energy on $\R^n$ that are not Talenti 
  bubbles.
\end{remark}

\begin{remark}
\label{rmk:every n}
Our proof works in any dimension if $\nu=1$. Indeed, there are only two points in the proof where the assumption $n \leq 5$ is used:
\begin{enumerate}
\item For $n >6$ the exponent $p$ is less than 2, and therefore the inequality
$$
\int_{\R^n}\sigma^{p-2}|\rho|^3 \leq \|\sigma\|_{L^{2^*}}^{p-2}\|\rho\|_{L^{2^*}}^3
$$
is false. However, one can note that for $p<2$ the inequality \cref{eq:tmp_main_pos1} holds also without the term $\abs{a}^{p-2}\abs{b}^2$, so for $n>6$ the term $\int_{\R^n}\sigma^{p-2}|\rho|^3$ is not present.
\item
As observed in the footnote before \cref{eq:le_dimensioni_contano},
the assumption $n\le 5$ is crucial for estimating the interaction
integrals between bubbles. However, if $\nu=1$ then there are no interaction integrals and thus everything
works also in higher dimension. 
\end{enumerate}
\end{remark}

Thence, we can prove the result for nonnegative functions in arbitrary dimension when only one bubble is 
allowed.
The following statement, with some minor differences, is the main result of \cite{cirfigmag2018}.
We state it here in this slightly different form since it will be convenient later in \cref{sec:fdi}.
\begin{corollary}\label{cor:single_bubble}
  For any dimension $n\ge 3$, there exists a constant $C=C(n)$ such that the following statement holds.
  For any nonnegative function $u\in H^1(\R^n)$ such that
  \begin{equation*}
    \frac12 S^n \le \int_{\R^n}\abs{\nabla u}^2 \le \frac32 S^n \comma
  \end{equation*}
  there exists a Talenti bubble $U$ such that
  \begin{equation*}
    \norm*{\nabla u - \nabla U}_{L^2} \le C\norm{\lapl u + u^p}_{H^{-1}} 
    \fullstop
  \end{equation*}
\end{corollary}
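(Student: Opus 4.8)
The plan is to deduce Corollary~\ref{cor:single_bubble} from Theorem~\ref{thm:main_close} exactly as Corollary~\ref{cor:main_pos} is deduced, but specialized to $\nu=1$. The only subtlety is that Theorem~\ref{thm:main_close} is stated for $3\le n\le 5$; however, as explained in Remark~\ref{rmk:every n}, the proof of Theorem~\ref{thm:main_close} works verbatim in every dimension $n\ge 3$ once $\nu=1$, since the two places where the hypothesis $n\le 5$ enters (the term $\int \sigma^{p-2}|\rho|^3$, which is simply absent when $p<2$ because \cref{eq:tmp_main_pos1} then holds without the $|a|^{p-2}|b|^2$ term, and the estimate on the bubble--bubble interaction integrals) are harmless: the first is handled by the modified elementary inequality, and the second is vacuous because with a single bubble there are no interaction integrals. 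So first I would record that, for $\nu=1$, Theorem~\ref{thm:main_close} holds for all $n\ge 3$, giving a $\delta=\delta(n)>0$ and $C=C(n)>0$ with the stated conclusion whenever $\|\nabla u-\nabla \tilde U\|_{L^2}\le\delta$ for a (single) Talenti bubble $\tilde U$.

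Next, I would remove the a priori closeness hypothesis using Struwe's decomposition. Up to enlarging $C$, we may assume $\|\lapl u+u^p\|_{H^{-1}}$ is smaller than any fixed $\eps>0$ of our choosing. Applying the quantitative-free version of Struwe's theorem (Theorem~\ref{thm:struwe_intro}, or equivalently \cite[Chapter III, Theorem 3.1 and Remarks 3.2]{struwe2008}) with $\nu=1$ to the nonnegative function $u$ with $\frac12 S^n\le\int|\nabla u|^2\le\frac32 S^n$: for every $\delta>0$ there is $\eps=\eps(n,\delta)>0$ such that $\|\lapl u+u^p\|_{H^{-1}}\le\eps$ forces the existence of a Talenti bubble $\tilde U$ with $\|\nabla u-\nabla\tilde U\|_{L^2}\le\delta$. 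Choosing $\delta$ to be the threshold from the ($\nu=1$) Theorem~\ref{thm:main_close} and then $\eps$ accordingly, the hypotheses of that theorem are met, and its conclusion yields a Talenti bubble $U$ with $\|\nabla u-\nabla U\|_{L^2}\le C\|\lapl u+u^p\|_{H^{-1}}$. (Note that for nonnegative $u$ one has $u|u|^{p-1}=u^p$, so the two formulations of the right-hand side coincide.)

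I do not anticipate a genuine obstacle here: the argument is a routine concatenation of Struwe's compactness statement with the already-proven local estimate. The only point requiring a little care — and the one I would state explicitly — is the justification that Theorem~\ref{thm:main_close}'s proof is dimension-free when $\nu=1$, i.e.\ spelling out Remark~\ref{rmk:every n}: the interaction sums $\sum_{i\ne j}$ are empty, so \cref{eq:le_dimensioni_contano} is never invoked, and the Hölder step bounding $\int\sigma^{p-2}|\rho|^3$ is simply skipped in favor of the version of \cref{eq:tmp_main_pos1} valid for $p<2$. With that remark in hand, the corollary follows immediately, and I would conclude the proof there.
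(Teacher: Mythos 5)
Your proposal is correct and follows essentially the same route as the paper: the paper's proof of this corollary simply repeats the proof of \cref{cor:main_pos} (Struwe's compactness to reduce to the local setting, then \cref{thm:main_close}), invoking \cref{rmk:every n} to extend \cref{thm:main_close} to all $n\ge 3$ when $\nu=1$. Your explicit spelling-out of why the $\nu=1$ case is dimension-free matches the content of that remark.
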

\begin{proof}
  The proof of the statement is identical to the proof of \cref{cor:main_pos}, with the 
  only difference that (thanks to \cref{rmk:every n}) we can apply \cref{thm:main_close} for any dimension $n\ge 3$ since only one bubble is present.
\end{proof}

\subsection{The two missing estimates}
\label{subsec:missing}
It remains to prove \cref{eq:spectral_estimate} and \cref{eq:interaction_estimate}.
Before proving them, let us shed some light on the reasons why these two inequalities should hold.

The first of the two inequalities is a strengthened Poincaré whose validity follows from the spectral
properties of $\rho$. Indeed, in the simple setting with a single bubble $\sigma=U[0,1]$, 
\cref{eq:spectral_estimate} is equivalent to
\begin{equation*}
  \int_{\R^n} U[0,1]^{p-1} \rho^2 \le \frac{\tilde c}p \int_{\R^n}\abs{\nabla\rho}^2 \fullstop
\end{equation*}
This latter inequality follows from the orthogonality conditions \cref{eq:orthogonality_U1,eq:orthogonality_U2,eq:orthogonality_U3}
since $U,\partial_{\lambda}U,\partial_{z_i}U$ are the eigenfunctions of $\frac{-\lapl}{U^{p-1}}$ with
eigenvalue greater or equal to $\frac1p$. For a justification of the last statement, see for instance 
\cite[Appendix]{bianchi1991}.
To handle the fact that in our setting $\sigma$ can be a linear combination of multiple bubbles, we make use of
a localization argument via partitions of unity that allows us to treat each bubble independently.
Although this argument is rather standard and \cref{eq:spectral_estimate} is already known (see for instance 
\cite[Proposition 3.1]{bahri1989}), we prefer to write the proof both for the convenience of the reader and also because some of the localization arguments will be useful later.

The estimate \cref{eq:interaction_estimate} is of course empty (and thus trivial) if there is a single bubble,
hence also the difficulty of this estimate depends heavily on the presence of multiple bubbles. Exploiting
the  localization argument used to prove \cref{eq:spectral_estimate}, we manage to prove \cref{eq:interaction_estimate} by testing 
$-\lapl u + u\abs{u}^{p-1}$ against suitably localized versions of $U$ and $\partial_\lambda U$. 

Showing \cref{eq:interaction_estimate} is the less intuitive and most involved part of the whole proof.

\subsection{Localization of a family of bubbles}
\label{subsec:localization}
Given a family of Talenti bubbles $\sigma=\sum_{i=1}^\nu \alpha_i U_i$, we want to build some bump functions 
$\Phi_1,\dots,\Phi_\nu$ in such a way that, in some appropriate sense, $\sigma\Phi_i \cong \alpha_i U_i$.
The existence of these bump functions is tightly linked to the fact that the family is $\delta$-interacting
for a small $\delta$. Indeed if, for example, $\sigma = U_1 + U_2$ and $U_1=U_2$ it is clearly impossible
to find any region where, in any meaningful sense, $\sigma\cong U_1$.

A similar localization argument is present in \cite[Proposition 3.1 and Lemma 3.2]{bahri1989}, where the author proves \cref{prop:spectral_ineq}. However, as mentioned before, 
since in any case we need some further properties of the localization in order to prove \cref{prop:interaction_and_coef},
we decided to include full proofs both of the localization argument and of 
\cref{prop:spectral_ineq}.

\begin{lemma}\label{lem:nice_cutoff}
Let $n\geq 1$. Given a point $\bar x\in\R^n$ and two radii $0<r<R$, there exists a 
  Lipschitz bump function $\varphi=\varphi_{\bar x, r, R}:\R^n\to\cc01$ such that $\varphi\equiv 1$ in 
  $B(\bar x, r)$, $\varphi\equiv 0$ in $B(\bar x, R)^\complement$, and
  \begin{equation*}
    \int_{\R^n} \abs{\nabla\varphi}^n \lesssim \log\left(\frac{R}{r}\right)^{1-n} \fullstop
  \end{equation*}
\end{lemma}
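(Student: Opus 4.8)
The plan is to construct $\varphi$ as a function of the radial variable $t=\abs{x-\bar x}$ alone, interpolating between $1$ and $0$ on the annulus $\{r<t<R\}$ using a profile whose derivative is proportional to $1/t$, since $1/t$ is (up to normalization) the extremal profile for the weighted one-dimensional problem hidden inside $\int\abs{\nabla\varphi}^n$ in dimension $n$. Concretely, I would set $\varphi(x)\defeq g(\abs{x-\bar x})$ where $g:[0,\infty)\to\cc01$ is defined by $g\equiv 1$ on $[0,r]$, $g\equiv 0$ on $[R,\infty)$, and
\begin{equation*}
  g(t) \defeq \frac{\log(R/t)}{\log(R/r)} \qquad\text{for } t\in[r,R]\fullstop
\end{equation*}
This $g$ is Lipschitz, takes values in $\cc01$, and matches the boundary values continuously, so $\varphi$ is a Lipschitz bump function with $\varphi\equiv 1$ on $B(\bar x,r)$ and $\varphi\equiv 0$ outside $B(\bar x,R)$, as required.

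Next I would compute the gradient. For $t\in(r,R)$ we have $g'(t)=-\frac{1}{t\log(R/r)}$, hence $\abs{\nabla\varphi(x)}=\frac{1}{\abs{x-\bar x}\,\log(R/r)}$ on the annulus and $\abs{\nabla\varphi}=0$ elsewhere. Therefore, using polar coordinates and writing $\omega_{n-1}$ for the surface measure of the unit sphere,
\begin{equation*}
  \int_{\R^n}\abs{\nabla\varphi}^n
  = \frac{1}{\log(R/r)^n}\int_{r<\abs{x-\bar x}<R}\frac{dx}{\abs{x-\bar x}^n}
  = \frac{\omega_{n-1}}{\log(R/r)^n}\int_r^R \frac{t^{n-1}}{t^n}\,dt
  = \frac{\omega_{n-1}}{\log(R/r)^n}\log\!\left(\frac{R}{r}\right)\fullstop
\end{equation*}
This equals $\omega_{n-1}\log(R/r)^{1-n}$, which is exactly the claimed bound $\lesssim\log(R/r)^{1-n}$ (with the implicit constant $\omega_{n-1}$ depending only on $n$).

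There is no real obstacle here: the only mild subtlety is that $\varphi$ is merely Lipschitz rather than smooth, but that is allowed by the statement and the computation above is justified since $\abs{\nabla\varphi}$ is defined a.e.\ and bounded. If one wanted a genuinely $C^\infty$ bump one could mollify $g$ slightly or replace the sharp logarithmic profile by a smoothed version on $[r,R]$ at the cost of only a constant factor, but this is unnecessary. The one thing worth double-checking is the edge case $n=1$, where the integral becomes $\int_r^R t^{-1}\,dt=\log(R/r)$ and the bound $\log(R/r)^{1-n}=\log(R/r)^0=1$ is still correct (here $\omega_0$ should be read as $2$, accounting for the two components of the "annulus"). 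Thus the construction is uniform in $n\geq 1$ and the proof is complete.
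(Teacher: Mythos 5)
Your construction is exactly the one used in the paper: the logarithmic interpolation profile on the annulus, followed by the polar-coordinates computation of $\int\abs{\nabla\varphi}^n$. The proof is correct, and your extra remarks (explicit constant $\omega_{n-1}$, the $n=1$ edge case) are harmless additions to the same argument.
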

\begin{proof}
  Without loss of generality we can assume $\bar x = 0$. We define $\varphi$ as
  \begin{equation*}
    \varphi(x) \defeq
    \begin{cases}
      1 &\quad\quad\text{if $\abs{x}\le r$}, \\
      \frac{\log(R)-\log(\abs{x})}{\log(R)-\log(r)} &\quad\quad\text{if $r<\abs{x}<R$},\\
      0 &\quad\quad\text{if $R<\abs{x}$}.
    \end{cases}
  \end{equation*}
  By definition $\varphi\equiv 1$ in $B(0, r)$ and $\varphi\equiv 0$ in $B(0,R)^\complement$. 
  The norm of the gradient of $\varphi$ is $0$ outside $B(0,R)\setminus B(0,r)$, whereas inside that annulus it
  satisfies
  \begin{equation*}
    \abs{\nabla\varphi}(x) = \log\left(\frac Rr\right)^{-1}\frac 1{\abs{x}} \fullstop
  \end{equation*}
  Thus, it holds
  \begin{equation*}
    \int_{\R^n} \abs{\nabla\varphi}^n 
    = \log\left(\frac Rr\right)^{-n}\int_{B(0, R)\setminus B(0, r)} \frac{1}{\abs{x}^n}\de x
    \approx \log\left(\frac Rr\right)^{1-n}
  \end{equation*}
  as desired.
\end{proof}

\begin{lemma}\label{lem:localization}
  For any $n\ge 3$, $\nu\in\N$, and $\eps>0$, there exists $\delta=\delta(n,\nu,\eps)>0$ such that if 
  $U_1=U[z_1,\lambda_1],\dots,U_\nu=U[z_\nu,\lambda_\nu]$ is a $\delta$-interacting family of $\nu$ Talenti 
  bubbles, then for any $1\le i\le \nu$ there exists a Lipschitz bump function
  $\Phi_i:\R^n\to\cc01$ such that the following hold:
  \begin{enumerate}[label=\textit{(\arabic*)}]
   \item \label{it:localization1}Almost all mass of $U_i^{2^*}$ is in the region $\{\Phi_i=1\}$, that is 
   \begin{equation*}\int_{\{\Phi_i=1\}} U_i^{2^*} \ge (1-\eps)S^n \fullstop\end{equation*}
   \item \label{it:localization2}In the region $\{\Phi_i>0\}$ it holds $\eps U_i > U_j$ for any $j\not=i$.
   \item \label{it:localization3}The $L^n$-norm of the gradient is small, that is
   \begin{equation*} \norm{\nabla\Phi_i}_{L^n} \le \eps \fullstop\end{equation*}
   \item \label{it:localization4}For any $j\not=i$ such that $\lambda_j\le \lambda_i$, it holds
   \begin{equation*}
      \frac{\sup_{\{\Phi_i>0\}}U_j}{\inf_{\{\Phi_i>0\}}U_j} \le 1 + \eps \fullstop
   \end{equation*}
  \end{enumerate}
\end{lemma}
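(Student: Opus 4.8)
The plan is to construct each bump function $\Phi_i$ essentially as a product of annular cutoffs of the type provided by \cref{lem:nice_cutoff}, centered at $z_i$, with inner and outer radii chosen so that the annulus where $\Phi_i$ transitions sits ``between'' bubble $U_i$ and every other bubble $U_j$. First I would reduce, using the symmetries $T_{z,\lambda}$ from \cref{subsec:symmetries}, to the case where bubble $U_i$ is normalized; more usefully, I would fix a large radius $\Lambda>0$ (depending only on $n,\nu,\eps$) such that $\int_{B(0,\Lambda)}U[0,1]^{2^*}\ge (1-\eps/2)S^n$ and $U[0,1]>$ (a definite constant) on $B(0,\Lambda)$, and such that outside $B(0,1/\Lambda)$ the ``tail'' of $U[0,1]$ is controlled. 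Rescaling back, this says the bulk of $U_i$'s mass lives in the ball $B(z_i,\Lambda/\lambda_i)$, and $U_i$ is comparable to $\lambda_i^{(n-2)/2}$ there.

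The core combinatorial point is the following dichotomy coming from $\delta$-interaction \cref{eq:def_delta_interaction}: for each pair $i\neq j$, either the ratio $\lambda_i/\lambda_j$ (or its reciprocal) is $\le\delta$, so the bubbles live at wildly different scales, or $\lambda_i\lambda_j|z_i-z_j|^2\ge 1/\delta$, so the bubbles are concentrated in widely separated regions (relative to their common scale). In either case, for a suitable choice of radii $r_i<R_i$ around $z_i$ with $\log(R_i/r_i)\to\infty$ as $\delta\to 0$, one can arrange that on the annulus $B(z_i,R_i)\setminus B(z_i,r_i)$ one has $\eps U_i > U_j$ for all $j\neq i$, that $B(z_i,r_i)$ still captures $\ge(1-\eps)S^n$ of the mass $\int U_i^{2^*}$, and that on $B(z_i,R_i)$ one has $\eps U_i>U_j$ whenever $j\neq i$. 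I would then set $\Phi_i\defeq\varphi_{z_i,r_i,R_i}$. Item \ref{it:localization1} is then immediate from the mass-capture property; item \ref{it:localization2} follows since $\{\Phi_i>0\}\subseteq B(z_i,R_i)$ where the pointwise domination $\eps U_i>U_j$ was arranged; item \ref{it:localization3} is exactly \cref{lem:nice_cutoff} applied with $R_i/r_i$ large, giving $\norm{\nabla\Phi_i}_{L^n}^n\lesssim\log(R_i/r_i)^{1-n}\to 0$; and item \ref{it:localization4} follows because on the small set $\{\Phi_i>0\}\subseteq B(z_i,R_i)$, if $\lambda_j\le\lambda_i$ then $U_j$ varies only over a region of diameter $\le 2R_i$ which, by the interaction condition, is small compared to the natural length scale $1/\lambda_j$ of $U_j$, hence $U_j$ is nearly constant there — quantitatively one uses \cref{eq:U_derlambda} and the explicit form \cref{eq:talenti_intro} to bound the oscillation.

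The main obstacle, and the part requiring care, is checking that a \emph{single} choice of radii $r_i,R_i$ simultaneously satisfies all four requirements against \emph{all} the other $\nu-1$ bubbles at once. The two regimes in the dichotomy push in somewhat different directions: when $\lambda_j\ll\lambda_i$ the danger is that $U_j$'s flat tail dominates $U_i$ far out, forcing $R_i$ small; when $\lambda_j\gg\lambda_i$ or $z_j$ is far, the danger is near $z_i$ itself or on the far side, constraining $r_i$; and one must still leave $\log(R_i/r_i)$ large. The resolution is a scale-ordering argument: relabel so that one processes bubbles in order of increasing (or decreasing) $\lambda$, and for each $i$ choose $R_i$ just below the scale at which the next-larger-scale obstruction kicks in and $r_i$ just above the scale at which the next-smaller-scale obstruction kicks in; the $\delta$-interaction hypothesis guarantees a gap of size $\gtrsim\log(1/\delta)$ between these, which can be split among the $\nu$ bubbles so that each individual window still has logarithmic width tending to infinity. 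One also has to treat separately the bubbles $U_j$ that are at a comparable scale to $U_i$ but with $z_j$ far from $z_i$: for those, $U_j$ is already tiny on all of $B(z_i,R_i)$ provided $R_i \ll |z_i-z_j|$, which again holds for $\delta$ small by \cref{eq:condition:intro}. Once these bookkeeping choices are made explicit, verifying \ref{it:localization1}--\ref{it:localization4} is a matter of routine estimates on \cref{eq:talenti_intro}.
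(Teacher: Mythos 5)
There is a genuine gap in the construction itself. You ultimately set $\Phi_i\defeq\varphi_{z_i,r_i,R_i}$, a single radial cutoff, so that $\{\Phi_i>0\}=B(z_i,R_i)$ and in particular $\Phi_i\equiv 1$ on a full neighborhood of $z_i$. This cannot satisfy \cref{it:localization2} when a more concentrated bubble sits inside the bulk of $U_i$. Concretely, take $\nu=2$, $z_2=z_1$, $\lambda_2=\lambda_1/\delta$: this family is $\delta$-interacting (the minimum in \cref{eq:def_delta_interaction} equals $\delta$), yet
\begin{equation*}
\frac{U_2(z_1)}{U_1(z_1)}=\Bigl(\frac{\lambda_2}{\lambda_1}\Bigr)^{\frac{n-2}{2}}=\delta^{-\frac{n-2}{2}}\longrightarrow\infty\comma
\end{equation*}
so $\eps U_1>U_2$ fails on a whole (small but nonempty) ball around $z_1=z_2$, which lies in $\{\Phi_1>0\}$ for every choice of $r_1,R_1>0$. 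Your scale-ordering discussion of "constraining $r_i$" does not resolve this: no choice of the two radii of an annular cutoff centered at $z_i$ can exclude a point interior to the region where $\Phi_i\equiv1$. More generally, the nearby bubbles with $\lambda_j\gg\lambda_i$ may have centers scattered anywhere in the ball carrying the mass of $U_i^{2^*}$, and around each such center the domination $\eps U_i>U_j$ necessarily fails.

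The fix, and the structural feature your proposal misses, is that the support of $\Phi_i$ must have holes. After normalizing $U_i=U[0,1]$, the paper takes $\Phi_i=\varphi_{0,\eps' R,R}\prod_{j\in I}\bigl(1-\varphi_{z_j,R_j,(\eps')^{-1}R_j}\bigr)$, where $I$ indexes the nearby, more concentrated bubbles and $R_j$ is the radius below which $U_j$ exceeds $\eps' U_i$; the $\delta$-interaction hypothesis forces $R_j\le(\eps')^2$, so the excised balls $B(z_j,(\eps')^{-1}R_j)$ have radius at most $\eps'$. Removing them therefore costs only $O(\nu(\eps')^n)$ of the mass in \cref{it:localization1}, and by \cref{lem:nice_cutoff} only $O\bigl(\nu\log(1/\eps')^{\frac1n-1}\bigr)$ in the $L^n$-norm of \cref{it:localization3}, so both items survive. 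The remaining parts of your outline (far-away or less concentrated bubbles are dominated on the whole large ball for $\delta$ small; \cref{it:localization4} via near-constancy of the flatter bubbles on $\{\Phi_i>0\}$; \cref{it:localization3} via the logarithmic cutoff) do match the paper's argument once the product structure is in place.
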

\begin{proof}
  \newcommand{\uffa}{\epsilon}
  Without loss of generality we can show the statement only for one of the indices, say $i=\nu$, and we can also assume that
  $U_\nu=U[0, 1]$. This second assumption is justified by the observations contained in 
  \cref{subsec:symmetries}, since the $L^n$-norm of the gradient of a function is invariant under scaling.
  For notational simplicity we denote $U\defeq U_\nu$.
  
  We fix a small number $\uffa>0$ (that will be fixed at the end of the proof, depending on the parameter $\eps$ appearing in the statement) and a large parameter $R>1$ ($R$ will be fixed later, depending on $\uffa$).
  
  If $\delta$ is sufficiently small (depending on $\uffa$ and $R$), it follows that $\uffa U>U_j$ in $B(0,R)$
  for any $1\le j < \nu$ such that $\lambda_j < 1$ or $\abs{z_j}>2R$. In other words we are saying that, if the bubbles are 
  sufficiently weakly-interacting, in an arbitrarily large ball $U$ is much larger than every other bubble which is either less 
  concentrated  or sufficiently far. Moreover, if $\delta$ is sufficiently small, since 
  $U_j(x) \sim (\lambda_j^{-1}+\abs{x-z_j})^{2-n}$ it also holds
  \begin{equation*}
    \frac{\sup_{B(0,R)}U_j}{\inf_{B(0,R)}U_j} \le 1 + \uffa
  \end{equation*}
  for any $1\le j < \nu$ such that $\lambda_j\le 1$.
 Hence, it remains to control the bubbles that are not far and that are more concentrated than $U$.
  
  Let $I\subset \{1,\ldots,\nu-1\}$ be the set of indices $j$ such that $\lambda_j>1$ and $\abs{z_j}<2R$.
  For any $j\in I$, let $R_j\in\oo{0}{\infty}$ be the only positive real number such that
  \begin{equation*}
      \uffa\left(\frac1{1+R^2}\right)^{\frac{n-2}2} 
      = \left(\frac{\lambda_j}{1+\lambda_j^2\abs{R_j}^2}\right)^{\frac{n-2}2} \fullstop
  \end{equation*}
 Note that if $\delta$ is sufficiently small then $R_j\le \uffa^2$ for any $j\in I$.
  Thus, as a consequence of the definition of $R_j$, for any $j\in I$ it holds $\uffa U \ge U_j$ in 
  $B(0, R)\setminus B(0, R_j)$.
  
  We are now in position to define the function $\Phi=\Phi_\nu$. 
  With the notation $\varphi_{\bar x, r, R}$ introduced in \cref{lem:nice_cutoff}, we define $\Phi$ as
  \begin{equation*}
    \Phi \defeq \varphi_{0,\uffa R, R} \prod_{j\in I} (1-\varphi_{z_j, R_j, \uffa^{-1}R_j}) \fullstop
  \end{equation*}
  Let us check that if $R$ is chosen sufficiently large, then all requirements are satisfied.
  
Since $R_j\le \uffa^2$, it holds
  \begin{align*}
    \int_{\{\Phi < 1\}}U^{2^*} 
    \le \int_{B(0,R)^{\complement}} U^{2^*} + \sum_{j\in I}\int_{B(z_j, \uffa^{-1}R_j)} U^{2^*}
    \le \int_{B(0,R)^{\complement}} U^{2^*} + \sum_{j\in I}C_n\left(\uffa^{-1}R_j\right)^{n} 
    \le \int_{B(0,R)^{\complement}} U^{2^*} + C_n\nu\uffa^n \fullstop
  \end{align*}
  Hence, if $\uffa$ is sufficiently small, 
  choosing $R=R(\uffa)$ large enough we obtain
  \begin{equation*}
    \int_{\{\Phi<1\}}U^{2^*} \le \uffa S^n
  \end{equation*}
  and therefore \cref{it:localization1} holds.
  
 Noticing that $\{\Phi>0\}$ is contained inside
  \begin{equation*}
    B(0,R)\setminus \bigcup_{j\in I} B(z_j, R_j) \comma
  \end{equation*}
since inside such region  $\uffa U\ge U_j$ for 
  any $1\le j<\nu$ (by the observations above), also \cref{it:localization2} holds.
  
  Similarly, since $\{\Phi>0\}$ is contained into $B(0,R)$, also property \cref{it:localization4} is satisfied.
  
 Finally, since $\varphi_{\bar x, r_1, r_2}(x)\in \cc01$ for any choice of the 
  parameters $\bar x, x, r_1, r_2$, 
  \begin{equation*}
    \abs{\nabla \Phi(x)} \le \abs{\nabla\varphi_{0,\uffa R, R}(x)} 
    + \sum_{j\in I}\abs{\nabla\varphi_{0,R_j, \uffa^{-1}R_j}(x)} \quad \text{for any $x\in \R^n$}\fullstop
  \end{equation*}
 Thus, taking into account \cref{lem:nice_cutoff}, we deduce
  \begin{equation*}
    \norm{\nabla \Phi(x)}_{L^n} 
    \le \norm{\nabla\varphi_{0,\uffa R, R}(x)}_{L^n} 
    + \sum_{j\in I}
    \norm{\nabla\varphi_{0,R_j, \uffa^{-1}R_j}(x)}_{L^n}
    \le C(n)\nu \log(\uffa^{-1})^{\frac1n-1} \comma
  \end{equation*}
for some dimensional constant $C(n)$.
Hence, given $\eps>0$, it suffices to choose $\uffa$ small enough to ensure that 
$C(n)\nu\log(\uffa^{-1})^{\frac1n-1} \leq \eps$ and \cref{it:localization3}. In this way, since $\eps\gg\uffa$, also all the
  other properties hold with $\uffa$ replaced by $\eps$.
\end{proof}

\subsection{Spectral Inequality}
\label{subsec:spectral}
Using the localization devised in \cref{lem:localization}, the proof of \cref{prop:spectral_ineq} follows
a very natural path: we localize, apply the spectral inequality for a single bubble, and then sum all the
terms to obtain the full estimate. 

\begin{proposition}\label{prop:spectral_ineq}
Let $n\ge 3$ and $\nu\in\N$. There exists a positive constant $\delta=\delta(n,\nu)>0$ such that if 
  $\sigma=\sum_{i=1}^{\nu}\alpha_i U[z_i,\lambda_i]$ is a linear combination of $\delta$-interacting Talenti 
  bubbles and $\rho\in H^1(\R^n)$ satisfies \cref{eq:orthogonality_U1,eq:orthogonality_U2,eq:orthogonality_U3} 
  with $U_i=U[z_i,\lambda_i]$, then
  \begin{equation*}
    \int_{\R^n}\sigma^{p-1}\rho^2 \le \frac{\tilde c}{p} \int_{\R^n}\abs{\nabla\rho}^2
  \end{equation*}
  where $\tilde c=\tilde c(n,\nu)$ is a constant strictly less than $1$.
\end{proposition}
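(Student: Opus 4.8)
The plan is to reduce the multi-bubble spectral inequality to the single-bubble case via the partition-of-unity argument prepared in \cref{lem:localization}. First I would recall the single-bubble spectral fact: if $V=U[z,\lambda]$ is a single Talenti bubble and $\eta\in H^1(\R^n)$ satisfies the three orthogonality conditions $\int \eta V^p = \int\eta\,\partial_\lambda V\,V^{p-1} = \int\eta\,\partial_{z_k}V\,V^{p-1} = 0$, then $\int V^{p-1}\eta^2 \le \frac{1}{\Lambda}\int|\nabla\eta|^2$, where $\Lambda$ is the first eigenvalue of $\frac{-\Delta}{V^{p-1}}$ above the $(n+2)$-dimensional kernel spanned by $V,\partial_\lambda V,\nabla_z V$; by the analysis in \cite[Appendix]{bianchi1991} one has $\Lambda>p$, so one gets a constant $\tilde c_0 = p/\Lambda < 1$ in front of $\frac1p\int|\nabla\eta|^2$.

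Next, fix a small $\eps>0$ to be chosen at the end and let $\delta=\delta(n,\nu,\eps)$ and the bump functions $\Phi_i:\R^n\to[0,1]$ be as in \cref{lem:localization}. The idea is to estimate $\int\sigma^{p-1}\rho^2$ by splitting space according to where each $\Phi_i$ is active. On the set $\{\Phi_i>0\}$ property \ref{it:localization2} gives $U_j < \eps U_i$ for $j\neq i$, so $\sigma^{p-1} \le (\alpha_i U_i + \nu\eps U_i)^{p-1} \le (1+C\eps)\alpha_i^{p-1}U_i^{p-1} \le (1+C\eps)U_i^{p-1}$ there (using $|\alpha_i-1|\le\delta$); and away from $\bigcup_i\{\Phi_i>0\}$ every bubble is comparably small, so $\sigma^{p-1}$ is uniformly tiny there and that contribution is absorbed into $\eps\norm{\nabla\rho}_{L^2}^2$ by Hölder and Sobolev. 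The main term becomes $\sum_i (1+C\eps)\int_{\{\Phi_i>0\}} U_i^{p-1}\rho^2$, and the task is to bound each $\int U_i^{p-1}\rho^2$ (or its localized version) using the single-bubble inequality applied not to $\rho$ but to the localized function $\rho\Phi_i$. Here the orthogonality of $\rho$ against $U_i^p, \partial_\lambda U_i\,U_i^{p-1}, \partial_{z_k}U_i\,U_i^{p-1}$ (conditions \cref{eq:orthogonality_U1,eq:orthogonality_U2,eq:orthogonality_U3}) must be transferred to $\rho\Phi_i$ up to a controllably small error: since $U_i^{p-1}$ decays fast and $\Phi_i\equiv 1$ on the region carrying almost all of $U_i^{2^*}$ (property \ref{it:localization1}), the quantities $\int\rho(1-\Phi_i)U_i^p$ etc.\ are $O(\eps)$ times $\norm{\nabla\rho}_{L^2}$; one then projects $\rho\Phi_i$ onto the orthogonal complement of the kernel, incurring an $O(\eps)$ loss.

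The main obstacle—and the place that needs the most care—is the cross term produced by the cutoff: $\int U_i^{p-1}(\rho\Phi_i)^2 \ge \int_{\{\Phi_i=1\}} U_i^{p-1}\rho^2$, but in applying the single-bubble inequality to $\rho\Phi_i$ one generates $\int|\nabla(\rho\Phi_i)|^2 = \int\Phi_i^2|\nabla\rho|^2 + 2\int\Phi_i\rho\nabla\rho\cdot\nabla\Phi_i + \int\rho^2|\nabla\Phi_i|^2$, and the last two terms must be shown negligible. This is exactly what \ref{it:localization3} is for: by Hölder with exponents $(2^*/2, n/2)'$-style splitting (i.e.\ $\int\rho^2|\nabla\Phi_i|^2 \le \norm{\rho}_{L^{2^*}}^2\norm{\nabla\Phi_i}_{L^n}^2 \le \eps^2\norm{\nabla\rho}_{L^2}^2$, and similarly $\int\Phi_i\rho\,\nabla\rho\cdot\nabla\Phi_i \le \norm{\nabla\rho}_{L^2}\norm{\rho}_{L^{2^*}}\norm{\nabla\Phi_i}_{L^n}\lesssim\eps\norm{\nabla\rho}_{L^2}^2$). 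Summing over $i$, collecting all the $O(\eps)$ and $O(\delta)$ errors, and using that $\sum_i\int\Phi_i^2|\nabla\rho|^2 \le \int|\nabla\rho|^2$ (the $\Phi_i$ have essentially disjoint supports up to small overlap, which can itself be arranged or estimated), one obtains
\[
  \int_{\R^n}\sigma^{p-1}\rho^2 \le \big(\tilde c_0 + C(n,\nu)\eps\big)\,\frac1p\int_{\R^n}|\nabla\rho|^2 .
\]
Finally choose $\eps$ small enough that $\tilde c := \tilde c_0 + C(n,\nu)\eps < 1$ and set $\delta = \delta(n,\nu,\eps)$; this yields the claim.
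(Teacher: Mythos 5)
Your proposal is correct and follows essentially the same route as the paper's proof: localize with the bump functions of \cref{lem:localization}, transfer the orthogonality conditions to $\rho\Phi_i$ up to an $\smallo(1)$ error, apply the single-bubble spectral gap $\Lambda>p$ from \cite[Appendix]{bianchi1991}, control the cutoff cross terms via $\norm{\nabla\Phi_i}_{L^n}\le\eps$, and sum using the disjointness of the supports. The only cosmetic difference is that you split space on $\bigcup_i\{\Phi_i>0\}$ versus its complement while the paper splits on $\{\sum_i\Phi_i<1\}$, but both residual regions are handled by the same H\"older--Sobolev argument using \cref{lem:localization}-\cref{it:localization1}.
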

\begin{proof}
  In this proof we will denote with $\smallo(1)$ any quantity that goes to zero when $\delta$ goes to zero.
  Let $\Phi_1,\dots,\Phi_\nu$ be the localization functions built in \cref{lem:localization} for a certain 
  $\eps$ that depends on $\delta$. It is clear that we can choose $\eps=\smallo(1)$.
  
  Thanks to \cref{lem:localization}-\cref{it:localization2}, it holds
  \begin{equation}\label{eq:spectral_tmp1}
    \int_{\R^n}\sigma^{p-1}\rho^2 \le (1+\smallo(1))\sum_{i=1}^{\nu} \int_{\R^n} \Phi_i^2\rho^2 U_i^{p-1}
    + \int_{\{\sum \Phi_i < 1\}} \sigma^{p-1}\rho^2 \fullstop
  \end{equation}
Then, by \cref{lem:localization}-\cref{it:localization1}, using H\"older and Sobolev inequalities we
  find
  \begin{equation}\label{eq:spectral_tmp2}
    \int_{\{\sum \Phi_i < 1\}} \sigma^{p-1}\rho^2
    \le \biggl(\int_{\{\sum \Phi_i < 1\}} \sigma^{2^*}\biggr)^{\frac{p-1}{2^*}}\norm{\rho}_{L^{2^*}}^2
    \le \smallo(1)\norm{\nabla\rho}_{L^2}^2 \fullstop
  \end{equation}
We now claim that
  \begin{equation}\label{eq:spectral_tmp3}
    \int_{\R^n}(\rho\Phi_i)^2 U_i^{p-1} 
    \le \frac1{\Lambda}\int_{\R^n}\abs{\nabla (\rho\Phi_i)}^2 + \smallo(1)\norm{\nabla\rho}_{L^2}^2\comma
  \end{equation}
  where $\frac1\Lambda$ is the largest eigenvalue of $\frac{-\lapl}{U_i^{p-1}}$ that is strictly smaller 
  than $\frac1p$. Let us remark that the value of $\Lambda$ does not depend on $i$.
  
  This inequality is crucial and is the only one that exploits the orthogonality conditions we are 
  assuming on $\rho$. Its proof relies on the fact that $\rho\Phi_i$ \emph{almost} satisfies the orthogonality 
  conditions, and hence the spectral properties of the operator $\frac{-\lapl}{U_i^{p-1}}$ will give us
  \cref{eq:spectral_tmp3}.
  
  Let $\psi:\R^n\to\R$ be, up to scaling, one of the functions $U_i,\partial_\lambda U_i,\partial_{z_j}U_i$, with the scaling chosen so that
$\int_{\R^n}\psi^2 U_i^{p-1}=1$. 
  Hence, thanks to the orthogonality conditions \cref{eq:orthogonality_U1,eq:orthogonality_U2,eq:orthogonality_U3}, it holds
  \begin{align*}
    \scalprod{\rho\Phi_i}{\psi}_{L^2_{U_i^{p-1}}} &=
    \abs*{\int_{\R^n}(\rho\Phi_i)\psi U_i^{p-1}} = \abs*{\int_{\R^n}\rho\psi U_i^{p-1} (1-\Phi_i)}
    \le \abs[\bigg]{\int_{\{\Phi_i<1\}} \rho\psi U_i^{p-1}} \\
    &\le \norm{\rho}_{L^{2^*}}\left(\int_{\R^n}\psi^2 U_i^{p-1}\right)^{\frac12}
    \biggl(\int_{\{\Phi_i<1\}}U_i^{2^*}\biggr)^{\frac1n}
    \le \smallo(1)\norm{\nabla\rho}_{L^2}\fullstop
  \end{align*}
  where in the last inequality we applied \cref{lem:localization}-\cref{it:localization1}.
  
  This proves that $\rho\Phi_i$ is almost orthogonal to $\psi$.
 Hence, since the functions $U_i,\partial_\lambda U_i,\partial_{z_j}U_i$ form an orthogonal basis for the space of eigenfunctions
  of $\frac{-\lapl}{U_i^{p-1}}$ with eigenvalue greater or equal than $\frac1p$ (see 
  \cite[Appendix]{bianchi1991}), it holds
  \begin{equation*}
    \int_{\R^n}(\rho\Phi_i)^2 U_i^{p-1} \le \frac{1}{\Lambda}\int_{\R^n}\abs{\nabla(\rho\Phi_i)}^2
    +\smallo(1)\int_{\R^n}\abs{\nabla\rho}^2
  \end{equation*}
  that is exactly \cref{eq:spectral_tmp3}.
  
  We now estimate the right-hand side of \cref{eq:spectral_tmp3}. Note that 
  \begin{equation}\label{eq:spectral_tmp4}
    \int_{\R^n}\abs{\nabla (\rho\Phi_i)}^2 = 
    \int_{\R^n}\abs{\nabla \rho}^2\Phi_i^2
    +\int_{\R^n}\rho^2\abs{\nabla \Phi_i}^2
    +2\int_{\R^n}\rho\Phi_i\nabla\rho\cdot\nabla\Phi_i \comma
  \end{equation}
and that the last two terms above  can be bounded as follows:
for the first one, using H\"older and Sobolev inequalities, we have the estimate
  \begin{equation*}	
    \int_{\R^n}\rho^2\abs{\nabla \Phi_i}^2
    \le 
    \norm{\rho}_{L^{2^*}}^2\norm{\nabla\Phi_i}_{L^n}^2 \le \smallo(1)\norm{\nabla\rho}_{L^2}^2
  \end{equation*}
  while for the second one, since $\frac1{2^*}+\frac1{\infty}+\frac12+\frac1n=1$, we find
  \begin{equation*}
    \int_{\R^n}\rho\Phi_i\nabla\rho\cdot\nabla\Phi_i \le 
    \norm{\rho}_{L^{2^*}}\norm{\Phi_i}_{L^{\infty}}\norm{\nabla\rho}_{L^2}\norm{\nabla\Phi_i}_{L^n}
    \le \smallo(1)\norm{\nabla\rho}_{L^2}^2 \fullstop
  \end{equation*}
  Hence \cref{eq:spectral_tmp4} becomes
  \begin{equation}\label{eq:spectral_tmp5}
    \int_{\R^n}\abs{\nabla (\rho\Phi_i)}^2 
    \le \int_{\R^n}\abs{\nabla \rho}^2\Phi_i^2 + \smallo(1)\norm{\nabla\rho}_{L^2}^2 \fullstop
  \end{equation}
  Note that, as a consequence of \cref{lem:localization}-\cref{it:localization2}, we know that the various bump 
  functions $\Phi_i$ have disjoint supports, therefore
  \begin{equation}\label{eq:spectral_tmp6}
    \sum_{i=1}^{\nu}\int_{\R^n}\abs{\nabla\rho}^2\Phi_i^2 \le \int_{\R^n}\abs{\nabla\rho}^2 \fullstop
  \end{equation}
Thus, combining \cref{eq:spectral_tmp1,eq:spectral_tmp2,eq:spectral_tmp3,eq:spectral_tmp5,eq:spectral_tmp6} we 
  achieve
  \begin{align*}
    \int_{\R^n}\sigma^{p-1}\rho^2 
    &\le (1+\smallo(1))\sum_{i=1}^{\nu} \int_{\R^n} \Phi_i^2\rho^2 U_i^{p-1}
    +\smallo(1)\norm{\nabla\rho}_{L^2}^2 \\
    &\le \left(\frac1\Lambda+\smallo(1)\right)\sum_{i=1}^{\nu} \int_{\R^n} \abs{\nabla (\rho\Phi_i)}^2
    +\smallo(1)\norm{\nabla\rho}_{L^2}^2
    \le \left(\frac1\Lambda+\smallo(1)\right)\int_{\R^n}\abs{\nabla\rho}^2
  \end{align*}
  that implies the statement because $\Lambda>p$.
\end{proof}

\subsection{Interaction integral estimate}
\label{subsec:interaction}
\begin{proposition}\label{prop:interaction_and_coef}
Let $n\ge 3$ and $\nu\in\N$. For any $\hat \eps>0$ there exists $\delta=\delta(n,\nu,\hat \eps)>0$ 
  such that the following statement holds.
  Let $u=\sum_{i=1}^\nu\alpha_i U_i + \rho$, where the family $(\alpha_i,U_i)_{1\le i\le \nu}$ 
  is $\delta$-interacting, and $\rho$ satisfies both the orthogonality conditions 
  \cref{eq:orthogonality_U1,eq:orthogonality_U2,eq:orthogonality_U3} and the bound $\norm{\nabla\rho}_{L^2}\le 1$.
  Then, for any $1\le i\le \nu$, it holds
  \begin{equation}\label{eq:interaction_and_coef:coef}
    \abs{\alpha_i-1} \lesssim \hat \eps\norm{\nabla\rho}_{L^2} 
    + \norm{\lapl u + u\abs{u}^{p-1}}_{H^{-1}} + \norm{\nabla\rho}_{L^2}^{\min(2,p)}\comma
  \end{equation}
  and for any pair of indices $i\not=j$ it holds
  \begin{equation}\label{eq:interaction_and_coef:interaction}
    \int_{\R^n} U_i^pU_j \lesssim \hat \eps\norm{\nabla\rho}_{L^2} 
    + \norm{\lapl u + u\abs{u}^{p-1}}_{H^{-1}} + \norm{\nabla\rho}_{L^2}^{\min(2,p)} \fullstop
  \end{equation}
\end{proposition}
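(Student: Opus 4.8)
The plan is to \emph{test} the equation $\lapl u + u\abs{u}^{p-1}=0$ against suitably chosen localized functions, exactly as announced in Section~\ref{subsec:missing}. Fix an index $i$ and let $\Phi_i$ be the bump function produced by Lemma~\ref{lem:localization} for a parameter $\eps$ that we will send to $0$ together with $\delta$. The two natural test functions are $\Phi_i U_i$ and $\Phi_i \partial_\lambda U_i$. Testing against $\Phi_i U_i$ should, after expanding $u=\sum_j\alpha_j U_j+\rho$ and using $-\lapl U_i=U_i^p$, produce on the left-hand side a main term $\alpha_i\int U_i^{p+1}$ minus $\alpha_i\abs{\alpha_i}^{p-1}\int U_i^{p+1}$ (which vanishes to first order exactly when $\alpha_i=1$) plus the interaction terms $\sum_{j\ne i}\alpha_j\int U_i^p U_j$, while the right-hand side is controlled by $\norm{\lapl u+u\abs{u}^{p-1}}_{H^{-1}}\norm{\nabla(\Phi_i U_i)}_{L^2}\lesssim \norm{\lapl u+u\abs{u}^{p-1}}_{H^{-1}}$. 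The error terms coming from the cutoff (terms with $\nabla\Phi_i$) are $\smallo(1)$ by Lemma~\ref{lem:localization}\ref{it:localization3}, the terms quadratic and higher in $\rho$ are handled by the elementary inequalities \cref{eq:tmp_main_pos1,eq:tmp_main_pos2} and Sobolev, giving contributions $\lesssim\norm{\nabla\rho}_{L^2}^{\min(2,p)}$, and the cross term linear in $\rho$ is where the orthogonality conditions \cref{eq:orthogonality_U1,eq:orthogonality_U2,eq:orthogonality_U3} enter: $\int \rho\,U_i^{p-1}(\Phi_i U_i)$ is not literally zero but equals $\int\rho\,U_i^p(\Phi_i-1)$, which by Lemma~\ref{lem:localization}\ref{it:localization1} and H\"older is $\smallo(1)\norm{\nabla\rho}_{L^2}$; here is where the factor $\hat\eps\norm{\nabla\rho}_{L^2}$ in the statement originates.

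The subtlety is that testing against $\Phi_i U_i$ alone mixes the two unknowns we want to estimate, namely $\abs{\alpha_i-1}$ and the interactions $\int U_i^p U_j$. To decouple them I would test \emph{also} against $\Phi_i\partial_\lambda U_i$ (using $-\lapl\partial_\lambda U_i=pU_i^{p-1}\partial_\lambda U_i$): because $\partial_\lambda U_i$ changes sign, its pairing against $U_i^p$ vanishes, so this second test equation isolates a different linear combination of the quantities $\int U_i^{p-1}\partial_\lambda U_i\, U_j$ (one for each $j\ne i$) together with terms in $\alpha_j-1$, while the coefficient-diagonal term $\alpha_i\int U_i^{p-1}(\partial_\lambda U_i)^2\cdot(\dots)$ is structurally different. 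The key analytic input is the precise two-sided asymptotics of the interaction integrals, i.e. $\int U_i^p U_j\approx Q_{ij}^{(n-2)/2}$ where $Q_{ij}=\min(\lambda_i/\lambda_j,\lambda_j/\lambda_i,(\lambda_i\lambda_j\abs{z_i-z_j}^2)^{-1})$, from Proposition~\ref{prop:interaction_approx}, plus analogous estimates for $\int U_i^{p-1}\partial_\lambda U_i\,U_j$; together with the localization property Lemma~\ref{lem:localization}\ref{it:localization4} (which says each far, more-concentrated bubble $U_j$ is essentially \emph{constant} on $\{\Phi_i>0\}$, so that e.g. $\int_{\{\Phi_i>0\}}U_i^p U_j\approx U_j(z_i)\int U_i^p$ etc.), these let one read the leading behavior of every cross term and thereby invert the resulting (approximately triangular, after ordering the bubbles by concentration) linear system for the $\abs{\alpha_i-1}$'s and the interaction integrals, absorbing all genuinely higher-order contributions into the right-hand side of \cref{eq:interaction_and_coef:coef,eq:interaction_and_coef:interaction}.

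The main obstacle — and the most delicate bookkeeping — is precisely this inversion step: one must show that the matrix coupling the quantities $(\alpha_i-1)$ and $\int U_i^p U_j$ obtained from the two families of test equations is, after localization and up to $\smallo(1)$ errors, invertible with inverse bounded uniformly in $\delta$. Ordering the bubbles so that $\lambda_1\le\lambda_2\le\cdots\le\lambda_\nu$ and exploiting Lemma~\ref{lem:localization}\ref{it:localization4} makes the system essentially triangular (each localized test "sees" only the less-concentrated bubbles as constants and the more-concentrated ones are negligible on $\{\Phi_i=1\}$ by \ref{it:localization2}), so in fact one can argue by induction on the number of bubbles rather than inverting a matrix directly. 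A secondary technical point is keeping track of the correct power $\norm{\nabla\rho}_{L^2}^{\min(2,p)}$: when $p<2$ (i.e. $n\ge 7$) the term $\sigma^{p-2}\rho^2$ in \cref{eq:tmp_main_pos1} is absent (cf.\ Remark~\ref{rmk:every n}), so the cubic-in-$\rho$ contribution is replaced by $\norm{\nabla\rho}_{L^2}^p$, which is why the exponent $\min(2,p)$ appears — but note that this proposition, unlike Theorem~\ref{thm:main_close}, is stated for all $n\ge 3$, so both regimes must be carried through.
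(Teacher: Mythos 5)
Your proposal is correct and follows essentially the same route as the paper: test the expanded equation against the localized functions $\Phi_i U_i$ and $\Phi_i\partial_\lambda U_i$, use the orthogonality conditions via the $\Phi_i-1$ trick and Lemma~\ref{lem:localization} to control the error terms, treat the less-concentrated bubbles as constants on $\{\Phi_i>0\}$ by property \ref{it:localization4}, induct on the bubbles ordered by concentration, and decouple $\abs{\alpha_i-1}$ from the interaction integrals using the fact that $\int U^p\partial_\lambda U=0$ while $\int U^{p-1}\partial_\lambda U\neq 0$. This is precisely the paper's argument (including the $\min(2,p)$ bookkeeping), presented as a sketch rather than in full detail.
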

\begin{proof}
  In order to handle the cases $n\le 6$ and $n>6$ at the same time, in this proof we will highlight as 
  $\lowdim{E}$ the terms $E$ that appear in our estimates only when $n\le 6$ (that is when 
  $p\ge 2$).

  We consider $\delta$ as a parameter and we denote with $\smallo(1)$ any expression that goes to zero 
  when the parameter $\delta$ goes to zero. Similarly $\smallo(E)$ denotes any expression that, when divided
  by $E$, is $\smallo(1)$.
  
  Let $\lambda_1,\dots,\lambda_\nu > 0$ and $z_1,\dots z_\nu\in\R^n$ be the parameters such that 
  $U_i=U[z_i,\lambda_i]$ for any $1\le i\le\nu$. Without loss of generality we can assume $\lambda_i$ to be
  decreasing (i.e. $U_1$ is the most concentrated bubble).
  We prove the statement by induction on the index $i=1,\ldots,\nu$ (starting from the most concentrated bubble).
  
  Let us fix $1\le i\le\nu$ and let us assume to know the result for all smaller values of the index.
  For notational simplicity we denote $U=U_i$, $\alpha=\alpha_i$, and $V=\sum_{j\not=i}\alpha_j U_j$.
  Let $\Phi=\Phi_i$ be the bump function built in \cref{lem:localization} for a fixed $\eps>0$ that depends
  on $\delta$ (and is $\smallo(1)$ by definition). 
  
  Without loss of generality we can assume $U_i=U=U[0,1]$. Indeed, all the quantities involved ($\alpha-1$, 
  $\int_{\R^n} U_i^pU_j$, $\norm{\nabla\rho}_{L^2}$, $\norm{\lapl u + u\abs{u}^{p-1}}_{H^{-1}}$) are invariant
  under the action of the symmetries described in \cref{subsec:symmetries}.
  
We begin from the identity
  \begin{equation}\begin{aligned}\label{eq:color_identity}
    ({\alpha}{-\alpha^p})U^p {- p\alpha^{p-1}U^{p-1}V} 
    = &\,{\lapl\rho + (-\lapl u - u\abs{u}^{p-1})} {-\sum \alpha_i U_i^p} {+ p(\alpha U)^{p-1}\rho}\\
      &+ \left[{(\sigma +\rho)\abs{\sigma+\rho}^{p-1}} { - \sigma^p} {- p\sigma^{p-1}\rho}\right] \\
      &+ \left[{p\sigma^{p-1}\rho} { - p(\alpha U)^{p-1}\rho}\right] \\
      &+ \left[{(\alpha U+V)^p} {-(\alpha U)^p} {- p(\alpha U)^{p-1}V}\right]
    \fullstop
  \end{aligned}\end{equation}
  Exploiting \cref{lem:localization}-\cref{it:localization2} we can show that, in the region 
  $\{\Phi>0\}$, it holds
  \begin{align*}
    &\sum_{j\not=i} \alpha_j U_j^p = \smallo\left(U^{p-1}V\right) \comma \\
    &\abs*{(\sigma+\rho)\abs{\sigma +\rho}^{p-1} - \sigma^p - p\sigma^{p-1}\rho} 
    \lesssim \abs{\rho}^p + \lowdim{U^{p-2}\abs{\rho}^2} \comma \\
    &\abs*{p\sigma^{p-1}\rho-p(\alpha U)^{p-1}\rho} = \smallo(U^{p-1}\abs{\rho}) \comma \\
    &\abs*{(\alpha U+V)^p -(\alpha U)^p - p(\alpha U)^{p-1}V} = \smallo\left(U^{p-1}V\right) \fullstop
  \end{align*}
  Thus, applying these  estimates in \cref{eq:color_identity}, 
  we deduce that inside the region $\{\Phi>0\}$ one has
  \begin{equation}\label{eq:main_before_testing}
   \begin{split}
    \abs*{(\alpha-\alpha^p)U^p - (p\alpha^{p-1}+\smallo(1))U^{p-1}V 
    - \lapl\rho - (-\lapl u-u\abs{u}^{p-1}) - p(\alpha U)^{p-1}\rho} \\ 
    \lesssim \abs{\rho}^p + \lowdim{U^{p-2}\abs{\rho}^2} + \smallo(U^{p-1}\abs{\rho}) \fullstop
   \end{split}
  \end{equation}
  
  In the remaining part of this proof, all integrals are computed on the whole $\R^n$ and therefore, for notational convenience, 
  we do not write explicitly the domain of integration.
  
  Let $\xi$ be either $U$ or $\partial_\lambda U$. What follows holds for both choices.
  
First of all, thanks to the orthogonality conditions \cref{eq:orthogonality_H1,eq:orthogonality_H2,eq:orthogonality_U1,eq:orthogonality_U2}, recalling \cref{eq:u_solves_yamabe}, we know that
  \begin{equation}\label{eq:ortho}
    \int U^{p-1}\xi\rho = \int \nabla\xi\cdot\nabla\rho = 0\fullstop
  \end{equation}

  Let us test \cref{eq:main_before_testing} against $\xi\Phi$. We get
  \begin{equation}\label{eq:generic_testing}\begin{aligned}
    \phantom{x}&\abs*{\int \left[(\alpha-\alpha^p)U^p-(p\alpha^{p-1}+\smallo(1))U^{p-1}V\right]\xi\Phi} \\
    &\quad\quad\quad\lesssim \abs*{\int \nabla\rho\cdot\nabla(\xi\Phi)} 
	  + \abs*{\int (-\lapl u - u\abs{u}^{p-1})\xi\Phi} 
	  + \abs*{\int U^{p-1}\xi\rho\Phi} \\
	  &\quad\quad\quad + \int \abs{\rho}^p\abs{\xi}\Phi
	  + \lowdim{\int U^{p-2}\abs{\xi}\abs{\rho}^2\Phi}
	  + \smallo\left(\int U^{p-1}\abs{\xi}\abs{\rho}\Phi\right) \fullstop
  \end{aligned}\end{equation}
We now exploit \cref{eq:ortho} to bound all the terms appearing in the right-hand side of \cref{eq:generic_testing}:
  \begin{equation}\label{eq:interaction_tmp1}\begin{aligned}
    \phantom{x}&\abs*{\int \nabla\rho\cdot\nabla(\xi\Phi)}
	= \abs*{\int \nabla\rho\cdot\nabla(\xi(\Phi-1))}
        \le \norm{\nabla\rho}_{L^2}\norm{\nabla(\xi(\Phi-1))}_{L^2} \comma \\
    &\abs*{\int (-\lapl u - u\abs{u}^{p-1})\xi\Phi} 
	\le \norm{\lapl u + u\abs{u}^{p-1}}_{H^{-1}}\norm{\nabla(\xi\Phi)}_{L^2}\comma\\ 
    &\abs*{\int U^{p-1}\xi\rho\Phi} = \abs*{\int U^{p-1}\xi\rho(\Phi-1)} 
      \lesssim 
      \norm{\nabla\rho}_{L^2}
      \biggl(\int_{\{\Phi<1\}}\bigl(U^{p-1}\abs{\xi}\bigr)^{\frac{2^*}p}\biggr)^{\frac{p}{2^*}} \comma\\
    &\int \abs{\rho}^p\abs{\xi}\Phi \le \int\abs{\rho}^p \abs{\xi} 
    \lesssim \norm{\nabla\rho}^p_{L^2}\norm{\xi}_{L^{2^*}} \comma \\
    &\lowdim{\int U^{p-2}\abs{\xi}\abs{\rho}^2\Phi \le \int U^{p-2}\abs{\xi}\abs{\rho}^2
    \lesssim \norm{\nabla\rho}^2_{L^2}\norm{U^{p-2}\xi}_{L^\frac{2^*}{p-1}}} \comma \\
    &\int U^{p-1}\abs{\xi}\abs{\rho}\Phi \le \int U^{p-1}\abs{\xi}\abs{\rho}
      \lesssim \norm{\nabla\rho}_{L^2}\norm{U^{p-1}\xi}_{L^\frac{2^*}{p}}\fullstop
  \end{aligned}\end{equation}
  Moreover, since $\xi$ is equal either to $U$ or to $\partial_\lambda U$, we have that $\abs{\xi}\lesssim U$ pointwise (see \cref{eq:U_derlambda}).
  Hence, recalling \cref{lem:localization}-\cref{it:localization1} and 
  \cref{lem:localization}-\cref{it:localization3} we obtain
  \begin{equation}\label{eq:interaction_tmp2}\begin{aligned}
    &\norm{\nabla(\xi(\Phi-1))}_{L^2} = \smallo(1) \comma
    &&\norm{\nabla(\xi\Phi)}_{L^2} \lesssim 1 \comma
    &&\int_{\{\Phi<1\}}\bigl(U^{p-1}\abs{\xi}\bigr)^{\frac{2^*}p} = \smallo(1) \comma\\
    &\norm{\xi}_{L^{2^*}} \lesssim 1 \comma
    &&\norm{U^{p-2}\xi}_{L^\frac{2^*}{p-1}} \lesssim 1 \comma
    &&\norm{U^{p-1}\xi}_{L^\frac{2^*}{p}} \lesssim 1 \fullstop
  \end{aligned}\end{equation}
  Using the set of inequalities \cref{eq:interaction_tmp1,eq:interaction_tmp2}, it follows by
  \cref{eq:generic_testing} that
  \begin{equation}\label{eq:testing_with_orthogonality}\begin{aligned}
    \phantom{x}&\abs*{\int \left[(\alpha-\alpha^p)U^p-(p\alpha^{p-1}+\smallo(1))U^{p-1}V\right]\xi\Phi} \\
    &\quad\quad\quad\lesssim 
    \smallo(1)\norm{\nabla\rho}_{L^2} 
    + \norm{\lapl u + u\abs{u}^{p-1}}_{H^{-1}} 
    + \norm{\nabla\rho}_{L^2}^{\min(2,p)}
    \fullstop
  \end{aligned}\end{equation}
  Let us now split $V=V_1+V_2$ where $V_1\defeq\sum_{j<i}\alpha_j U_j$ and $V_2\defeq\sum_{j>i}\alpha_j U_j$.
  Since by induction we can assume that the statement of the proposition holds for all $j<i$, and recalling that
  $\int U_i^pU_j = \int \nabla U_i\cdot\nabla U_j = \int U_j^pU_i$ 
  and $\abs{\xi}\lesssim U$, we see
  \begin{equation}\label{eq:inter_estimate_tmp10}
    \int U^{p-1}V_1\abs{\xi}\Phi \lesssim \int U^{p}V_1 \lesssim \smallo(1)\norm{\nabla\rho}_{L^2} 
    + \norm{\lapl u + u\abs{u}^{p-1}}_{H^{-1}} 
    + \norm{\nabla\rho}_{L^2}^{\min(2,p)} \fullstop
  \end{equation}
  On the other hand, thanks to \cref{lem:localization}-\cref{it:localization4}, we know that
  \begin{equation}\label{eq:inter_estimate_tmp11}
    V_2(x)\Phi(x) = (1+\smallo(1))V_2(0)\Phi(x)
  \end{equation}
  for any $x\in\R^n$.

We now prove \cref{eq:interaction_and_coef:coef}.  
  If $\alpha=1$ there is nothing to prove, so we can assume that $\alpha \neq 1$
  and we define 
  $\theta\defeq\frac{p\alpha^{p-1}V_2(0)}{\alpha-\alpha^p}$. 
  Recalling \cref{eq:inter_estimate_tmp10,eq:inter_estimate_tmp11}, it follows by \cref{eq:testing_with_orthogonality}
  that
  \begin{equation}\label{eq:clean_testing}\begin{aligned}
 \abs{\alpha-\alpha^p}\abs*{\int (U^p-(1+\smallo(1))\theta U^{p-1})\xi\Phi} \lesssim 
    \smallo(1)\norm{\nabla\rho}_{L^2} 
    + \norm{\lapl u + u\abs{u}^{p-1}}_{H^{-1}} 
    + \norm{\nabla\rho}_{L^2}^{\min(2,p)}
    \fullstop
  \end{aligned}\end{equation}
  This latter inequality is very strong since it holds both with $\xi=U$ and $\xi=\partial_\lambda U$, with the constant $\theta$ independent of this choice. 
  Since $\Phi$ is identically $1$ on a large ball centered at $0$ where $U$ has almost all the mass (see \cref{lem:localization}-\cref{it:localization1}), we have
  \begin{equation}\label{eq:inter_estim_tmp100}
    \int \bigl(U^p-(1+\smallo(1))\theta U^{p-1}\bigr)\xi\Phi = \int U^p\xi-\theta\int U^{p-1}\xi + \smallo(1) \fullstop
  \end{equation}
  Our goal is to show that the right-hand side cannot be very small both when $\xi=U$ and 
  when $\xi=\partial_\lambda U$. In order to achieve this, it suffices to check that
  \begin{equation}\label{eq:inter_estim_tmp101}
    \frac{\int U^{2^*}}{\int U^p} \not
    = \frac{\int U^p\partial_\lambda U}{\int U^{p-1}\partial_\lambda U} \fullstop
  \end{equation}
  Note that the left-hand side is clearly positive, while the right-hand side is equal to zero. Indeed $\int U^p\partial_\lambda U$ is the derivative with respect to $\lambda$ of
  $\frac1{2^*}\int U[0,\lambda]^{2^*}$ which is independent of $\lambda$ (see  \cref{subsec:symmetries}), hence $\int U^p\partial_\lambda U=0$,
  while
  \begin{equation*}
    p\int U^{p-1}\partial_\lambda U 
    = \frac{\de}{\de\lambda}\Bigr|_{\lambda=1}\int U[0,\lambda]^p =
    \frac{\de}{\de\lambda}\Bigr|_{\lambda=1}\biggl(\lambda^{\frac{2-n}2}\int U[0, 1]^p \biggr)
    = 
    \frac{2-n}{2}\int U^p \not = 0 \fullstop
  \end{equation*}
Thus, as a consequence of \cref{eq:inter_estim_tmp100,eq:inter_estim_tmp101} we deduce that
  \begin{equation*}
    \max_{\xi\in\{U,\partial_\lambda U\}}
    \left(\abs*{\int (U^p-(1+\smallo(1))\theta U^{p-1})\xi\Phi}\right)
    \gtrsim 1
  \end{equation*}
  and therefore, choosing $\xi$ so that the maximum above is attained, \cref{eq:clean_testing} implies 
  \cref{eq:interaction_and_coef:coef}.
  
  Now that we have proven \cref{eq:interaction_and_coef:coef}, choosing $\xi=U$ in 
  \cref{eq:testing_with_orthogonality} we obtain
  \begin{equation*}
    \abs*{\int U^pV\Phi} 
    \lesssim 
    \smallo(1)\norm{\nabla\rho}_{L^2} 
    + \norm{\lapl u + u\abs{u}^{p-1}}_{H^{-1}} 
    + \norm{\nabla\rho}_{L^2}^{\min(2,p)}
  \end{equation*}
  and in particular
  \begin{equation*}
    \abs*{\int_{B(0,1)} U^pU_j} 
    \lesssim 
    \smallo(1)\norm{\nabla\rho}_{L^2} 
    + \norm{\lapl u + u\abs{u}^{p-1}}_{H^{-1}} 
    + \norm{\nabla\rho}_{L^2}^{\min(2,p)}
  \end{equation*}
  for any $j \not = i$. Thanks to \cref{cor:interaction_integral_localized}, we deduce 
  \cref{eq:interaction_and_coef:interaction} for all $j > i$. Since for $j<i$ we already know the validity
  of \cref{eq:interaction_and_coef:interaction} by the induction (recall that $\int U^pU_j = \int U_j^pU$),
  this concludes the proof.
\end{proof}

\newcommand\res{\mathcal E} 
\newcommand\nes{\mathcal F} 
\newcommand\rbasis{B_\res} 
\newcommand\nbasis{B_\nes} 
\newcommand\LW{L^2_{(U+V)^{p-1}}} 
\newcommand{\tf}{\ensuremath{\tilde f}}%
\section{Counterexample in dimension \texorpdfstring{$n\ge 6$}{strictly larger than 5}}\label{sec:counterexample}
In this section we show that \cref{thm:main_close} does not hold when the dimension $n$ is strictly above $5$. 
The exact statement we want to prove is the following.
\begin{theorem}\label{thm:counterexample}
  For any dimension $n\ge 6$, there exists a family of functions $u_R\in H^1(\R^n)$ parametrized by a positive
  real number $R>1$ such that the following statement holds.
  
  For any choice of the parameters $\alpha,\beta,\lambda_1,\lambda_2>0$ and $z_1,z_2\in\R^n$, if we denote 
  $\sigma'\defeq \alpha U[z_1,\lambda_1] + \beta U[z_2,\lambda_2]$, then
  \begin{equation*}
    \norm{\lapl u_R + u_R\abs{u_R}^{p-1}}_{L^{(2^*)'}} \lesssim
    \zeta_n(\norm*{\nabla u_R-\nabla\sigma'}_{L^2}) \comma
  \end{equation*}
  where $\zeta_n:\oo0\infty\to\oo0\infty$ is defined as
  \begin{equation*}
    \zeta_n(t) \defeq 
    \begin{cases}
      \frac{t}{\abs{\log(t)}}&\quad\text{if $n=6$,}\\
      t^{\frac{10}9}&\quad\text{if $n=7$,}\\
      t^{\frac65}\abs{\log(t)}&\quad\text{if $n=8$,}\\
      t^{\frac{n+4}{n+2}}&\quad\text{if $n>8$.}\\
    \end{cases}
  \end{equation*}
  Furthermore, when we let $R\to\infty$, the family $u_R$ satisfies
  \begin{equation*}
    \norm{\nabla u_R-\nabla U[-Re_1,1]-\nabla U[Re_1, 1]}_{L^2} \to 0 \comma
  \end{equation*}
  where $e_1=(1,0,\dots,0)$ is the first vector of the canonical basis of $\R^n$.
  
  In particular, \cref{thm:main_close} cannot hold when $n\ge 6$.
\end{theorem}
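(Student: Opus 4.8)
The plan is to take as approximate solution the symmetric pair of bubbles $\sigma_R\defeq U[-Re_1,1]+U[Re_1,1]$, which I abbreviate $U_1+U_2$, and to perturb it to $u_R\defeq\sigma_R+\rho$ by a carefully chosen small function $\rho=\rho_R$ that stays \emph{transverse} to the manifold $\mathcal M$ of sums of two Talenti bubbles (so that $\dist(u_R)\approx\norm{\nabla\rho}_{L^2}$), while at the same time $u_R$ solves $\lapl u+u\abs{u}^{p-1}=0$ far more accurately than $\sigma_R$ does. Write $f\defeq-(\lapl\sigma_R+\sigma_R^{p})=(U_1+U_2)^p-U_1^p-U_2^p$ for the error of $\sigma_R$; it is concentrated near the two bubbles and its size, in the relevant norms, is governed by a power of the interaction $\int_{\R^n}U_1^pU_2\approx R^{2-n}$. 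For $u_R=\sigma_R+\rho$ one has the exact identity
\begin{equation*}
  \lapl u_R+u_R\abs{u_R}^{p-1}
  =-f+\bigl(\lapl\rho+p\,\sigma_R^{p-1}\rho\bigr)
  +\bigl[(\sigma_R+\rho)\abs{\sigma_R+\rho}^{p-1}-\sigma_R^{p}-p\,\sigma_R^{p-1}\rho\bigr].
\end{equation*}
By the elementary inequality \cref{eq:tmp_main_pos1} and the identity $p(2^*)'=2^*$, the last bracket is quadratic in $\rho$, with $L^{(2^*)'}$-norm $\lesssim\norm{\nabla\rho}_{L^2}^{\min(2,p)}$; since $\min(2,p)>1$ and a one-line computation gives $t^{\min(2,p)}/\zeta_n(t)\to 0$ as $t\to 0^+$ for every $n\ge 6$, this term will be negligible. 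So the heart of the matter is to pick $\rho$ for which $\lapl\rho+p\,\sigma_R^{p-1}\rho$ cancels $f$ as much as possible.

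The obstruction to solving $\lapl\rho+p\,\sigma_R^{p-1}\rho=f$ is the approximate kernel of the linearized operator. Since $\partial_\lambda U_i$ and $\partial_{z_j}U_i$ are eigenfunctions of $\frac{-\lapl}{U_i^{p-1}}$ with eigenvalue $p$ (see \cref{eq:u_solves_yamabe}), for $R$ large the operator $\frac{-\lapl}{\sigma_R^{p-1}}$ has a cluster of $2(n+1)$ eigenvalues near $p$, and by the spectral theory of \cref{app:spectrum} this cluster is separated from the rest of the spectrum --- in particular from the eigenvalue $\approx 1$ carried by the $U_i$'s and from the third eigenvalue of the single-bubble operator --- by a gap that is \emph{uniform in $R$}. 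I would then let $\tilde f$ be the error $f$ with its component along this near-kernel removed: this is the implicitly defined datum of equation \cref{eq:def_rho}, ``implicit'' precisely because the exact eigenfunctions of the two-bubble operator spanning the near-kernel are only available as small, non-explicit perturbations of the $\partial_\lambda U_i,\partial_{z_j}U_i$. Solving $\lapl\rho+p\,\sigma_R^{p-1}\rho=\tilde f$ with $\rho$ orthogonal to the near-kernel, the uniform gap yields a solution $\rho$ with $\norm{\nabla\rho}_{L^2}\approx\norm{\tilde f}_{H^{-1}}$, which tends to $0$ as $R\to\infty$; and one checks, as in the setup of \cref{thm:main_close}, that the component of $\rho$ along the remaining tangent directions $U_1,U_2$ is of lower order, so that $0<\dist(u_R)\approx\norm{\nabla\rho}_{L^2}\to 0$. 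The final convergence $\norm{\nabla u_R-\nabla U[-Re_1,1]-\nabla U[Re_1,1]}_{L^2}=\norm{\nabla\rho}_{L^2}\to 0$ is then automatic.

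With this choice the identity above becomes $\lapl u_R+u_R\abs{u_R}^{p-1}=-(f-\tilde f)+(\text{quadratic})$, so everything reduces to estimating $\norm{f-\tilde f}_{L^{(2^*)'}}$, i.e. the size of the projection of $f$ onto the near-kernel. Here is the key --- and, as stressed above, counterintuitive --- cancellation: testing $f$ against $\partial_\lambda U_1$ (and similarly against $\partial_{z_j}U_1$), and using $\int_{\R^n}U_2^p\,\partial_\lambda U_1=\int_{\R^n}(-\lapl U_2)\,\partial_\lambda U_1=p\int_{\R^n}U_2\,U_1^{p-1}\partial_\lambda U_1$ together with $(U_1+U_2)^p-U_1^p=pU_1^{p-1}U_2+(\text{higher order})$, the leading interaction terms cancel \emph{exactly}, leaving only a remainder of higher order in the interaction. \textbf{The hard part will be} to estimate this remainder sharply, together with the contribution of the non-explicit eigenfunction corrections: a careful analysis of the resulting integrals --- using the asymptotics collected in \cref{app:computations} and tracking the borderline-divergent contributions responsible for the logarithms at $n=6$ and $n=8$ --- is exactly what produces the rates encoded in $\zeta_n$; this computation, and not the construction, is the genuinely technical step (the other being the uniform spectral gap of \cref{app:spectrum}). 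Granting it, $\norm{\lapl u_R+u_R\abs{u_R}^{p-1}}_{L^{(2^*)'}}\lesssim\zeta_n(\norm{\nabla\rho}_{L^2})\approx\zeta_n(\dist(u_R))$, and since $\zeta_n$ is increasing near $0$ the same bound holds with $\dist(u_R)$ replaced by $\norm{\nabla u_R-\nabla\sigma'}_{L^2}$ for any competitor $\sigma'$. Finally, were \cref{thm:main_close} true for some $n\ge 6$ (with $\nu=2$), applying it to $u_R$ --- legitimate for $R$ large, since the two bubbles are $\delta(R)$-interacting with $\delta(R)=\frac{1}{4R^2}\to 0$ and $\norm{\nabla u_R-\nabla\sigma_R}_{L^2}\to 0$ --- would give $\dist(u_R)\lesssim\norm{\lapl u_R+u_R\abs{u_R}^{p-1}}_{H^{-1}}\lesssim\zeta_n(\dist(u_R))$, hence $1\lesssim\zeta_n(\dist(u_R))/\dist(u_R)\to 0$ as $R\to\infty$, a contradiction.
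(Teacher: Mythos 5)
Your proposal reproduces the skeleton of the paper's construction: perturb $U+V$ by solving the linearized equation whose datum is $f=(U+V)^p-U^p-V^p$ with the near-kernel component removed, observe that the quadratic remainder is negligible (correctly, since $p\le 2$ for $n\ge 6$ and $t^{p}=\smallo(\zeta_n(t))$), and close with the contradiction against \cref{thm:main_close}. The genuine gap is that the central estimate, $\norm{f-\tilde f}_{L^{(2^*)'}}\lesssim\zeta_n(\norm{\nabla\rho}_{L^2})$, is deferred wholesale to ``the hard part'', and the mechanism you propose for it is not the one that works. You plan to exploit the cancellation $\int U_2^p\,\partial_\lambda U_1=p\int U_2U_1^{p-1}\partial_\lambda U_1$ against the \emph{explicit} approximate kernel elements; but this never identifies where $n\ge6$ enters, which is the entire content of the theorem. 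The paper uses no cancellation at all: since the projected-out subspace is finite dimensional, it bounds each coefficient brutally by Cauchy--Schwarz, $\abs{\scalprod{f}{\psi}_{L^2}}\le\norm{f}_{L^2}\norm{\psi}_{L^2}$, where the $L^2$-integrability of the eigenfunctions $\psi$ is itself a nontrivial fact obtained from Rellich's inequality (\cref{lem:eigenfunctions_integrability}, requiring $n\ge5$), and then everything reduces to the explicit comparison of $\norm{f}_{L^2}$ with $\norm{f}_{H^{-1}}$ (\cref{lem:normf,lem:normf_l2}); the relation $\norm{f}_{L^2}=\smallo(\norm{f}_{H^{-1}})$ holds precisely when $n\ge6$ and is the numerological heart of the counterexample. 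Your cancellation route would in any case need quantitative $L^2$-bounds on $\psi-\partial_\lambda U_i$ for the true (non-explicit) eigenfunctions $\psi$, i.e.\ exactly the machinery you have not supplied; moreover, for the eigenfunctions near eigenvalue $1$ (those close to $U_1,U_2$, which the paper also projects out) there is no cancellation whatsoever --- by convexity $\int fU_1\ge(p-1)\int U_1^pU_2$, which is of leading order in the interaction --- so these directions cannot be dismissed.

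The second unproven step is the lower bound $\norm{\nabla u_R-\nabla\sigma'}_{L^2}\gtrsim\norm{\nabla\rho}_{L^2}$ uniformly over \emph{all} competitors $\sigma'=\alpha U[z_1,\lambda_1]+\beta U[z_2,\lambda_2]$, which you dispatch with ``one checks''. Since $\rho$ is orthogonal only to exact eigenspaces of $\frac{-\lapl}{(U+V)^{p-1}}$ and not to the tangent space of the manifold at $U+V$, one must first prove that the two subspaces are $\smallo(1)$-close in the weighted $L^2$-norm (\cref{prop:realisnice}, resting on the eigenfunction perturbation and concentration results of \cref{app:spectrum}) and then carry out the expansion of $\norm{\nabla(\rho+\sigma-\sigma')}_{L^2}$ ruling out cancellation (\cref{prop:distance_is_rho}); this is the other main technical component of the argument. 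Relatedly, you never establish the non-degeneracy $\norm{\tilde f}_{H^{-1}}\approx\norm{f}_{H^{-1}}$, without which $\rho$ could be far too small and the counterexample would evaporate; in the paper this too follows from $\norm{f-\tilde f}_{H^{-1}}\lesssim\norm{f}_{L^2}=\smallo(\norm{f}_{H^{-1}})$, i.e.\ from the same $n\ge6$ computation you have omitted. In short, the architecture is right, but the proposal omits precisely the estimates that make the statement true for $n\ge6$ and false below it.
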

\begin{remark}
  As can be seen from the statement, we prove more than the failure of \cref{thm:main_close}. Indeed the 
  counterexample is stronger than needed for the following reasons:
  \begin{itemize}
  \item The $H^{-1}$-norm is replaced with the stronger $L^{(2^*)'}$-norm.
  \item When $n\ge 7$, we show the existence of an exponent $\gamma>1$ such that \cref{thm:main_close} remains 
  false even if $\norm{\lapl u+u\abs{u}^{p-1}}_{H^{-1}}$ is raised to the power $\frac{1}{\gamma'}$ with 
  $\gamma'<\gamma$. 
  This shows that the infimum of the exponents that make \cref{thm:main_close} true, if it exists, is 
  greater than $1$ when $n\ge 7$.
  \item We allow the freedom to choose the coefficients in front of the Talenti bubbles.
  \end{itemize}
  Moreover, the counterexample is ``as simple as it can be''. 
  In fact, in the neighborhood of a single bubble \cref{thm:main_close} holds in every dimension, as observed in the paragraph
  before \cref{cor:single_bubble}. 
  Therefore at least two bubbles are required for a counterexample, and indeed our construction uses exactly two bubbles (that is 
  $\nu=2$ in the statement of \cref{thm:main_close}).
\end{remark}

Nonetheless, \cref{thm:counterexample} is not entirely satisfying.
In fact, as mentioned in the introduction, Struwe's \cref{thm:struwe_intro} (and then \cref{cor:main_pos}) deal with nonnegative function.
This shortcoming is solved by the following theorem that shows the existence of a \emph{nonnegative} 
counterexample, which negates
also the validity of \cref{cor:main_pos} when the dimension is strictly larger than $5$.
\begin{theorem}\label{thm:counterexample_pos}
  For any dimension $n\ge 6$, there exists a family of nonnegative functions $u^+_R\in H^1(\R^n)$ parametrized 
  by a positive real number $R>1$ such that the following statement holds.
  
  For any choice of the parameters $\alpha,\beta,\lambda_1,\lambda_2>0$ and $z_1,z_2\in\R^n$, if we denote
  $\sigma'\defeq \alpha U[z_1,\lambda_1] + \beta U[z_2,\lambda_2]$, it holds
  \begin{equation*}
    \norm{\lapl u^+_R + (u^+_R)^p}_{H^{-1}} \lesssim
    \xi_n(\norm*{\nabla u^+_R-\nabla\sigma'}_{L^2}) \comma
  \end{equation*}
  where $\xi_n(t)\defeq \sqrt{t\zeta_n(t)}$, and $\zeta_n$ is the function defined in 
  \cref{thm:counterexample}.
  Furthermore, when we let $R\to\infty$, the family satisfies
  \begin{equation}
  \label{eq:R infty u+}
    \norm{\nabla u^+_R-\nabla U[-Re_1,1]-\nabla U[Re_1, 1]}_{L^2} \to 0 \comma
  \end{equation}
  where $e_1=(1,0,\dots,0)$ is the first vector of the canonical basis of $\R^n$.
  
  In particular, \cref{cor:main_pos} cannot hold when $n\ge 6$.
\end{theorem}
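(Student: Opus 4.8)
The plan is to obtain the nonnegative counterexample by taking the positive part of the signed family of \cref{thm:counterexample}. Let $u_R$ be as in that theorem, set $\sigma_R\defeq U[-Re_1,1]+U[Re_1,1]$, $\eta_R\defeq\norm{\nabla u_R-\nabla\sigma_R}_{L^2}\to 0$, and write $E_R\defeq\lapl u_R+u_R\abs{u_R}^{p-1}$ for the signed defect, so that $\norm{E_R}_{L^{(2^*)'}}\lesssim\zeta_n(d_R)$ where $d_R\defeq\dist(u_R)$ is the $\norm{\nabla\cdot}_{L^2}$-distance of $u_R$ from the set of sums of two bubbles (recall $d_R\le\eta_R\to 0$, and that by the construction $d_R\approx\eta_R$). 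I would take $u^+_R\defeq\max(u_R,0)=u_R+u^-_R$ as the candidate, with $u^-_R\defeq\max(-u_R,0)$. Since $\sigma_R\ge 0$ one has $u^-_R\le\abs{u_R-\sigma_R}$ pointwise, hence $\norm{u^-_R}_{L^{2^*}}\lesssim\eta_R$. The first task is to prove a quantitative bound $\norm{\nabla u^-_R}_{L^2}\to 0$: this will immediately give $\norm{\nabla u^+_R-\nabla\sigma_R}_{L^2}\le\norm{\nabla u^-_R}_{L^2}+\eta_R\to 0$ — the analogue of \cref{eq:R infty u+} — and, combined with the energy normalization of $u_R$, the bound $\tfrac32 S^n\le\int\abs{\nabla u^+_R}^2\le\tfrac52 S^n$ placing $u^+_R$ in the regime of \cref{cor:main_pos} with $\nu=2$.

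Next I would compute the defect of $u^+_R$. Since $u^+_R$ and $u^-_R$ have (up to a null set) disjoint supports, $u_R\abs{u_R}^{p-1}=(u^+_R)^p-(u^-_R)^p$ and therefore $\lapl u_R=E_R-(u^+_R)^p+(u^-_R)^p$. Kato's inequality produces a nonnegative measure $\mu_R\defeq\lapl\abs{u_R}-\operatorname{sgn}(u_R)\lapl u_R\ge 0$ concentrated on $\{u_R=0\}$, and one has $\lapl u^-_R=-\chi_{\{u_R<0\}}\lapl u_R+\tfrac12\mu_R$ (here $\lapl u_R\in L^{(2^*)'}$ denotes the absolutely continuous part). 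Plugging $\lapl u^+_R=\lapl u_R+\lapl u^-_R$, the singular and the $(u^-_R)^p$ contributions cancel and one is left with the clean identity
\[\lapl u^+_R+(u^+_R)^p=\chi_{\{u_R>0\}}E_R+\tfrac12\mu_R\fullstop\]
The first term is controlled outright: $\norm{\chi_{\{u_R>0\}}E_R}_{L^{(2^*)'}}\le\norm{E_R}_{L^{(2^*)'}}\lesssim\zeta_n(d_R)$. For the second, writing $\mu_R=2\lapl u^-_R+2\chi_{\{u_R<0\}}\lapl u_R$ and using $\chi_{\{u_R<0\}}\lapl u_R=\chi_{\{u_R<0\}}E_R+(u^-_R)^p$ one gets $\norm{\mu_R}_{H^{-1}}\lesssim\norm{\nabla u^-_R}_{L^2}+\norm{E_R}_{L^{(2^*)'}}+\norm{u^-_R}_{L^{2^*}}^p\lesssim\norm{\nabla u^-_R}_{L^2}+\zeta_n(d_R)$.

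It then remains to estimate $\norm{\nabla u^-_R}_{L^2}$. Since $u^-_R$ vanishes on $\{u_R=0\}$, the measure $\mu_R$ pairs to zero against $u^-_R$, so testing the identity $\lapl u^-_R=-\chi_{\{u_R<0\}}E_R-(u^-_R)^p+\tfrac12\mu_R$ against $u^-_R$ yields (recall $p+1=2^*$)
\[\int_{\R^n}\abs{\nabla u^-_R}^2=\int_{\{u_R<0\}}E_R\,u^-_R+\int_{\R^n}(u^-_R)^{2^*}\le\norm{u^-_R}_{L^{2^*}}\norm{E_R}_{L^{(2^*)'}}+\norm{u^-_R}_{L^{2^*}}^{2^*}\fullstop\]
Since $\norm{u^-_R}_{L^{2^*}}\lesssim\eta_R\to 0$, the last term is of lower order and one obtains $\norm{\nabla u^-_R}_{L^2}\lesssim\sqrt{\eta_R\,\zeta_n(d_R)}\lesssim\xi_n(\eta_R)$ (a short bootstrap using Sobolev on $u^-_R$ itself would in fact give $\norm{\nabla u^-_R}_{L^2}\lesssim\zeta_n(d_R)$, but $\xi_n$ already suffices). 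Substituting into the previous step, $\norm{\lapl u^+_R+(u^+_R)^p}_{H^{-1}}\lesssim\zeta_n(d_R)+\xi_n(\eta_R)\lesssim\xi_n(\eta_R)$; since $\xi_n$ is increasing and, for every competitor $\sigma'$, $\norm{\nabla u^+_R-\nabla\sigma'}_{L^2}\ge\dist(u^+_R)\ge d_R-\norm{\nabla u^-_R}_{L^2}\gtrsim d_R\approx\eta_R$, we conclude $\norm{\lapl u^+_R+(u^+_R)^p}_{H^{-1}}\lesssim\xi_n(\norm{\nabla u^+_R-\nabla\sigma'}_{L^2})$ for all $\sigma'$. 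As $\xi_n(t)/t\to 0$, this rules out any linear estimate, so \cref{cor:main_pos} fails for $n\ge 6$.

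The main obstacle is not this formal computation but its rigorous justification: the manipulations with $\abs{u_R}$, $\operatorname{sgn}(u_R)$ and the Kato measure $\mu_R$ — in particular the claim that $\mu_R$ is concentrated on $\{u_R=0\}$ (so that it annihilates $u^-_R$) and the integration by parts producing the energy identity — require that $u_R$ be regular enough (e.g.\ continuous, with a non-pathological zero level set), while the a priori information $\lapl u_R\in L^{(2^*)'}$ alone sits below the continuity threshold when $n$ is large. This should be dealt with using the explicit structure of $u_R$ from \cref{thm:counterexample} (a sum of smooth Talenti bubbles plus a perturbation solving a linear elliptic equation, whose regularity one bootstraps up to the needed level), or by a routine approximation — e.g.\ replacing $\abs{u_R}$ by $\sqrt{u_R^2+\varepsilon^2}-\varepsilon$ and letting $\varepsilon\to 0$, at the cost of lower-order errors.
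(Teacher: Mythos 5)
Your proposal is correct and follows essentially the same route as the paper: take $u^+_R=(u_R)^+$, control $\norm{\nabla u^-_R}_{L^2}$ via the energy identity $\int_{\R^n}\abs{\nabla u^-_R}^2=\int_{\{u_R<0\}}u_R(-\lapl u_R)$ to get a bound of order $\xi_n$, and then compare the defects and the distances of $u^+_R$ and $u_R$ (this is precisely \cref{lem:norm_rhominus} together with the proof of \cref{thm:counterexample_pos}). The only point where you diverge is the Kato-measure computation of $\lapl u^+_R$, which is an unnecessary detour: since $u^+_R=u_R+u^-_R$, one has the distributional identity $\lapl u^+_R+(u^+_R)^p=\bigl(\lapl u_R+u_R\abs{u_R}^{p-1}\bigr)+\lapl u^-_R+(u^-_R)^p$, and the last two terms are controlled in $H^{-1}$ by $\norm{\nabla u^-_R}_{L^2}$ and $\norm{u^-_R}_{L^{2^*}}^{p}$ respectively, which is exactly how the paper obtains \cref{eq:counterexample_pos2}. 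Consequently the regularity obstacle you flag at the end is not a genuine gap: no splitting of $\lapl\abs{u_R}$ into absolutely continuous and singular parts is ever needed, and the energy identity is just the chain rule $\nabla u^-_R=-\chi_{\{u_R<0\}}\nabla u_R$ for $H^1$ functions paired against $\lapl u_R\in L^{(2^*)'}\subseteq H^{-1}$, valid with no continuity assumption on $u_R$.
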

\begin{remark}
  Note that in \cref{thm:counterexample_pos} we use again the $H^{-1}$-norm
  (instead of the $L^{(2^*)'}$-norm used in 
  \cref{thm:main_close}).
  Moreover the dependence given by $\xi_n$ is slightly worse than the dependence given by $\zeta_n$ (that, most
  likely, was already non-optimal).
  
  The notation $u_R^+$ is significative not only of the nonnegativity of the counterexample, but also of 
  the way it is constructed: it is exactly the positive part of the counterexample $u_R$ built in 
  \cref{thm:counterexample}.
  
  Let us remark that, perturbing suitably a family of nonnegative functions that satisfies 
  \cref{thm:counterexample_pos}, we can easily obtain a family of \emph{positive} functions that
  still satisfies \cref{thm:counterexample_pos}. Hence \cref{thm:counterexample_pos} holds even if the 
  functions are required to be strictly positive.
\end{remark}

Let us sketch briefly how the counterexample is built. From now on we will not show explicitly the dependence
of our construction from the real parameter $R>0$ (so we will write $u$ in place of $u_R$). 
Moreover, unless stated otherwise,  the dimension $n$ will always be greater or equal than $6$.

Let us fix two Talenti bubbles $U=U[-Re_1, 1]$ and $V=[Re_1,1]$.
The idea is to linearize the equation $\lapl u + u\abs{u}^{p-1}=0$ when $u$ is close to $U+V$. Hence, let 
$u=U+V+\rho$.
We shall ask $\rho$ to be $H^1$-orthogonal to the manifold of linear combinations of two Talenti bubbles 
(see \cref{eq:orthogonality_H1,eq:orthogonality_H2,eq:orthogonality_H3}). Indeed, under this assumption we shall have that $\norm{\nabla\rho}_{L^2}\approx d(u)$, where $d(u)$ is the $H^1$-distance between
$u$ and the manifold of all linear combinations of two Talenti bubbles.

Thanks to the estimate (see \cref{eq:apriori_error_estimate} below)
\begin{align*}
  \norm{\lapl u + u\abs{u}^{p-1}}_{L^{(2^*)'}} 
  = \norm{\lapl\rho + \left((U+V)^p-U^p-V^p\right) + p(U+V)^{p-1}\rho}_{L^{(2^*)'}} + \bigo(\norm{\nabla \rho}_{L^2}^p) \comma
\end{align*}
if we were able to solve $\lapl\rho + \left((U+V)^p-U^p-V^p\right) + p(U+V)^{p-1}\rho=0$, we would have 
finished. Unfortunately this is not possible because there are some 
nontrivial obstructions (that are a consequence of the orthogonality conditions we are imposing on $\rho$) related to the spectrum of $\frac{-\lapl}{(U+V)^{p-1}}$. 

For this reason, we consider instead a perturbation $\tf$ of $f\defeq (U+V)^p-U^p-V^p$ such that
$\lapl\rho + \tf + p(U+V)^{p-1}\rho=0$ becomes solvable. We build $\tf$ as a suitable projection of $f$
onto a subspace of eigenfunctions of $\frac{-\lapl}{(U+V)^{p-1}}$. This will allow us to prove the desired controls on
$\rho$, from which we will deduce that $u=U+V+\rho$ is the sought counterexample.

To be precise, we should say that the function $\rho$ that we are going to construct does not satisfy exactly the orthogonality conditions 
mentioned above. Nonetheless, we will be able to show (through a series of delicate properties of eigenspaces of close-by operators) that it \emph{almost} satisfies these conditions, and this will be enough to show that the distance 
of $u$ from the manifold of linear combinations of two Talenti bubbles cannot be much less than 
$\norm{\nabla \rho}_{L^2}$.

Let us remark that our construction is not explicit, as it depends on the solution of a partial differential equation.

\subsection{Notation and definitions for the counterexample}\label{sub:notation_counterexample}
The dimension $n \ge 6$ will be considered fixed, and all constants are implicitly allowed to depend on the 
dimension.
Let us fix $\eps=\eps(n)$ such that $(1-\eps)^2=\frac{p}{\Lambda}$, where $\Lambda^{-1}$ is the largest
eigenvalue of $\left(\frac{-\lapl}{U[0,1]^{p-1}}\right)^{-1}$ below $\frac1p$.  Our choice of $\eps$ ensures that
\begin{equation}\label{eq:eigenvalue_separation}
  \frac{p}{p(1-\eps)^{-1}} = 1-\eps \quad \text{ and } \quad \frac{p(1-\eps)^{-1}}{\Lambda} = 1-\eps \fullstop
\end{equation}
Let $U=U[-Re_1, 1]$ and $V=[Re_1,1]$ be two Talenti bubbles.
Our constructions and definitions depend on a real parameter $R\gg1$. 
The dependence from $R$ will not be explicit in our notation (that is, we will not add an index $R$ 
everywhere).
Given two expressions $A$ and $B$ (that depend on $R$), the notation $A=\smallo(B)$ means
that there exists a function $\omega:\oo0\infty\to\oo0\infty$ such that $\omega(R)\to0$ when 
$R\to\infty$ and $A\le \omega(R) B$.

Let us now introduce two subspaces of $\LW(\R^n)$.

Let $\res$ be the subspace generated by all eigenfunctions of 
$\left(\frac{-\lapl}{(U+V)^{p-1}}\right)^{-1}$ with eigenvalue larger than $\frac{1-\eps}{p}$, and fix an orthonormal basis $\rbasis$ of $\res$ made of eigenfunctions of
$\left(\frac{-\lapl}{(U+V)^{p-1}}\right)^{-1}$. Note that, since this basis is made of eigenfunctions, the functions in
$\rbasis$ are orthogonal also with respect to the $H^1$-scalar product.

Let $\nes$ be the subspace generated by all eigenfunctions of 
$\left(\frac{-\lapl}{U^{p-1}}\right)^{-1}$ and $\left(\frac{-\lapl}{V^{p-1}}\right)^{-1}$ with eigenvalue 
larger than $\frac{1-\eps}{p}$. Let us recall (see \cite[Appendix]{bianchi1991}) that the set $\nbasis$ 
composed of the $2(1+1+n)$ functions
\begin{equation*}
  \nbasis
  \defeq
  \bigg\{U, \partial_{\lambda}U, \partial_{z_i}U, V, \partial_{\lambda}V, \partial_{z_i}V    
  \bigg\}
\end{equation*}
is a basis of $\nes$.
Such a basis is not orthonormal (or orthogonal) with respect to neither the $\LW$-scalar product nor 
the $H^1$-scalar product. 
Nonetheless, if we let $R$ go to infinity, the $\LW$-scalar product between two functions in $\nbasis$ 
converges to $0$ and the $\LW$-norms of all those functions converge to some positive values. The same 
properties hold also for the $H^1$-norm. We will refer to these properties saying that $\nbasis$  is 
\emph{asymptotically quasi-orthonormal} with respect to both the $\LW$-scalar product and the $H^1$-scalar 
product.

Let $\pi:\LW(\R^n)\to \LW(\R^n)$ be the projection on the subspace orthogonal to $\res$ (with respect 
to the $\LW$-scalar product), set $f\defeq(U+V)^p-U^p-V^p$, and define
\begin{equation*}
  \tf \defeq (U+V)^{p-1}\cdot\pi\left(\frac{f}{(U+V)^{p-1}}\right) \fullstop
\end{equation*}
\begin{remark}
  Our choice of $\tf$ ensures (by construction) that $\tf$ is orthogonal, in the $L^2$-scalar product, 
  to $\res$. 
  Since this is fundamentally the only property that we need on $\tf$, one could also be tempted to define $\tf$ as 
  the projection, with respect to the $L^2$-scalar product, onto the subspace $L^2$-orthogonal to $\res$. 
  These two definitions are not the same and we have chosen ours because it allows us to prove 
  \cref{lem:counterexample_deep} (while it is not clear to us how to prove \cref{lem:counterexample_deep} with the other definition).
\end{remark}

Let us define the function $\rho\in \LW(\R^n)$ as the unique solution of
\begin{equation}\label{eq:def_rho}
  \lapl\rho+p(U+V)^{p-1}\rho+\tf=0
\end{equation}
such that $\pi(\rho)=\rho$ (namely $\rho$ is orthogonal, in the $\LW$-scalar product, to $\res$).
The existence of $\rho$ can be justified as follows.
If we denote $T=\left(\frac{-\lapl}{(U+V)^{p-1}}\right)^{-1}$, then \cref{eq:def_rho} is equivalent to
\begin{equation*}
  (\id-pT)\rho = T\left(\pi\left(\frac{f}{(U+V)^{p-1}}\right)\right) \fullstop
\end{equation*}
Since $\pi$ is the projection into $\res^{\perp}$ and $\res$ is an eigenspace for the self-adjoint operator 
$T$, the right-hand side belongs to $\res^{\perp}$.
Let $T|_{\res^{\perp}}$ denote the restriction of the
operator $T$ on the subspace $\res^{\perp}$.
Since (by definition of $\res$) $T|_{\res^{\perp}}$ has eigenvalues strictly smaller than $\frac{1-\eps}{p}$, it follows that $\id-pT|_{\res^{\perp}}:\res^{\perp}\to \res^{\perp}$ is an isomorphism.
Thus there exists a unique solution $\rho \in \res^{\perp}$ given by
\begin{equation*}
\rho\defeq \left(\id-pT|_{\res^{\perp}}\right)^{-1}T|_{\res^{\perp}}\left(\pi\left(\frac{f}{(U+V)^{p-1}}\right) \right)\fullstop
\end{equation*}
With this definition, we set $u\defeq U+V+\rho$.

\subsection{First observations}
Before going into the technical details of the proof, let us remark some properties that will be crucial
later on.

First of all, thanks to \cref{lem:H1estimate}, since $\pi(\rho)=\rho$ it holds 
\begin{equation}\label{eq:normrho}
  \norm{\nabla \rho}_{L^2} \approx \norm{\tf}_{H^{-1}} \fullstop
\end{equation}
Also, recalling \cref{eq:def_rho}, the identity
\begin{equation}\label{eq:tmp_first_remarks1}
  \lapl u + u\abs{u}^{p-1}
  = \bigl[f-\tf\bigr] + \left[(U+V+\rho)\abs{U+V+\rho}^{p-1}-(U+V)^p-p(U+V)^{p-1}\rho\right] 
\end{equation}
holds.
Moreover, since $1<p\le 2$, we have the pointwise elementary estimate
\begin{equation}\label{eq:tmp_first_remarks2}
  \abs*{(U+V+\rho)\abs{U+V+\rho}^{p-1}-(U+V)^p-p(U+V)^{p-1}\rho} \lesssim \abs{\rho}^p\fullstop
\end{equation}
In particular, combining \cref{eq:tmp_first_remarks1,eq:tmp_first_remarks2} and applying the 
Sobolev inequality, we deduce
\begin{equation}\label{eq:apriori_error_estimate}
  \norm{\lapl u + u\abs{u}^{p-1}}_{L^{(2^*)'}} \le 
  \norm{f-\tf}_{L^{(2^*)'}} + \norm{\rho^p}_{L^{(2^*)'}}
  \lesssim 
  \norm{f-\tf}_{L^{(2^*)'}} + \norm{\nabla\rho}_{L^2}^p\fullstop
\end{equation}
Hence, the main challenge is to estimate $\norm{f-\tf}_{L^{(2^*)'}}$.
This is done in the next sections, where we prove the crucial estimate in \cref{lem:counterexample_fundamental}.

Finally, as already mentioned before, let us emphasize that our choice of $\rho$
does not satisfy the orthogonality conditions we would desire (namely $\rho$ is not
orthogonal to $\nes$ with respect to the $H^1$-scalar product).
Nonetheless, we will show that it is \emph{almost} orthogonal and this will suffice to deduce that the 
function $u$ provides the desired family of counterexamples.
For this the first important step is to estimate the distance between the subspaces 
 $\res$ and $\nes$. This is the purpose of the next subsection.

\subsection{\texorpdfstring{The subspaces $\res$ and $\nes$}{The two eigenspaces} are very close}
When $R>0$ is large, since $U^{p-1}$ and $V^{p-1}$ are concentrated in regions  distant one from
the other, it is natural to expect that the lowest section of the spectrum of $\frac{-\lapl}{(U+V)^{p-1}}$ is 
approximately the sum of the lowest parts of the spectra of $\frac{-\lapl}{U^{p-1}}$ and 
$\frac{-\lapl}{V^{p-1}}$.
Thus the two subspaces $\res$ and $\nes$ should have the same dimension and
be, in some appropriate sense, very close one to the other.
This subsection is devoted exactly to this: formalizing and proving the mentioned ansatz.

Let us begin introducing a distance between subspaces of a Hilbert space. This definition is classical, 
see \cite[Equation (3)]{morris2010} and the references therein for the proofs of the properties we 
are going to state.
\begin{definition}
  Let $E, F$ be two finite subspaces of a Hilbert space $X$. The distance $d(E,F)$ between them is defined as
  \begin{equation*}
    d(E, F) \defeq d_H\bigl(\{\abs{x}\le 1\}\cap E, \{\abs{x}\le 1\}\cap F \bigr) \comma
  \end{equation*}
  where $d_H$ denotes the Hausdorff distance.
\end{definition}
This notion of distance enjoys a number of nice properties. We list some of them:
\begin{itemize}
 \item For any two subspaces $E$ and $F$ it holds $d(E, F)\le 1$.
 \item If two subspaces $E$ and $F$ have different dimensions, then $d(E, F)=1$.
 \item There are several equivalent definitions of such distance involving orthogonal projections. More precisely,
 given two subspaces $E$ and $F$, let $\pi_E:X\to E$ and $\pi_F:X\to F$ be the orthogonal projections onto $E$ and 
 $F$, respectively. Then the following identity holds:
 \begin{equation*}
    d(E, F) = \norm{\pi_E-\pi_F}_{op} \comma
 \end{equation*}
 where $\norm{\emptyparam}_{op}$ is the operator norm.
 Moreover, it also holds
 \begin{equation*}
    d(E, F) = \max\left(
    \sup_{e\in E\setminus \{0\}} \frac{\abs{e-\pi_F(e)}}{\abs{e}},
    \sup_{f\in F\setminus \{0\}} \frac{\abs{f-\pi_E(f)}}{\abs{f}}
    \right) \fullstop
 \end{equation*}
 If $\dim{E} = \dim{F}$, then the two suprema in the last formula are equal and it 
 holds
 \begin{equation*}
    d(E, F) 
    = \sup_{e\in E\setminus \{0\}} \frac{\abs{e-\pi_F(e)}}{\abs{e}}
    = \sup_{f\in F\setminus \{0\}} \frac{\abs{f-\pi_E(f)}}{\abs{f}}
    \fullstop
 \end{equation*}
 \item The distance does not change if we replace $E,F$ with $E^{\perp}$ and $F^{\perp}$, that is
 \begin{equation*}
    d(E, F) = d(E^{\perp}, F^{\perp}) \fullstop
 \end{equation*}
\end{itemize}

We are ready to prove that $\res$ and $\nes$ are close with respect to the distance defined above. The proof is 
based on a series of technical tools that are postponed to \cref{app:spectrum}.

\begin{proposition}\label{prop:realisnice}
  For any sufficiently large $R$ we have $\dim\res=\dim\nes=2n+4$ and
  \begin{equation*}
    d(\res, \nes) = \smallo(1) \comma
  \end{equation*}
  where the distance on the subspaces is induced by the $\LW$-norm.
\end{proposition}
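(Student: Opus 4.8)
The plan is to reduce the statement to quantitative spectral-perturbation facts about the weighted operators $\frac{-\lapl}{U^{p-1}}$, $\frac{-\lapl}{V^{p-1}}$, and $\frac{-\lapl}{(U+V)^{p-1}}$, all of which I expect to be packaged in \cref{app:spectrum}. The weight $\w_R\defeq(U+V)^{p-1}$ satisfies $\w_R\to U^{p-1}$ (in $L^{n/2}$) near $-Re_1$ and $\w_R\to V^{p-1}$ near $Re_1$, and after the symmetries of \cref{subsec:symmetries} these are fixed weights; so the three operators live on an $R$-dependent family of weighted $L^2$ spaces that degenerates, as $R\to\infty$, into a ``disjoint union'' of two copies of the model operator $\frac{-\lapl}{U[0,1]^{p-1}}$. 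The key point for the dimension count is that the model operator has, by \cite[Appendix]{bianchi1991}, exactly $n+2$ eigenvalues $\ge \frac1p$ (namely $\frac1p$ itself with eigenspace spanned by $U,\partial_\lambda U, \partial_{z_i}U$, plus the top eigenvalue $1$ coming from $U$), and since $\frac{1-\eps}p$ was chosen strictly between the $(n+3)$-rd eigenvalue and $\frac1p$, counting eigenvalues $>\frac{1-\eps}p$ also gives exactly $n+2$ with a spectral gap on both sides. Hence $\dim\nes = 2(n+2)=2n+4$ is immediate from the definition of $\nes$ and the fact that $U^{p-1}$ and $V^{p-1}$ produce independent copies.

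For the $\res$ side, the plan is to show that for $R$ large the operator $\frac{-\lapl}{(U+V)^{p-1}}$ has exactly $2n+4$ eigenvalues $>\frac{1-\eps}p$. First I would produce a test-function lower bound: feeding the $2n+4$ functions of $\nbasis$ into the Rayleigh quotient of $\frac{-\lapl}{(U+V)^{p-1}}$, using that $\nbasis$ is asymptotically quasi-orthonormal (stated in \cref{sub:notation_counterexample}) and that, e.g., $\int U^{p-1}\abs{\partial_\lambda U}^2 V^{p-1}\text{-type cross terms} = \smallo(1)$ because the two bubbles are weakly interacting, one gets a $(2n+4)$-dimensional subspace on which the Rayleigh quotient is $\ge \frac1p - \smallo(1) > \frac{1-\eps}p$; by the min-max principle this forces at least $2n+4$ eigenvalues above $\frac{1-\eps}p$. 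For the matching upper bound I would use a localization/partition-of-unity argument exactly in the spirit of \cref{lem:localization}: write any $\psi$ with small Rayleigh quotient as $\psi = \Phi_U\psi + \Phi_V\psi + (\text{small})$, observe that on the support of $\Phi_U$ the weight $(U+V)^{p-1}$ is comparable to $U^{p-1}$ up to $(1+\smallo(1))$, and deduce that a subspace of dimension $2n+5$ with Rayleigh quotient $>\frac{1-\eps}p$ would, after localizing, contradict the spectral gap for the two model operators. This is a standard IMS-type localization estimate and the cutoff error is controlled by $\norm{\nabla\Phi}_{L^n}=\smallo(1)$ together with Sobolev, exactly as in the proof of \cref{prop:spectral_ineq}. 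Combining the two bounds gives $\dim\res = 2n+4 = \dim\nes$.

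With the dimensions equal, the distance estimate follows from an eigenvalue-gap perturbation argument. The cleanest route is: take any $x\in\res$ with $\norm{x}_{\LW}=1$; I want $\norm{x-\pi_\nes(x)}_{\LW}=\smallo(1)$, where $\pi_\nes$ is the $\LW$-orthogonal projection onto $\nes$. Localize $x=\Phi_U x + \Phi_V x + \smallo(1)$; each localized piece approximately solves the eigenvalue equation for the corresponding model operator with eigenvalue in $(\frac{1-\eps}p, 1]$ up to an $H^{-1}$-error of size $\smallo(1)$ (the error being the commutator $[\lapl,\Phi]$ applied to $x$, plus the weight-comparison error), so by the spectral gap of the model operator (eigenvalues either $\ge\frac1p-\smallo(1)$ inside the relevant band, or $\le$ the next eigenvalue which is bounded away from $\frac{1-\eps}p$) the almost-eigenfunction $\Phi_U x$ is $\smallo(1)$-close in $\LW$ to the span of $\{U,\partial_\lambda U,\partial_{z_i}U\}$ — i.e. to a piece of $\nbasis$ — and similarly for $\Phi_V x$. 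Hence $x$ is $\smallo(1)$-close to $\nes$; the symmetric statement (every unit vector of $\nes$ is $\smallo(1)$-close to $\res$) is proved the same way, and since $\dim\res=\dim\nes$ the two one-sided bounds combine via the projection-norm characterization $d(\res,\nes)=\norm{\pi_\res-\pi_\nes}_{op}$ to give $d(\res,\nes)=\smallo(1)$.

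The main obstacle I anticipate is not the soft min-max part but making the almost-eigenfunction/spectral-gap step quantitative with the correct error bookkeeping: one must know that there is a genuine gap of the model operator $\frac{-\lapl}{U[0,1]^{p-1}}$ separating the eigenvalues in $(\frac{1-\eps}p,1]$ from the rest of the spectrum (both from below — the next eigenvalue is strictly less than $\frac{1-\eps}p$ — and the fact that there is nothing between $\frac1p$ and the top eigenvalue that would interfere), and one must control how the weight perturbation $(U+V)^{p-1}$ versus $U^{p-1}$ on $\{\Phi_U>0\}$ feeds into the Rayleigh quotient; this is where the quantitative tools of \cref{app:spectrum} (continuity of eigenvalues and eigenspaces under $L^{n/2}$-perturbations of the weight, plus the localization lemmas) do the real work, and I would simply invoke them.
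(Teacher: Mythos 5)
Your proposal is correct, and for the heart of the matter (the two one-sided closeness statements) it follows the same route as the paper: cut an eigenfunction of $\frac{-\lapl}{(U+V)^{p-1}}$ into pieces supported near each bubble, show that each piece is an approximate eigenfunction of the corresponding single-bubble operator, and invoke the spectral gap built into the choice of $\eps$ together with the quantitative tools of \cref{app:spectrum} (\cref{lem:approximation_eigenfunctions}, \cref{prop:restriction_eigenfunction}, \cref{lem:concentration_eigenfunctions}, \cref{cor:rellich2}). Two structural differences are worth pointing out. First, your separate min--max count of the eigenvalues of $\frac{-\lapl}{(U+V)^{p-1}}$ above the threshold is superfluous: once every unit vector of $\res$ is $\smallo(1)$-close to $\nes$ and vice versa, one has $d(\res,\nes)=\smallo(1)<1$ directly from the characterization of $d$ as the maximum of the two suprema (no equality of dimensions is needed to use that formula), and since subspaces of different dimensions are at distance $1$ this already forces $\dim\res=\dim\nes=2n+4$. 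The paper exploits exactly this and never counts eigenvalues of the combined operator; your IMS-type upper bound would also work, but it is the delicate half of the min--max argument and you get its conclusion for free. Second, the direction ``$\nbasis$ is close to $\res$'' does not require any localization: an element $\psi\in\nbasis$ with $-\lapl\psi=\lambda U^{p-1}\psi$ is already, globally, an approximate eigenfunction of $\frac{-\lapl}{(U+V)^{p-1}}$, with error $\lambda\bigl(U^{p-1}-(U+V)^{p-1}\bigr)\psi$ controlled in $L^2$ and converted by \cref{cor:rellich2} into the quantity appearing in \cref{lem:approximation_eigenfunctions}; only the hard direction ($\rbasis$ close to $\nes$) needs the cutoffs and \cref{prop:restriction_eigenfunction}, followed by the upgrade from the $L^2_{U^{p-1}}$-norm to the $\LW$-norm using the supports of the cutoffs and \cref{lem:concentration_eigenfunctions} for the leftover annular region. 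These are simplifications rather than corrections; your argument as outlined would go through.
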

\begin{proof}
  Let us notice that the subspace $\nes$ is the direct sum of the two subspaces $\nes_U$ and $\nes_V$
  that are generated by the eigenfunctions of $\left(\frac{-\lapl}{U^{p-1}}\right)^{-1}$ and 
  $\left(\frac{-\lapl}{V^{p-1}}\right)^{-1}$ with eigenvalue larger than $\frac{1-\eps}p$.
  The two bases $\rbasis$ and $\nbasis$ are respectively orthonormal and asymptotically quasi-orthonormal with
  respect to the $\LW$-scalar product (see \cref{sub:notation_counterexample}), hence the statement is equivalent to showing the following two facts:
  \begin{enumerate}[label=\textit{(\arabic*)}]
   \item \label{it:hard_close_EF} For any $\psi \in\rbasis$ there exists $\psi'\in\nes$ such that 
   $\norm{\psi-\psi'}_{\LW}=\smallo(1)$.
   \item \label{it:easy_close_EF} For any $\psi \in\nbasis$ there exists $\psi'\in\res$ such that 
   $\norm{\psi-\psi'}_{\LW}=\smallo(1)$.
  \end{enumerate}
 We begin by proving \cref{it:easy_close_EF}.
  Without loss of generality we can assume $\psi\in\nbasis\cap\nes_U$. Therefore it holds
  $-\lapl \psi = \lambda U^{p-1}\psi$ with $\lambda\in\{1, p\}$, and rearranging the terms we get
  \begin{equation*}
    -\lapl\psi - \lambda (U+V)^{p-1}\psi = \lambda\left(U^{p-1}-(U+V)^{p-1}\right)\psi \fullstop
  \end{equation*}
  We can now apply \cref{lem:approximation_eigenfunctions} in conjunction with 
  \cref{cor:rellich2} to obtain
  \begin{equation*}
    \sum_k \alpha_k^2\left(1-\frac{\lambda}{\lambda_k}\right)^2 
    \lesssim \norm{\left(U^{p-1}-(U+V)^{p-1}\right)\psi}_{L^2} = \smallo(1) \comma
  \end{equation*}
  where $\psi = \sum\alpha_k\psi_k$ and $(\lambda_k^{-1},\psi_k)$ is the sequence of eigenvalues and normalized
  eigenfunctions of the operator $\left(\frac{-\lapl}{(U+V)^{p-1}}\right)^{-1}$. Choosing $\psi'\defeq\pi_\res(\psi)$ as the
  projection of $\psi$ onto $\res$ we have
  \begin{equation*}
    \psi' = \sum_{\lambda_k < p(1-\eps)^{-1}}\alpha_k\psi_k\comma
  \end{equation*}
  and thus
  \begin{equation*}
    \norm{\psi-\psi'}_{\LW}^2 = \sum_{\lambda_k \ge p(1-\eps)^{-1}}\alpha_k^2
    \le \frac1{\eps^2}\sum_{\lambda_k \ge p(1-\eps)^{-1}}
    \alpha_k^2 \left(1-\frac{\lambda}{\lambda_k}\right)^2
    = \smallo(1) \comma
  \end{equation*}
  that is exactly the statement of \cref{it:easy_close_EF}.
  
  The proof of \cref{it:hard_close_EF} is similar to the one of \cref{it:easy_close_EF}; the main difference
  being that, instead of \cref{lem:approximation_eigenfunctions}, we will use 
  \cref{prop:restriction_eigenfunction}.
  Let us consider $\psi\in\rbasis$ that solves $-\lapl\psi = \lambda (U+V)^{p-1}\psi$ 
  with $0< \lambda < p(1-\eps)^{-1}$. Define two functions $\varphi_U$ and $\varphi_V$ as
  \begin{align*}
    \varphi_U(x) \defeq \psi(x)\,\eta\left(\frac{x+Re_1}{R/2}\right) \comma\qquad
    \varphi_V(x) \defeq \psi(x)\,\eta\left(\frac{x-Re_1}{R/2}\right) \comma
  \end{align*}
  where $\eta$ is a smooth bump function as described in the statement of 
  \cref{prop:restriction_eigenfunction}.
  If we apply \cref{prop:restriction_eigenfunction} and we follow the same reasoning we have used to prove
  \cref{it:easy_close_EF} (recalling \cref{eq:eigenvalue_separation}), we find two functions 
  $\psi_U\in\nes_U$ and $\psi_V\in\nes_V$ such that
  \begin{align*}
    \norm{\psi_U-\varphi_U}_{L^2_{U^{p-1}}} = \smallo(1) \comma \qquad
    \norm{\psi_V-\varphi_V}_{L^2_{V^{p-1}}} = \smallo(1) \fullstop
  \end{align*}
  Since $\varphi_U$ is supported inside the set $\{U\ge V\}$ and $\varphi_V$ is supported inside $\{V\ge U\}$, the previous estimates 
  can be upgraded to
  \begin{equation}\label{eq:close_subspace_tmp}\begin{aligned}
    &\norm{\psi_U-\varphi_U}_{\LW} 
    \lesssim \norm{\psi_U-\varphi_U}_{L^2_{U^{p-1}}} + 
    \norm{\psi_U}_{L^2_{V^{p-1}}}
    = \smallo(1) \comma \\
    &\norm{\psi_V-\varphi_V}_{\LW} 
    \lesssim \norm{\psi_V-\varphi_V}_{L^2_{V^{p-1}}} 
    + \norm{\psi_V}_{L^2_{U^{p-1}}}
    = \smallo(1) \fullstop
  \end{aligned}\end{equation}
  Let us define $\psi'\defeq\psi_U+\psi_V$. Of course it holds $\psi'\in\nes$. Then, by the triangle inequality
  and \cref{eq:close_subspace_tmp}, we obtain
  \begin{align*}
    \norm{\psi-\psi'}_{\LW} 
    &\le \smallo(1) + \norm{\psi-\varphi_U-\varphi_V}_{\LW} \\
    &\le \smallo(1) + \int_{\R^n\setminus\left[B(-Re_1,R/2)\cup B(Re_1,R/2)\right]} \psi^2(U+V)^{p-1} \fullstop
  \end{align*}
  The proof is finished since also the last  integral is $\smallo(1)$ thanks to 
  \cref{lem:concentration_eigenfunctions}.
\end{proof}

\begin{remark}
  The statement of \cref{prop:realisnice} can be generalized to cover much more general situations. 
  Even if we will not need it, let us give a possible generalization.
  
  For a fixed $n\ge 5$, let $(w_k)_{k\in\N}\subseteq L^{\frac n2}(\R^n)$ be a sequence of positive 
  weights of the form $w_k=\sum_{i=1}^{\nu} U[x^{(k)}_i, \lambda_i]^{p-1}$, where $(\lambda_i)_{1\le i\le \nu}$ are fixed and 
  $(x^{(k)}_i)_{1\le i\le \nu,k\in\N}$ is a sequence of $\nu$-tuples of points in $\R^n$ such that 
  $\abs{x^{(k)}_i-x^{(k)}_j}\to\infty$ for any $i\not = j$. 
  For a fixed $\mu>0$, let $\res^{(k)}_\mu$ be the subspace of the eigenfunctions of 
  $\left(\frac{-\lapl}{w_k}\right)^{-1}$ with eigenvalue greater or equal than $\mu^{-1}$,
  and let $\nes^{(k)}_{\mu,i}$ be the subspace of the eigenfunctions of 
  $\left(\frac{-\lapl}{U[x^{(k)}_i, \lambda_i]^{p-1}}\right)^{-1}$ with eigenvalue greater or
  equal than $\mu^{-1}$.
  Then, if $\mu^{-1}$ is not an eigenvalue for $\left(\frac{-\lapl}{U[0,1]^{p-1}}\right)^{-1}$, it holds
  \begin{equation*}
    d\left(\res^{(k)}_\mu, 
    \nes^{(k)}_{\mu,1}\oplus\cdots\oplus\nes^{(k)}_{\mu,\nu}\right) 
    \to 0 \comma
  \end{equation*}
  where the distance between subspaces is induced by the $L^2_{w_k}$-norm.
  
  With some care it would be possible to obtain a similar result also for weights much more 
  general than finite sums of Talenti bubbles (in the spirit of \cref{prop:restriction_eigenfunction}). 
  We will not do that, as it is out of the scope of this note.
\end{remark}

\subsection{The norm of \texorpdfstring{$\rho$}{rho} is asymptotically larger than \texorpdfstring{$||\lapl u + u|u|^{p-1}||_{L^{(2^*)'}}$}{the discrepancy}}
Our intuition tells us that $\norm{\nabla \rho}_{L^2}$ gives a good approximation of the $H^1$-distance
of $u$ from the manifold of linear combinations of two Talenti bubbles. Thus, since our final goal is
proving that such a distance is asymptotically larger than $\norm{\lapl u + u\abs{u}^{p-1}}_{L^{(2^*)'}}$, we 
devote this section to the proof that $\norm{\nabla \rho}_{L^2}$ is asymptotically larger than 
$\norm{\lapl u + u\abs{u}^{p-1}}_{L^{(2^*)'}}$.

Our approach is very direct: we compute all the involved quantities and, in the end, compare them.

Let us emphasize that the elementary and explicit estimate
\begin{equation*}
  \norm{f}_{L^2} = \smallo(\norm{f}_{H^{-1}}) \comma
\end{equation*}
where $f=(U+V)^p-U^p-V^p$, is fundamentally equivalent to what we want to prove.
The validity of the mentioned estimate (that follows from \cref{lem:normf,lem:normf_l2} below) depends heavily
on the dimensional condition $n\ge 6$.

\begin{lemma}\label{lem:normf}
  It holds
  \begin{equation*}
    \norm{f}_{H^{-1}} \gtrsim 
    \begin{cases}
      R^{-4}\log(R)^{\frac12} \quad&\text{if $n=6$,}\\
       R^{-\frac{n+2}2} \quad&\text{if $n\ge 7$.}
    \end{cases}
  \end{equation*}
\end{lemma}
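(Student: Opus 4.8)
The plan is to bound $\norm{f}_{H^{-1}}$ from below by comparing $f$ with an explicit model function, using that since $f\ge 0$ the $H^{-1}$-norm is monotone with respect to the pointwise order on nonnegative functions: indeed $\norm{g}_{H^{-1}}^2=c_n\iint_{\R^n\times\R^n}\abs{x-y}^{2-n}g(x)g(y)\,\de x\,\de y$ with $c_n>0$, so $0\le g\le g'$ implies $\norm{g}_{H^{-1}}\le\norm{g'}_{H^{-1}}$. First I would record the pointwise estimate on $f$. Since $p>1$, the convexity of $t\mapsto t^p$ gives $(a+b)^p-a^p-b^p\ge (p-1)a^{p-1}b$ for $0\le b\le a$, and $V\le U$ on the half-space $\{x_1\le 0\}$; hence $f\ge (p-1)U^{p-1}V$ there. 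On the ball $B\defeq B(-Re_1,R/2)$ one has $x_1<-R/2<0$ and $\abs{x-Re_1}\le\tfrac52 R$, so $V(x)\gtrsim R^{-(n-2)}$, while $U^{p-1}(x)=n(n-2)\,(1+\abs{x+Re_1}^2)^{-2}\gtrsim\min(1,\abs{x+Re_1}^{-4})$ (using $(n-2)(p-1)/2=2$). Therefore
\begin{equation*}
  f\ \gtrsim\ R^{-(n-2)}\,h \comma\qquad\text{where }h(x)\defeq\min\bigl(1,\abs{x+Re_1}^{-4}\bigr)\text{ for }x\in B\text{ and }h(x)\defeq 0\text{ for }x\notin B\fullstop
\end{equation*}

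By the monotonicity recalled above it then suffices to bound $\norm{h}_{H^{-1}}^2=\int_{\R^n}h\,(-\lapl)^{-1}h$ from below. A direct estimate of the Newtonian potential shows $(-\lapl)^{-1}h(x)\gtrsim\min(1,\abs{x+Re_1}^{-2})$ whenever $\abs{x+Re_1}<R$: for $1\le\abs{x+Re_1}\le R$ one restricts the potential integral to $\{\abs{y+Re_1}<\abs{x+Re_1}/2\}$, where $\abs{x-y}\approx\abs{x+Re_1}$, and uses the mass growth $\int_{\{\abs{z}<r\}}\min(1,\abs{z}^{-4})\,\de z\approx r^{\,n-4}$ for $1\le r\le R$ (this is where the slow decay of $U^{p-1}$ enters). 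Feeding this lower bound back in,
\begin{equation*}
  \norm{h}_{H^{-1}}^2\ \gtrsim\ \int_{\{1<\abs{z}<R/2\}}\min(1,\abs{z}^{-4})\,\abs{z}^{-2}\,\de z\ \approx\ \int_1^{R/2}r^{\,n-7}\,\de r\fullstop
\end{equation*}
Up to constants this last integral equals $\log R$ when $n=6$ and $R^{\,n-6}$ when $n\ge 7$. Combining with $f\gtrsim R^{-(n-2)}h$ and monotonicity gives $\norm{f}_{H^{-1}}\gtrsim R^{-(n-2)}\norm{h}_{H^{-1}}$, i.e. $\norm{f}_{H^{-1}}\gtrsim R^{-4}\log(R)^{1/2}$ for $n=6$ and $\norm{f}_{H^{-1}}\gtrsim R^{-(n-2)}\cdot R^{(n-6)/2}=R^{-(n+2)/2}$ for $n\ge 7$, which is the claim.

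The auxiliary facts used here (the explicit form of $U^{p-1}$, the mass growth $\int_{\{\abs{z}<r\}}\min(1,\abs{z}^{-4})\approx r^{n-4}$, and the pointwise lower bound for the Newtonian potential of $h$) are elementary and of the kind collected in \cref{app:computations}. The one conceptually important point — and the main thing to get right — is that one must \emph{not} simply test $f$ against $U$, which only yields the weaker bound $\norm{f}_{H^{-1}}\gtrsim R^{-(n-2)}$, but rather against (the Newtonian potential of) the \emph{entire} slowly decaying tail $U^{p-1}\mathbf 1_B$, whose relevant integral $\int_1^{R}r^{n-7}\,\de r$ diverges as $R\to\infty$ precisely when $n\ge 6$ (it stays bounded for $n\le 5$, which is exactly why no anomalous growth of $\norm{f}_{H^{-1}}$ occurs in low dimension). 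The only mildly delicate bookkeeping is that the truncation defining $B$ must be placed at scale $R$, and this is what produces the correct power of $R$ and the logarithm in dimension $6$.
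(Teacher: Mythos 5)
Your proof is correct, but it takes a genuinely different route from the paper. The paper argues by duality with two separate test functions: for $n\ge 7$ it pairs $f$ with a bump $\eta(\cdot/(4R))$ supported in a far-field region where $f\approx R^{-n-2}$, which yields $R^{-(n+2)/2}$ in one line but loses the logarithm at $n=6$; for $n=6$ it instead tests against $f^{1/2}$ and evaluates $\int U^{3/2}V^{3/2}$ and $\norm{\nabla(f^{1/2})}_{L^2}$ explicitly via \cref{prop:interaction_approx,prop:integral_simple_bubbles}. You instead use the positive-kernel representation $\norm{g}_{H^{-1}}^2=\int g\,(-\lapl)^{-1}g$ together with its monotonicity on nonnegative functions, reduce $f$ to the explicit model $R^{-(n-2)}\min(1,\abs{x+Re_1}^{-4})\mathbf 1_{B(-Re_1,R/2)}$ via the convexity bound $(a+b)^p-a^p-b^p\ge(p-1)a^{p-1}b$ for $b\le a$, and compute the energy of that model by hand; all the intermediate estimates (the mass growth $r^{n-4}$, the potential bound $r^{-2}$, and the final integral $\int_1^{R/2}r^{n-7}\,\de r$) check out, and the exponents recombine correctly to $R^{-4}\log(R)^{1/2}$ for $n=6$ and $R^{-(n+2)/2}$ for $n\ge 7$. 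What your approach buys is uniformity in the dimension and a transparent explanation of the threshold (the divergence of $\int_1^R r^{n-7}\,\de r$ exactly for $n\ge 6$); what the paper's buys is brevity in the case $n\ge7$ and avoidance of any discussion of the Newtonian kernel, at the cost of needing an ad hoc test function for $n=6$. Both arguments ultimately extract the lower bound from the same slowly decaying tail $U^{p-1}V$.
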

\begin{proof}
  First we deal with the easier case $n\ge 7$.
  Let us fix a smooth bump function 
  $\eta\in C_c^{\infty}(\R^n)$ such that $0\le \eta\le 1$ everywhere, $\eta\equiv 1$ in $B(1,\frac14)$, and
  $\eta\equiv 0$ in $B(1,\frac12)^{\complement}$. 
  In $B(4Re_1, R)$ the function $f$ is comparable to $R^{-n-2}$ (that coincides with the
  decay of $U[0,1]^p$), hence
  it holds
  \begin{equation*}
    R^{-2} = R^n \cdot R^{-n-2}
    \lesssim \int_{\R^n} f(x)\, \eta\left(\frac x{4R}\right)\de x
    \le \norm{f}_{H^{-1}} \norm*{\nabla \left(\eta\left(\frac{\emptyparam}{4R}\right)\right)}_{L^2}
    \lesssim R^{\frac{n-2}2}\norm{f}_{H^{-1}}
  \end{equation*}
  that gives
  \begin{equation*}
    R^{-\frac{n+2}2} \lesssim \norm{f}_{H^{-1}} \comma
  \end{equation*}
  as desired.
  
  When $n=6$, we prove the result testing $f$ against the function $f^{\frac12}$.
  Let us remark that, since $n=6$, it holds $2^*=3$, $p=2$, and thus in particular $f=2UV$.
  
  We have
  \begin{equation}\label{eq:normf_tmp1}
    \int_{\R^6} U^{\frac32}V^{\frac32} \lesssim
    \norm{f}_{H^{-1}}\left(\int_{\R^6}\abs{\nabla(f^{\frac12})}^2\right)^{\frac12} \fullstop
  \end{equation}
  We estimate independently the left-hand side and the right-hand side.
  
  Applying \cref{prop:interaction_approx} with parameters $\alpha=\beta=\frac32$ yields
  \begin{equation}\label{eq:normf_tmp2}
    \int_{\R^6} U^{\frac32}V^{\frac32} \approx R^{-6}\log(R) \fullstop
  \end{equation}
On the other hand it holds
  \begin{equation*}
    \abs{\nabla(f^{\frac12})}^2 \approx \abs{\nabla U}^2U^{-1}V + \abs{\nabla V}^2V^{-1}U
  \end{equation*}
  and, since $\abs{\nabla U} \approx U^{\frac{n-1}{n-2}}=U^{\frac54}$, we obtain
  \begin{equation*}
    \abs{\nabla(f^{\frac12})}^2 \approx U^{\frac32}V + UV^{\frac32} \fullstop
  \end{equation*}
  Thus, applying \cref{prop:integral_simple_bubbles} with $a=c=\frac32$ and $b=d=1$, we deduce
  \begin{equation}\label{eq:normf_tmp3}
    \left(\int_{\R^6}\abs{\nabla(f^{\frac12})}^2\right)^{\frac12}
    \approx
    \left(\int_{\R^6}U^{\frac32}V\right)^{\frac12}
    \approx
    \left(R^{-4}\log(R)\right)^{\frac12}
    \approx 
    R^{-2}\log(R)^{\frac12} \fullstop
  \end{equation}
  Finally, combining \cref{eq:normf_tmp1,eq:normf_tmp2,eq:normf_tmp3} we get
  \begin{equation*}
    R^{-6}\log(R) \lesssim \norm{f}_{H^{-1}}\cdot R^{-2}\log(R)^{\frac12} \comma
  \end{equation*}
  that implies the desired estimate.
\end{proof}

\begin{lemma}\label{lem:normf_l2}
  It holds
  \begin{equation*}
    \norm{f}_{L^2} \approx 
    \begin{cases}
      R^{-4} \quad&\text{if $n=6$,}\\
      R^{-5} \quad&\text{if $n=7$,}\\
      R^{-6}\log(R)^{\frac12} \quad&\text{if $n=8$,}\\
      R^{-\frac{n+4}2} \quad&\text{if $n>8$.}
    \end{cases}
  \end{equation*}
\end{lemma}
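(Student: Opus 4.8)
The plan is to reduce the whole computation to a single elementary integral by first establishing a two‑sided \emph{pointwise} bound on $f$. The starting point is the elementary inequality, valid for all $a,b\ge0$ and all exponents $1<p\le2$,
\[
  (a+b)^p-a^p-b^p \approx \min(a,b)\,\max(a,b)^{p-1},
\]
with implicit constants depending only on $p$: the upper bound comes from $(a+b)^p-b^p=\int_b^{a+b}pt^{p-1}\,\de t\le p2^{p-1}a\,b^{p-1}$ when $a\le b$, and the lower bound from $(a+b)^p-b^p\ge p\,a\,b^{p-1}$ together with $a^p\le a\,b^{p-1}$ for $a\le b$ (for $n=6$ this is just the identity $f=2UV$). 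Applying it with $\{a,b\}=\{U(x),V(x)\}$ yields $f\approx\min(U,V)\max(U,V)^{p-1}$ pointwise, and since the reflection $x_1\mapsto-x_1$ exchanges $U$ and $V$ (so that $\{U\ge V\}=\{x_1\le0\}$) we get
\[
  \norm{f}_{L^2}^2 \approx \int_{\R^n}\min(U,V)^2\max(U,V)^{2(p-1)} \approx \int_{\{U\ge V\}}U^{2(p-1)}V^2 \fullstop
\]
The key numerology is that $(n-2)(p-1)=4$, so $U^{2(p-1)}(x)\approx(1+\abs{x+Re_1}^2)^{-4}$, which decays like $\abs{x+Re_1}^{-8}$; this is exactly why a threshold appears at $n=8$.

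Next I would split $\{U\ge V\}$ into the ball $B(-Re_1,R/2)$ and its complement. On $B(-Re_1,R/2)$ one has $\abs{x-Re_1}\approx2R$, hence $V(x)\approx R^{-(n-2)}$, so that piece is comparable to
\[
  R^{-2(n-2)}\int_{B(-Re_1,R/2)}U^{2(p-1)}
  = R^{-2(n-2)}\int_{B(0,R/2)}U[0,1]^{2(p-1)}(y)\,\de y \fullstop
\]
Since the integrand behaves like $(1+\abs{y}^2)^{-4}$, the last integral is $\approx1$ for $n<8$, $\approx\log R$ for $n=8$, and $\approx R^{\,n-8}$ for $n>8$. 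On the complement, $V\le U$ gives the crude bound $U^{2(p-1)}V^2\le U^{2p}$, and since the complement sits inside $\{\abs{x+Re_1}>R/2\}$,
\[
  \int_{\{U\ge V\}\setminus B(-Re_1,R/2)}U^{2(p-1)}V^2 \le \int_{\{\abs{x+Re_1}>R/2\}}U^{2p} = \int_{\{\abs{y}>R/2\}}U[0,1]^{2p}(y)\,\de y \approx R^{-n-4}\fullstop
\]
These integrals are elementary; alternatively they can be read off \cref{prop:integral_simple_bubbles}.

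Putting the two contributions together, the main term is $R^{-2(n-2)}$, $R^{-12}\log R$, $R^{-n-4}$ for $n\le7$, $n=8$, $n\ge9$ respectively, while the remainder is always $\lesssim R^{-n-4}$: one checks $R^{-n-4}$ is of strictly lower order than the main term when $n\le7$ (because $n+4>2(n-2)$), of strictly lower order than $R^{-12}\log R$ when $n=8$, and of the same order when $n\ge9$. Hence $\norm{f}_{L^2}^2$ equals the main term up to constants in every case (the ball piece gives the matching lower bound), and taking square roots produces the four values $R^{-4}$, $R^{-5}$, $R^{-6}\log(R)^{1/2}$, $R^{-(n+4)/2}$.

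I do not expect a genuine obstacle here: the only point that needs care is keeping the three regimes $n<8$, $n=8$, $n>8$ straight — in particular verifying that the crude remainder estimate $R^{-n-4}$ is subleading for $n\le8$ and exactly matches the main term for $n\ge9$ — and by design the $L^2$ computation is completely explicit.
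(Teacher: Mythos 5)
Your proof is correct and follows essentially the same route as the paper: the paper proves this lemma by noting that $\varphi(x,y)=((x+y)^p-x^p-y^p)^2$ satisfies $\varphi\approx x^{2p-2}y^2$ on $\{y\le 2x\}$ (your pointwise bound $f\approx\min(U,V)\max(U,V)^{p-1}$, squared) and then invoking \cref{prop:integral_simple_bubbles} with $a=2p-2$, $b=2$, $c=2$, $d=2p-2$. You simply unpack that proposition by hand with a slightly leaner two-region decomposition, and your case analysis at the threshold $n=8$ and the resulting values match the statement.
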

\begin{proof}
 We note that $\norm{f}^2_{L^2} = \int_{\R^n} \varphi(U, V)$ where 
  \begin{equation*}
    \varphi(x, y)=\left((x+y)^p-x^p-y^p\right)^2 \fullstop
  \end{equation*}
  The mentioned function $\varphi$ satisfies the hypotheses of \cref{prop:integral_simple_bubbles} with
  $a=2p-2,b=2,c=2,d=2p-2$. Hence $\norm{f}^2_{L^2}\approx\Phi_R(2p-2,2,2,2p-2)$, and computing the value of 
  $\Phi_R(2p-2, 2, 2, 2p-2)^{\frac12}$ yields the desired result.
\end{proof}

\begin{lemma}\label{lem:counterexample_deep}
  It holds
  \begin{equation*}
    \norm{f-\tf}_{L^{(2^*)'}} \lesssim \norm{f}_{L^2} \fullstop
  \end{equation*}
\end{lemma}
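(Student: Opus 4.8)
The plan is to exploit the precise definition of $\tf$ in order to extract an explicit factor $(U+V)^{p-1}$ out of $f-\tf$, and then cash in the (uniform in $R$) integrability of this weight. Set $g\defeq\frac{f}{(U+V)^{p-1}}$, which lies in $\LW$ since $\norm{g}_{\LW}^2=\int_{\R^n}\frac{f^2}{(U+V)^{p-1}}<\infty$. Because $\tf=(U+V)^{p-1}\pi(g)$ with $\pi=\id-\pi_\res$ the $\LW$-orthogonal projection onto $\res^{\perp}$, we get $f-\tf=(U+V)^{p-1}\bigl(g-\pi(g)\bigr)=(U+V)^{p-1}\pi_\res(g)$. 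Expanding in the $\LW$-orthonormal basis $\rbasis$ of $\res$ and using that $\scalprod{g}{\psi}_{\LW}=\int_{\R^n}f\psi$, this becomes
\begin{equation*}
  f-\tf=\sum_{\psi\in\rbasis}\Bigl(\int_{\R^n}f\psi\Bigr)(U+V)^{p-1}\psi\fullstop
\end{equation*}
By \cref{prop:realisnice} the sum has $\dim\res=2n+4$ terms, a number depending only on $n$, so it suffices to prove, for every $\psi\in\rbasis$,
\begin{equation*}
  \Bigl|\int_{\R^n}f\psi\Bigr|\;\norm{(U+V)^{p-1}\psi}_{L^{(2^*)'}}\lesssim\norm{f}_{L^2}\fullstop
\end{equation*}

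The one genuine input is a uniform bound on the $\LW$-normalized eigenfunctions. Each $\psi\in\rbasis$ solves $-\lapl\psi=\lambda_\psi(U+V)^{p-1}\psi$ with $\lambda_\psi<\frac{p}{1-\eps}$ (by definition of $\res$), hence $\int_{\R^n}\abs{\nabla\psi}^2=\lambda_\psi\int_{\R^n}(U+V)^{p-1}\psi^2=\lambda_\psi\lesssim1$, and Sobolev gives $\norm{\psi}_{L^{2^*}}\lesssim1$. Writing $\psi=\lambda_\psi\,c_n\,\abs{\emptyparam}^{2-n}*\bigl((U+V)^{p-1}\psi\bigr)$ and using the Hardy--Littlewood--Sobolev inequality (with $\tfrac12=\tfrac{n+4}{2n}-\tfrac2n$) followed by H\"older (with $\tfrac{n+4}{2n}=\tfrac3n+\tfrac1{2^*}$),
\begin{equation*}
  \norm{\psi}_{L^2}\lesssim\norm{(U+V)^{p-1}\psi}_{L^{\frac{2n}{n+4}}}\le\norm{(U+V)^{p-1}}_{L^{n/3}}\norm{\psi}_{L^{2^*}}\lesssim1\comma
\end{equation*}
where $\norm{(U+V)^{p-1}}_{L^{n/3}}\lesssim1$ uniformly in $R$: indeed $(p-1)\tfrac n3(n-2)=\tfrac{4n}{3}>n$, so $\int_{\R^n}U[0,1]^{(p-1)n/3}<\infty$, whence $\int_{\R^n}(U+V)^{(p-1)n/3}\lesssim\int_{\R^n}U[0,1]^{(p-1)n/3}$ is a dimensional constant. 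The same H\"older trick with $\tfrac{n+2}{2n}=\tfrac2n+\tfrac1{2^*}$ and $\int_{\R^n}(U+V)^{2^*}\le(2S^{n/2})^{2^*}$ gives $\norm{(U+V)^{p-1}\psi}_{L^{(2^*)'}}\le\norm{(U+V)^{p-1}}_{L^{n/2}}\norm{\psi}_{L^{2^*}}\lesssim1$.

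With these bounds the conclusion is immediate: by Cauchy--Schwarz $\bigl|\int_{\R^n}f\psi\bigr|\le\norm{f}_{L^2}\norm{\psi}_{L^2}\lesssim\norm{f}_{L^2}$, so each of the finitely many terms is $\lesssim\norm{f}_{L^2}$ and thus $\norm{f-\tf}_{L^{(2^*)'}}\lesssim\norm{f}_{L^2}$. The conceptual point worth stressing — and the reason the alternative, unweighted, definition of $\tf$ would not do — is that the only place where the structure of the problem enters is that $\tf$ is $(U+V)^{p-1}$ times an $\LW$-projection: this is exactly what leaves the factor $(U+V)^{p-1}$ in $f-\tf$, and its integrability in $L^{n/2}$ and $L^{n/3}$ is what turns the spectral truncation into the sought gain. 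Note in particular that no quantitative rate in $R$ for the closeness of $\res$ and $\nes$ is needed for this estimate; the crude Cauchy--Schwarz bound already suffices.
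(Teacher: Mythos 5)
Your proof is correct and follows essentially the same route as the paper: expand $f-\tf$ over the finite $\LW$-orthonormal eigenbasis of $\res$, bound $\abs{\scalprod{f}{\psi}_{L^2}}$ by Cauchy--Schwarz, and control $\norm{(U+V)^{p-1}\psi}_{L^{(2^*)'}}$ and $\norm{\psi}_{L^2}$ uniformly in $R$. The only difference is that where the paper invokes \cref{lem:eigenfunctions_integrability} (a duality argument via Rellich's inequality) to get $\norm{\psi}_{L^2}\lesssim\norm{\psi}_{\LW}$, you derive the same bound via Hardy--Littlewood--Sobolev and H\"older with $(U+V)^{p-1}\in L^{n/3}$ — a valid, self-contained substitute with the same dimensional restriction $n\ge 5$.
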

\begin{proof}
  By definition of $\tf$, we have
  \begin{align*}
    f-\tf &= (U+V)^{p-1}\left(\frac{f}{(U+V)^{p-1}}-\pi\left(\frac{f}{(U+V)^{p-1}}\right)\right) \\
    &= (U+V)^{p-1}\sum_{\psi_\res\in\rbasis} \scalprod{\frac{f}{(U+V)^{p-1}}}{\psi_\res}_{\LW}\psi_\res \\
    &= (U+V)^{p-1}\sum_{\psi_\res\in\rbasis}\scalprod{f}{\psi_\res}_{L^2}\,\psi_\res
    \fullstop
  \end{align*}
  Thus, taking the $L^{(2^*)'}$-norm and applying H\"older's inequality with exponents $\frac12+\frac1n=\frac1{(2^*)'}$, we obtain
  \begin{align*}
    \norm{f-\tf}_{L^{(2^*)'}} 
    &\le \sum_{\psi_\res\in\rbasis} \abs{\scalprod{f}{\psi_\res}_{L^2}}
    \cdot\norm{(U+V)^{p-1}\psi_\res}_{L^{(2^*)'}} \\
    &\lesssim \sum_{\psi_\res\in\rbasis} \norm{f}_{L^2}\cdot
    \norm{\psi_\res}_{L^2}\cdot
    \norm{(U+V)^{\frac{p-1}2}\psi_\res}_{L^2}\cdot
    \norm{(U+V)^{\frac{p-1}2}}_{L^n}\\
    &\lesssim \norm{f}_{L^2} \sum_{\psi_\res\in\rbasis}
    \norm{\psi_\res}_{L^2} \cdot \norm{\psi_\res}_{\LW}
  \end{align*}
  and the conclusion follows applying \cref{lem:eigenfunctions_integrability}.
\end{proof}

\begin{lemma}\label{lem:counterexample_fundamental}
  It holds
  \begin{equation*}
    \norm{f-\tf}_{L^{(2^*)'}} \lesssim \zeta_n(\norm{\nabla\rho}_{L^2})\comma
  \end{equation*}
  where $\zeta_n$ is the same function considered in \cref{thm:counterexample}.
\end{lemma}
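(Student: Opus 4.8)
The plan is to compare two explicit asymptotic quantities: the size of $\norm{f-\tf}_{L^{(2^*)'}}$, which by \cref{lem:counterexample_deep} is controlled by $\norm{f}_{L^2}$, against the size of $\norm{\nabla\rho}_{L^2}$, which will be controlled by $\norm{f}_{H^{-1}}$. Once these two asymptotics are pinned down, the statement becomes a direct consequence of the computations in \cref{lem:normf,lem:normf_l2,lem:counterexample_deep} and of the very definition of $\zeta_n$.

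First I would fix the order of $\norm{\nabla\rho}_{L^2}$. By \cref{eq:normrho} we have $\norm{\nabla\rho}_{L^2}\approx\norm{\tf}_{H^{-1}}$. Writing $\tf=f-(f-\tf)$ and using that, by \cref{lem:counterexample_deep} and the dual Sobolev embedding, $\norm{f-\tf}_{H^{-1}}\lesssim\norm{f-\tf}_{L^{(2^*)'}}\lesssim\norm{f}_{L^2}$, while comparing \cref{lem:normf_l2} with \cref{lem:normf} shows $\norm{f}_{L^2}=\smallo(\norm{f}_{H^{-1}})$ for every $n\ge 6$, we get $\norm{\tf}_{H^{-1}}=(1+\smallo(1))\norm{f}_{H^{-1}}$ and hence $\norm{\nabla\rho}_{L^2}\approx\norm{f}_{H^{-1}}$. (This is the step where the assumption $n\ge6$ is genuinely used, as it is only then that $\norm{f}_{L^2}$ is asymptotically smaller than $\norm{f}_{H^{-1}}$.) Since $\zeta_n$ is increasing and satisfies $\zeta_n(ct)\approx\zeta_n(t)$ for $t$ small, the claim is then equivalent to $\norm{f-\tf}_{L^{(2^*)'}}\lesssim\zeta_n(\norm{f}_{H^{-1}})$.

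To conclude in the range $n\ge 7$ it then suffices to combine $\norm{f-\tf}_{L^{(2^*)'}}\lesssim\norm{f}_{L^2}$ with the lower bound for $\norm{f}_{H^{-1}}$ of \cref{lem:normf} and the value of $\norm{f}_{L^2}$ of \cref{lem:normf_l2}: a direct check gives $\norm{f}_{L^2}\lesssim\zeta_n(\norm{f}_{H^{-1}})$, the inequality being — essentially by construction of $\zeta_n$ — an equality of powers of $R$ (with an extra logarithm to spare when $n=8$). I expect the case $n=6$ to be the main obstacle: there $\norm{f}_{H^{-1}}$ exceeds $\norm{f}_{L^2}$ only by a factor $\abs{\log R}^{O(1)}$, so the crude estimate $\norm{f-\tf}_{L^{(2^*)'}}\lesssim\norm{f}_{L^2}$ is off by a logarithm and does not close the argument. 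To recover the missing factor one has to go back to $f-\tf$, which is $(U+V)^{p-1}$ times the $\LW$-projection of $f/(U+V)^{p-1}$ onto $\res$, and extract a cancellation from the projection — for instance, using that $f$ is even under the reflection $x_1\mapsto-x_1$ (so that its component along the antisymmetric eigenfunctions of $\tfrac{-\lapl}{(U+V)^{p-1}}$ vanishes), together with the fact that, relative to its own $\LW$-norm, $f/(U+V)^{p-1}$ is asymptotically orthogonal to $\res$.
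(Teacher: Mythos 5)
Your argument is the paper's argument: one first establishes $\norm{\nabla\rho}_{L^2}\approx\norm{\tf}_{H^{-1}}\approx\norm{f}_{H^{-1}}$ exactly as you do (via \cref{eq:normrho}, \cref{lem:counterexample_deep}, the dual Sobolev embedding $L^{(2^*)'}\embedding H^{-1}$, and $\norm{f}_{L^2}=\smallo(\norm{f}_{H^{-1}})$), and then reduces the claim to $\norm{f}_{L^2}\lesssim\zeta_n(\norm{f}_{H^{-1}})$, which the paper reads off from \cref{lem:normf,lem:normf_l2}. For $n\ge7$ your ``direct check'' is correct and complete, and coincides with what the paper does.

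The only divergence is at $n=6$, and there your diagnosis is in fact more careful than the paper's: with $\norm{f}_{L^2}\approx R^{-4}$ and $R^{-4}\log(R)^{1/2}\lesssim\norm{f}_{H^{-1}}\lesssim\norm{f}_{L^{3/2}}\approx R^{-4}\log(R)^{2/3}$, the inequality $\norm{f}_{L^2}\lesssim\zeta_6(\norm{f}_{H^{-1}})$ with $\zeta_6(t)=t/\abs{\log t}$ does fail by a fractional power of $\log R$, whereas the paper asserts it without comment. However, the repair you sketch is a dead end: for $n=6$ one has $f=2UV$, and its $L^2$-pairing with the symmetric direction $U+V$ of $\nes$ (hence, by \cref{prop:realisnice}, with an element of $\rbasis$) is $\approx\int_{\R^6} U^2V\approx R^{-4}\approx\norm{f}_{L^2}$; reflection invariance only kills the antisymmetric components, so no cancellation is available in the projection and \cref{lem:counterexample_deep} is essentially sharp in dimension $6$. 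The correct way to close the case $n=6$ is simply to keep what the crude estimate already gives, namely $\norm{f-\tf}_{L^{(2^*)'}}\lesssim R^{-4}\lesssim\norm{\nabla\rho}_{L^2}\abs{\log\norm{\nabla\rho}_{L^2}}^{-1/2}$, i.e.\ to accept the slightly weaker $\zeta_6(t)=t/\abs{\log t}^{1/2}$ in place of $t/\abs{\log t}$; this is still $\smallo(t)$, which is all that \cref{thm:counterexample} requires.
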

\begin{proof}
  The estimates contained in \cref{lem:normf_l2,lem:normf} tell us that
  \begin{equation}\label{eq:counterexample_fundtmp1}
    \norm{f}_{L^2}\lesssim \zeta_n(\norm{f}_{H^{-1}}) \comma
  \end{equation}
  in particular $\norm{f}_{L^2}=\smallo(\norm{f}_{H^{-1}})$. 
  Hence, thanks to \cref{lem:counterexample_deep} and Sobolev inequality (that by duality implies the embedding $L^{(2^*)'}\embedding H^{-1}$), we have
  \begin{equation*}
    \norm{f-\tf}_{H^{-1}} \lesssim \norm{f-\tf}_{L^{(2^*)'}} 
    \lesssim \norm{f}_{L^2} = \smallo(\norm{f}_{H^{-1}})\comma
  \end{equation*}
therefore, recalling \cref{eq:normrho}, we obtain
  \begin{equation}\label{eq:counterexample_fundtmp2}
    \norm{\nabla\rho}_{L^2}\approx \norm{\tf}_{H^{-1}} \approx \norm{f}_{H^{-1}} 
    \fullstop
  \end{equation}
  Then the statement follows from \cref{eq:counterexample_fundtmp1,eq:counterexample_fundtmp2}.
\end{proof}

Thanks to all the previous results, we can now prove the main proposition of this
section.
\begin{proposition}\label{prop:counterexample_main_estimate}
  It holds 
  \begin{equation*}
    \norm{\lapl u + u\abs{u}^{p-1}}_{L^{(2^*)'}} \lesssim \zeta_n(\norm{\nabla\rho}_{L^2}) \comma
  \end{equation*}
  where $\zeta_n$ is the same function considered in \cref{thm:counterexample}.
\end{proposition}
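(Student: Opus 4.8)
The plan is to obtain the estimate as an essentially immediate consequence of the two facts already established in this section, namely the a priori error bound \cref{eq:apriori_error_estimate} and the key estimate \cref{lem:counterexample_fundamental}. Recall that \cref{eq:apriori_error_estimate} gives
\begin{equation*}
  \norm{\lapl u + u\abs{u}^{p-1}}_{L^{(2^*)'}} \lesssim \norm{f-\tf}_{L^{(2^*)'}} + \norm{\nabla\rho}_{L^2}^p \comma
\end{equation*}
so the first step is to invoke \cref{lem:counterexample_fundamental} to bound $\norm{f-\tf}_{L^{(2^*)'}} \lesssim \zeta_n(\norm{\nabla\rho}_{L^2})$. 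This already disposes of the main term, and what remains is only to check that the higher-order term $\norm{\nabla\rho}_{L^2}^p$ coming from the nonlinearity $u\abs{u}^{p-1}$ is also controlled by $\zeta_n(\norm{\nabla\rho}_{L^2})$.

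For this last point I would first observe that $t\defeq\norm{\nabla\rho}_{L^2}$ is small: indeed $\norm{f}_{H^{-1}}$ is small for $R$ large by \cref{lem:normf}, and $\norm{\nabla\rho}_{L^2}\approx\norm{f}_{H^{-1}}$ by \cref{eq:counterexample_fundtmp2}. Then one simply compares the exponent $p=\frac{n+2}{n-2}$ with the power appearing in $\zeta_n$. When $n=6$ we have $p=2$ and $t^2\lesssim t/\abs{\log t}=\zeta_6(t)$ since $t\abs{\log t}\to0$. When $n\ge7$, the power in $\zeta_n$ is at most $\frac{n+4}{n+2}$ (with an extra factor $\abs{\log t}$ only when $n=8$, which goes in the favorable direction), and $p>\frac{n+4}{n+2}$ because $(n+2)^2-(n+4)(n-2)=2n+12>0$; hence $t^p\le t^{\frac{n+4}{n+2}}$ for $t\le1$, so $t^p\lesssim\zeta_n(t)$. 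In every case one even has $t^p=\smallo(\zeta_n(t))$ as $t\to0^+$.

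Putting these two bounds together yields $\norm{\lapl u + u\abs{u}^{p-1}}_{L^{(2^*)'}} \lesssim \zeta_n(\norm{\nabla\rho}_{L^2})$, which is the claim. I do not expect any genuine obstacle here: all of the substance has already been absorbed into \cref{lem:counterexample_fundamental}, and hence into the spectral comparison \cref{prop:realisnice} of the subspaces $\res$ and $\nes$, the projection estimate \cref{lem:counterexample_deep}, and the explicit computations of \cref{lem:normf,lem:normf_l2}. The present proposition is the bookkeeping step that repackages these estimates and discards the harmless nonlinear remainder.
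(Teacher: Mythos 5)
Your proposal is correct and follows exactly the paper's proof: combine \cref{eq:apriori_error_estimate} with \cref{lem:counterexample_fundamental} and then absorb the remainder $\norm{\nabla\rho}_{L^2}^p$ into $\zeta_n(\norm{\nabla\rho}_{L^2})$, which the paper states as $\norm{\nabla\rho}_{L^2}^p\ll\zeta_n(\norm{\nabla\rho}_{L^2})$ without detail. Your explicit case-by-case comparison of $p$ with the exponents in $\zeta_n$ is accurate and simply fills in what the paper leaves implicit.
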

\begin{proof}
  Thanks to \cref{eq:apriori_error_estimate,lem:counterexample_fundamental} we have
  \begin{equation}\label{eq:counterexample_maintmp3}
    \norm{\lapl u + u\abs{u}^{p-1}}_{L^{(2^*)'}} \lesssim \norm{\nabla\rho}_{L^2}^p
    + \zeta_n(\norm{\nabla\rho}_{L^2}),
  \end{equation}
  and this concludes the proof since $\norm{\nabla\rho}_{L^2}^p\ll \zeta_n(\norm{\nabla\rho}_{L^2})$.
\end{proof}

\subsection{The function \texorpdfstring{$u$}{u} is a real counterexample}
It is now time to prove that the function $u$ is the desired counterexample.
The only thing that is still missing is the fact that $\norm{\nabla\rho}_{L^2}$ is comparable to
the $H^1$-distance of $u$ from the manifold of linear combinations of two Talenti bubbles.
The rough idea is that this must be true since, thanks to \cref{prop:realisnice}, $\rho$ is almost orthogonal
to the mentioned manifold in $U+V$. However, transforming this intuition into a proof requires some care.

Let us begin with three technical lemmas. All of them are somehow related to the fact that many 
different norms are involved in our computations (i.e. $H^1, H^{-1}, \LW$) and it is 
crucial to control adequately one with the other.
\begin{lemma}\label{lem:equivalent_norms_nes}
  On the subspace $\nes$ the two norms $\norm{\emptyparam}_{\LW}$ and $\norm{\emptyparam}_{H^1}$ are 
comparable, uniformly as $R\to \infty$. Equivalently, there exist constants $C$ and $R_0$ such that, for any $R\geq R_0$, 
  \begin{equation*}
    C^{-1}\norm{\varphi}_{\LW} \le \norm{\nabla \varphi}_{L^2} \le C\norm{\varphi}_{\LW}
  \end{equation*}
  for any $\varphi\in\nes$.
\end{lemma}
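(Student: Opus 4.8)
The plan is to reduce the claim to finite-dimensional linear algebra, using that the basis
$\nbasis=\{U,\partial_\lambda U,\partial_{z_i}U,V,\partial_\lambda V,\partial_{z_i}V\}$ of $\nes$ is asymptotically quasi-orthonormal with respect to \emph{both} the $\LW$-scalar product and the $H^1$-scalar product, as recalled in \cref{sub:notation_counterexample}. Let me first note that, for the $H^1$-norms, this is exact rather than merely asymptotic: by the identities in \cref{subsec:symmetries}, each element of $\nbasis$ equals $T_{\pm Re_1,1}(\psi)$ for a suitable $\psi$ among the finitely many fixed functions $U[0,1],\ \partial_\lambda U[0,1],\ \partial_{z_i}U[0,1]$, and $T_{z,1}$ preserves the $H^1$-scalar product, so the $H^1$-norm of every element of $\nbasis$ is a strictly positive constant independent of $R$. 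Likewise, the $H^1$-scalar product between a ``$U$-type'' and a ``$V$-type'' element of $\nbasis$ is, after integrating by parts and using \cref{eq:u_solves_yamabe}, an interaction integral such as $\int U^p V$ or $\int U^p\partial_\lambda V$, which tends to $0$ as $R\to\infty$ by \cref{prop:interaction_approx}. For the $\LW$-scalar product the corresponding facts---norms bounded away from $0$, mixed scalar products $\smallo(1)$---are exactly the asymptotic quasi-orthonormality stated in \cref{sub:notation_counterexample} (here one uses that $(U+V)^{p-1}$ agrees with $U^{p-1}$, resp.\ $V^{p-1}$, up to an error negligible on the region where the relevant basis function concentrates).

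Concretely, enumerate $\nbasis=\{\psi_1,\dots,\psi_{2n+4}\}$ and, for $\varphi\in\nes$, write $\varphi=\sum_j c_j\psi_j$ with $c=(c_1,\dots,c_{2n+4})\in\R^{2n+4}$. Let $G_R$ and $H_R$ be the Gram matrices of $\nbasis$ with respect to the $\LW$- and $H^1$-scalar products, so that
\begin{equation*}
  \norm{\varphi}_{\LW}^2=c^T G_R\,c\comma\qquad \norm{\nabla\varphi}_{L^2}^2=c^T H_R\,c\fullstop
\end{equation*}
By the discussion above, as $R\to\infty$ both $G_R$ and $H_R$ converge to diagonal matrices with strictly positive diagonal entries. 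Hence there exist $R_0$ and a constant $C\ge1$ such that, for all $R\ge R_0$, every eigenvalue of $G_R$ and of $H_R$ lies in $[C^{-1},C]$; consequently
\begin{equation*}
  C^{-1}\abs{c}^2\le \norm{\varphi}_{\LW}^2\le C\abs{c}^2\comma\qquad
  C^{-1}\abs{c}^2\le \norm{\nabla\varphi}_{L^2}^2\le C\abs{c}^2
\end{equation*}
for every $\varphi=\sum_j c_j\psi_j\in\nes$ and every $R\ge R_0$. Combining these two chains of inequalities gives $C^{-2}\norm{\varphi}_{\LW}^2\le\norm{\nabla\varphi}_{L^2}^2\le C^2\norm{\varphi}_{\LW}^2$, which is the assertion of the lemma (after replacing $C$ by $C^2$). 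As a byproduct, nondegeneracy of the limiting Gram matrices shows that $\nbasis$ is linearly independent for $R\ge R_0$, so $\dim\nes=2n+4$, consistently with \cref{prop:realisnice}.

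There is no serious obstacle here: the statement is a soft consequence of the already-available asymptotic quasi-orthonormality of $\nbasis$. The only point worth double-checking is precisely that input---namely that the $\LW$- and $H^1$-norms of the basis functions stay bounded away from $0$ uniformly in $R$; as indicated above, for the $H^1$-norms this follows at once from the scaling invariance of \cref{subsec:symmetries}, and for the $\LW$-norms from the fact that each basis function concentrates where $(U+V)^{p-1}$ is comparable to a single $U[z,1]^{p-1}$.
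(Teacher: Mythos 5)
Your proof is correct and follows essentially the same route as the paper: the paper's own proof is a one-sentence appeal to the asymptotic quasi-orthonormality of $\nbasis$ with respect to both scalar products, and your Gram-matrix argument is simply the detailed unwinding of that assertion (together with a welcome verification of the quasi-orthonormality input itself via the symmetries of \cref{subsec:symmetries} and \cref{prop:interaction_approx}).
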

\begin{proof}
  Let us recall that $\nbasis$ is a basis for $\nes$ which is asymptotically quasi-orthonormal (as $R\to \infty$) with respect to 
  both the $\LW$-scalar product and the $H^1$-scalar product (see \cref{sub:notation_counterexample}). This fact  implies that, for $R\gg 1$, the two norms are 
comparable on $\nes$ independently of $R$.
\end{proof}

\begin{lemma}\label{lem:almost_H1_orthogonality}
Let $\nes^{\perp}$ be the orthogonal complement of $\nes$ with respect to the 
  $\LW$-scalar product.
  For any $\varphi\in H^1(\R^n) \cap \nes^{\perp}$ it holds
  \begin{equation*}
    \abs*{\scalprod{\nabla\psi_\nes}{\nabla\varphi}} \le \smallo(\norm{\nabla \varphi}_{L^2})
  \end{equation*}
  for any $\psi_\nes\in\nbasis$. 
\end{lemma}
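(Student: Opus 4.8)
The plan is to combine an integration by parts, the $\LW$-orthogonality of $\varphi$ to the space $\nes$, and the fact that the two bubbles $U=U[-Re_1,1]$ and $V=U[Re_1,1]$ become infinitely weakly-interacting as $R\to\infty$. First I would fix $\psi_\nes\in\nbasis$ and, exploiting the symmetry between $U$ and $V$, reduce to the case in which $\psi_\nes$ is one of $U,\partial_\lambda U,\partial_{z_i}U$. In this case $-\lapl\psi_\nes=\mu\,U^{p-1}\psi_\nes$ for some $\mu\in\{1,p\}$, and from \cref{eq:U_derlambda} (together with the analogous elementary bound for $\partial_{z_i}U$) one gets the pointwise estimate $\abs{\psi_\nes}\lesssim U$. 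Integrating by parts and using that $\varphi\in\nes^\perp$, so that $\scalprod{\psi_\nes}{\varphi}_{\LW}=\int_{\R^n}(U+V)^{p-1}\psi_\nes\varphi=0$, I would write
\begin{equation*}
  \scalprod{\nabla\psi_\nes}{\nabla\varphi}=\mu\int_{\R^n}U^{p-1}\psi_\nes\varphi=-\mu\int_{\R^n}\bigl((U+V)^{p-1}-U^{p-1}\bigr)\psi_\nes\varphi\fullstop
\end{equation*}

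Since $n\ge 6$ we have $p-1\le 1$, so $t\mapsto t^{p-1}$ is nondecreasing and subadditive on $\oo0\infty$, which gives the pointwise bound $0\le (U+V)^{p-1}-U^{p-1}\le V^{p-1}$. Combining this with $\abs{\psi_\nes}\lesssim U$, then applying H\"older's inequality with exponents $(2^*)'$ and $2^*$ and the Sobolev inequality, I obtain
\begin{equation*}
  \abs*{\scalprod{\nabla\psi_\nes}{\nabla\varphi}}\lesssim\int_{\R^n}V^{p-1}U\abs{\varphi}\le\norm{V^{p-1}U}_{L^{(2^*)'}}\norm{\varphi}_{L^{2^*}}\lesssim\norm{V^{p-1}U}_{L^{(2^*)'}}\norm{\nabla\varphi}_{L^2}\fullstop
\end{equation*}
The remaining task is to show $\norm{V^{p-1}U}_{L^{(2^*)'}}=\smallo(1)$. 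Since $(p-1)(2^*)'+(2^*)'=p(2^*)'=2^*$, the quantity $\norm{V^{p-1}U}_{L^{(2^*)'}}^{(2^*)'}=\int_{\R^n}V^{(p-1)(2^*)'}U^{(2^*)'}$ is an interaction integral of the type estimated in \cref{prop:interaction_approx} (equivalently \cref{prop:integral_simple_bubbles}), hence it is bounded by a positive power of $\min\bigl(1,\tfrac{1}{4R^2}\bigr)$ up to a possible logarithmic factor, and in particular it vanishes as $R\to\infty$. This yields $\abs{\scalprod{\nabla\psi_\nes}{\nabla\varphi}}=\smallo(\norm{\nabla\varphi}_{L^2})$, and since $\nbasis$ is a finite set the estimate is uniform in $\psi_\nes\in\nbasis$.

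The only point that needs care is the choice of the H\"older split in the second display: one must keep $U$ and $V^{p-1}$ together inside a single norm so that the smallness comes from the \emph{interaction} of the two far-apart bubbles (splitting them into separate norms would give translation-invariant, hence non-vanishing, factors). Everything else is routine: the integration by parts is legitimate because $U,\partial_\lambda U,\partial_{z_i}U,\varphi\in H^1(\R^n)$, and all the integrals above are absolutely convergent since $\abs{\psi_\nes}\lesssim U$ and $\varphi\in L^{2^*}(\R^n)$. I expect the interaction-integral bound (invoking the appendix) to be the only step with any content, and even that is immediate from the cited estimates once the exponents are checked to sum to $2^*$.
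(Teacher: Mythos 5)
Your proof is correct and follows essentially the same route as the paper's: both exploit $-\lapl\psi_\nes=\mu U^{p-1}\psi_\nes$ together with the $\LW$-orthogonality to rewrite the scalar product as $-\mu\int\bigl((U+V)^{p-1}-U^{p-1}\bigr)\psi_\nes\varphi$, and then use that this difference is driven by the weak interaction of the two bubbles. The only (harmless) divergence is in the final estimate, where you use the pointwise bound $(U+V)^{p-1}-U^{p-1}\le V^{p-1}$ (valid since $p\le 2$ for $n\ge 6$) plus unweighted H\"older/Sobolev and \cref{prop:interaction_approx}, whereas the paper applies Cauchy--Schwarz in $\LW$ and controls $\norm{\varphi}_{\LW}$ via \cref{prop:weighted_cpt_embedding}.
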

\begin{proof}
  Without loss of generality we can assume that $-\lapl \psi_\nes = \lambda\psi_\nes U^{p-1}$ with
  $\lambda\in \{1,p\}$.
  Hence, by the assumption $\varphi\in\nes^{\perp}$, applying Cauchy-Schwarz inequality we obtain
  \begin{align*}
    \scalprod{\nabla\psi_\nes}{\nabla\varphi} 
    &= \lambda \int_{\R^n} \psi_\nes U^{p-1} \varphi 
    = \lambda \int_{\R^n} \psi_\nes \left(U^{p-1} - (U+V)^{p-1}\right) \varphi \\
    &\lesssim \left(\int_{\R^n}\psi_\nes^2 
    \frac{\left((U+V)^{p-1}-U^{p-1}
    \right)^2}{(U+V)^{p-1}}\right)^{\frac12} 
    \norm{\varphi}_{\LW}
    \fullstop
  \end{align*}
  The statement now follows from the fact that the first term goes to $0$ when $R\to\infty$, while the
  second factor is bounded by $\norm{\nabla\varphi}_{L^2}$ thanks to \cref{prop:weighted_cpt_embedding}.
\end{proof}

\begin{lemma}\label{lem:U_differentiable_H1}
  Let $\mathcal U:\R\times\R^n\times\oo{0}{\infty}\to H^1(\R^n)$ be the function that maps 
  $(\alpha, z, \lambda)$ onto $\alpha U[z, \lambda]$. 
  The function $\mathcal U$ is differentiable (as a function with values in $H^1(\R^n)$) and its gradient at 
  $(\alpha, z, \lambda)$ is given by
  \begin{equation*}
    \nabla \mathcal U (\alpha, z, \lambda) = 
    \left(U[z,\lambda], \alpha\nabla_z U[z,\lambda], \alpha\partial_\lambda U[z,\lambda]\right) \fullstop
  \end{equation*}
\end{lemma}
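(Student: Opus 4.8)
The plan is to prove the stronger assertion that $\mathcal U$ is of class $C^1$ on the open set $\R\times\R^n\times\oo0\infty$; since the target $H^1(\R^n)=\dot W^{1,2}(\R^n)$ is a Banach space and the domain is finite-dimensional, Fréchet differentiability (with differential assembled from the partial derivatives) then follows from the classical criterion ``all partial derivatives exist and are continuous $\Rightarrow$ totally differentiable''. Concretely, I would verify three points: (i) the three functions in the statement belong to $H^1(\R^n)$; (ii) $\mathcal U$ admits partial derivatives everywhere, equal to those functions; (iii) these partials depend continuously, in the $H^1$-norm, on $(\alpha,z,\lambda)$. For (i), fix $(\alpha,z,\lambda)$ and set $U=U[z,\lambda]$. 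One has $U\in H^1$ since $\int_{\R^n}\abs{\nabla U}^2=S^n$. For the other two, \cref{eq:u_solves_yamabe} gives $\int_{\R^n}\abs{\nabla\partial_\lambda U}^2=p\int_{\R^n}U^{p-1}(\partial_\lambda U)^2$ and the analogous identity for $\partial_{z_j}U$; the explicit formula \cref{eq:U_derlambda} (together with the corresponding expression for $\nabla_z U$) yields the pointwise bounds $\abs{\partial_\lambda U}\lesssim\lambda^{-1}U$ and $\abs{\nabla_z U}\lesssim\lambda U$, so both integrals are controlled by a multiple of $\int_{\R^n}U^{p+1}=\int_{\R^n}U^{2^*}=S^n<\infty$, whence $\alpha\partial_\lambda U,\,\alpha\nabla_z U\in H^1$.

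For (ii), the $\alpha$-direction is trivial because $\mathcal U$ is linear in $\alpha$, and the $z$- and $\lambda$-directions are handled identically; take the $\lambda$-direction. For each fixed $x$ the map $\lambda\mapsto U[z,\lambda](x)$ is smooth, so by the fundamental theorem of calculus (differentiating under the $\int_0^h$ is legitimate, all integrands being smooth in $(s,x)$ with locally uniform bounds) one has, pointwise in $x$,
\[
  \nabla_x\!\left(\frac{U[z,\lambda+h]-U[z,\lambda]}{h}-\partial_\lambda U[z,\lambda]\right)
  = \frac1h\int_0^h \nabla_x\bigl(\partial_\lambda U[z,\lambda+s]-\partial_\lambda U[z,\lambda]\bigr)\,\de s,
\]
and therefore
\[
  \norm*{\frac{U[z,\lambda+h]-U[z,\lambda]}{h}-\partial_\lambda U[z,\lambda]}_{H^1}
  \le \sup_{\abs s\le\abs h}\norm{\nabla\partial_\lambda U[z,\lambda+s]-\nabla\partial_\lambda U[z,\lambda]}_{L^2}.
\]
It remains to see that $s\mapsto\partial_\lambda U[z,\lambda+s]$ is continuous from a neighbourhood of $0$ into $H^1$, which I would obtain by dominated convergence: $\abs{\nabla_x\partial_\lambda U[z,\lambda+s](x)-\nabla_x\partial_\lambda U[z,\lambda](x)}^2\to0$ pointwise as $s\to0$ and, for $s$ in a small neighbourhood of $0$, is dominated by a fixed $L^1$ function, thanks to the uniform polynomial decay of the Talenti bubbles and of their derivatives read off from \cref{eq:talenti_intro,eq:U_derlambda}. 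This shows $\partial_\lambda\mathcal U(\alpha,z,\lambda)=\alpha\partial_\lambda U[z,\lambda]$, and likewise $\partial_{z_j}\mathcal U(\alpha,z,\lambda)=\alpha\partial_{z_j}U[z,\lambda]$.

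For (iii), continuity in $\alpha$ is immediate; for continuity in $(z,\lambda)$ I would reduce to the base point $(0,1)$ using \cref{subsec:symmetries}: $U[z,\lambda]=T_{z,\lambda}(U[0,1])$, $\partial_\lambda U[z,\lambda]=\lambda^{-1}T_{z,\lambda}(\partial_\lambda U[0,1])$, and $\nabla_z U[z,\lambda]=\lambda\,T_{z,\lambda}(\nabla_z U[0,1])$. Since the operators $\{T_{z,\lambda}\}$ form a group of linear isometries of $H^1$ acting strongly continuously (continuity of translations and dilations on $\dot W^{1,2}$), the maps $(z,\lambda)\mapsto T_{z,\lambda}\varphi$ are continuous for every fixed $\varphi\in H^1$, and multiplying by the continuous scalar factors $\lambda^{\pm1}$ and $\alpha$ preserves continuity. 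Combining (i)–(iii) with the $C^1$ criterion gives the lemma. The only step that is not entirely routine is producing, near an arbitrary parameter value, the explicit $L^2$ dominating functions used in the dominated-convergence arguments of (ii) and (iii); but these are immediate from the closed-form expressions \cref{eq:talenti_intro,eq:U_derlambda} and the decay of the bubbles, so this is the mildest of obstacles.
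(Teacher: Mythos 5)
Your argument is correct and rests on exactly the same mechanism as the paper's (one-sentence) proof: the gradients of $U[z,\lambda]$, $\partial_\lambda U[z,\lambda]$, and $\nabla_z U[z,\lambda]$ are, locally uniformly in the parameters, dominated by a fixed $L^2(\R^n)$ function of the form $C(1+\abs{x})^{1-n}$, so dominated convergence yields differentiability in $H^1(\R^n)$. Your write-up simply makes explicit the standard reduction (continuity of the partial derivatives plus the $C^1$ criterion, via the symmetries $T_{z,\lambda}$) that the paper leaves implicit.
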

\begin{proof}
  The statement follows from the fact that the gradients of the partial derivatives 
  $U[z,\lambda]$, $\alpha\nabla_z U[z,\lambda]$, $\alpha\partial_\lambda U[z,\lambda]$ are (locally with respect to the parameters 
  $(\alpha,z,\lambda)$) dominated by an $L^2(\R^n)$-function (in particular by a multiple of 
  $(1+\abs{x})^{1-n}$), so the result follows by dominated convergence.
\end{proof}

\begin{proposition}\label{prop:distance_is_rho}
  The $H^1$-norm of $\rho$ approximates the $H^1$-distance of $u$ from the manifold of linear combinations of
  two Talenti bubbles. More precisely, if we denote $\sigma'\defeq \alpha U[z_1,\lambda_1] + \beta U[z_2,\lambda_2]$, it holds
  \begin{equation*}
    \inf_{\sigma'}\norm{\nabla u-\nabla \sigma'}_{L^2} \gtrsim \norm{\nabla\rho}_{L^2} \comma
  \end{equation*}
  where the infimum is taken over all choices of the parameters $\alpha,\beta,\lambda_1,\lambda_2>0$ 
  and $z_1,z_2\in\R^n$.
\end{proposition}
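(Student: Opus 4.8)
The plan is to compare $u-\sigma'$ with $\rho$ by writing, for any competitor $\sigma'=\alpha U[z_1,\lambda_1]+\beta U[z_2,\lambda_2]$,
$$u-\sigma'=\rho+w,\qquad w\defeq (U+V)-\sigma',$$
so that $\norm{\nabla u-\nabla\sigma'}_{L^2}^2=\norm{\nabla\rho}_{L^2}^2+\norm{\nabla w}_{L^2}^2-2\scalprod{\nabla\rho}{\nabla w}$. I only need to treat $\sigma'$ that are $H^1$-close to $U+V$: indeed if $\norm{\nabla u-\nabla\sigma'}_{L^2}\ge \norm{\nabla\rho}_{L^2}$ there is nothing to prove, while the opposite inequality forces $\norm{\nabla w}_{L^2}\le 2\norm{\nabla\rho}_{L^2}=\smallo(1)$. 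For such $\sigma'$ the proof reduces to the two facts (i) $\norm{\nabla w}_{L^2}$ splits off a $\nes$-component of comparable size, and (ii) $\rho$ is almost $H^1$-orthogonal to $\nes$, which together give $\scalprod{\nabla\rho}{\nabla w}=\smallo\bigl(\norm{\nabla\rho}_{L^2}\norm{\nabla w}_{L^2}\bigr)$ and hence $\norm{\nabla u-\nabla\sigma'}_{L^2}^2\ge(1-\smallo(1))\bigl(\norm{\nabla\rho}_{L^2}^2+\norm{\nabla w}_{L^2}^2\bigr)\ge \tfrac12\norm{\nabla\rho}_{L^2}^2$ for $R$ large.

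For the almost-orthogonality of $\rho$ to $\nes$, I would split $\rho=\rho_1+\rho_2$ into its $\LW$-orthogonal projections onto $\nes$ and $\nes^{\perp}$. Since $\pi(\rho)=\rho$, i.e. $\rho\perp_{\LW}\res$, one has $\rho_1=(\pi_\nes-\pi_\res)\rho$, so $\norm{\rho_1}_{\LW}\le d(\res,\nes)\,\norm{\rho}_{\LW}=\smallo(1)\norm{\rho}_{\LW}$ by \cref{prop:realisnice}; combined with $\norm{\rho}_{\LW}\lesssim\norm{\nabla\rho}_{L^2}$ (H\"older and Sobolev, using $\norm{(U+V)^{p-1}}_{L^{n/2}}\lesssim1$) and \cref{lem:equivalent_norms_nes}, this yields $\norm{\nabla\rho_1}_{L^2}=\smallo(\norm{\nabla\rho}_{L^2})$ and $\norm{\nabla\rho_2}_{L^2}\lesssim\norm{\nabla\rho}_{L^2}$. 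Then for $\psi\in\nbasis$, $\scalprod{\nabla\rho}{\nabla\psi}=\scalprod{\nabla\rho_1}{\nabla\psi}+\scalprod{\nabla\rho_2}{\nabla\psi}$, the first term being $\smallo(\norm{\nabla\rho}_{L^2})$ by Cauchy–Schwarz and the second $\smallo(\norm{\nabla\rho_2}_{L^2})=\smallo(\norm{\nabla\rho}_{L^2})$ by \cref{lem:almost_H1_orthogonality}. As $\nbasis$ is asymptotically quasi-orthonormal in $H^1$ and finite, this upgrades to $\abs{\scalprod{\nabla\rho}{\nabla\tau}}\lesssim\norm{\nabla\tau}_{L^2}\,\smallo(\norm{\nabla\rho}_{L^2})$ for every $\tau\in\nes$.

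For the chord $w=(U+V)-\sigma'$, I want $w=\tau+\omega$ with $\tau\in\nes$, $\norm{\nabla\tau}_{L^2}\lesssim\norm{\nabla w}_{L^2}$, and $\norm{\nabla\omega}_{L^2}=\smallo(\norm{\nabla w}_{L^2})$. The input needed is a \emph{quantitative near-uniqueness of the two-bubble decomposition}: if $\norm{\nabla w}_{L^2}$ is small enough then, after possibly swapping the two bubbles of $\sigma'$, the parameters $(\alpha,z_1,\lambda_1)$, $(\beta,z_2,\lambda_2)$ are close to $(1,-Re_1,1)$, $(1,Re_1,1)$, with $\abs{\alpha-1}+\abs{\lambda_1-1}+\abs{z_1+Re_1}\lesssim\norm{\nabla(\alpha U[z_1,\lambda_1])-\nabla U}_{L^2}\lesssim\norm{\nabla w}_{L^2}$ and analogously for the second bubble. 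I would prove this either by a compactness argument based on Struwe's \cref{thm:struwe_intro}, or directly by testing $\nabla\sigma'$ against $\nabla U,\nabla\partial_\lambda U,\nabla\nabla_z U$ and their $V$-analogues and invoking the interaction estimates of \cref{prop:interaction_approx} to pin down the parameters (the usual non-degeneracy of the manifold of weakly-interacting two-bubble sums). Granting this, \cref{lem:U_differentiable_H1} applied to $w=(1-\alpha)U+\alpha\bigl(U-U[z_1,\lambda_1]\bigr)+(1-\beta)V+\beta\bigl(V-U[z_2,\lambda_2]\bigr)$ gives $U-U[z_1,\lambda_1]=-(z_1+Re_1)\cdot\nabla_z U-(\lambda_1-1)\partial_\lambda U+\smallo(\abs{z_1+Re_1}+\abs{\lambda_1-1})$ in $H^1$ (and similarly for $V$); collecting the $\nes$-valued terms into $\tau$ and the remainders into $\omega$ produces the desired splitting.

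Assembling, $\scalprod{\nabla\rho}{\nabla w}=\scalprod{\nabla\rho}{\nabla\tau}+\scalprod{\nabla\rho}{\nabla\omega}$ is $\smallo(\norm{\nabla\rho}_{L^2}\norm{\nabla\tau}_{L^2})+\norm{\nabla\rho}_{L^2}\norm{\nabla\omega}_{L^2}=\smallo(\norm{\nabla\rho}_{L^2}\norm{\nabla w}_{L^2})$, which plugged into the expansion of $\norm{\nabla u-\nabla\sigma'}_{L^2}^2$ above closes the argument and, taking the infimum over $\sigma'$, gives $\inf_{\sigma'}\norm{\nabla u-\nabla\sigma'}_{L^2}\gtrsim\norm{\nabla\rho}_{L^2}$ for $R$ large. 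I expect the genuine obstacle to be the quantitative near-uniqueness of the two-bubble decomposition in the third step — making precise, with constants uniform as $R\to\infty$, that an $H^1$-competitor close to $U+V$ cannot carry its bubbles in a substantially different configuration; the remaining steps are a bookkeeping combination of \cref{prop:realisnice}, \cref{lem:equivalent_norms_nes}, \cref{lem:almost_H1_orthogonality} and \cref{lem:U_differentiable_H1}, and the proposition then feeds into \cref{prop:counterexample_main_estimate} to yield \cref{thm:counterexample}.
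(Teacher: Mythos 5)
Your proposal is correct and follows essentially the same route as the paper: the same decomposition $u-\sigma'=\rho+(\sigma-\sigma')$, the same reduction to competitors close to $U+V$, the same use of \cref{prop:realisnice} and \cref{lem:equivalent_norms_nes} to show $\rho$ is almost $H^1$-orthogonal to $\nes$, and the same use of \cref{lem:U_differentiable_H1} and \cref{lem:almost_H1_orthogonality} to split the chord into an $\nes$-component plus an $o(\delta)$ remainder. The "quantitative near-uniqueness" you single out as the genuine obstacle is also the point the paper treats most lightly (it asserts that $\norm{\sigma-\sigma'}_{H^1}=\smallo(1)$ forces the parameters to be $\smallo(1)$-close, and then gets the two-sided bound $\norm{\sigma-\sigma'}_{H^1}\approx\delta$ from the differentiability and non-degeneracy of the parametrization), so your plan to settle it by compactness or by testing against the derivative directions is consistent with, and no weaker than, what the paper does.
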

\begin{proof}
Given $\sigma'=\alpha U[z_1,\lambda_1] + \beta U[z_2,\lambda_2]$,
  we notice that $u-\sigma'=\rho + (\sigma-\sigma')$. 
  The core idea of the proof is to show that there cannot be extreme cancellation when computing the 
  $H^1$-norm of $\rho + (\sigma-\sigma')$.
  For this, we first show that there is not extreme cancellation when computing the norm of 
  $\sigma-\sigma'$, and we then exploit that $\rho$ is \emph{almost orthogonal} to $\sigma-\sigma'$ to obtain the result.

  Let us define $U'\defeq U[z_1, \lambda_1]$ and $V'\defeq U[z_2, \lambda_2]$.
  
  If $\norm{\sigma-\sigma'}_{H^1} \ge 2\norm{\nabla\rho}_{L^2}$ then the statement trivially holds, so 
  we can assume that $\norm{\sigma-\sigma'}_{H^1} \lesssim \norm{\nabla\rho}_{L^2}$. Thus,
  since $\norm{\nabla\rho}_{L^2}=\smallo(1)$, 
the quantity $\delta=\delta(\alpha, \lambda_1, z_1, \beta, \lambda_2, z_2)$ defined as
  \begin{equation*}
    \delta \defeq \abs{\alpha-1}+\abs{\lambda_1-1}+\abs{z_1+Re_1}
    + \abs{\beta-1}+\abs{\lambda_2-1}+\abs{z_2-Re_1}
  \end{equation*}
  is also $\smallo(1)$ (note that, without loss of generality, we have assumed that
  $U'$ is close to $U$ and $V'$ is close to $V$).
  
  Our first goal is estimating $\norm{\sigma-\sigma'}_{H^1}$. We note that
  \begin{equation}\label{eq:counterexample_thm_tmp1}
    \norm{\sigma-\sigma'}_{H^1}^2 
    = \norm{U-\alpha U'}_{H^1}^2
    + \norm{V- \beta V'}_{H^1}^2 + 2 \scalprod{U-\alpha U'}{V-\beta V'}_{H^1} \fullstop
  \end{equation}
  As stated in \cref{lem:U_differentiable_H1}, the map $\mathcal U$ is differentiable and the components of
  its gradient at the point $(1, 0, 1)$ are nonzero and $H^1$-orthogonal. Hence, given that the involved quantities are 
  translation invariant, we get
  \begin{align*}
    \norm{U - \alpha U'}_{H^1} = \norm{U[-Re_1,1]-\alpha U[z_1, \lambda_1]}_{H^1} 
    &\approx \abs{\alpha-1}+\abs{\lambda_1-1}+\abs{z_1+Re_1} \comma \\
    \norm{V - \beta V'}_{H^1} = \norm{U[Re_1,1]-\beta U[z_2, \lambda_2]]}_{H^1} 
    &\approx \abs{\beta-1}+\abs{\lambda_2-1}+\abs{z_2-Re_1} \fullstop
  \end{align*}
  Using again the differentiability of $\mathcal U$, we also obtain
  \begin{align*}
    &\abs*{\scalprod{U - \alpha U'}{V - \beta V'}_{H^1}}= 
    \abs*{\scalprod{U[-Re_1,1]-\alpha U[z_1, \lambda_1]}{U[Re_1,1]-\beta U[z_2, \lambda_2]}_{H^1}}\\
    &\quad\quad\lesssim \smallo(\delta^2) 
    + \delta^2 \abs*{\scalprod{\nabla\mathcal U(1, -Re_1, 1)}{\nabla\mathcal U(1, Re_1, 1)}_{H^1}} 
    = \smallo(\delta^2) \fullstop
  \end{align*}
  The last three estimates, together with \cref{eq:counterexample_thm_tmp1}, imply that
  \begin{equation}\label{eq:norm_diff_sigma}	
   \norm{\sigma-\sigma'}_{H^1} \approx \delta \fullstop
  \end{equation}
  We now show that
  \begin{equation}
  \label{eq:last}
    \abs{\scalprod{\sigma-\sigma'}{\rho}_{H^1}} 
    = \smallo\left(\norm{\sigma-\sigma'}_{H^1}\cdot\norm{\nabla\rho}_{L^2}\right)\fullstop
  \end{equation}
  
  Let $\tilde\rho\in\nes^{\perp}$ be the orthogonal projection of $\rho$ onto $\nes^{\perp}$, with respect to the $\LW$-scalar product.
  Since $\rho\in\res^{\perp}$, thanks to \cref{prop:realisnice} we know that
  \begin{equation*}
    \norm{\rho-\tilde\rho}_{\LW} = \smallo(\norm{\rho}_{\LW}) = \smallo(\norm{\nabla\rho}_{L^2})\comma
  \end{equation*}
  and given that $\rho-\tilde\rho\in\nes$ we can apply \cref{lem:equivalent_norms_nes} and deduce that
  \begin{equation}\label{eq:counterexample_thm_norm_tilderho}
    \norm{\nabla\rho-\nabla\tilde\rho}_{L^2} = \smallo(\norm{\nabla\rho}_{L^2}) \fullstop
  \end{equation}
Thus, thanks to \cref{eq:counterexample_thm_norm_tilderho} and Cauchy-Schwarz inequality, we get
  \begin{align*}
    \abs{\scalprod{\sigma-\sigma'}{\rho}_{H^1}} 
    &\le 
    \abs{\scalprod{\sigma-\sigma'}{\tilde\rho}_{H^1}}
    +
    \norm{\sigma-\sigma'}_{H^1}\norm{\nabla\rho-\nabla\tilde\rho}_{L^2} \\
    &=\abs{\scalprod{\sigma-\sigma'}{\tilde\rho}_{H^1}} 
    + \smallo(\norm{\sigma-\sigma'}_{H^1}\norm{\nabla\rho}_{L^2}) \fullstop
  \end{align*}
  In order to estimate $\abs{\scalprod{\sigma-\sigma'}{\tilde\rho}_{H^1}}$, we split it as
  \begin{equation*}	
    \abs{\scalprod{\sigma-\sigma'}{\tilde\rho}_{H^1}} \le 
    \abs{\scalprod{U-\alpha U'}{\tilde\rho}_{H^1}}
    + \abs{\scalprod{V-\beta V'}{\tilde\rho}_{H^1}} \fullstop
  \end{equation*}
  We will focus on $\scalprod{U-\alpha U'}{\tilde\rho}_{H^1}$, as the other term can be handled analogously.
  It holds
  \begin{align*}
    \scalprod{U-\alpha U'}{\tilde\rho}_{H^1}
    &= \scalprod
    {(1-\alpha)U -\alpha(\lambda_1-1)\partial_\lambda U - \alpha (z_1+Re_1)\cdot\nabla_zU}
    {\tilde\rho}_{H^1} \\
    &\phantom{=}+ \alpha \scalprod
    {U+(\lambda_1-1)\partial_\lambda U + (z_1+Re_1)\nabla_z U - U'}
    {\tilde\rho}_{H^1} \fullstop
  \end{align*}
  Recalling \cref{lem:almost_H1_orthogonality}, the first of the two terms can be controlled as
  \begin{align*}
    &\abs*{\scalprod{(1-\alpha)U -\alpha(\lambda_1-1)\partial_\lambda U - (z_1+Re_1)\cdot\nabla_zU}
    {\tilde\rho}_{H^1}} \\
    &\quad\quad \lesssim 
    \left(\abs{1-\alpha} + \abs{\alpha}\abs{\lambda_1-1} + \abs{\alpha}\abs{z_1+Re_1}\right)
    \cdot \smallo(\norm{\nabla\tilde\rho}_{L^2}) \\
    &\quad\quad\lesssim \delta\cdot\smallo(\norm{\nabla\tilde\rho}_{L^2})
    = \smallo(\norm{\sigma-\sigma'}_{H^1}\cdot \norm{\nabla\rho}_{L^2}) \comma
  \end{align*}
  where in the last estimate we have applied \cref{eq:norm_diff_sigma,eq:counterexample_thm_norm_tilderho}.
  
  To bound the second term, we apply Cauchy-Schwarz inequality:
  \begin{align*}
    &\abs*{\scalprod{U+(\lambda_1-1)\partial_\lambda U + (z_1+Re_1)\nabla_z U - U'}{\tilde\rho}_{H^1}} \\
    &\quad\quad\le 
    \norm{U+(\lambda_1-1)\partial_\lambda U + (z_1+Re_1)\nabla_z U - U'}_{H^1}\norm{\nabla\tilde\rho}_{L^2} \\
    &\quad\quad\lesssim 
    \norm{U+(\lambda_1-1)\partial_\lambda U + (z_1+Re_1)\nabla_z U - U'}_{H^1}\norm{\nabla\rho}_{L^2} \fullstop
  \end{align*}
  Recalling \cref{eq:norm_diff_sigma}, the proof of \cref{eq:last} is finished once we note that 
  \begin{equation*}
    \norm{U+(\lambda_1-1)\partial_\lambda U + (z_1+Re_1)\nabla_z U - U'}_{H^1} 
    = \smallo(\abs{\lambda_1-1} + \abs{z_1+Re_1}) \comma
  \end{equation*}
 which is a direct consequence of \cref{lem:U_differentiable_H1}.
 
 Thanks to \cref{eq:last}, we obtain that
 \begin{equation*}
   \norm{u-\sigma'}_{H^1}=\norm{\rho+(\sigma-\sigma')}_{H^1}
   \approx \norm{\rho}_{H^1}+\norm{\sigma-\sigma'}_{H^1}\ge \norm{\rho}_{H^1},
 \end{equation*}
 concluding the proof.
\end{proof}

Thanks to \cref{prop:distance_is_rho}, we are ready to show that our family
of functions constitutes a counterexample.

\begin{proof}[Proof of \cref{thm:counterexample}]
  We prove that the parametrized family $u=u_R$ that we have built satisfies all the requirements.
  
  Since $u=U+V+\rho$, it follows by \cref{eq:normrho,lem:normf} that
  \begin{equation*}
    \norm{\nabla u - \nabla U[-Re_1,1] - \nabla U[Re_1, 1]}_{L^2} = \norm{\nabla\rho}_{L^2}\to 0 
  \end{equation*}
when $R\to\infty$.
  Hence, applying \cref{prop:counterexample_main_estimate,prop:distance_is_rho} we obtain
  \begin{equation*}
    \norm{\lapl u + u\abs{u}^{p-1}}_{H^{-1}} \lesssim \zeta_n(\norm{\nabla\rho}_{L^2})
    \lesssim \zeta_n(\norm{\nabla u-\nabla\sigma'}_{L^2})
  \end{equation*}
  and this concludes the proof.
\end{proof}

\subsection{Construction of a nonnegative counterexample}
As anticipated, we show that the positive part of the counterexample $u=u_R$ constructed in the previous sections satisfies all the requirements
of \cref{thm:counterexample_pos}.

\begin{lemma}\label{lem:norm_rhominus}
  It holds
  \begin{equation*}
    \norm{\nabla u^-}_{L^2} \lesssim \xi_n(\norm{\nabla\rho}_{L^2}) \comma
  \end{equation*}
  where $\xi_n$ is the function considered in \cref{thm:counterexample_pos}.
\end{lemma}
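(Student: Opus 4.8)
The plan is to test the equation solved by $u$ against its negative part $u^-$, using the elementary pointwise bound $u^-\le\abs{\rho}$ to render the nonlinear contribution negligible.

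First observe that $0\le u^-\le\abs{\rho}$ pointwise: since $u=U+V+\rho$ with $U,V\ge0$, at any point where $u<0$ one has $\rho<-(U+V)\le0$, hence $\abs{u}=-(U+V)+\abs{\rho}\le\abs{\rho}$, while $u^-$ vanishes where $u\ge0$. In particular $u^-\in H^1(\R^n)$ with $\nabla u^-=-\nabla u\,\mathbf{1}_{\{u<0\}}$, and by the Sobolev inequality
\[
  \norm{u^-}_{L^{2^*}}\le\norm{\rho}_{L^{2^*}}\lesssim\norm{\nabla\rho}_{L^2}=\smallo(1)\fullstop
\]

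Next I would set $g\defeq\lapl u+u\abs{u}^{p-1}$, which by \cref{prop:counterexample_main_estimate} satisfies $g\in L^{(2^*)'}(\R^n)\embedding H^{-1}(\R^n)$ with $\norm{g}_{L^{(2^*)'}}\lesssim\zeta_n(\norm{\nabla\rho}_{L^2})$. Testing the identity $g=\lapl u+u\abs{u}^{p-1}$ against $u^-$, and noting that $u\abs{u}^{p-1}u^-=-(u^-)^{2^*}$ on $\{u<0\}$ while it vanishes elsewhere (recall $2^*=p+1$), one obtains
\[
  \int_{\R^n}\abs{\nabla u^-}^2=\int_{\R^n}(u^-)^{2^*}+\int_{\R^n}g\,u^-\fullstop
\]
Then I would estimate the right-hand side. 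Since $2^*-2>0$ and $\norm{u^-}_{L^{2^*}}=\smallo(1)$, the Sobolev inequality gives $\int_{\R^n}(u^-)^{2^*}=\norm{u^-}_{L^{2^*}}^{2^*-2}\norm{u^-}_{L^{2^*}}^2=\smallo(1)\norm{\nabla u^-}_{L^2}^2$, while Hölder and Sobolev give $\int_{\R^n}g\,u^-\le\norm{g}_{L^{(2^*)'}}\norm{u^-}_{L^{2^*}}\lesssim\norm{g}_{L^{(2^*)'}}\norm{\nabla u^-}_{L^2}$. Absorbing the term $\smallo(1)\norm{\nabla u^-}_{L^2}^2$ into the left-hand side yields
\[
  \norm{\nabla u^-}_{L^2}\lesssim\norm{g}_{L^{(2^*)'}}\lesssim\zeta_n(\norm{\nabla\rho}_{L^2})\le\xi_n(\norm{\nabla\rho}_{L^2})\comma
\]
where the last inequality holds because $\zeta_n(t)\le t$ for $t$ small, so $\zeta_n(t)=\sqrt{\zeta_n(t)^2}\le\sqrt{t\,\zeta_n(t)}=\xi_n(t)$. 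This is in fact stronger than the claimed bound, but that is harmless.

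There is no real obstacle here; the only point requiring care is to justify that the test-function computation is legitimate, i.e. that $u\abs{u}^{p-1}u^-\in L^1(\R^n)$ and that one may pair $\lapl u$ with $u^-\in H^1(\R^n)$. Both follow from $u^-\le\abs{\rho}$ together with $u^-\in H^1(\R^n)\cap L^{2^*}(\R^n)$ and $u\abs{u}^{p-1}\in L^{(2^*)'}(\R^n)$ (the latter since $\norm{u\abs{u}^{p-1}}_{L^{(2^*)'}}=\norm{u}_{L^{2^*}}^{p}<\infty$). The genuinely useful input — and the reason the estimate beats the crude bound $\norm{\nabla u^-}_{L^2}\lesssim\norm{\nabla\rho}_{L^2}$ — is the pointwise domination $u^-\le\abs{\rho}$, which forces $\norm{u^-}_{L^{2^*}}$ to be small and thereby makes the nonlinear term negligible.
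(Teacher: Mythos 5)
Your proof is correct, and it starts from the same integration by parts as the paper: the identity $\int_{\R^n}\abs{\nabla u^-}^2=\int_{\{u<0\}}u(-\lapl u)$ obtained in the paper via the divergence theorem applied to $u^-\nabla u$ is exactly your pairing of $\lapl u$ with $u^-$. The two arguments diverge in how the right-hand side is estimated. The paper substitutes the explicit expression $-\lapl u=U^p+V^p+p(U+V)^{p-1}\rho+\tf$ from \cref{eq:def_rho}, uses the pointwise bounds $\abs{u}\le\abs{\rho}$ and $\abs{\lapl u}\lesssim\abs{\rho}^p+\abs{f-\tf}$ on $\{u<0\}$, and ends up with $\norm{\nabla u^-}_{L^2}^2\lesssim\norm{\nabla\rho}_{L^2}^{2^*}+\norm{\nabla\rho}_{L^2}\norm{f-\tf}_{L^{(2^*)'}}$; the square root of the product $\norm{\nabla\rho}_{L^2}\,\zeta_n(\norm{\nabla\rho}_{L^2})$ is precisely where $\xi_n=\sqrt{t\,\zeta_n(t)}$ comes from. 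You instead regroup the terms so as to keep the sign-definite contribution $-\int(u^-)^{2^*}$ exactly (absorbing it via $\norm{u^-}_{L^{2^*}}\le\norm{\rho}_{L^{2^*}}=\smallo(1)$) and measure the residual $g=\lapl u+u\abs{u}^{p-1}$ in $L^{(2^*)'}$ against $\norm{u^-}_{L^{2^*}}\lesssim\norm{\nabla u^-}_{L^2}$ rather than against the (potentially much larger) $\norm{\rho}_{L^{2^*}}$. This buys you the sharper conclusion $\norm{\nabla u^-}_{L^2}\lesssim\zeta_n(\norm{\nabla\rho}_{L^2})$, which implies the stated bound since $\zeta_n(t)\le t$, hence $\zeta_n(t)\le\xi_n(t)$, for $t$ small (and $\norm{\nabla\rho}_{L^2}=\smallo(1)$ as $R\to\infty$ is the standing regime). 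Both routes ultimately rest on the same key input, namely \cref{lem:counterexample_fundamental} (which you invoke through \cref{prop:counterexample_main_estimate}), so your variant is a clean and legitimate improvement rather than a different proof in substance.
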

\begin{proof}
  Applying the divergence theorem to the vector field $u^-\nabla u$, we get
  \begin{equation}\label{eq:lemma_pos_tmp1}
    \int_{\R^n} \abs{\nabla u^-}^2 = \int_{\{u<0\}} u(-\lapl u) \fullstop
  \end{equation}
  In order to proceed let us recall that $u=U+V+\rho$ and thus, if $u<0$, then $U+V<\abs{\rho}$.
  Hence in the region $\{u<0\}$ it holds $\abs{u}\le\abs{\rho}$ and $f\lesssim \abs{\rho}^p$ (recall that $f=(U+V)^p-U^p-V^p$).
  By definition of $\rho$ (see \cref{eq:def_rho}), we have
  \begin{equation*}
    -\lapl u = U^p+V^p+p(U+V)^{p-1}\rho + \tf \comma
  \end{equation*}
  so 
  \begin{equation*}
    \abs{\lapl u} \lesssim \abs{\rho}^p + \abs{f-\tf}\quad \text{in the region $\{u<0\}$} \fullstop
  \end{equation*}
Hence, using this inequality in \cref{eq:lemma_pos_tmp1}, we obtain
  \begin{align*}
    \int_{\R^n} \abs{\nabla u^-}^2 
    &\lesssim \int_{\R^n} \abs{\rho}^{2^*} + \int_{\R^n} \abs{\rho}\abs{f-\tf}
    \le \norm{\rho}_{L^{2^*}}^{2^*} + \norm{\rho}_{L^{2^*}}\norm{f-\tf}_{L^{(2^*)'}} \\
    &\lesssim \norm{\nabla \rho}_{L^2}^{2^*} + \norm{\nabla\rho}_{L^2}\norm{f-\tf}_{L^{(2^*)'}} \comma
  \end{align*}
  where we have applied H\"older and Sobolev inequalities.
  The statement now follows thanks to \cref{lem:counterexample_fundamental}.
\end{proof}

\begin{proof}[Proof of \cref{thm:counterexample_pos}]
  We prove that $u^+=(u_R)^+$ satisfies the requirements.

  Thanks to \cref{lem:norm_rhominus}, it holds
  \begin{align*}
    \abs*{\norm{\nabla u^+-\nabla U-\nabla V}_{L^2} - \norm{\nabla u-\nabla U-\nabla V}_{L^2}} &\le
    \norm{\nabla u^-}_{L^2} \lesssim
    \xi_n(\norm{\nabla\rho}_{L^2}) \\
    &= \smallo(\norm{\nabla u-\nabla U-\nabla V}_{L^2})\fullstop
  \end{align*}
  Hence \cref{eq:R infty u+} follows directly from 
  \cref{thm:counterexample}.
  
  We now show the validity of the following two key estimates:
  \begin{align}
    &\norm{\nabla u^+-\nabla\sigma'}_{L^2} \approx 
    \norm{\nabla u-\nabla\sigma'}_{L^2} \comma \label{eq:counterexample_pos1}\\
    &\norm{\lapl u^{+}+(u^+)^p}_{H^{-1}} 
    \lesssim \norm{\lapl u + u\abs{u}^{p-1}}_{H^{-1}} + \xi_n(\norm{\nabla\rho}_{L^2}) 
    \label{eq:counterexample_pos2}
    \fullstop
  \end{align}
 Note that, combining these two estimates with \cref{prop:distance_is_rho,thm:counterexample}, we have
  \begin{align*}
    \norm{\lapl u^{+}+(u^+)^p}_{H^{-1}} 
    &\lesssim \norm{\lapl u + u\abs{u}^{p-1}}_{H^{-1}} + \xi_n(\norm{\nabla\rho}_{L^2}) \\
    &\lesssim \zeta_n(\norm{\nabla u - \nabla\sigma'}_{L^2}) + \xi_n(\norm{\nabla u - \nabla\sigma'}_{L^2}) \\
    &\lesssim \xi_n(\norm{\nabla u^+-\nabla\sigma'}_{L^2})
    \comma
  \end{align*}
  that concludes the proof of \cref{thm:counterexample_pos}.
  
  Let us begin by proving \cref{eq:counterexample_pos1}. Combining the triangle inequality with
  \cref{lem:norm_rhominus,prop:distance_is_rho}, we obtain
  \begin{align*}
    \abs*{\norm{\nabla u^+-\nabla\sigma'}_{L^2}-\norm{\nabla u-\nabla\sigma'}_{L^2}}
    &\le \norm{\nabla u^+-\nabla u}_{L^2} = \norm{\nabla u^-}_{L^2} \lesssim \xi_n(\norm{\nabla\rho}_{L^2}) \\
    &\lesssim \xi_n(\norm{\nabla u-\nabla\sigma'}_{L^2})
  \end{align*}
  and \cref{eq:counterexample_pos1} follows since $\xi_n(t)/t \to 0$ when $t\to 0$.
  
  Now we focus on \cref{eq:counterexample_pos2}. The triangle and Sobolev inequalities yield
  \begin{align*}
    \abs*{\norm{\lapl u^{+}+(u^+)^p}_{H^{-1}} - \norm{\lapl u + u\abs{u}^{p-1}}_{H^{-1}}}
    &\le \norm{\nabla u^+-\nabla u}_{L^2} + 
    \norm{(u^+)^p-u\abs{u}^{p-1}}_{H^{-1}} \\
    &\lesssim \norm{\nabla u^-}_{L^2} + \norm{(u^-)^p}_{L^{(2^*)'}} \\
    &\lesssim \norm{\nabla u^-}_{L^2} + \norm{\nabla u^-}_{L^2}^p
    \lesssim \norm{\nabla u^-}_{L^2} \fullstop
  \end{align*}
  Thanks to \cref{lem:norm_rhominus}, also \cref{eq:counterexample_pos2} follows, concluding the proof.
\end{proof}

\section{Application to convergence to equilibrium for a fast diffusion equation}\label{sec:fdi}
The goal of this section is to prove a \emph{quantitative} convergence
to the equilibrium for the fast diffusion equation
\begin{equation*}\tag{FDI}\label{eq:FDI}\begin{cases}
    u(0, \emptyparam) = u_0 \\
    \frac{\de}{\de t}u = \lapl (u^{\frac1p}) 
\end{cases}\comma\end{equation*}
where $u_0:\R^n\to\co{0}\infty$ is a nonnegative initial datum.

Given $T>0$, $z\in\R^n$ and $\lambda>0$, let us define the function 
$u_{T,z,\lambda}:\co0T\times\R^n\to\oo0\infty$ as
\begin{equation}\label{eq:barenblatt}
  u_{T,z,\lambda}(t, x) 
  \defeq \left(\frac{p-1}p\right)^{\frac{p}{p-1}}(T-t)^{\frac{p}{p-1}}U[z,\lambda](x)^p \fullstop
\end{equation}
One can note that, for any choice of the parameters, the function $u_{T,z,\lambda}$, extended
to $0$ for $t\ge T$, solves \cref{eq:FDI}.

It is well-known \cite{delpinosaez2001} that for a large class of initial data 
$u(0,\emptyparam)=u_0$, the solution $u$ of the fast diffusion equation vanishes in finite time and  the 
profile of $u$ at the vanishing time coincides with the profile of one of the special solutions 
\cref{eq:barenblatt}. 
More precisely, if $T>0$ is the vanishing time of a solution $u:\oo0\infty\times\R^n\to\co0\infty$ of 
\cref{eq:FDI}, there exist $z\in\R^n$ and $\lambda>0$ such that
\begin{equation*}
  \norm*{\frac{u(t,\emptyparam)}{u_{T,z,\lambda}}-1}_{L^{\infty}} \to 0 
  \quad\text{as $t\to T^{-}$} \fullstop
\end{equation*}

We prove a quantitative version of the convergence.
The proof partially overlaps with the proof of \cite[Theorem 1.3]{cirfigmag2018}. 
However, since the latter contains an error\footnote{The first identity of \cite[Equation (3.14)]{cirfigmag2018} does 
not hold. That identity is then used to establish \cite[Equation (3.15)]{cirfigmag2018} and this generates
the error.}, 
and for the sake completeness, we report the full proof.
\begin{theorem}
  For any $n\ge 3$, let $u:\co{0}{\infty}\times\R^n\to\co{0}{\infty}$ be a solution of \cref{eq:FDI}
  with nonnegative initial datum $u_0\in L^{(2^*)'}(\R^n)$.
  Then the solution $u$ vanishes in finite time and, if $0<T=T(u_0)<\infty$ is the vanishing time,
  there exist $z\in\R^n$ and $\lambda>0$ such that
  \begin{equation*}
    \norm*{\frac{u(t,\emptyparam)}
    {u_{T,z,\lambda}(t,\emptyparam)}
    -1}_{L^{\infty}} \le A\cdot(T-t)^{\kappa(n)} \quad \forall\, 0<t<T \comma
  \end{equation*}
  where $\kappa=\kappa(n)>0$ is a dimensional constant, and $A=A(n,u_0)$ is a constant that depends
  on $n$ and the initial datum.
\end{theorem}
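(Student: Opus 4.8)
The plan is to convert the extinction dynamics of \cref{eq:FDI} into convergence towards a stationary state, and then to invoke the sharp single-bubble estimate \cref{cor:single_bubble}. For $u_0\in L^{(2^*)'}(\R^n)$ the solution extinguishes at a finite time $T=T(u_0)$, and by \cite{delpinosaez2001} there exist $z\in\R^n$ and $\lambda>0$ such that $\norm{u(t,\cdot)/u_{T,z,\lambda}(t,\cdot)-1}_{L^\infty}\to 0$ as $t\to T^-$. First I would perform the parabolic rescaling $s\defeq-\log(T-t)$ and
\begin{equation*}
  u(t,x)=\bigl(\tfrac{p-1}{p}(T-t)\bigr)^{\frac{p}{p-1}}V(s,x)^p\comma
\end{equation*}
after which a direct computation shows that $V$ solves the \emph{autonomous} equation $(p-1)V^{p-1}\partial_sV=\lapl V+V^p$, whose stationary solutions are exactly the Talenti bubbles; moreover $\norm{\nabla V(s,\cdot)}_{L^2}^2\to S^n$ and $\dist(V(s,\cdot),\mathcal M)\to 0$ as $s\to\infty$, where $\mathcal M$ denotes the manifold of Talenti bubbles and $\dist$ the $H^1$-distance.

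The heart of the argument is the energy--dissipation identity: testing the equation against $\partial_sV$ and integrating by parts (legitimate thanks to the bubble-like decay of $V(s,\cdot)$) gives, with $J(w)\defeq\tfrac12\int\abs{\nabla w}^2-\tfrac1{2^*}\int\abs{w}^{2^*}$,
\begin{equation*}
  \frac{\de}{\de s}J(V(s,\cdot))=-(p-1)\int_{\R^n}V^{p-1}(\partial_sV)^2\le 0\fullstop
\end{equation*}
This is exactly the point where the argument of \cite{cirfigmag2018} needs to be corrected. Since $J(V(s,\cdot))\to J(U[z,\lambda])=\tfrac1nS^n$, the excess energy $E(s)\defeq J(V(s,\cdot))-\tfrac1nS^n$ is nonnegative and nonincreasing. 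Writing $\lapl V+V^p=(p-1)V^{p-1}\partial_sV$ and applying H\"older (note $(p-1)\tfrac n2=2^*$) and Sobolev, one gets
\begin{equation*}
  \norm{\lapl V+V^p}_{H^{-1}}^2\lesssim\norm{V}_{L^{2^*}}^{p-1}\int_{\R^n}V^{p-1}(\partial_sV)^2=-\frac{\norm{V}_{L^{2^*}}^{p-1}}{p-1}\,E'(s)\comma
\end{equation*}
while a Taylor expansion of $J$ around the bubble $\tilde U$ closest to $V(s,\cdot)$, together with the elementary bound $\int\abs{\nabla\rho}^2-p\int\tilde U^{p-1}\rho^2\le\int\abs{\nabla\rho}^2$, yields $E(s)\lesssim\dist(V(s,\cdot),\mathcal M)^2$. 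For $s$ large the energy $\norm{\nabla V(s,\cdot)}_{L^2}^2$ belongs to $[\tfrac12S^n,\tfrac32S^n]$, so \cref{cor:single_bubble} applies and gives $\dist(V(s,\cdot),\mathcal M)^2\lesssim\norm{\lapl V+V^p}_{H^{-1}}^2$. Chaining these three estimates produces $E(s)\lesssim-E'(s)$, hence $E(s)\lesssim e^{-\omega s}$ for some $\omega=\omega(n)>0$ and all large $s$.

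To upgrade the decay of $E$ into exponential decay of the distance of $V(s,\cdot)$ to a \emph{fixed} bubble I would run a modulation argument. From the dissipation, $\int\norm{\partial_\sigma V}_{L^2_{V^{p-1}}}^2\,\de\sigma\lesssim E$, so Cauchy--Schwarz on unit intervals gives $\int_s^\infty\norm{\partial_\sigma V}_{L^2_{V^{p-1}}}\,\de\sigma\lesssim e^{-\omega s/2}$. Parametrizing, via the implicit function theorem and for $\sigma$ large, the closest point $c_\sigma U[z_\sigma,\lambda_\sigma]$ on the manifold of multiples of bubbles and differentiating the corresponding orthogonality conditions, the relations $\scalprod{\partial_\lambda U}{U}_{H^1}=\scalprod{\nabla_zU}{U}_{H^1}=0$ and the eigenfunction equations \cref{eq:u_solves_yamabe} yield $\abs{\dot c_\sigma}+\abs{\dot z_\sigma}+\abs{\dot\lambda_\sigma}\lesssim\norm{\partial_\sigma V}_{L^2_{V^{p-1}}}$; integrability then forces $(c_\sigma,z_\sigma,\lambda_\sigma)\to(1,z,\lambda)$ (the limit being the Del Pino--Saez bubble) at rate $e^{-\omega\sigma/2}$. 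Writing $\rho_\sigma\defeq V(\sigma,\cdot)-c_\sigma U[z_\sigma,\lambda_\sigma]$, which is $H^1$-orthogonal to $U[z_\sigma,\lambda_\sigma]$, $\partial_\lambda U[z_\sigma,\lambda_\sigma]$ and $\nabla_zU[z_\sigma,\lambda_\sigma]$, the coercivity of the second variation of $J$ on this orthogonal complement (available because $c_\sigma\to1$; compare the spectral inequality of \cref{subsec:spectral}) gives $\norm{\nabla\rho_\sigma}_{L^2}^2\lesssim E(\sigma)+\abs{c_\sigma-1}^2\lesssim e^{-\omega\sigma}$, and therefore $\norm{\nabla(V(\sigma,\cdot)-U[z,\lambda])}_{L^2}\lesssim e^{-\omega\sigma/2}$.

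Finally I would upgrade the exponential $H^1$-decay to the claimed relative $L^\infty$-decay. Since $V(s,\cdot)/U[z,\lambda]\to1$ uniformly, on every compact set $V$ is bounded above and bounded away from $0$, so the rescaled equation is uniformly parabolic there and parabolic regularity converts the exponential $H^1$-smallness of $V(s,\cdot)-U[z,\lambda]$ into exponential $C^0$-smallness of $V(s,\cdot)/U[z,\lambda]-1$ on compact sets; the region near spatial infinity is handled using the decay of $U[z,\lambda]$ together with barrier and comparison arguments as in \cite{delpinosaez2001}, leading to $\norm{V(s,\cdot)/U[z,\lambda]-1}_{L^\infty(\R^n)}\lesssim e^{-\kappa s}$ for some $\kappa=\kappa(n)>0$. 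Since $u(t,\cdot)/u_{T,z,\lambda}(t,\cdot)=(V(s,\cdot)/U[z,\lambda])^p$ with $s=-\log(T-t)$, the elementary estimate $\abs{\xi^p-1}\lesssim\abs{\xi-1}$ for $\xi$ near $1$ gives $\norm{u(t,\cdot)/u_{T,z,\lambda}(t,\cdot)-1}_{L^\infty}\lesssim(T-t)^\kappa$ for $t$ close to $T$, and the full statement follows by enlarging the constant $A=A(n,u_0)$ to absorb the range of $t$ bounded away from $T$. The main obstacles are the modulation step (passing from the averaged dissipative decay to the pointwise decay of $\dist(V(s,\cdot),\mathcal M)$, and the convergence of the scaling parameters to fixed limits) and, to a lesser extent, the uniform-in-space parabolic bootstrap; by contrast, the new quantitative ingredient \cref{cor:single_bubble} enters only through the single inequality $\dist(V(s,\cdot),\mathcal M)\lesssim\norm{\lapl V+V^p}_{H^{-1}}$.
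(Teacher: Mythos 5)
Your overall strategy coincides with the paper's: rescale to the autonomous equation, use the energy--dissipation identity together with \cref{cor:single_bubble} and an upper bound $J-J(U)\lesssim \dist^2$ to close the differential inequality $E\lesssim -E'$, deduce exponential decay, and then upgrade to the relative $L^\infty$ bound. Two of your sub-steps, however, take a genuinely different and heavier route than the paper. First, to pass from the decay of $E$ to convergence to a \emph{fixed} bubble you propose a modulation argument (ODEs for $c_\sigma,z_\sigma,\lambda_\sigma$ obtained by differentiating the orthogonality conditions). This can be made to work, but it is the least justified part of your sketch: you would need to set up the implicit-function-theorem parametrization, prove the bound $\abs{\dot c_\sigma}+\abs{\dot z_\sigma}+\abs{\dot\lambda_\sigma}\lesssim\norm{\partial_\sigma V}_{L^2_{V^{p-1}}}$ rigorously, and justify the coercivity $\norm{\nabla\rho_\sigma}_{L^2}^2\lesssim E+\abs{c_\sigma-1}^2$. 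The paper avoids all of this: since the dissipation inequality gives $\int_s^\infty\delta(w)\lesssim e^{-Cs}$ after Cauchy--Schwarz on unit intervals, one simply integrates the equation $\partial_s(w^p)=\lapl w+w^p$ pointwise in time, takes $L^{(2^*)'}$-norms, and concludes that $w(s)$ is Cauchy in $L^{2^*}$ with exponential rate; the limit is the del Pino--S\'aez bubble and no modulation is needed. Second, for the $L^\infty$ upgrade the paper does not run a parabolic bootstrap plus barriers; it transfers to the sphere by stereographic projection and interpolates $\norm{v-1}_{L^\infty}\lesssim\norm{v-1}_{L^{2^*}}^{C(n)}$ using the uniform Lipschitz bound already available from \cite{delpinosaez2001}, which is considerably lighter.

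Two smaller points. The inequality $E(s)\lesssim\dist(V(s,\cdot),\mathcal M)^2$ is exactly where the gap of \cite{cirfigmag2018} sat; the paper's clean mechanism is the pointwise convexity inequality $(W+\rho)^{2^*}\ge W^{2^*}+2^*W^p\rho$ (valid because $w=W+\rho\ge0$), which yields $J(w)\le J(W)+\tfrac12\int\abs{\nabla\rho}^2$ with no remainder to estimate; your "Taylor expansion plus discard the negative quadratic term" also works, but you should control the Taylor remainder by H\"older and Sobolev (it is $O(\norm{\nabla\rho}_{L^2}^{\min(3,2^*)})$, hence admissible), and you should say so. Finally, your assertion that $\norm{\nabla V(s,\cdot)}_{L^2}^2\to S^n$ (needed to place the energy in $[\tfrac12S^n,\tfrac32S^n]$ before invoking \cref{cor:single_bubble}) does not follow directly from the relative $L^\infty$ convergence of \cite{delpinosaez2001}; the paper derives it by extracting a subsequence of times with $\delta(w(s_i))\to0$, applying Struwe's compactness to get $H^1$-convergence along that subsequence, and then using the monotonicity of $J$. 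You should include this step.
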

\begin{proof}
  With $C(n)$ we will denote any constant that depends only on the dimension $n$. 
  The value of $C(n)$ can change from line to line.
  On the contrary, the constants $A_1,A_2,\dots$ are allowed to depend also on the initial datum $u_0$.

  Applying \cite[Chapter 7, Theorem 7.10]{vazquez2006}, we can assume without loss of generality that
  \begin{equation}\label{eq:delpino}
    \frac{u(t,\emptyparam)}{u_{T,0,1}(t,\emptyparam)} 
    \stackrel{L^{\infty}(\R^n)}{\longrightarrow} 
    1 
    \quad\text{as $t\to T^{-}$} \fullstop
  \end{equation}
    Following \cite{delpinosaez2001}, let us define $w:\co0\infty\times\R^n\to\co0\infty$ as
  \begin{equation*}
    w(s, x) \defeq \left(\frac{u(t, x)}
    {\left(\frac{p-1}p\right)^{\frac{p}{p-1}}(T-t)^{\frac{p}{p-1}}}\right)^{\frac1p}\comma
  \end{equation*}
  where $t=T\left(1-\exp\bigl(-\frac{(p-1)s}p\bigr)\right)$. 
  By definition, it holds
  \begin{equation*}
    w(s,x) = \left(\frac{u(t,x)}{u_{T,0,1}(t,x)}\right)^{\frac1p}U[0,1](x)
  \end{equation*}
  and thus \cref{eq:delpino} implies
  \begin{equation}\label{eq:delpino_clean}
    w(s) \stackrel{L^{2^*}(\R^n)}{\longrightarrow} U[0,1]
    \quad\text{as $s\to\infty$}\fullstop
  \end{equation}
  Moreover, it can be shown that \cref{eq:FDI} implies that $w$ satisfies the equation
  \begin{equation}\label{eq:pde_for_w}
    \frac{\de}{\de s}(w^p) = \lapl w + w^p \fullstop
  \end{equation}
  Let us compute the time-derivative of the $L^{2^*}$-norm and $H^1$-norm of $w$:
  \begin{align}
    \frac{\de}{\de s} \int_{\R^n} w^{2^*} 
    &= \frac{2^*}p\int_{\R^n} \left(w^{2^*}-\abs{\nabla w}^2\right) \comma \label{eq:der_twostar}\\
    \frac{\de}{\de s} \int_{\R^n} \abs{\nabla w}^2 
    &= \frac2p \int_{\R^n} \left(\abs{\nabla w}^2 - \frac{(\lapl w)^2}{w^{p-1}}\right)
    = \frac{2}p\int_{\R^n}\left( w^{2^*}-\abs{\nabla w}^2 - \frac{(\lapl w + w^p)^2}{w^{p-1}} \right)
    \fullstop \label{eq:der_hone}
  \end{align}
  Hence, defining the functional $J$ as
  \begin{equation*}
    J(w) \defeq \frac12\int_{\R^n} \abs{\nabla w}^2 - \frac1{2^*}\int_{\R^n} w^{2^*} \comma
  \end{equation*}
\cref{eq:der_twostar,eq:der_hone} give
  \begin{equation*}
    \frac{\de}{\de s} J(w) = -\frac1p\int_{\R^n} \frac{(\lapl w + w^p)^2}{w^{p-1}} \fullstop
  \end{equation*}
  In particular, the quantity $s\mapsto J(w(s))$ is decreasing.
  Let us remark that, for any choice of the parameters $z\in\R^n$ and $\lambda>0$, the Talenti 
  bubble $U[z,\lambda]$ is a critical point for $J$ and it holds
  \begin{equation*}
    J(U[z, \lambda]) = S^n\left(\frac12-\frac1{2^*}\right) \fullstop
  \end{equation*}
  We now estimate $\frac{\de}{\de s} J(w)$.
  Applying H\"older's inequality we get
  \begin{equation*}
    \left(\int_{\R^n} \left(
      \frac{(\lapl w + w^p)^{\frac{2^*}p}}
      {w^{\frac{2^*(p-1)}{2p}}}
    \right)^{\frac{2p}{2^*}}\right)^{\frac{2^*}{2p}}
    \left(\int_{\R^n} \left(
      w^{\frac{2^*(p-1)}{2p}}
    \right)^{\frac{2p}{p-1}}\right)^{\frac{p-1}{2p}} 
    \ge \int_{\R^n} (\lapl w + w^p)^{\frac{2^*}{p}} \comma
  \end{equation*}
  that is equivalent to
  \begin{equation*}
    \int_{\R^n} \frac{(\lapl w + w^p)^2}{w^{p-1}}
    \ge 
    \norm{\lapl w + w^p}_{L^{(2^*)'}}^2
    \left(\int_{\R^n} w^{2^*}\right)^{-\frac2n} \fullstop
  \end{equation*}
  Hence we deduce that
  \begin{equation*}
    \frac{\de}{\de s} J(w) 
    \le -\frac1p\norm{\lapl w + w^p}_{L^{(2^*)'}}^2\left(\int_{\R^n} w^{2^*}\right)^{-\frac2n} \fullstop
  \end{equation*}
Recalling \cref{eq:delpino_clean} and defining 
  $\delta(w)\defeq\norm{\lapl w + w^p}_{L^{(2^*)'}}$, the latter inequality can be simplified to
  \begin{equation}\label{eq:time_derivative_estimate}
    \frac{\de}{\de s} J(w) 
    \le  - C(n)\delta(w)^2
  \end{equation}
  for any large enough time $s$.

  We want to show that, for any sufficiently large time $s>0$, the
  function $w(s)$ satisfies the bound on the energy necessary to apply 
  \cref{cor:single_bubble}.
  
  Thanks to \cref{eq:delpino_clean}, the quantity $J(w)$ is bounded from below at all times.
  Therefore \cref{eq:time_derivative_estimate} implies the existence of a sequence of times
  $(s_i)_{i\in\N}\subseteq\co0\infty$ such that $s_i\nearrow\infty$ and 
  $\delta(w(s_i))\to 0$ as $i\to\infty$. Moreover 
  $\norm{\nabla w(s_i)}_{L^2}$ is uniformly bounded because $J(w(s_i))$ is decreasing.
  Thus we can apply \cite[Prop. 2.1]{struwe1984} to deduce that the convergence 
  \cref{eq:delpino_clean} can be upgraded to the $H^1$-convergence
  \begin{equation*}
    w(s_i) \stackrel{H^1(\R^n)}{\longrightarrow} U[0,1] 
    \quad\text{as $i\to\infty$}\fullstop
  \end{equation*}
  Since $J(w(s))$ is decreasing, we have
  \begin{equation}\label{eq:functional_limit}
    \lim_{s\to\infty}J(w(s))
    = \lim_{i\to\infty}J(w(s_i)) = J(U[0,1]) \comma
  \end{equation}
  that together with \cref{eq:delpino_clean} implies  that  
  \begin{equation}\label{eq:energy_estimate}
    \int_{\R^n}\abs{\nabla w(s,x)}^2 \de x\to \int_{\R^n} \abs{\nabla U[0,1]}^2 = S^n
    \quad\text{as $s\to\infty$}\fullstop
  \end{equation}
  
  Our next goal is showing that $J(w)-J(U[0,1])$ is bounded from above by $\delta(w)^2$.
  Thanks to \cref{eq:energy_estimate}, we can apply \cref{cor:single_bubble}  to deduce that, for any 
  sufficiently large $s>0$, there is a decomposition $w=W + \rho$, where $W$ is a Talenti 
  bubble (that depends on the time $s$) and $\rho$ satisfies
  \begin{equation}\label{eq:fde_tmp1}
    \int_{\R^n}\abs{\nabla\rho}^2 \le C(n)\delta(w)^2 \fullstop
  \end{equation}
  Substituting $w=W + \rho$ in $J(w)$ and noticing that $(W+\rho)^{2^*}-W^{2^*}-2^*W^p\rho\geq 0$ (as a consequence of the positivity of $w$ and Bernoulli's inequality)\footnote{The fact that $\int [(W+\rho)^{2^*}-W^{2^*}-2^*W^p\rho] \ge 0$ is crucial to fix the issue in the proof of \cite[Theorem 1.3]{cirfigmag2018}.},  
using \cref{eq:fde_tmp1} we get
  \begin{equation}\label{eq:talenti_criticality}\begin{aligned}
    J(w) 
    &= J(W) 
    + \frac12\int_{\R^n}\abs{\nabla\rho}^2+\int_{\R^n}\nabla\rho\cdot\nabla W
    -\frac1{2^*}\int_{\R^n}\left( (W+\rho)^{2^*}-W^{2^*}\right) \\
    &= J(U[0,1]) + \frac12\int_{\R^n}\abs{\nabla\rho}^2
    -\frac1{2^*}\int_{\R^n}\left((W+\rho)^{2^*}-W^{2^*}-2^*W^p\rho \right)\\
    &\le J(U[0,1])+C(n)\delta(w)^2 \fullstop
  \end{aligned}\end{equation}
  Let us emphasize that this latter inequality is the central point of the whole proof, as
  it encodes the criticality of Talenti bubbles for the functional $J$. 
  Joining the inequalities \cref{eq:time_derivative_estimate,eq:talenti_criticality}, we obtain
  \begin{equation}\label{eq:main_inequality}
    \frac{\de}{\de s}\left( J(w) - J(U[0,1])\right) \le -C(n)\delta(w)^2 \le
    -C(n)\left(J(w) - J(U[0,1])\right)
  \end{equation}
  for any sufficiently large $s$.
  
Since $J(w(s))$ is decreasing in $s$, \cref{eq:functional_limit} tells us that $J(w(s))-J(U[0,1])\ge 0$ for all $s\ge 0$.
  Hence, \cref{eq:main_inequality} implies the existence of a constant $A_1>0$ such that, for any $s>0$,
  \begin{equation*}
    0 \le J(w(s))-J(U[0,1]) \le A_1 e^{-C(n)s} \fullstop
  \end{equation*}
  This exponential decay together with \cref{eq:main_inequality} implies the fundamental bound
  \begin{equation*}
    \int_s^{\infty}\delta(w)^2 \le A_2 e^{-C(n)s}
  \end{equation*}
  and from this, splitting the integral on fixed-length intervals and applying Cauchy-Schwarz inequality, 
  we can deduce
  \begin{align*}
    \int_s^{\infty}\delta(w)=\sum_{k=0}^\infty \int_{s+k}^{s+k+1}\delta(w)
    \leq \sum_{k=0}^\infty \left(\int_{s+k}^{s+k+1}\delta(w)^2\right)^{1/2}
    \leq \sum_{k=0}^\infty\left(A_2e^{-C(n)[s+k]}\right)^{1/2}
    \le A_3 e^{-C(n)s} \fullstop
  \end{align*}
  Recalling \cref{eq:pde_for_w}, for any $t>s>0$ we can write pointwise
  \begin{equation*}
    w(t)^p-w(s)^p = \int_s^t \left(\lapl w + w^p\right)\comma 
  \end{equation*}
  and taking the $L^{(2^*)'}$-norm of both sides we get
  \begin{align*}
    \norm{w(t)-w(s)}_{L^{2^*}} \le A_4\norm{w(t)^p-w(s)^p}_{L^{(2^*)'}} 
    \le
    A_4\int_s^t\norm{\lapl w + w^p}_{L^{(2^*)'}} 
    \le A_4\int_s^\infty \delta(w) 
    \le A_5e^{-C(n)s} \fullstop
  \end{align*}
Letting $t\to \infty$, this implies that the convergence stated in \cref{eq:delpino_clean} is exponential:
  \begin{equation}\label{eq:fdi_final}
    \norm{w(s)-U[0,1]}_{L^{2^*}} \le A_6e^{-C(n)s} \fullstop
  \end{equation}
  Finally, let $F:\R^n\to\S^n$ be the stereographic projection 
  $F(x)\defeq\left(\frac{2x}{1+\abs{x}^2},\frac{\abs{x}^2-1}{1+\abs{x}^2}\right)$ and let 
  $v:\co0\infty\times\S^n\to\co0\infty$ be the function defined as
  \begin{equation*}
    v(s, F(x)) \defeq \frac{w(s,x)}{U[0,1](x)} \fullstop
  \end{equation*}
  Since the Jacobian of $F$ satisfies
  \begin{equation*}
    \det(\de F)(x) = \left(\frac{2}{1+x^2}\right)^n = C(n)\cdot U[0,1](x)^{2^*}\comma
  \end{equation*}
  the estimates \cref{eq:fdi_final} becomes
  \begin{equation}\label{eq:fdi_final2}
    \norm{v(s)-1}_{L^{2^*}(\S^n)} \le A_7e^{-C(n)s} \fullstop
  \end{equation}
  Noticing that
  \begin{equation*}
    v(s, F(x)) = \left(\frac{u(t,x)}{u_{T,0,1}(t,x)}\right)^{\frac1p}\comma
  \end{equation*}
  thanks to \cref{eq:fdi_final2} the proof would be concluded if we were able to show that
  \begin{equation*}
    \norm{v(s)-1}_{L^{\infty}(\S^n)} \le A_8 \norm{v(s)-1}_{L^{2^*}(\S^n)}^{C(n)}
  \end{equation*}
  for any sufficiently large $s$.
  This latter inequality follows directly from interpolation inequalities, noticing that $v(s)$ is uniformly 
  Lipschitz for $s \gg1$ as a consequence of \cite[(4.2) and Proposition 5.1]{delpinosaez2001}.
\end{proof}

\appendix

\section{Spectral properties of the weighted Laplacian}\label{app:spectrum}
Let $\w\in L^{\frac n2}(\R^n)$ be a positive weight. 
Our goal is to study the properties of the spectrum of the operator $\frac{-\lapl}\w$ and of its eigenfunctions.
The spectrum of such a weighted Laplacian is thoroughly studied in literature, see for example \cite{allegretto1992}. An exhaustive list of related references is contained 
in the introduction of \cite{szulkin1998}.

After some first basic statements that tell us that the spectrum of the operator is discrete, 
we move our attention to the properties of the eigenfunctions. 
For general weights we prove that \emph{almost}-eigenfunctions are close to true eigenfunctions and that
the eigenfunctions obey a concentration property. 

Finally we focus on weights that decay at infinity as $(1+\abs{x})^{-4}$. In this situation we are able to
show some finer integrability properties of the eigenfunctions, and deduce that the restriction of an 
eigenfunction is close to an eigenfunction for the restriction of the weight.

\subsection{Results valid for any \texorpdfstring{$\w\in L^{\frac n2}(\R^n)$}{n/2-integrable weight}}
Let us begin with a technical, albeit important, proposition that gives us a compact embedding (in the style
of the classical Sobolev embedding) from a homogeneous Sobolev space into a weighted space.
\begin{proposition}[Compact embedding in weighted space]\label{prop:weighted_cpt_embedding}
  Given a positive integer $n\in\N$, let $1\le p < n$ and $q < p^*$ be two real numbers. 
  For any positive weight $\w\in L^{(\frac{p^*}q)'}(\R^n)$, the following compact embedding holds:
  \begin{equation*}
    \dot W^{1,p}(\R^n) \embedding[cpt] L^q_\w(\R^n) \fullstop
  \end{equation*}
\end{proposition}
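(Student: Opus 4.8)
The plan is to combine the classical Sobolev embedding $\dot W^{1,p}(\R^n)\embedding L^{p^*}(\R^n)$ with a Hölder argument to gain integrability against the weight, and then upgrade boundedness to compactness by a truncation/tightness argument. First I would show the embedding is \emph{continuous}: for $u\in\dot W^{1,p}(\R^n)$, by Hölder's inequality with exponents $\frac{p^*}{q}$ and $\left(\frac{p^*}{q}\right)'$ one has
\begin{equation*}
  \int_{\R^n}\abs{u}^q\w \le \left(\int_{\R^n}\abs{u}^{p^*}\right)^{\frac{q}{p^*}}\left(\int_{\R^n}\w^{(\frac{p^*}q)'}\right)^{\frac{1}{(\frac{p^*}q)'}}
  = \norm{u}_{L^{p^*}}^q\,\norm{\w}_{L^{(\frac{p^*}q)'}}\comma
\end{equation*}
so $\norm{u}_{L^q_\w}\lesssim \norm{\w}_{L^{(\frac{p^*}q)'}}^{1/q}\norm{\nabla u}_{L^p}$ by the Sobolev inequality. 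This already gives the bounded embedding.

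For compactness, I would take a sequence $(u_k)$ bounded in $\dot W^{1,p}(\R^n)$; up to a subsequence $u_k\rightharpoonup u$ weakly in $\dot W^{1,p}$, hence (by Rellich on bounded domains together with the Sobolev embedding) $u_k\to u$ strongly in $L^q_{loc}(\R^n)$ and a.e. The goal is to promote this to strong convergence in $L^q_\w(\R^n)$, for which it suffices to control the tails. Split $\R^n = B_\Lambda \cup B_\Lambda^\complement$. On $B_\Lambda$, strong $L^q$-convergence plus the same Hölder estimate with $\w\mathbf 1_{B_\Lambda}$ handles that piece (note $\w\in L^{(\frac{p^*}q)'}(B_\Lambda)$). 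On $B_\Lambda^\complement$, I would estimate
\begin{equation*}
  \int_{B_\Lambda^\complement}\abs{u_k-u}^q\w \le \norm{u_k-u}_{L^{p^*}}^q\left(\int_{B_\Lambda^\complement}\w^{(\frac{p^*}q)'}\right)^{\frac{1}{(\frac{p^*}q)'}}\comma
\end{equation*}
and since $\norm{u_k-u}_{L^{p^*}}$ is uniformly bounded while $\int_{B_\Lambda^\complement}\w^{(\frac{p^*}q)'}\to 0$ as $\Lambda\to\infty$ (integrability of $\w^{(\frac{p^*}q)'}$), this tail is uniformly small. Combining, given $\eps>0$ choose $\Lambda$ to kill the tail, then $k$ large to kill the local part, yielding $u_k\to u$ in $L^q_\w(\R^n)$.

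The main subtlety — and the only place requiring genuine care — is the use of the Rellich--Kondrachov theorem on balls: a priori $u_k$ is only in the homogeneous space $\dot W^{1,p}$, so one must argue that on each fixed ball $B_\Lambda$ the $u_k$ are bounded in the inhomogeneous $W^{1,p}(B_\Lambda)$. This follows because $\norm{\nabla u_k}_{L^p(B_\Lambda)}$ is bounded by assumption and $\norm{u_k}_{L^p(B_\Lambda)}\le \abs{B_\Lambda}^{\frac1p-\frac1{p^*}}\norm{u_k}_{L^{p^*}}\lesssim \norm{\nabla u_k}_{L^p}$ by Hölder and the global Sobolev inequality; hence Rellich applies on $B_\Lambda$ and gives a diagonal subsequence converging in $L^q(B_\Lambda)$ for every $\Lambda\in\N$. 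The strict inequality $q<p^*$ is exactly what makes the Rellich embedding $W^{1,p}(B_\Lambda)\embedding L^q(B_\Lambda)$ compact, so it is used essentially.
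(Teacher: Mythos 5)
Your proposal follows the same route as the paper: continuity via H\"older (with exponents $\frac{p^*}{q}$ and $(\frac{p^*}{q})'$) plus Sobolev, and compactness via Rellich--Kondrachov on balls combined with a tightness estimate that uses the smallness of $\norm{\w}_{L^{(p^*/q)'}(B_\Lambda^\complement)}$ for large $\Lambda$. Your justification that a $\dot W^{1,p}$-bounded sequence is bounded in the inhomogeneous $W^{1,p}(B_\Lambda)$ is correct and in fact more explicit than the paper's.

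There is, however, one step that does not work as literally written: the local piece $\int_{B_\Lambda}\abs{u_k-u}^q\w$. You claim that ``strong $L^q(B_\Lambda)$-convergence plus the same H\"older estimate with $\w\mathbf 1_{B_\Lambda}$'' handles it, but neither ingredient alone suffices and their naive combination fails: the H\"older estimate bounds this integral by $\norm{u_k-u}_{L^{p^*}}^q\norm{\w}_{L^{(p^*/q)'}(B_\Lambda)}$, whose first factor is only bounded, not vanishing; and strong $L^q(B_\Lambda)$-convergence controls the integral only against a \emph{bounded} weight, whereas $\w$ may be unbounded on $B_\Lambda$. The missing ingredient is a second, ``vertical'' truncation: split $B_\Lambda$ into $\{\w\le M\}$ and $\{\w>M\}$. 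On the first set, $\int\abs{u_k-u}^q\w\le M\int_{B_\Lambda}\abs{u_k-u}^q\to 0$ by the local strong convergence; on the second, H\"older gives the bound $\norm{u_k-u}_{L^{p^*}}^q\norm{\w}_{L^{(p^*/q)'}(\{\w>M\})}$, and the second factor tends to $0$ as $M\to\infty$ by integrability of $\w^{(p^*/q)'}$. This is precisely what the paper does by introducing the truncated weight $\w_R\defeq\w\,\mathbf 1_{\{\w\le R\}}$ and keeping the extra error term $\norm{f_k-f}^q_{L^q_\w(\{\w>R\})}$ in the final limsup. (Alternatively, one can note that $\abs{u_k-u}^q\rightharpoonup 0$ weakly in $L^{p^*/q}(B_\Lambda)$ and pair it with $\w\in L^{(p^*/q)'}(B_\Lambda)$.) With this fix your argument is complete and coincides with the paper's.
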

\begin{proof}
  Let us fix a real number $R$, and denote by $B_R=B(0,R)$ the ball of radius $R$ centered at the origin.
    Thanks to the chain of embeddings
  \begin{equation*}
    \dot W^{1,p}(\R^n) \embedding L^{p^*}(\R^n) \embedding L^{p^*}(B_R) \embedding L^p(B_R) \comma
  \end{equation*}
  it follows that
  \begin{equation*}
    \dot W^{1,p}(\R^n) \embedding W^{1,p}(B_R)
  \end{equation*}
  and therefore, applying the Rellich-Kondrakov theorem, it holds
  \begin{equation}\label{eq:weight_embedding1}
    \dot W^{1,p}(\R^n) \embedding[cpt] L^q(B_R) \fullstop
  \end{equation}
  Let us define $\w_R:\R^n\to\R$ as
  \begin{equation*}
    \w_R(x) \defeq
    \begin{cases}
      \w(x) \quad&\text{if $\w(x) \le R$,} \\
      0    \quad&\text{otherwise} \fullstop
    \end{cases}
  \end{equation*}
  Since $\w_R\in L^\infty(\R^n)$, it holds
  \begin{equation*}
    L^q(B_R) \embedding L^q_{\w_R}(B_R) \comma
  \end{equation*}
  and therefore \cref{eq:weight_embedding1} implies
  \begin{equation}\label{eq:weight_embedding2}
    \dot W^{1,p}(\R^n) \embedding[cpt] L^q_{\w_R}(B_R) \fullstop
  \end{equation}
  Let us remark that H\"older and Sobolev inequalities  imply that, for any 
  $g\in \dot W^{1,p}(\R^n)$ and any Borel set $E\subseteq \R^n$, it holds
  \begin{equation}\label{eq:weight_embedding3}
    \norm{g}_{L^q_\w(E)} \le C\norm{\nabla g}_{L^p(\R^n)}^q\norm{\w}_{L^\alpha(E)}
  \end{equation}
  where $\alpha \defeq (\frac{p^*}{q})'$ and $C=C(n, p, q)$ is a constant. 
  
  Let us now fix a bounded sequence $(f_k)_{k\in\N}\subseteq \dot W^{1,p}(\R^n)$.
  Up to extracting a subsequence, thanks to \cref{eq:weight_embedding2}, by a diagonal argument we can find a 
  function $f\in \dot W^{1, p}(\R^n)$ such that for any $R > 0$ it holds $f_k\to f$ in the 
  $L^q_{\w_R}(B_R)$-norm.
  We want to prove that $f_k\to f$ in the stronger $L^q_\w(\R^n)$-norm.
  
  For a fixed $R>0$, recalling \cref{eq:weight_embedding3}, we have
  \begin{align*}
    \limsup_{k\to\infty}&\norm{f_k-f}^q_{L^q_\w(\R^n)} 
    = 
    \limsup_{k\to\infty}
    \norm{f_k-f}^q_{L^q_{\w_R}(B_R)} + \norm{f_k-f}^q_{L^q_{\w_R}(B_R^\complement)}
    + \norm{f_k-f}^q_{L^q_\w(\{\w>R\})} \\
    &\le 
    \limsup_{k\to\infty}
    C\cdot\norm{\nabla f_k-\nabla f}_{L^p(\R^n)}^q \left(
    \norm{\w}_{L^\alpha(B_R^\complement)}
    +\norm{\w}_{L^\alpha(\{\w>R\})}\right) \\
    &\le 
    2^qC\left(\norm{\nabla f}_{L^p(\R^n)}^q + \sup_{k\in\N}\norm{\nabla f_k}_{L^p(\R^n)}^q\right)
    \left(\norm{\w}_{L^\alpha(B_R^\complement)} +\norm{\w}_{L^\alpha(\{\w>R\})}\right) \fullstop
  \end{align*}
  The desired convergence now follows sending $R$ to infinity.
\end{proof}

We can now state and prove the main theorem of this section.
\begin{theorem}\label{thm:inverse_well_defined}
  For any $n\ge 3$ and any positive weight $\w\in L^{\frac n2}(\R^n)$, the inverse operator 
  $\left(\frac{-\lapl}{\w}\right)^{-1}$ is well-defined and continuous from $L^2_\w(\R^n)$
  into $H^1(\R^n)$. 
  Hence, thanks to \cref{prop:weighted_cpt_embedding}, it is a compact self-adjoint operator 
  from $L^2_\w(\R^n)$ into itself.
\end{theorem}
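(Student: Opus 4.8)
The plan is to construct $\left(\frac{-\lapl}{\w}\right)^{-1}g$, for a given datum $g\in L^2_\w(\R^n)$, as the unique weak solution $u\in H^1(\R^n)$ of $-\lapl u=\w g$ via the Riesz representation theorem, and then to read off all the asserted properties from this construction together with the compactness already established in \cref{prop:weighted_cpt_embedding}. The first ingredient is the \emph{continuous} embedding $H^1(\R^n)\embedding L^2_\w(\R^n)$: since $\frac{2^*}{2}=\frac{n}{n-2}$ has H\"older conjugate $\bigl(\frac{2^*}{2}\bigr)'=\frac n2$, for every $v\in H^1(\R^n)$ we have
\[
  \norm{v}_{L^2_\w}^2=\int_{\R^n}v^2\w\le \norm{v^2}_{L^{2^*/2}}\norm{\w}_{L^{n/2}}=\norm{v}_{L^{2^*}}^2\norm{\w}_{L^{n/2}}\lesssim \norm{\w}_{L^{n/2}}\norm{\nabla v}_{L^2}^2\comma
\]
where the last step is the Sobolev inequality \cref{eq:sobolev_ineq_intro}.

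With this in hand, for fixed $g\in L^2_\w(\R^n)$ the linear functional $v\mapsto\int_{\R^n}\w g v$ is bounded on $H^1(\R^n)$: by Cauchy--Schwarz in $L^2_\w$ and the embedding above, $\bigl|\int_{\R^n}\w gv\bigr|\le\norm{g}_{L^2_\w}\norm{v}_{L^2_\w}\lesssim \norm{\w}_{L^{n/2}}^{1/2}\norm{g}_{L^2_\w}\norm{\nabla v}_{L^2}$. Since the $H^1$-scalar product is exactly $(u,v)\mapsto\int_{\R^n}\nabla u\cdot\nabla v$, the Riesz representation theorem yields a unique $u\in H^1(\R^n)$ with $\int_{\R^n}\nabla u\cdot\nabla v=\int_{\R^n}\w gv$ for all $v\in H^1(\R^n)$, and with $\norm{\nabla u}_{L^2}\lesssim \norm{\w}_{L^{n/2}}^{1/2}\norm{g}_{L^2_\w}$; this $u$ is by definition $\left(\frac{-\lapl}{\w}\right)^{-1}g$, so the operator is well-defined and continuous from $L^2_\w(\R^n)$ into $H^1(\R^n)$. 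Composing with the compact embedding $\dot W^{1,2}(\R^n)\embedding[cpt] L^2_\w(\R^n)$ of \cref{prop:weighted_cpt_embedding} — applied with $p=2$, $q=2<2^*$, so that the weight hypothesis reads $\w\in L^{(2^*/2)'}(\R^n)=L^{n/2}(\R^n)$ — gives that $\left(\frac{-\lapl}{\w}\right)^{-1}\colon L^2_\w(\R^n)\to L^2_\w(\R^n)$ is compact. Self-adjointness then follows at once from the weak formulation: writing $u_i=\left(\frac{-\lapl}{\w}\right)^{-1}g_i$ and testing the equation for $u_1$ with $u_2$ and conversely,
\[
  \scalprod{\left(\frac{-\lapl}{\w}\right)^{-1}g_1}{g_2}_{L^2_\w}=\int_{\R^n}u_1g_2\w=\int_{\R^n}\nabla u_1\cdot\nabla u_2=\int_{\R^n}u_2g_1\w=\scalprod{g_1}{\left(\frac{-\lapl}{\w}\right)^{-1}g_2}_{L^2_\w}\fullstop
\]

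I do not expect a genuine obstacle here: the analytic heart of the statement — the compactness of $\dot W^{1,2}(\R^n)\embedding[cpt] L^2_\w(\R^n)$ — has already been isolated in \cref{prop:weighted_cpt_embedding}, and what remains is the standard Lax--Milgram/Riesz construction. The only point deserving care is the bookkeeping of exponents, namely checking that $\frac{2^*}{2}$ and $\frac n2$ are H\"older conjugate, so that both the continuous embedding $H^1\embedding L^2_\w$ used above and the integrability hypothesis of \cref{prop:weighted_cpt_embedding} are driven by precisely the assumed condition $\w\in L^{n/2}(\R^n)$.
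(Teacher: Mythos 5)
Your proposal is correct and follows essentially the same route as the paper: a H\"older--Sobolev bound showing that $v\mapsto\int_{\R^n}\w g v$ is a bounded functional on $H^1(\R^n)$, followed by the Riesz representation theorem to produce the solution operator, with compactness imported from \cref{prop:weighted_cpt_embedding}. The only difference is cosmetic — you factor the estimate through the continuous embedding $H^1\embedding L^2_\w$ plus Cauchy--Schwarz rather than a single three-factor H\"older application, and you write out the self-adjointness computation that the paper leaves implicit.
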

\begin{proof}
  Let $\varphi\in H^1(\R^n)$ and $f\in L^2_\w(\R^n)$. Applying H\"older and Sobolev inequalities, we obtain
  \begin{align*}
    \scalprod{f}{\varphi}_{L^2_\w} = \int_{\R^n}f\varphi \,\w
    \le \left(\int_{\R^n} f^2\w\right)^{\frac12}
    \left(\int_{\R^n} \w^{\frac n2}\right)^{\frac1n}
    \left(\int_{\R^n} \abs{\varphi}^{2^*}\right)^{\frac1{2^*}}
    \lesssim \norm{f}_{L^2_\w}\norm{\w}_{L^{\frac n2}}^{\frac12}\norm{\varphi}_{H^1} \fullstop
  \end{align*}
  As a consequence, the map 
  \begin{equation*}
    L^2_\w(\R^n)\ni f \mapsto \scalprod{f}{\emptyparam}_{L^2_\w}\in (H^1)'
  \end{equation*}
  is continuous and injective. 
  Applying Riesz Theorem, it follows that there exists a unique continuous linear map 
  $T:L^2_\w(\R^n)\to H^1(\R^n)$ such that for any $f\in L^2_\w(\R^n)$ and any $g\in H^1(\R^n)$ 
  it holds
  \begin{equation*}
    \int_{\R^n} fg\,\w = \int_{\R^n} \nabla T(f)\cdot \nabla g 
    = \int_{\R^n} -\lapl T(f)\, g \implies -\lapl T(f) = f\w \fullstop
  \end{equation*}
  Thus $T=\left(\frac{-\lapl}\w\right)^{-1}$ and the statement is proven.
\end{proof}
\begin{remark}
  From now on we will use implicitly the following useful identity:
  \begin{equation*}
    \left(\frac{-\lapl}{\w}\right)^{-1}\Bigl(\frac f\w\Bigr)=(-\lapl)^{-1}f \fullstop
  \end{equation*}
\end{remark}

Since we have shown that $\left(\frac{-\lapl}{\w}\right)^{-1}$ is compact and self-adjoint, we know that its
spectrum is discrete. From now on we move our attention to the structure of its eigenfunctions.
We begin by showing that if a function is \emph{almost} an eigenfunction, than it close to a true 
eigenfunction.
\begin{lemma}[Approximate eigenfunction]\label{lem:approximation_eigenfunctions}
  Let us fix $n\ge 3$ and a positive weight $\w\in L^{\frac n2}(\R^n)$.
  Let $\psi\in L^2_\w(\R^n)$ be such that $-\lapl \psi - \lambda \w \psi = f$ for some $\lambda > 0$ and 
  $f:\R^n\to\R$. If $\psi = \sum \alpha_k\psi_k$ where $(\lambda_k^{-1},\psi_k)_{k\in\N}$ is the 
  sequence of eigenvalues and normalized eigenfunctions for $\left(\frac{-\lapl}{\w}\right)^{-1}$, 
  then it holds
  \begin{equation*}
    \sum_k \alpha_k^2\left(1-\frac{\lambda}{\lambda_k}\right)^2 = \norm{\lapl^{-1} f}^2_{L^2_\w} \fullstop
  \end{equation*}
\end{lemma}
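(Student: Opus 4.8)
The plan is to apply $(-\lapl)^{-1}$ to the equation and read off the result in the orthonormal eigenbasis of the weighted Laplacian. By \cref{thm:inverse_well_defined} the operator $T\defeq\left(\frac{-\lapl}{\w}\right)^{-1}$ is compact and self-adjoint on $L^2_\w(\R^n)$; since it is an inverse it is also injective, so the spectral theorem provides a \emph{complete} orthonormal system $(\psi_k)_{k\in\N}$ of $L^2_\w(\R^n)$ with $T\psi_k=\lambda_k^{-1}\psi_k$. Writing $\psi=\sum_k\alpha_k\psi_k$ (which is the decomposition in the statement, and is in any case forced by $\psi\in L^2_\w(\R^n)$ and completeness of the $\psi_k$), the continuity of $T$ gives $T\psi=\sum_k\alpha_k\lambda_k^{-1}\psi_k$.

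First I would rewrite the hypothesis $-\lapl\psi-\lambda\w\psi=f$ as $-\lapl\psi=\lambda(\w\psi)+f$ and apply $(-\lapl)^{-1}$; using the identity $(-\lapl)^{-1}(\w\,\varphi)=T\varphi$ from the remark following \cref{thm:inverse_well_defined} (with $\varphi=\psi$), this yields
\[
  \psi=\lambda\,T\psi+(-\lapl)^{-1}f\fullstop
\]
Hence $(-\lapl)^{-1}f=\psi-\lambda\,T\psi=\sum_k\alpha_k\bigl(1-\tfrac{\lambda}{\lambda_k}\bigr)\psi_k$ in $L^2_\w(\R^n)$; in particular this shows, in passing, that $(-\lapl)^{-1}f$ (equivalently $\lapl^{-1}f$, which has the same $L^2_\w$-norm) belongs to $L^2_\w(\R^n)$, so there is no integrability issue to settle. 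Since the right-hand side is precisely the expansion of $\lapl^{-1}f$ in the orthonormal basis $(\psi_k)$, Parseval's identity in $L^2_\w(\R^n)$ gives
\[
  \norm{\lapl^{-1}f}_{L^2_\w}^2=\sum_k\alpha_k^2\Bigl(1-\frac{\lambda}{\lambda_k}\Bigr)^2\comma
\]
which is the claim.

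The argument is essentially a one-line spectral computation, so I do not anticipate any genuine obstacle. The two points deserving a word of care are: (i) the completeness of the eigensystem $(\psi_k)$, which relies on the injectivity of $T$ so that $L^2_\w(\R^n)$ decomposes entirely along its eigenspaces; and (ii) the interpretation of the symbol $\lapl^{-1}f$, which here is simply the $L^2_\w(\R^n)$-function $\psi-\lambda\,T\psi$ dictated by the identity above (the overall sign being irrelevant, since only the $L^2_\w$-norm enters the statement).
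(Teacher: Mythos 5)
Your proof is correct and is essentially the paper's argument: both reduce the hypothesis to the identity $\sum_k\alpha_k\bigl(1-\tfrac{\lambda}{\lambda_k}\bigr)\psi_k=(-\lapl)^{-1}f$ (the paper by dividing by $\w$ and applying $\bigl(\tfrac{-\lapl}{\w}\bigr)^{-1}$, you by applying $(-\lapl)^{-1}$ directly via the remark after \cref{thm:inverse_well_defined}) and then conclude by taking the $L^2_\w$-norm. Your added remarks on completeness of the eigenbasis and the meaning of $\lapl^{-1}f$ are sound but do not change the route.
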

\begin{proof}
  Substituting $\psi = \sum \alpha_k\psi_k$ into $-\lapl \psi - \lambda \w \psi = f$ yields
  \begin{equation*}
    \sum_k \alpha_k(\lambda_k-\lambda)\psi_k = \frac{f}{\w} \fullstop
  \end{equation*}
  Applying $\left(\frac{-\lapl}{\w}\right)^{-1}$ to both sides yields
  \begin{equation*}
    \sum_k \alpha_k\left(1-\frac{\lambda}{\lambda_k}\right)\psi_k = (-\lapl)^{-1}f \fullstop
  \end{equation*}
  and the desired inequality follows taking the $L^2_\w$-norm.
\end{proof}

The following lemma ensures that, as soon as we assume natural conditions on the spectral decomposition of $f$, 
if $-\lapl u-\lambda \w u=f$ then we can control the $H^1$-norm of $u$ with the $H^{-1}$-norm of $f$.
\begin{lemma}\label{lem:H1estimate}
  Let us fix $n\ge 3$ and a positive weight $\w\in L^{\frac n2}(\R^n)$.
  Let $u\in L^2_\w(\R^n)$ and $f\in H^{-1}(\R^n)$ be such that $-\lapl u-\lambda \w u = f$ for some 
  $\lambda > 0$.
  Let $u=\sum \alpha_k \psi_k$, where $(\lambda_k^{-1}, \psi_k)$ is the sequence
  of eigenvalues and normalized eigenfunctions of $\left(\frac{-\lapl}{\w}\right)^{-1}$, and assume that 
  whenever $\alpha_k\not=0$ it holds $\lambda_k\ge \lambda(1-\eps)^{-1}$ for some $\eps \in \oo01$.
Then
  \begin{equation*}
    \eps\norm{\nabla u}_{L^2} \le \norm{f}_{H^{-1}} \le \norm{\nabla u}_{L^2} \fullstop
  \end{equation*}
\end{lemma}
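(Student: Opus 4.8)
The plan is a direct spectral computation. Since $\left(\frac{-\lapl}{\w}\right)^{-1}$ is compact, self-adjoint and injective on $L^2_\w(\R^n)$ (\cref{thm:inverse_well_defined}), its normalized eigenfunctions $\{\psi_k\}$ form an orthonormal basis, so the expansion $u=\sum_k\alpha_k\psi_k$ is a legitimate identity in $L^2_\w(\R^n)$. First I would record the two elementary facts about the eigenfunctions: from $-\lapl\psi_k=\lambda_k\w\psi_k$ and the normalization $\int_{\R^n}\w\psi_k^2=1$ one gets $\int_{\R^n}\nabla\psi_j\cdot\nabla\psi_k=\lambda_k\int_{\R^n}\w\psi_j\psi_k=\lambda_k\delta_{jk}$, so that $\{\psi_k\}$ is an orthogonal system also for the $H^1$-scalar product, with $\norm{\nabla\psi_k}_{L^2}^2=\lambda_k$; moreover $\lambda_k>0$ for every $k$, since $\left(\frac{-\lapl}{\w}\right)^{-1}$ is positive and injective.

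The key step is to rewrite $f$ as a Laplacian. Using $\w\psi_k=\lambda_k^{-1}(-\lapl\psi_k)$ inside the identity $f=-\lapl u-\lambda\w u$, and setting $g\defeq\sum_k\alpha_k\bigl(1-\tfrac{\lambda}{\lambda_k}\bigr)\psi_k$, one obtains $f=-\lapl g$. Hence $\norm{f}_{H^{-1}}=\norm{\nabla g}_{L^2}$, which is finite since $f\in H^{-1}(\R^n)$ by assumption, and by the $H^1$-orthogonality recorded above
\begin{equation*}
  \norm{f}_{H^{-1}}^2=\norm{\nabla g}_{L^2}^2=\sum_k\alpha_k^2\Bigl(1-\frac{\lambda}{\lambda_k}\Bigr)^2\lambda_k\fullstop
\end{equation*}

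Next I would exploit the hypothesis on the spectral support of $u$. Whenever $\alpha_k\neq0$ we have $\lambda_k\ge\lambda(1-\eps)^{-1}$, which together with $\lambda,\lambda_k>0$ gives $\eps\le1-\tfrac{\lambda}{\lambda_k}<1$, hence $\eps^2\le\bigl(1-\tfrac{\lambda}{\lambda_k}\bigr)^2<1$. In particular $\sum_k\alpha_k^2\lambda_k<\infty$, so $u\in H^1(\R^n)$ with $\norm{\nabla u}_{L^2}^2=\sum_k\alpha_k^2\lambda_k$, and comparing the two series term by term yields
\begin{equation*}
  \eps^2\norm{\nabla u}_{L^2}^2\le\norm{f}_{H^{-1}}^2\le\norm{\nabla u}_{L^2}^2\comma
\end{equation*}
which is exactly the claim after taking square roots.

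There is no genuine obstacle here: once $f$ is expressed as $-\lapl g$, the estimate is a one-line termwise comparison. The only points deserving a little care are the convergence of the spectral series in the relevant norms and the a posteriori membership $u\in H^1(\R^n)$ (both handled by a routine truncation of the expansion $u=\sum_k\alpha_k\psi_k$), and the positivity $\lambda_k>0$, which is immediate from the positivity and injectivity of $\left(\frac{-\lapl}{\w}\right)^{-1}$.
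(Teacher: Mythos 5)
Your proof is correct and follows essentially the same route as the paper: you derive the same spectral identity $(-\lapl)^{-1}f=\sum_k\alpha_k\bigl(1-\tfrac{\lambda}{\lambda_k}\bigr)\psi_k$, use the $H^1$-orthogonality $\int\nabla\psi_j\cdot\nabla\psi_k=\lambda_k\delta_{jk}$ to express both $\norm{f}_{H^{-1}}^2$ and $\norm{\nabla u}_{L^2}^2$ as series, and conclude by termwise comparison using $\eps\le 1-\tfrac{\lambda}{\lambda_k}<1$. The added remarks on $\lambda_k>0$ and the a posteriori membership $u\in H^1(\R^n)$ are sound but do not change the argument.
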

\begin{proof}
  First of all, let us check that the family of functions $\lambda_k^{-\frac12}\psi_k$ is a complete 
  orthonormal basis of $H^1(\R^n)$. For any $i,j\in\N$, it holds
  \begin{equation*}
    \scalprod{\psi_i}{\psi_j}_{H^1} = \int_{\R^n} \nabla\psi_i \cdot \nabla\psi_j 
    = \int_{\R^n} -\lapl\psi_i\,\psi_j
    =\lambda_i\int_{\R^n} \psi_i\psi_j\, \w = \scalprod{\psi_i}{\psi_j}_{L^2_\w}\comma
  \end{equation*}
  and thus the desired $H^1$-orthonormality follows from the orthonormality of $(\psi_k)$ with respect to the
  $L^2_\w$-scalar product. A similar computation shows that this basis is also complete in $H^1(\R^n)$.
  
  As shown in the proof of \cref{lem:approximation_eigenfunctions}, it holds
  \begin{equation*}
    (-\lapl)^{-1} f = \sum_k \alpha_k\left(1-\frac{\lambda}{\lambda_k}\right)\psi_k
  \end{equation*}
  and thus, since $\lambda_k^{-\frac12}\psi_k$ is an orthonormal basis of $H^1(\R^n)$, we deduce that
  \begin{equation*}
    \norm{f}^2_{H^{-1}} = \norm{\nabla \lapl^{-1} f}^2_{L^2} 
    = \sum_k \alpha_k^2\left(1-\frac{\lambda}{\lambda_k}\right)^2\lambda_k \fullstop
  \end{equation*}
  Similarly, it holds
  \begin{equation*}
    \norm{\nabla u}^2_{L^2} = \sum_k \alpha_k^2\lambda_k
  \end{equation*}
  and thus the desired two-sided estimate follows directly from the assumption 
  that $\lambda_k\ge \lambda(1-\eps)^{-1}$ whenever $\alpha_k \neq 0$.
\end{proof}

It is natural to expect that the eigenfunctions are concentrated in the zone where the weight itself is
concentrated. The following lemma shows exactly this.
\begin{lemma}[Concentration of eigenfunctions]\label{lem:concentration_eigenfunctions}
  Let us fix $n\ge 3$ and a positive weight $\w\in L^{\frac n2}(\R^n)$.
  Let $\lambda>0$ and $\psi\in L^2_\w(\R^n)$ be such that $-\lapl\psi = \lambda \w\psi$.
  For any measurable set $E\subseteq\R^n$ it holds
  \begin{equation*}
    \int_E \psi^2\w \le C(n)\lambda \norm{\w}_{L^{\frac n2}(E)} \int_{\R^n} \psi^2 \w \fullstop
  \end{equation*}
\end{lemma}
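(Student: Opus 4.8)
The plan is to deduce the estimate from just three ingredients: the fact that an eigenfunction lies in $H^1(\R^n)$, a single integration by parts of the equation against $\psi$ itself, and H\"older's inequality (with exponents $\tfrac{n}{n-2}$ and $\tfrac n2$) followed by the Sobolev inequality. No localization or PDE regularity is needed.

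First I would observe that $\psi\in H^1(\R^n)$. Indeed, by the remark following \cref{thm:inverse_well_defined}, the equation $-\lapl\psi=\lambda\w\psi$ can be rewritten as $\psi=\lambda\left(\frac{-\lapl}{\w}\right)^{-1}(\psi)$, and the right-hand side belongs to $H^1(\R^n)$ since $\left(\frac{-\lapl}{\w}\right)^{-1}$ maps $L^2_\w(\R^n)$ continuously into $H^1(\R^n)$. Testing the equation against $\psi$ is therefore legitimate and gives the basic energy identity
\begin{equation*}
  \int_{\R^n}\abs{\nabla\psi}^2 = \lambda\int_{\R^n}\psi^2\,\w \fullstop
\end{equation*}
Next, for the left-hand side of the claim I would split the product $\psi^2\cdot\w$ on $E$ via H\"older: since $\tfrac{n-2}{n}+\tfrac2n=1$ and $\tfrac{2n}{n-2}=2^*$, one gets
\begin{equation*}
  \int_E\psi^2\,\w \le \left(\int_E\abs{\psi}^{2^*}\right)^{\frac{n-2}n}\left(\int_E\w^{\frac n2}\right)^{\frac2n}
  = \norm{\psi}_{L^{2^*}(E)}^2\,\norm{\w}_{L^{\frac n2}(E)}
  \le \norm{\psi}_{L^{2^*}(\R^n)}^2\,\norm{\w}_{L^{\frac n2}(E)} \fullstop
\end{equation*}
Finally, applying the Sobolev inequality $S^2\norm{\psi}_{L^{2^*}}^2\le\norm{\nabla\psi}_{L^2}^2$ together with the energy identity above yields
\begin{equation*}
  \int_E\psi^2\,\w \le S^{-2}\norm{\nabla\psi}_{L^2}^2\,\norm{\w}_{L^{\frac n2}(E)}
  = S^{-2}\lambda\,\norm{\w}_{L^{\frac n2}(E)}\int_{\R^n}\psi^2\,\w \comma
\end{equation*}
which is exactly the assertion with $C(n)=S(n)^{-2}$.

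I do not expect any genuine obstacle here: every step is a direct invocation of a standard inequality or of a result already proved in this appendix. The only point that deserves a line of care is the justification that $\psi\in H^1(\R^n)$ — so that $\int_{\R^n}\abs{\nabla\psi}^2=\int_{\R^n}(-\lapl\psi)\psi$ holds without boundary or integrability issues — and this is precisely what \cref{thm:inverse_well_defined} (and its corollary identity) supplies.
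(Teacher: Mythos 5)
Your argument is exactly the paper's: H\"older with exponents $\tfrac{n}{n-2}$ and $\tfrac n2$ on $E$, then the Sobolev inequality combined with the energy identity $\int_{\R^n}\abs{\nabla\psi}^2=\lambda\int_{\R^n}\psi^2\w$ coming from the eigenfunction equation. Your extra care in justifying $\psi\in H^1(\R^n)$ via \cref{thm:inverse_well_defined} is a welcome (if implicit in the paper) detail, but the proof is the same.
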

\begin{proof}
  H\"older's inequality gives us
  \begin{equation*}
    \int_E \psi^2\w \le \norm{\psi}_{L^{2^*}(E)}^2\norm{\w}_{L^{\frac n2}(E)} \fullstop
  \end{equation*}
 Also, the fact that $\psi$ is an eigenfunction relative to $\lambda$ together with the Sobolev 
  inequality tell us
  \begin{equation*}
    \norm{\psi}_{L^{2^*}(E)}^2 \le C\int_{\R^n}\abs{\nabla \psi}^2 = \lambda C\int_{\R^n} \psi^2\w \comma
  \end{equation*}
  where $C=C(n)$ is a constant that depends only on the dimension.
  
  Joining the two inequalities yields the desired estimate.
\end{proof}

\subsection{Further results when \texorpdfstring{$\w\approx(1+\lvert x\rvert)^{-4}$}{w decays like the inverse of the 4th power of the distance}}
If we assume more on the weight, namely that $\w\approx(1+\abs{x})^{-4}$, we can obtain some better 
integrability of the eigenfunctions and 
we can prove that, in a certain sense, the restriction of eigenfunctions for the weight $\w$ are 
eigenfunctions for the restriction of the weight.

In this section it is crucial that $\w\approx(1+\abs{x})^{-4}$, as we rely on 
\cref{thm:rellich_inequality}. 
This result is known in literature as Rellich's inequality (see \cite[Chapter 6]{balinsky2015}).
A short proof of the mentioned inequality with the sharp constant can be found for example in 
\cite{machihara2017}.
\begin{theorem}[Rellich's Inequality]\label{thm:rellich_inequality}
  If $n\ge 5$ it holds
  \begin{equation*}
    \int_{\R^n} f^2 \abs{x}^{-4} \de x \le C_n \int_{\R^n} \abs{\lapl u}^2 \de x
  \end{equation*}
  for any $f\in H^2(\R^n)$, where $H^2(\R^n)$ is the space of functions with Laplacian in 
  $L^2(\R^n)$.
\end{theorem}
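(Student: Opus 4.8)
The plan is to prove the inequality with the sharp constant $C_n=\frac{16}{n^2(n-4)^2}$, which is finite precisely because $n\ge 5$, via a completion-of-squares argument coupled with a weighted Hardy inequality. First I would reduce to the case $u\in C^\infty_c(\R^n\setminus\{0\})$: since the origin has zero $H^2$-capacity when $n\ge 4$, such functions are dense in $H^2(\R^n)$, and working with them guarantees that all the integrations by parts below produce no boundary terms (neither at the origin nor at infinity). The general statement then follows by approximation, using that $\|u\|_{L^2(\abs{x}^{-4}\de x)}$ is finite once the right-hand side is (which is itself a consequence of the inequality on smooth functions).

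The core computation is the following. Fix a parameter $\lambda\ge 0$ and set $w\defeq \lapl u+\lambda\frac{u}{\abs{x}^2}$. Expanding the square,
\begin{equation*}
  \int_{\R^n}w^2=\int_{\R^n}\abs{\lapl u}^2+2\lambda\int_{\R^n}\frac{u\,\lapl u}{\abs{x}^2}+\lambda^2\int_{\R^n}\frac{u^2}{\abs{x}^4}\fullstop
\end{equation*}
I would evaluate the middle term by integrating by parts twice: a first integration by parts gives $\int\frac{u\,\lapl u}{\abs{x}^2}=-\int\frac{\abs{\nabla u}^2}{\abs{x}^2}+2\int\frac{u\,(x\cdot\nabla u)}{\abs{x}^4}$, and then, using $u\,(x\cdot\nabla u)=\tfrac12\,x\cdot\nabla(u^2)$ together with the identity $\operatorname{div}\!\big(x\abs{x}^{-4}\big)=(n-4)\abs{x}^{-4}$, one gets $\int\frac{u\,(x\cdot\nabla u)}{\abs{x}^4}=-\frac{n-4}{2}\int\frac{u^2}{\abs{x}^4}$. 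Substituting back yields the identity
\begin{equation*}
  \int_{\R^n}\abs{\lapl u}^2=\int_{\R^n}w^2+2\lambda\int_{\R^n}\frac{\abs{\nabla u}^2}{\abs{x}^2}+\lambda\bigl(2(n-4)-\lambda\bigr)\int_{\R^n}\frac{u^2}{\abs{x}^4}\fullstop
\end{equation*}

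Next I would invoke the weighted Hardy inequality $\int\frac{\abs{\nabla u}^2}{\abs{x}^2}\ge\bigl(\tfrac{n-4}{2}\bigr)^2\int\frac{u^2}{\abs{x}^4}$, valid for $n\ge 5$; it follows in one line from the same divergence identity, since $\int\frac{u^2}{\abs{x}^4}=-\frac{2}{n-4}\int\frac{u\,(x\cdot\nabla u)}{\abs{x}^4}$ and Cauchy--Schwarz. Dropping the nonnegative term $\int w^2$ and inserting the Hardy bound into the identity above gives
\begin{equation*}
  \int_{\R^n}\abs{\lapl u}^2\ge\lambda\left(\frac{n(n-4)}{2}-\lambda\right)\int_{\R^n}\frac{u^2}{\abs{x}^4}\fullstop
\end{equation*}
The quadratic $\lambda\mapsto\lambda\bigl(\tfrac{n(n-4)}{2}-\lambda\bigr)$ is maximized at $\lambda=\frac{n(n-4)}{4}>0$, with maximal value $\frac{n^2(n-4)^2}{16}$. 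This yields $\int_{\R^n}\abs{\lapl u}^2\ge\frac{n^2(n-4)^2}{16}\int_{\R^n}\frac{u^2}{\abs{x}^4}$, i.e. the claim with $C_n=\frac{16}{n^2(n-4)^2}$.

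The whole computation is elementary and there is no genuinely hard step; the only points requiring care are the density/approximation step that licenses the integrations by parts and the bookkeeping in the optimization over $\lambda$ that produces the sharp constant (note the coefficient $2(n-4)-\lambda$ can be negative at the optimum for $n$ large, which is why the Hardy term is essential). As an alternative one could decompose $u$ in spherical harmonics, reduce to a one-dimensional Rellich inequality on $(0,\infty)$ with weight $r^{n-1}\de r$, and treat that via the Mellin transform; but the multiplier argument above is shorter and is the one I would write out.
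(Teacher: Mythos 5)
Your argument is correct, and it is worth noting that the paper does not actually prove this statement: it is quoted from the literature, with \cite{balinsky2015} for the general theory and \cite{machihara2017} for a short proof with the sharp constant. Your completion-of-squares/multiplier argument is essentially the standard short proof and I checked the computations: the two integrations by parts giving $\int_{\R^n}\frac{u\,\lapl u}{\abs{x}^2}=-\int_{\R^n}\frac{\abs{\nabla u}^2}{\abs{x}^2}-(n-4)\int_{\R^n}\frac{u^2}{\abs{x}^4}$ are right, the weighted Hardy inequality with constant $\bigl(\frac{n-4}{2}\bigr)^2$ follows as you say from the divergence identity and Cauchy--Schwarz, and the coefficient bookkeeping $\frac{(n-4)^2}{2}+2(n-4)=\frac{n(n-4)}{2}$ followed by optimization at $\lambda=\frac{n(n-4)}{4}$ does produce the sharp constant $\frac{n^2(n-4)^2}{16}$; you are also right that the Hardy term is indispensable because $2(n-4)-\lambda=\frac{(n-4)(8-n)}{4}$ is negative at the optimum for $n\ge 9$. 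The only point to state with a little more care is the reduction to $C^\infty_c(\R^n\setminus\{0\})$: with the paper's (homogeneous) reading of $H^2(\R^n)$ one should say explicitly which norm the density is taken in and then pass to the limit by Fatou, but this is routine and the capacity claim is correct in the range $n\ge 5$ where the theorem is stated. Since the paper only needs the inequality with \emph{some} finite constant $C_n$, your proof is more than sufficient; its added value over the citation is that it is self-contained and exhibits the sharp constant.
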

\begin{corollary}\label{cor:rellich2}
  Let $\w\in L^{\frac n2}(\R^n)$ be a positive weight such that $\w\le c\sum (1+\abs{x-x_i})^{-4}$ for a 
  constant $c>0$ and some points $x_1,\dots,x_k\in \R^n$. If $n\ge 5$, it holds
  \begin{equation*}
    \norm{\lapl^{-1}f}_{L^2_\w} \lesssim \sqrt{ck} \norm{f}_{L^2}
  \end{equation*}
  for any function $f\in L^2(\R^n)$.
\end{corollary}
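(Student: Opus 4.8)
The plan is to reduce the estimate, by a crude ``union bound'' over the centers $x_1,\dots,x_k$, to Rellich's inequality \cref{thm:rellich_inequality} applied once around each $x_i$. First I would set $u\defeq\lapl^{-1}f$. Since $\lapl u=f\in L^2(\R^n)$, the function $u$ has its Laplacian in $L^2$, hence it is an admissible competitor in \cref{thm:rellich_inequality}. By definition of the weighted norm,
\begin{equation*}
  \norm{\lapl^{-1}f}_{L^2_\w}^2=\int_{\R^n}u^2\,\w\fullstop
\end{equation*}

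Next I would use the hypothesis on the weight together with the elementary pointwise bound $(1+t)^{-4}\le t^{-4}$ for $t>0$, which gives $\w(x)\le c\sum_{i=1}^{k}\abs{x-x_i}^{-4}$ for a.e.\ $x$. Therefore
\begin{equation*}
  \int_{\R^n}u^2\,\w\le c\sum_{i=1}^{k}\int_{\R^n}\frac{u(x)^2}{\abs{x-x_i}^4}\,\de x\fullstop
\end{equation*}
For each fixed $i$, after the change of variables $y=x-x_i$ I can apply \cref{thm:rellich_inequality} to the translate $y\mapsto u(y+x_i)$ (still with Laplacian in $L^2$), using that both the Lebesgue measure and $\lapl$ are translation invariant, to obtain
\begin{equation*}
  \int_{\R^n}\frac{u(x)^2}{\abs{x-x_i}^4}\,\de x\le C_n\int_{\R^n}\abs{\lapl u}^2=C_n\norm{f}_{L^2}^2\fullstop
\end{equation*}
Summing over $i=1,\dots,k$ then yields $\int_{\R^n}u^2\,\w\le C_n\,c\,k\,\norm{f}_{L^2}^2$, and taking square roots gives precisely $\norm{\lapl^{-1}f}_{L^2_\w}\lesssim\sqrt{ck}\,\norm{f}_{L^2}$.

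The argument is short and essentially routine, so there is no genuine obstacle; the factor $\sqrt{k}$ comes exactly from estimating $\w$ by the sum of the $k$ inverse-fourth-power weights and invoking Rellich once per center. The only points deserving a line of care are that $\lapl^{-1}f$ really does have its Laplacian in $L^2$ (so that \cref{thm:rellich_inequality} applies) — which holds by construction — and that the dimensional restriction $n\ge 5$ required by Rellich's inequality is exactly the standing hypothesis of the corollary.
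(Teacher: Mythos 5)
Your argument is correct and is exactly the intended one: the paper's own proof of this corollary is the single line ``direct consequence of \cref{thm:rellich_inequality}'', and your write-up simply fills in the routine details (dropping the $1$ in $(1+\abs{x-x_i})^{-4}$, translating Rellich's inequality to each center, and summing over the $k$ centers to produce the factor $ck$ before taking square roots). Nothing further is needed.
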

\begin{proof}
  The statement is a direct consequence of \cref{thm:rellich_inequality}.
\end{proof}

We can now prove that the eigenfunctions of $\frac{-\lapl}{\w}$ are in $L^2(\R^n)$. 
A priori we know only that they belong to $L^2_\w(\R^n)$.
The proof is achieved by duality applying \cref{cor:rellich2}.
\begin{lemma}\label{lem:eigenfunctions_integrability}
  Let $\w\in L^{\frac n2}(\R^n)$ be a positive weight such that $\w\le c\sum (1+\abs{x-x_i})^{-4}$ for a
  constant $c>0$ and some points $x_1,\dots, x_k\in \R^n$. If $n\ge 5$ and $\psi\in L^2_\w(\R^n)$ is such that
  $-\lapl\psi = \lambda \w\psi$ for some $\lambda>0$, then
  \begin{equation*}
    \norm{\psi}_{L^2} \le \lambda \sqrt{ck}\norm{\psi}_{L^2_\w} \fullstop
  \end{equation*}
\end{lemma}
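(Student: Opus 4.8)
The plan is to argue by duality. Being an eigenfunction of $\left(\frac{-\lapl}{\w}\right)^{-1}$, by \cref{thm:inverse_well_defined} the function $\psi$ belongs to $H^1(\R^n)$; in particular the equation $-\lapl\psi = \lambda\w\psi$ holds in the weak sense, i.e.\ $\int_{\R^n}\nabla\psi\cdot\nabla\varphi = \lambda\int_{\R^n}\w\psi\varphi$ for every $\varphi\in H^1(\R^n)$ with $\w\psi\varphi\in L^1(\R^n)$. Since
\begin{equation*}
  \norm{\psi}_{L^2} = \sup\left\{\int_{\R^n}\psi f\ :\ f\in C_c^\infty(\R^n),\ \norm{f}_{L^2}\le 1\right\}
\end{equation*}
(a supremum which a priori could be $+\infty$), it is enough to bound $\int_{\R^n}\psi f$ by $\lambda\sqrt{ck}\,\norm{\psi}_{L^2_\w}$ for each such $f$.

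So I would fix $f\in C_c^\infty(\R^n)$ with $\norm{f}_{L^2}\le 1$ and set $g\defeq(-\lapl)^{-1}f$. Then $g\in H^1(\R^n)$, and \cref{cor:rellich2} gives $g\in L^2_\w(\R^n)$ with $\norm{g}_{L^2_\w}\lesssim\sqrt{ck}\,\norm{f}_{L^2}$; by Cauchy--Schwarz in $L^2_\w$ this also shows $\w\psi g\in L^1(\R^n)$. Testing the weak formulation of $-\lapl g=f$ against $\psi$ and then the weak formulation of $-\lapl\psi=\lambda\w\psi$ against $g$, one gets
\begin{equation*}
  \int_{\R^n}\psi f = \int_{\R^n}\nabla\psi\cdot\nabla g = \lambda\int_{\R^n}\w\psi\, g \fullstop
\end{equation*}
Applying Cauchy--Schwarz in $L^2_\w$ and then \cref{cor:rellich2} once more yields
\begin{equation*}
  \abs*{\int_{\R^n}\psi f} = \lambda\abs*{\int_{\R^n}\w\psi\, g}
  \le \lambda\norm{\psi}_{L^2_\w}\norm{g}_{L^2_\w}
  \lesssim \lambda\sqrt{ck}\,\norm{\psi}_{L^2_\w}\norm{f}_{L^2} \fullstop
\end{equation*}
Taking the supremum over all admissible $f$ gives $\norm{\psi}_{L^2}\lesssim\lambda\sqrt{ck}\,\norm{\psi}_{L^2_\w}$.

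There is essentially no serious obstacle here: the only points to be checked are that $g=(-\lapl)^{-1}f\in H^1(\R^n)$ (immediate from $f\in C_c^\infty(\R^n)\subseteq L^{(2^*)'}(\R^n)$ and the mapping properties of the Newtonian potential) and that the products $f\psi$ and $\w\psi g$ are integrable, so that $\psi$ and $g$ are admissible test functions in the two weak formulations --- the former because $f$ has compact support and $\psi\in L^1_{\mathrm{loc}}(\R^n)$, the latter by the Cauchy--Schwarz bound noted above. The genuine content of the statement is thus entirely concentrated in \cref{cor:rellich2}, hence in Rellich's inequality \cref{thm:rellich_inequality}, which is exactly the place where the hypothesis $\w\lesssim\sum_i(1+\abs{x-x_i})^{-4}$ is used.
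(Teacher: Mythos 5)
Your proof is correct and is essentially identical to the paper's argument: both bound $\norm{\psi}_{L^2}$ by duality, testing against $f\in C^\infty_c(\R^n)$, rewriting $\int\psi f$ as $\lambda\int\w\psi\,(-\lapl)^{-1}f$ via the equation, and concluding with Cauchy--Schwarz in $L^2_\w$ together with \cref{cor:rellich2}. The extra care you take in justifying the integration by parts is fine but adds nothing beyond what the paper implicitly assumes.
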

\begin{proof}
  Let us fix a test function $\varphi\in C^{\infty}_c(\R^n)$. It holds
  \begin{align*}
    \int_{\R^n}\psi\varphi = \int_{\R^n}(-\lapl\psi)((-\lapl)^{-1}\varphi)
    = \lambda\int_{\R^n} \psi \w (-\lapl)^{-1}\varphi 
    \le \lambda\norm{\psi}_{L^2_\w}\norm{(-\lapl)^{-1}\varphi}_{L^2_\w} 
    \lesssim \lambda \norm{\psi}_{L^2_\w}\sqrt{ck}\norm{\varphi}_{L^2} \comma
  \end{align*}
  where in the last step we applied \cref{cor:rellich2}.
  The statement follows by taking the supremum over all functions $\varphi$ with $\norm{\varphi}_{L^2}\leq 1$.
\end{proof}

Let us conclude our study of the spectral properties of $\left(\frac{-\lapl}{\w}\right)^{-1}$ with the 
following intuitive proposition. 
It states that, under suitable assumptions, if we restrict an eigenfunction relative to the weight 
$\w$ to a zone where $\w$ is almost the same as $\w_1$, then the restriction is almost an eigenfunction 
for the weight $\w_1$.
\begin{proposition}\label{prop:restriction_eigenfunction}
  For a fixed $n\ge 5$, let $\w_1, \w\in L^{\frac n2}(\R^n)$ be two positive weights such that 
  $c^{-1}(1+\abs{x-\bar x}^{-4})\le \w$ and $\w_1 \le c\abs{x-\bar x}^{-4}$ for some constant 
  $c>0$ and $\bar x\in\R^n$.
  Let $\psi\in L^2_\w(\R^n)$ and $\lambda>0$ be  such that $-\lapl\psi-\lambda \w\psi = 0$ and 
  $\int_{\R^n} \psi^2\w=1$.
  
  Let us fix an arbitrary smooth function $\eta:\R\to\cc01$ such that $\eta(t)=1$ if $t\le 1$ and $\eta(t)=0$
  if $t\ge 2$. We will consider $\abs{\eta'}_\infty,\abs{\eta''}_\infty$ universal constants (in particular 
  they can be hidden in the $\lesssim$ notation).
  
  For a fixed radius $R>1$, denote $\varphi(x)\defeq \psi(x)\,\eta\left(\frac{\abs{x-\bar x}}{R}\right)$, and 
  write $\varphi=\sum \alpha_k\varphi_k$ where $(\lambda_k^{-1},\varphi_k)$ is the sequence of eigenvalues for
  $\left(\frac{-\lapl}{\w_1}\right)^{-1}$. Then it holds
  \begin{equation*}
    \sum_k \alpha_k^2\left(\frac{\lambda}{\lambda_k}-1\right)^2 \lesssim 
    c\lambda \left(R^{-2}+c\norm{\w}_{L^{\frac n2}(B_{2R}(\bar x)\setminus B_R(\bar x))} 
    + \lambda\sup_{B_{2R}(\bar x)} \frac{\abs{\w_1-\w}^2}{\w}\right)\fullstop
  \end{equation*}
\end{proposition}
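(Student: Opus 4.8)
The plan is to reduce the statement to \cref{lem:approximation_eigenfunctions} applied to the weight $\w_1$, and then to estimate the resulting inhomogeneous term in $L^2(\R^n)$ by means of \cref{cor:rellich2} and \cref{lem:concentration_eigenfunctions}.

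First I would introduce the cutoff $\chi(x)\defeq\eta\bigl(\abs{x-\bar x}/R\bigr)$, so that $\varphi=\psi\chi$, and compute via the Leibniz rule (using $-\lapl\psi=\lambda\w\psi$) that $\varphi$ solves $-\lapl\varphi-\lambda\w_1\varphi=f$, where
\[
  f \defeq \lambda(\w-\w_1)\varphi-2\nabla\psi\cdot\nabla\chi-\psi\,\lapl\chi \fullstop
\]
Since the two-sided bounds on the weights assumed in the statement give $\w_1/\w\lesssim c^2$ pointwise, one has $\varphi\in L^2_{\w_1}(\R^n)$, and (as the estimates below will show) $f\in L^2(\R^n)$; hence \cref{lem:approximation_eigenfunctions} applied with the weight $\w_1$ yields the identity
\[
  \sum_k \alpha_k^2\Bigl(\tfrac{\lambda}{\lambda_k}-1\Bigr)^2 = \norm{\lapl^{-1}f}_{L^2_{\w_1}}^2 \fullstop
\]
Because $\w_1$ obeys the decay hypothesis of \cref{cor:rellich2} with the single point $\bar x$, the right-hand side is $\lesssim c\,\norm{f}_{L^2}^2$, so the whole proof is reduced to estimating $\norm{f}_{L^2}$.

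I would then bound the three summands of $f$ separately. For the first, since $\varphi$ is supported in $B_{2R}(\bar x)$ and $\abs{\varphi}\le\abs{\psi}$, writing $\abs{\w-\w_1}^2=\tfrac{\abs{\w-\w_1}^2}{\w}\,\w$ and using $\int\psi^2\w=1$ gives $\norm{\lambda(\w-\w_1)\varphi}_{L^2}^2\le\lambda^2\sup_{B_{2R}(\bar x)}\tfrac{\abs{\w_1-\w}^2}{\w}$. For the second, $\nabla\chi$ is supported in the annulus $A\defeq B_{2R}(\bar x)\setminus B_R(\bar x)$ with $\abs{\nabla\chi}\lesssim R^{-1}$, while $\int_{\R^n}\abs{\nabla\psi}^2=\lambda\int\w\psi^2=\lambda$, so $\norm{\nabla\psi\cdot\nabla\chi}_{L^2}^2\lesssim\lambda R^{-2}$. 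For the third and most delicate summand, $\lapl\chi$ is again supported in $A$ with $\abs{\lapl\chi}\lesssim R^{-2}$; there the lower bound on $\w$ gives $\w\gtrsim c^{-1}R^{-4}$, so $\psi^2\lesssim cR^4\,\w\psi^2$ on $A$, and \cref{lem:concentration_eigenfunctions} applied with $E=A$ yields $\int_A\psi^2\w\lesssim\lambda\norm{\w}_{L^{n/2}(A)}$; altogether $\norm{\psi\,\lapl\chi}_{L^2}^2\lesssim c\lambda\norm{\w}_{L^{n/2}(A)}$. Summing the three contributions, multiplying by the constant $c$ coming from \cref{cor:rellich2}, and factoring out a common $\lambda$, one obtains exactly the claimed estimate.

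The Leibniz computation and the bookkeeping of constants are routine; the only genuinely delicate point is the treatment of the $\psi\,\lapl\chi$ term, where I must trade the unweighted $L^2$-norm of $\psi$ on the annulus for its $\w$-weighted norm (using the explicit decay $\w\gtrsim(1+\abs{x-\bar x})^{-4}$, hence $\w\gtrsim c^{-1}R^{-4}$ on $A$) before invoking the concentration estimate. This is precisely the step where the quartic decay of the weight is used, and where a different power of $R$ would break the final bound.
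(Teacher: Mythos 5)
Your proposal is correct and follows essentially the same route as the paper: the same Leibniz expansion of $-\lapl\varphi-\lambda\w_1\varphi$, reduction via \cref{lem:approximation_eigenfunctions} and \cref{cor:rellich2} to an $L^2$-bound on the inhomogeneity, and the same three-term estimate (with the annulus term handled by trading $R^{-4}$ for $c\,\w$ and invoking \cref{lem:concentration_eigenfunctions}). The bookkeeping of the constants $c$ and $\lambda$ also matches the stated bound exactly.
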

\begin{proof}
  Without loss of generality we can assume $\bar x = 0$.
  For notational simplicity, we denote $\eta_R(x) \defeq \eta\left(\frac{\abs{x}}{R}\right)$.
  
  Applying \cref{lem:approximation_eigenfunctions,cor:rellich2} we deduce
  \begin{equation}\label{eq:tech_prop1}
    \sum_k \alpha_k^2\left(\frac{\lambda}{\lambda_k}-1\right)^2 
    \lesssim c\norm{-\lapl \varphi-\lambda \w_1\varphi}^2_{L^2}\fullstop
  \end{equation}
  Let us expand $-\lapl \varphi-\lambda \w_1\varphi$ as follows:
  \begin{equation}\label{eq:tech_prop2}\begin{aligned}
    -\lapl \varphi-\lambda \w_1\varphi 
    &= (-\lapl\psi-\lambda \w\psi)\eta_R 
    - 2\nabla\psi\cdot\nabla\eta_R - \psi\lapl\eta_R - \lambda(\w_1-\w)\psi\eta_R \\
    &= - 2\nabla\psi\cdot\nabla\eta_R - \psi\lapl\eta_R - \lambda(\w_1-\w)\psi\eta_R\fullstop
  \end{aligned}\end{equation}
  Since $\abs{\eta'}_\infty\lesssim R^{-1}$ and $\abs{\eta''}_\infty\lesssim R^{-2}$,
  \cref{eq:tech_prop1,eq:tech_prop2} imply
  \begin{equation}\label{eq:tech_prop3}
    \sum_k \alpha_k^2\left(\frac{\lambda}{\lambda_k}-1\right)^2 
    \lesssim
    c\left(
    R^{-2}\int_{\R^n}\abs{\nabla\psi}^2 
    + R^{-4}\int_{B_{2R}\setminus B_R}\psi^2
    +\lambda^2\int_{B_{2R}}\abs{\w_1-\w}^2\psi^2 
    \right)\fullstop
  \end{equation}
  Given that $\psi$ is an eigenfunction with unit $L^2_\w$-norm, it holds $\int_{\R^n}\abs{\nabla\psi}^2=\lambda$.
  Recalling that $c^{-1}(1+\abs{x})^{-4}\le \w$, a direct application of \cref{lem:concentration_eigenfunctions} yields
  \begin{equation*}
    R^{-4}\int_{B_{2R}\setminus B_R}\psi^2 \lesssim c\int_{B_{2R}\setminus B_R}\psi^2 \w 
    \lesssim c\lambda\norm{\w}_{L^{\frac n2}(B_{2R}\setminus B_R)} \fullstop
  \end{equation*}
  Finally the last term is estimated as
  \begin{equation*}
    \int_{B_{2R}}\abs{\w_1-\w}^2\psi^2 
    \le \sup_{B_{2R}}\frac{\abs{\w_1-\w}^2}{\w} \int_{B_{2R}}\psi^2\w
    \le \sup_{B_{2R}}\frac{\abs{\w_1-\w}^2}{\w} \fullstop
  \end{equation*}
  Substituting all the mentioned estimates into \cref{eq:tech_prop3} finishes the proof.
\end{proof}

\section{Integrals involving two Talenti bubbles}\label{app:computations}
This appendix is devoted to the computations of integral quantities involving two Talenti bubbles.
First we deal with the case of two general Talenti bubbles $U$ and $V$, and integrals of the form 
$\int_{\R^n} U^\alpha V^\beta$ with $\alpha+\beta=2^*$.
Then we consider the special case of two bubbles $U=U[-Re_1,1]$ and $V=U[Re_1,1]$ for a large $R>1$ and we 
obtain a simple formula to compute very general integrals of functions that depend only on $U$ and $V$.

Even though the estimate \cref{prop:interaction_approx} shares the same spirit of the estimates 
\cite[F7--F21]{bahri1989}, our estimate is, as far as we can tell, not implied by the inequalities stated 
in \cite{bahri1989}.

All the proofs in this appendix exploit the same simple strategy: splitting the involved integrals in regions
where the integrand has a power-like behavior and then computing the integrals explicitly.

\begin{lemma}\label{lem:interaction_for_simple_bubbles}
  Given $n\ge 3$, let us fix $\alpha+\beta=2^*$ with $\alpha,\beta\geq 0$, $\lambda \in \oc01$, and $z\in\R^n$.
  Set $D\defeq \abs{z}$.
  
  If $\abs{\alpha-\beta}\ge\eps$ for some $\eps>0$, then
  \begin{equation*}
    \int_{\R^n} U[0,1]^\alpha U[z,\lambda]^\beta \approx_{n,\eps} 
    \begin{cases}
      \left(\frac{1}{\lambda D^2}\right)^{\frac{(n-2)\min(\alpha,\beta)}2} &\text{ if } D\lambda\ge 1 \comma\\
      \lambda^{\frac{(n-2)\min(\alpha,\beta)}2} &\text{ if } D\lambda\le 1 \fullstop\\
    \end{cases}
  \end{equation*}
  If instead $\alpha=\beta=\frac{2^*}2$, then
  \begin{equation*}
    \int_{\R^n} U[0,1]^\alpha U[z,\lambda]^\beta \approx_{n} 
    \begin{cases}
      \left(\frac{1}{\lambda D^2}\right)^{\frac n2}\log(\lambda^2 D) &\text{ if } D\lambda\ge 1 \comma\\
      \lambda^{\frac n2}\log(\lambda^{-1}) &\text{ if } D\lambda\le 1 \fullstop\\
    \end{cases}
  \end{equation*}
\end{lemma}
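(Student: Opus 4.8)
The plan is to compute the integral $\int_{\R^n}U[0,1]^\alpha U[z,\lambda]^\beta$ by exploiting the scaling symmetry described in \cref{subsec:symmetries} and then splitting $\R^n$ into a few regions in which each of the two bubbles has an essentially power-like behavior. Concretely, since $T_{0,\lambda^{-1}}$ preserves all the integrals $\int \varphi_1^{e_1}\cdots\varphi_k^{e_k}$ with $\sum e_i=2^*$, and $T_{0,\lambda^{-1}}(U[0,\lambda])=U[0,1]$, $T_{0,\lambda^{-1}}(U[z,1])=U[\lambda z,\lambda]$, one can always rescale so that the \emph{less} concentrated bubble becomes $U[0,1]$. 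After renaming, up to the harmless swap of the two bubbles (which only swaps $\alpha\leftrightarrow\beta$ in the final formula), we may assume we are estimating $\int_{\R^n}U[0,1]^\alpha\,U[z_0,\mu]^\beta$ with $\mu\ge 1$ and $\abs{z_0}=:D_0$, where the relationship between $\mu,D_0$ and the original $\lambda,D$ is explicit ($\mu=\lambda^{-1}$ up to constants, and the product $D\lambda$ is scaling invariant). So the dichotomy $D\lambda\gtrless 1$ becomes the dichotomy of whether the point $z_0$ lies inside or outside the concentration ball $B(0,\mu^{-1})$ of $U[0,1]$ — wait, more precisely whether the two bubbles' ``cores'' are comparable in distance or far apart.

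The core computation is then the following. Recall $U[0,1](x)\approx (1+\abs{x})^{2-n}$ and $U[z_0,\mu](x)\approx \mu^{\frac{2-n}2}\cdot\bigl(\mu^{-1}+\abs{x-z_0}\bigr)^{2-n}\cdot(\text{const})$ — that is, $U[z_0,\mu](x)\approx \mu^{-\frac{n-2}2}(\mu^{-1}+\abs{x-z_0})^{2-n}$. I would partition $\R^n$ into: (i) the ball $B_1$ where $\abs{x}\lesssim \max(1,D_0)$ and $U[0,1]$ is essentially constant $\approx \max(1,D_0)^{2-n}$ if $D_0\gg 1$, or $\approx 1$ if $D_0\lesssim 1$; inside this region we further split into the small ball $B(z_0,\mu^{-1})$ where $U[z_0,\mu]\approx \mu^{\frac{n-2}2}$ and the annular region where $U[z_0,\mu]\approx \mu^{-\frac{n-2}2}\abs{x-z_0}^{2-n}$; (ii) the far region $\abs{x}\gtrsim \max(1,D_0)$ where $U[0,1]\approx \abs{x}^{2-n}$ and $U[z_0,\mu]\approx \mu^{-\frac{n-2}2}\abs{x}^{2-n}$ (since there $\abs{x-z_0}\approx\abs{x}$). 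On each piece the integrand is a genuine power of $\abs{x}$ or $\abs{x-z_0}$, and I integrate explicitly in polar coordinates. The exponent governing convergence at infinity is $(n-2)(\alpha+\beta)=(n-2)2^*=2n$ against the volume element $\abs{x}^{n-1}$, so the far integral $\int^\infty r^{-2n}r^{n-1}\,dr$ converges and contributes $\approx \mu^{-\frac{(n-2)\beta}2}\max(1,D_0)^{-n}$; near each core the exponent on that core's variable is $(n-2)\beta$ (resp.\ $(n-2)\alpha$), which is $\le n-1+1$ exactly when $\beta\le \frac{2n}{n-2}/2=2^*/2$, i.e.\ when $\beta\le\alpha$, so the integral near the \emph{more concentrated} core is dominated by the region just outside its own concentration ball and produces the leading term, while the logarithm appears precisely in the borderline case $\alpha=\beta=2^*/2$ because then $(n-2)\beta = n$ and $\int_{\mu^{-1}}^{D_0} r^{-n}r^{n-1}\,dr = \int r^{-1}\,dr$ gives a $\log$.

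After these elementary one-dimensional integrals, I collect the pieces: the dominant contribution is the one coming from the annulus around the more concentrated bubble's core (between radius $\mu^{-1}$ and radius $\approx\max(1,D_0)$ from $z_0$), which after multiplying by the (essentially constant) value of the other bubble there yields exactly $\bigl(\frac{1}{\mu}\bigr)^{\frac{(n-2)\min(\alpha,\beta)}2}$ when $D_0\lesssim 1$ (corresponding to $D\lambda\lesssim 1$ after undoing the rescaling), and $\bigl(\frac{1}{\mu D_0^2}\bigr)^{\frac{(n-2)\min(\alpha,\beta)}2}$ when $D_0\gtrsim 1$ — with the extra $\log(\mu^2 D_0)$ (resp. $\log\mu$) factor in the balanced case, where $\log(\mu^2 D_0)$ translates into $\log(\lambda^{-2}D^{-1}\cdot\dots)$ — I would double check the precise form by carefully tracking the scaling, which is where sign/exponent bookkeeping matters. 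The main obstacle, and the only place requiring genuine care rather than routine estimation, is exactly this bookkeeping: making sure that after the scaling reduction the statement's parameters $\lambda,D$ map correctly to the reduced $\mu,D_0$, that the case $D\lambda\ge 1$ versus $D\lambda\le 1$ matches the geometric picture (cores far apart versus one core inside the other's concentration scale), and that the logarithmic factor comes out with the right argument and the right sign of the exponent inside the log. The inequalities $\alpha,\beta\ge 0$ and $\abs{\alpha-\beta}\ge\eps$ are used only to ensure the non-borderline power integrals are bounded with constants depending on $\eps$; the uniformity in $n$ (and $\eps$) of the implicit constants follows since there are finitely many regions and each one-dimensional integral is estimated with explicit constants.
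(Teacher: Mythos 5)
Your strategy is the same as the paper's: partition $\R^n$ into finitely many regions on each of which both bubbles are comparable to explicit powers, reduce to one-dimensional integrals in polar coordinates, and read off the dominant term (with the logarithm appearing exactly at the balanced exponent $\alpha=\beta=2^*/2$). The initial rescaling is harmless but unnecessary; the paper works directly with $U[0,1]$ and $U[z,\lambda]$. One point in your decomposition needs fixing before the computation goes through in the far-apart case $D\lambda\ge 1$: you cannot treat the less concentrated bubble as constant $\approx\max(1,D_0)^{2-n}$ on the whole ball $\{|x|\lesssim D_0\}$ centered at \emph{its own} center, since near that center it is $\approx 1$. You must keep separate balls of radius $\sim D/2$ around \emph{each} of the two centers (plus the intermediate and far regions, four regions in total, as in the paper): on each such ball the bubble centered there varies as a power while the \emph{other} one is constant, and the dominant contribution comes from the ball around the bubble carrying the \emph{larger} exponent — not always the more concentrated one, contrary to what you assert. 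With only your regions as written, the case where the less concentrated bubble carries the larger exponent would be underestimated by a large power of $D$. As for the logarithm bookkeeping you flag: your $\log(\mu^2 D_0)=\log(D/\lambda)$ is comparable to $\log D$ and to $\log(\lambda D^2)$ in the regime $D\lambda\ge1$, which is what the computation actually produces (the form $\log(\lambda^2 D)$ appearing in the statement should be read as $\log(\lambda D^2)$).
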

\begin{proof}
  We split the proof in two cases.
  
  \vspace{0.4em}
  \noindent\textit{The case $D\le \lambda^{-1}$.}
  \begin{figure}[htb]
    \centering
    \footnotesize

\begin{tikzpicture}[scale=0.4]

\pgfdeclarepatternformonly{north east lines wide2}%
   {\pgfqpoint{-1pt}{-1pt}}%
   {\pgfqpoint{20pt}{20pt}}%
   {\pgfqpoint{19pt}{19pt}}%
   {
     \pgfsetlinewidth{0.2pt}
     \pgfpathmoveto{\pgfqpoint{0pt}{0pt}}
     \pgfpathlineto{\pgfqpoint{19.1pt}{19.1pt}}
     \pgfusepath{stroke}
    }

\pgfmathsetmacro{\Zx}{1.7}
\pgfmathsetmacro{\Zy}{1}
\coordinate (O) at (0, 0);
\coordinate (Z) at (\Zx, \Zy);

\pgfmathsetmacro{\D}{{veclen(\Zx, \Zy)}}
\pgfmathsetmacro{\L}{3.5}

\fill[pattern=north east lines wide2] (-2*\L,-2*\L) rectangle ({2*\L},{2*\L});
\draw[fill=white!95!black] (O) circle ({2*\L});
\draw (O) circle (1);
\draw (Z) circle (\L);

\draw[fill=black] (O) circle [radius=1pt] node[above]{$0$};
\draw[fill=black] (Z) circle [radius=1pt] node[above]{$z$};

\pgfmathsetmacro{\T}{160}
\pgfmathsetmacro{\Tx}{cos(\T)*2*\L}
\pgfmathsetmacro{\Ty}{sin(\T)*2*\L}
\draw[-,dashed] (O) -- (\Tx,\Ty) node[midway,above] {$2\lambda^{-1}$};

\pgfmathsetmacro{\T}{250}
\pgfmathsetmacro{\Tx}{cos(\T)}
\pgfmathsetmacro{\Ty}{sin(\T)}
\draw[-,dashed] (O) -- (\Tx,\Ty) node[midway,right] {$1$};

\pgfmathsetmacro{\T}{280}
\pgfmathsetmacro{\Tx}{\Zx + cos(\T)*\L}
\pgfmathsetmacro{\Ty}{\Zy + sin(\T)*\L}
\draw[-,dashed] (Z) -- (\Tx,\Ty) node[midway,right] {$\lambda^{-1}$};

\node[draw,right,fill=white] at (-2*\L, 2*\L) {$D\le \lambda^{-1}$};

\end{tikzpicture}

    \caption{The balls $B(0,1)$, $B(z,\lambda^{-1})$, and $B(0,2\lambda^{-1})$, involved in the proof of the case $D\le\lambda^{-1}$.}
  \end{figure}
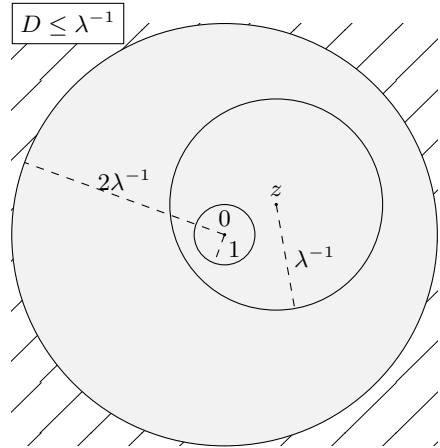
  In $B(0,2\lambda^{-1})$ it holds $U[z,\lambda]\approx U[z,\lambda](z) \approx \lambda^{\frac{n-2}2}$,
  while in $B(0,2\lambda^{-1})^{\complement}$ it holds $U[0,1]\approx \abs{x}^{-(n-2)}$ and 
  $U[z,\lambda]\approx \lambda^{-\frac{n-2}2}\abs{x}^{-(n-2)}$. Thus, recalling that $\alpha+\beta=2^*$, we get
  \begin{align*}
    &\int_{\R^n}U[0,1]^\alpha U[z, \lambda]^\beta = 
    \int_{B(0,2\lambda^{-1})}U[0,1]^\alpha U[z, \lambda]^\beta 
    + \int_{B(0,2\lambda^{-1})^{\complement}}U[0,1]^\alpha U[z, \lambda]^\beta \\
    &\quad\quad\quad\approx
    \int_0^{2\lambda^{-1}} (1+t^2)^{-\alpha\frac{n-2}2}\lambda^{\beta\frac{n-2}2} t^{n-1}\de t
    +
    \int_{2\lambda^{-1}}^{\infty} t^{-\alpha(n-2)}\lambda^{-\beta\frac{n-2}2}t^{-\beta(n-2)} t^{n-1}\de t \\
    &\quad\quad\quad\approx
    \lambda^{\beta\frac{n-2}2}\int_1^{2\lambda^{-1}} t^{n-1-\alpha(n-2)}\de t
    +
    \lambda^{\alpha\frac{n-2}2}
    \fullstop
  \end{align*}
  Let us approximate the value of such expression under a further assumption on the relation between $\alpha$
  and $\beta$.
  \begin{itemize}
   \item If $\alpha\ge\beta+\eps$, the expression becomes
comparable to $\lambda^{\frac{n-2}2\beta}$.
   \item If $\beta\ge\alpha+\eps$, the expression becomes comparable to $\lambda^{\frac{n-2}2\alpha}$. 
   \item If $\alpha=\beta$, the expression becomes comparable to $\lambda^{\frac{n}2}\log(\lambda^{-1})$.
  \end{itemize}
  
  \vspace{0.7em}
  \noindent\textit{The case $D\ge \lambda^{-1}$.}
  \begin{figure}[htb]
    \centering
    \footnotesize

\begin{tikzpicture}[scale=0.5]

\pgfmathsetmacro{\Zx}{3}
\pgfmathsetmacro{\Zy}{1}
\coordinate (O) at (0, 0);
\coordinate (Z) at (\Zx, \Zy);

\pgfmathsetmacro{\D}{{veclen(\Zx, \Zy)}}
\pgfmathsetmacro{\L}{1.2}

\pgfdeclarepatternformonly{north east lines wide1}%
   {\pgfqpoint{-1pt}{-1pt}}%
   {\pgfqpoint{30pt}{30pt}}%
   {\pgfqpoint{29pt}{29pt}}%
   {
     \pgfsetlinewidth{0.2pt}
     \pgfpathmoveto{\pgfqpoint{0pt}{0pt}}
     \pgfpathlineto{\pgfqpoint{29.1pt}{29.1pt}}
     \pgfusepath{stroke}
    }

\draw[pattern=north east lines wide1] (O) circle ({2*\D});
\draw[fill=white!95!black] (O) circle ({\D/2});
\draw[fill=white!95!black] (Z) circle ({\D/2});
\draw (O) circle (1);
\draw (Z) circle (\L);

\draw [fill=black] (O) circle [radius=1pt] node[above]{$0$};
\draw [fill=black] (Z) circle [radius=1pt] node[above]{$z$};

\pgfmathsetmacro{\T}{160}
\pgfmathsetmacro{\Tx}{cos(\T)*2*\D}
\pgfmathsetmacro{\Ty}{sin(\T)*2*\D}
\draw[-,dashed] (O) -- (\Tx,\Ty) node[midway,above] {$2D$};

\pgfmathsetmacro{\T}{40}
\pgfmathsetmacro{\Tx}{cos(\T)*\D/2}
\pgfmathsetmacro{\Ty}{sin(\T)*\D/2}

\pgfmathsetmacro{\T}{250}
\pgfmathsetmacro{\Tx}{cos(\T)}
\pgfmathsetmacro{\Ty}{sin(\T)}
\draw[-,dashed] (O) -- (\Tx,\Ty) node[midway,right] {$1$};

\pgfmathsetmacro{\T}{130}
\pgfmathsetmacro{\Tx}{\Zx + cos(\T)*\D/2}
\pgfmathsetmacro{\Ty}{\Zy + sin(\T)*\D/2}

\pgfmathsetmacro{\T}{280}
\pgfmathsetmacro{\Tx}{\Zx + cos(\T)*\L}
\pgfmathsetmacro{\Ty}{\Zy + sin(\T)*\L}
\draw[-,dashed] (Z) -- (\Tx,\Ty) node[midway,right] {$\lambda^{-1}$};

\node[draw,right] at (-2*\D, 2*\D) {$D\ge \lambda^{-1}$};
\node at (\D*0.30,\D*0.75) {$\frac D2$};
\draw[->] (\D*0.20,\D*0.70) -- (\D*0.10,\D*0.50);
\draw[->] (\D*0.40,\D*0.70) -- (\D*0.56,\D*0.65);

\end{tikzpicture}

    \caption{The balls $B(0,1)$, $B(0,\frac{D}2)$, $B(z,\lambda^{-1})$, $B(z,\frac{D}2)$, and $B(0,2D)$, involved in the proof of the case $D\ge\lambda^{-1}$.}
  \end{figure}
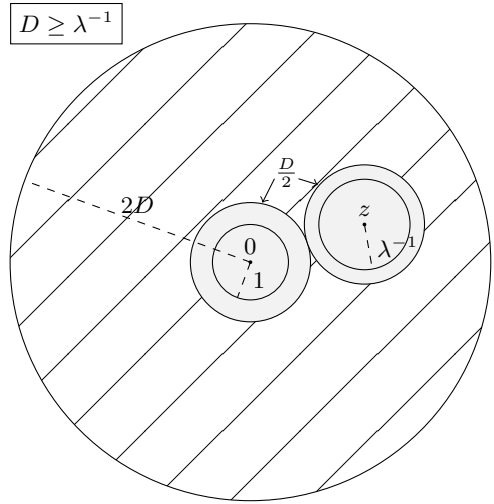
  Since we know a priori that $\lambda\le 1$, in this case we have also $D\ge 1$.
  Then, in $B(0,\frac D2)$ we have
  \begin{equation*}
    U[z,\lambda]\approx U[z,\lambda](0) \approx \left(\frac{1}{\lambda D^2}\right)^{\frac{n-2}2}\comma
  \end{equation*}
  and similarly in $B(z, \frac D2)$ we have
  \begin{equation*}
    U[0,1]\approx U[0,1](z) \approx \frac 1{D^{n-2}} \fullstop
  \end{equation*}
  We note that actually both approximations still hold inside 
 $B(0,2D)\setminus \left(B(0,\frac{D}2)\cup B(z,\frac{D}{2})\right)$, whereas in
  $B(0,2D)^{\complement}$ we have
  \begin{equation*}
    U[0,1] \approx \abs{x}^{-(n-2)} 
    \quad\text{and}\quad 
    U[z,\lambda]\approx \lambda^{-\frac{n-2}2}D^{-(n-2)} \fullstop
  \end{equation*}
  Thanks to these approximations, we can compute
  \begingroup
  \allowdisplaybreaks
  \begin{align*}
    \int_{\R^n}U[0,1]^\alpha U[z, \lambda]^\beta 
    &= \int_{B(0,\frac D2)}U[0,1]^\alpha U[z, \lambda]^\beta
    + \int_{B(z,\frac D2)}U[0,1]^\alpha U[z, \lambda]^\beta \\
    &\quad + \int_{B(0,2D)\setminus\left(B(0,\frac D2)\cup B(z,\frac D2)\right)}U[0,1]^\alpha U[z, \lambda]^\beta
    + \int_{B(0,2D)^{\complement}}U[0,1]^\alpha U[z, \lambda]^\beta \\
    &\approx
    \int_0^{\frac D2}\left(\frac1{(1+t^2)^{\frac{n-2}2}}\right)^{\alpha}
    \left(\frac{1}{\lambda^{\frac{n-2}2} D^{n-2}}\right)^{\beta}t^{n-1}\de t \\
    &\quad +
    \int_0^{\frac D2}\left(\frac1{D^{n-2}}\right)^{\alpha}
    \left(\frac{\lambda}{1+\lambda^2 t^2}\right)^{\frac{n-2}2\beta}t^{n-1}\de t \\
    &\quad + D^n\left(\frac1{D^{n-2}}\right)^{\alpha}\left(\frac{1}{\lambda^{\frac{n-2}2} D^{n-2}}\right)^{\beta}
    +\int_{2D}^{\infty}\left(\frac1{t^{n-2}}\right)^{\alpha}
    \left(\frac1{\lambda^{\frac{n-2}2}t^{n-2}}\right)^{\beta}t^{n-1}\de t \\
    &= \lambda^{-\beta\frac{n-2}2}D^{-\beta(n-2)}
    \int_0^{\frac D2}\frac{t^{n-1}}{(1+t^2)^{\alpha\frac{n-2}2}}\de t \\
    &\quad + \lambda^{-\alpha\frac{n-2}2}D^{-\alpha(n-2)}
    \int_0^{\frac{D\lambda}2}\frac{s^{n-1}}{(1+s^2)^{\beta\frac{n-2}2}}\de s \\
    &\quad + D^{-n}\lambda^{-\beta\frac{n-2}2} 
    + \lambda^{-\beta\frac{n-2}2}\int_{2D}^{\infty}\frac{1}{t^{n+1}}\de t \\
    &\approx
    \lambda^{-\beta\frac{n-2}2}D^{-\beta(n-2)}
    \int_{\frac14}^{\frac D2}t^{n-1-\alpha(n-2)}\de t \\
    &\quad + \lambda^{-\alpha\frac{n-2}2}D^{-\alpha(n-2)}
    \int_{\frac14}^{\frac{D\lambda}2}s^{n-1-\beta(n-2)}\de s \\
    &\quad + D^{-n}\lambda^{-\beta\frac{n-2}2} \fullstop
  \end{align*}
  \endgroup
  
  Once again, we approximate the value of such expression under a further assumption on the relation 
  between $\alpha$ and $\beta$.
  \begin{itemize}
   \item If $\alpha \ge \beta+\eps$, the expression becomes comparable to $(\lambda D^2)^{-\frac{(n-2)\beta}2}$.
   \item If $\beta \ge \alpha+\eps$, the expression becomes comparable to $(\lambda D^2)^{-\frac{(n-2)\alpha}2}$.
   \item If $\alpha = \beta$, since $D\lambda\ge 1$ and $\lambda\le 1$
   the expression becomes comparable to $\log(D)\,(\lambda D^2)^{-\frac{n}2}\approx \log(\lambda D^2)\,(\lambda D^2)^{-\frac{n}2}$.
  \end{itemize}
  As we have covered all possible cases, the statement is proven.
\end{proof}

\begin{proposition}\label{prop:interaction_approx}
  Given $n\ge 3$, let $U=U[z_1, \lambda_1]$ and $V=U[z_2, \lambda_2]$ be two bubbles such that 
  $\lambda_1\ge\lambda_2$.
  Let us define the quantity $Q=Q(U, V)=Q(z_1, \lambda_1, z_2, \lambda_2)$ as
  \begin{equation*}
    Q \defeq 
    \min\left(\frac{\lambda_2}{\lambda_1}, \frac1{\lambda_1\lambda_2 \abs{z_1-z_2}^2}\right) \fullstop
  \end{equation*}
  Then, for any fixed $\eps>0$ and any nonnegative exponents such that $\alpha+\beta=2^*$, it holds
  \begin{equation*}
    \int_{\R^n} U^\alpha V^\beta \approx_{n,\eps} 
    \begin{cases}
      Q^{\frac{(n-2)\min(\alpha, \beta)}2} &\text{ if }\ \abs{\alpha-\beta}\ge \eps\comma \\
      Q^{\frac n2}\log(\frac1Q) &\text{ if }\ \alpha=\beta \fullstop
    \end{cases}
  \end{equation*}
\end{proposition}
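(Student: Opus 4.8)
The plan is to deduce the statement from Lemma~\ref{lem:interaction_for_simple_bubbles} by a change of variables, using the scaling symmetries recalled in Section~\ref{subsec:symmetries}. Since by hypothesis $\lambda_1\ge\lambda_2$, I would substitute $x = z_1 + y/\lambda_1$ in $\int_{\R^n} U^\alpha V^\beta$. Writing $U[z,\lambda]=T_{z,\lambda}(U[0,1])$ and unwinding the definitions, one checks that under this substitution $U(x)=\lambda_1^{\frac{n-2}2}U[0,1](y)$ and $V(x)=\lambda_1^{\frac{n-2}2}U[z',\lambda'](y)$, where $\lambda'\defeq\lambda_2/\lambda_1\in\oc01$ and $z'\defeq\lambda_1(z_2-z_1)$. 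Because $\alpha+\beta=2^*$ and $\tfrac{(n-2)2^*}{2}=n$, the two bubbles contribute a factor $\lambda_1^{\frac{(n-2)(\alpha+\beta)}2}=\lambda_1^{n}$ that is exactly cancelled by the Jacobian $\lambda_1^{-n}$, so that
\[
  \int_{\R^n} U^\alpha V^\beta = \int_{\R^n} U[0,1]^\alpha\, U[z',\lambda']^\beta \comma
\]
i.e.\ the interaction integral is scale-invariant (this could equivalently be read off directly from the properties of $T_{z,\lambda}$ listed in Section~\ref{subsec:symmetries}).

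Next I would rewrite $Q$ in terms of the parameters $D\defeq\abs{z'}=\lambda_1\abs{z_1-z_2}$ and $\lambda'$ that appear in Lemma~\ref{lem:interaction_for_simple_bubbles}. Since $\lambda'D^2=\lambda_1\lambda_2\abs{z_1-z_2}^2$, the definition of $Q$ becomes the identity $Q=\min\!\bigl(\lambda',\tfrac1{\lambda'D^2}\bigr)$, equivalently $\tfrac1Q=\max\!\bigl(\tfrac1{\lambda'},\lambda'D^2\bigr)$. Moreover the case distinction of the lemma corresponds to which term realizes this minimum: $\lambda'\le\tfrac1{\lambda'D^2}$ iff $\lambda'D\le1$, so $Q=\lambda'$ exactly when $D\lambda'\le1$, and $Q=\tfrac1{\lambda'D^2}$ exactly when $D\lambda'\ge1$.

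With these two observations in hand the proof is a direct substitution into Lemma~\ref{lem:interaction_for_simple_bubbles}, applied with bubble $U[z',\lambda']$, $\lambda'\le1$. In the regime $D\lambda'\le1$ the lemma gives $\int\approx(\lambda')^{\frac{(n-2)\min(\alpha,\beta)}2}=Q^{\frac{(n-2)\min(\alpha,\beta)}2}$ when $\abs{\alpha-\beta}\ge\eps$, and $\int\approx(\lambda')^{\frac n2}\log(1/\lambda')=Q^{\frac n2}\log(1/Q)$ when $\alpha=\beta$. In the regime $D\lambda'\ge1$ it gives $\int\approx\bigl(\tfrac1{\lambda'D^2}\bigr)^{\frac{(n-2)\min(\alpha,\beta)}2}=Q^{\frac{(n-2)\min(\alpha,\beta)}2}$ when $\abs{\alpha-\beta}\ge\eps$, and $\int\approx\bigl(\tfrac1{\lambda'D^2}\bigr)^{\frac n2}\log(\lambda'D^2)=Q^{\frac n2}\log(1/Q)$ when $\alpha=\beta$, where in the last step one uses $\lambda'D^2=1/Q$ together with the fact that the logarithmic factor produced in the proof of Lemma~\ref{lem:interaction_for_simple_bubbles} in that case is $\log(\lambda D^2)$. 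In every case this is precisely the asserted estimate.

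There is essentially no analytic difficulty here; the work is entirely bookkeeping. The one place where care is needed is the reduction step: one must track the scaling of \emph{both} bubbles and of the Lebesgue measure so that the powers of $\lambda_1$ cancel, and correctly identify $\lambda'=\lambda_2/\lambda_1$ and $z'=\lambda_1(z_2-z_1)$ — a stray inversion or translation error would propagate everywhere. The only other subtlety is the identification of the logarithmic factors in the borderline exponent case $\alpha=\beta$: one should note that $\log(1/Q)$ equals $\log(\lambda'D^2)$ when $D\lambda'\ge1$ and $\log(1/\lambda')$ when $D\lambda'\le1$, which is exactly what the two branches of Lemma~\ref{lem:interaction_for_simple_bubbles} supply (and that the two branches agree on the overlap $D\lambda'=1$, where $Q=\lambda'=\tfrac1{\lambda'D^2}$).
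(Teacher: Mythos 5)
Your proposal is correct and is exactly the paper's argument: the paper's proof of this proposition is a one-line reduction to \cref{lem:interaction_for_simple_bubbles} via the scale/translation invariance of \cref{subsec:symmetries}, and you have simply written out that reduction in detail (including the correct identification $\lambda'=\lambda_2/\lambda_1$, $z'=\lambda_1(z_2-z_1)$, $Q=\min(\lambda',\tfrac1{\lambda'D^2})$, and the observation that the logarithmic factor from the lemma's proof is $\log(\lambda D^2)=\log(1/Q)$ in the relevant regime).
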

\begin{proof}
  Since the integral $\int_{\R^n} U^\alpha V^\beta$ is invariant under the transformations described in
  \cref{subsec:symmetries}, this result follows directly from \cref{lem:interaction_for_simple_bubbles}.
\end{proof}
\begin{remark}
 The behavior of $\int_{\R^n} U^\alpha V^\beta$ when $\alpha$ is close, but not equal, to $\beta$ cannot be
 captured by simple formulas as the ones in the statement of \cref{prop:interaction_approx}. Indeed
 in such a range of exponents there is the transition between a pure power behavior and a \emph{power+logarithm} behavior.
\end{remark}

\begin{corollary}\label{cor:interaction_integral_localized}
  Given $n\ge 3$ and two bubbles $U_1=U[z_1, \lambda_1]$ and $U_2=U[z_2, \lambda_2]$ with 
  $\lambda_1\ge\lambda_2$, it holds
  \begin{equation*}
    \int_{\R^n} U_1^p U_2 \approx \int_{B(z_1, \lambda_1^{-1})} U_1^p U_2 \fullstop
  \end{equation*}
\end{corollary}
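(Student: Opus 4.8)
The plan is to reduce to a normalized configuration via the scaling symmetries of \cref{subsec:symmetries}, and then to estimate the two integrals by the same region-splitting idea used in the proof of \cref{lem:interaction_for_simple_bubbles}. Since $\int_{\R^n} U_1^p U_2$ is invariant under the transformation $T_{z_1,\lambda_1}$, which sends $U_1=U[z_1,\lambda_1]$ to $U[0,1]$ and the ball $B(z_1,\lambda_1^{-1})$ to $B(0,1)$, a change of variables reduces the statement to proving
\begin{equation*}
  \int_{\R^n} U[0,1]^p\, U[z,\lambda] \approx \int_{B(0,1)} U[0,1]^p\, U[z,\lambda]
\end{equation*}
for every $z\in\R^n$ and every $\lambda\in\oc01$, the constraint $\lambda\le 1$ being exactly where the hypothesis $\lambda_1\ge\lambda_2$ enters.

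To estimate the right-hand side, I would use that on $B(0,1)$ one has $U[0,1]^p\approx 1$ (with dimensional constants), so that the right-hand side is comparable to $\int_{B(0,1)} U[z,\lambda]$. Setting $D\defeq\abs{z}$ and using $\lambda\le 1$ together with the elementary bounds $\abs{D-1}\le\abs{x-z}\le 1+D$ for $x\in B(0,1)$, one checks that $1+\lambda^2\abs{x-z}^2\approx 1+\lambda^2D^2$ uniformly for $x\in B(0,1)$; here the two regimes $D\lesssim 1$ and $D\gtrsim 1$ are handled separately (for small $D$ the lower bound is simply $1$, for large $D$ one uses $\abs{x-z}\approx D$). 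Since $\abs{B(0,1)}$ is a dimensional constant, this yields
\begin{equation*}
  \int_{B(0,1)} U[z,\lambda] \approx \lambda^{\frac{n-2}2}\bigl(1+\lambda^2D^2\bigr)^{-\frac{n-2}2}
  = \left(\frac{\lambda}{1+\lambda^2D^2}\right)^{\frac{n-2}2}
  \approx \left(\min\left(\lambda,\frac{1}{\lambda D^2}\right)\right)^{\frac{n-2}2}\fullstop
\end{equation*}

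Finally I would compare this with \cref{prop:interaction_approx} (equivalently with \cref{lem:interaction_for_simple_bubbles}) applied with the exponents $\alpha=p$ and $\beta=1$: these satisfy $\alpha+\beta=2^*$, $\min(\alpha,\beta)=1$, and $\abs{\alpha-\beta}=\frac{4}{n-2}$, which is a fixed positive number, so taking $\eps=\eps(n)=\frac{4}{n-2}$ places us in the generic case and gives $\int_{\R^n} U[0,1]^p\, U[z,\lambda]\approx Q^{\frac{n-2}2}$ with $Q=\min(\lambda,\frac{1}{\lambda D^2})$. This is precisely the quantity obtained above for the localized integral, so the two are comparable; undoing the reduction of the first step then proves the corollary. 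I do not expect any genuine difficulty: the argument is a routine computation, and the only point deserving a little care is the uniform comparison $1+\lambda^2\abs{x-z}^2\approx 1+\lambda^2D^2$ on $B(0,1)$.
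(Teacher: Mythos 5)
Your proposal is correct and follows essentially the same route as the paper's proof: reduce by the symmetries to $U_1=U[0,1]$, observe that $U_2$ is comparable to a constant (namely $U_2(0)\approx(\lambda/(1+\lambda^2D^2))^{\frac{n-2}2}$) on $B(0,1)$ because $\lambda\le 1$, and match this with the value of the full integral given by \cref{prop:interaction_approx} applied with $\alpha=p$, $\beta=1$. The only cosmetic difference is that you spell out the uniform comparison $1+\lambda^2\abs{x-z}^2\approx 1+\lambda^2D^2$, which the paper states more tersely as $U_2(x)\approx U_2(0)$ on $B(0,1)$.
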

\begin{proof}
  Thanks to the symmetries described in \cref{subsec:symmetries}, without loss of generality we can assume 
  $z_1=0$, $\lambda_1=1$. For clarity, let us denote $\lambda\defeq\lambda_2$ and $z\defeq z_2$.
  
  Since $\lambda\le 1$, it holds $U_2(0)\approx U_2(x)$ for any $x\in B(0,1)$. Hence we have
  \begin{equation*}
    \int_{B(0,1)}U_1^pU_2 \approx U_2(0) 
    \approx \left(\frac{\lambda}{1+\lambda^2\abs{z}^2}\right)^{\frac{n-2}2} \fullstop
  \end{equation*}
  Thanks to \cref{prop:interaction_approx} we know that
  \begin{equation*}
    \int_{\R^n} U_1^p U_2 \approx \min\left(\lambda, \frac1{\lambda\abs{z}^2}\right)^{\frac{n-2}2}
  \end{equation*}
 so the statement is proven since
  \begin{equation*}
    \min\left(\lambda, \frac1{\lambda\abs{z}^2}\right)^{\frac{n-2}2}
    \approx \left(\frac{\lambda}{1+\lambda^2\abs{z}^2}\right)^{\frac{n-2}2} \fullstop
  \end{equation*}
\end{proof}

\begin{proposition}\label{prop:integral_simple_bubbles}
  Given $n\ge 3$, let $\varphi:\oo{0}\infty^2\to\oo{0}\infty$ be a function such that
  \begin{itemize}
   \item $\varphi(x, y)\approx x^ay^b$ if $y\le 2x$,
   \item $\varphi(x, y)\approx x^cy^d$ if $x\le 2y$,
  \end{itemize}
  where $a,b,c,d\ge 0$ are nonnegative exponents satisfying $a+b=c+d>\frac n{n-2}$.
  If we denote $U\defeq U[-Re_1, 1]$ and $V\defeq U[Re_1, 1]$ for some $R\gg 1$, then it holds
  \begin{align*}
    \int_{\R^n} \varphi(U, V) 
    \approx \Phi_R(a, b, c, d) &\defeq R^{-b(n-2)}\int_1^R t^{n-1-a(n-2)}\de t \\
    &+ R^{-c(n-2)}\int_1^R t^{n-1-d(n-2)}\de t \\
    &+ R^{n-(a+b)(n-2)}
    \comma
  \end{align*}
  where the hidden constants do not depend on $R$ (but are allowed to depend on the dimension $n$ and the 
  function $f$). As a consequence, it holds
  \begin{equation*}
    \Phi_R(a, b, c, d) \approx
    \begin{cases}
      R^{-\min(b, c)(n-2)} \quad&\text{if $\max(a,d)>\frac{n}{n-2}$,}\\
      R^{-\min(b, c)(n-2)}\log(R) \quad&\text{if $\max(a,d)=\frac{n}{n-2}$,}\\
      R^{n-(a+b)(n-2)} \quad&\text{if $\max(a,d)<\frac{n}{n-2}$.}
    \end{cases}
  \end{equation*}
\end{proposition}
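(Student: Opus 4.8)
The plan is to establish $\int_{\R^n}\varphi(U,V)\approx\Phi_R(a,b,c,d)$ by a region decomposition adapted to the geometry of $U=U[-Re_1,1]$ and $V=U[Re_1,1]$ (whose centers are at distance $2R$), and then to read off the closed form of $\Phi_R$ by elementary one–variable calculus. First note that for any $x,y>0$ at least one of $y\le 2x$, $x\le 2y$ holds, so the hypotheses determine $\varphi$ up to multiplicative constants on all of $\oo0\infty^2$, and on the overlap both expressions are $\approx x^{a+b}=x^{c+d}$. I would split $\R^n$, up to a null set, into $A_U\defeq B(-Re_1,R)$, $A_V\defeq B(Re_1,R)$, the intermediate region $A_0\defeq B(0,2R)\setminus(A_U\cup A_V)$, and the far region $A_\infty\defeq\R^n\setminus B(0,2R)$. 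Since $\abs{-Re_1}=\abs{Re_1}=R$, both $A_U$ and $A_V$ are contained in $B(0,2R)$, so these four sets do partition $\R^n$; moreover $\abs{A_0}=\abs{B(0,2R)}(1-2^{1-n})\gtrsim R^n$ because $2^{1-n}\le\tfrac14$ for $n\ge3$.

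Next I would estimate each contribution. On $A_U$ one has $\abs{x+Re_1}\le R$ and $\abs{x-Re_1}\ge 2R-\abs{x+Re_1}\ge R$; since $V\le 2U$ is equivalent to $1+\abs{x+Re_1}^2\le 2^{2/(n-2)}(1+\abs{x-Re_1}^2)$, this shows $V\le 2U$ throughout $A_U$, hence $\varphi(U,V)\approx U^aV^b$ there, while $\abs{x-Re_1}\approx R$ gives $V\approx R^{2-n}$. Therefore, changing variables $y=x+Re_1$,
\begin{equation*}
  \int_{A_U}\varphi(U,V)\approx R^{-b(n-2)}\int_{B(0,R)}(1+\abs{y}^2)^{-a(n-2)/2}\,\de y\approx R^{-b(n-2)}\int_1^R t^{n-1-a(n-2)}\,\de t\comma
\end{equation*}
where the contribution of $\abs{y}\le1$ is a constant that gets absorbed since $\int_1^R t^{n-1-a(n-2)}\,\de t\gtrsim1$. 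An entirely symmetric computation on $A_V$, where $U\le 2V$ throughout — hence $\varphi(U,V)\approx U^cV^d$ — and $U\approx R^{2-n}$, gives $R^{-c(n-2)}\int_1^R t^{n-1-d(n-2)}\,\de t$. On $A_0$ one has $\abs{x\pm Re_1}\approx R$, so $U\approx V\approx R^{2-n}$ and $\varphi(U,V)\approx R^{-(a+b)(n-2)}$, whence $\int_{A_0}\varphi(U,V)\approx\abs{A_0}R^{-(a+b)(n-2)}\approx R^{n-(a+b)(n-2)}$. On $A_\infty$ one has $\abs{x\pm Re_1}\approx\abs{x}$, so $\varphi(U,V)\approx\abs{x}^{-(a+b)(n-2)}$ and, using $(a+b)(n-2)>n$, $\int_{A_\infty}\varphi(U,V)\approx\int_{2R}^\infty t^{n-1-(a+b)(n-2)}\,\de t\approx R^{n-(a+b)(n-2)}$. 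Summing the four contributions and absorbing the two coinciding last terms yields $\int_{\R^n}\varphi(U,V)\approx\Phi_R(a,b,c,d)$.

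To obtain the three regimes of $\Phi_R$, I would evaluate the two integrals in its definition: $\int_1^R t^{n-1-a(n-2)}\,\de t$ is $\approx R^{n-a(n-2)}$, $\approx\log R$, or $\approx1$ according to whether $a<\tfrac n{n-2}$, $a=\tfrac n{n-2}$, or $a>\tfrac n{n-2}$, and symmetrically for $d$. Substituting and using $a+b=c+d$, the first term of $\Phi_R$ becomes $\approx R^{n-(a+b)(n-2)}$, $\approx R^{-b(n-2)}\log R$, or $\approx R^{-b(n-2)}$ in the three cases for $a$, and likewise the second term becomes $\approx R^{n-(a+b)(n-2)}$, $\approx R^{-c(n-2)}\log R$, or $\approx R^{-c(n-2)}$ in the three cases for $d$. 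The key bookkeeping fact is that $a+b=c+d$ forces $b<c\iff a>d$, so $\min(b,c)=(a+b)-\max(a,d)$ and hence $\min(b,c)(n-2)=(a+b)(n-2)-n$ precisely when $\max(a,d)=\tfrac n{n-2}$. A short case split on whether $\max(a,d)$ is below, equal to, or above $\tfrac n{n-2}$ — in each case comparing the competing powers $R^{n-(a+b)(n-2)}$, $R^{-b(n-2)}$, $R^{-c(n-2)}$ (and their $\log R$ variants) and discarding the dominated ones — then produces the stated values $R^{-\min(b,c)(n-2)}$, $R^{-\min(b,c)(n-2)}\log R$, and $R^{n-(a+b)(n-2)}$, respectively. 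Everything here is elementary; I do not expect a genuine obstacle, the only points needing care being (i) verifying that the correct branch of $\varphi$ is active on $A_U$ and $A_V$ (handled by the explicit inequality above and its mirror image) and (ii) tracking which of the several competing powers of $R$ dominates in the final case analysis.
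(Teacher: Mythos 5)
Your proposal is correct and follows essentially the same strategy as the paper: decompose $\R^n$ into two balls around the bubble centers, an intermediate annular region, and a far region, estimate $U$, $V$, and the active branch of $\varphi$ on each piece, and then evaluate the resulting one-dimensional integrals; the only cosmetic difference is your choice of radius $R$ rather than $R/2$ for the balls around $\pm Re_1$. Your careful verification of which branch of $\varphi$ applies on each region and the explicit case analysis yielding the closed form of $\Phi_R$ are both sound (in particular the identity $\min(b,c)=(a+b)-\max(a,d)$ is correct).
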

\begin{proof}
  The only properties of $U[0,1]$ we are going to use are that $U[0,1]\approx 1$ in $B(0,1)$ and 
  $U[0,1]\approx \abs{x}^{2-n}$ elsewhere.
  
  We split the desired integral in four zones
  \begin{align*}
    \int_{\R^n} \varphi(U, V) =
    \int_{B_U} \varphi(U, V) 
    +\int_{B_V} \varphi(U, V) 
    +\int_{B(0, 2R)\setminus(B_U\cup B_V)} \varphi(U, V)
    +\int_{B(0, 2R)^\complement} \varphi(U, V) \comma
  \end{align*}
  where $B_U = B(-Re_1, \frac R2)$ and $B_V=B(R e_1, \frac R2)$.
  Let us compute the four terms separately.
  
  We start with the integral on $B_U$:
  \begin{align*}
    \int_{B_U} \varphi(U, V) 
    &\approx \int_{B_U} U^a V^b 
    \approx R^{-b(n-2)}\int_{B(0,\frac R2)} U[0,1]^a\\
   & \approx R^{-b(n-2)}\left(1+\int_1^R \frac{t^{n-1}}{t^{a(n-2)}}\de t\right) 
    \approx R^{-b(n-2)}\int_1^R t^{n-1-a(n-2)}\de t \fullstop
  \end{align*}
  Of course, we can perform an analogous computation also on the ball $B_V$.
  
  In $B(0, 2R)\setminus(B_U\cup B_V)$ it holds $U\approx V\approx R^{-(n-2)}$ and thus 
  $\phi(U,V)\approx R^{-(a+b)(n-2)}$. Therefore
  \begin{equation*}
    \int_{B(0, 2R)\setminus(B_U\cup B_V)} \varphi(U, V) \approx R^{n-(a+b)(n-2)} \fullstop
  \end{equation*}
Finally, outside 
  $B(0, 2R)$ it holds
  \begin{equation*}
    \int_{B(0, 2R)^\complement} \varphi(U, V) 
    \approx \int_{B(0, 2R)^\complement} \abs{x}^{-(a+b)(n-2)}\de x 
    \approx R^{n-(a+b)(n-2)}
    \fullstop
  \end{equation*}
  Combining all these estimates, the result follows.
\end{proof}

\bigskip

\noindent
\textit{Acknowledgments:} both authors are funded by the European Research Council under the Grant 
Agreement No. 721675 ``Regularity and Stability in Partial Differential Equations (RSPDE)''.

\bigskip

\printbibliography

\end{document}